%% file: main.tex
\documentclass[11pt]{amsart}
\input{command}
\input{packages}

\title{Iwasawa cohomology of pseudorigid families of $(\varphi, \Gamma)$-modules}
\author{Yashi Jain}

\begin{document}
\maketitle
\begin{abstract}
   We study the Iwasawa cohomology complex attached to a pseudorigid family of $(\varphi, \Gamma)$-modules and prove coadmissibility of its cohomology groups, perfectness and compatibility with base change. We apply this result to give another proof of perfectness of the Herr complex in pseudorigid family. We also use it to prove perfectness of Selmer complexes and obtain a result on variation of ranks of Selmer groups in analytic family. 
\end{abstract}
\begin{flushright}
\begin{minipage}{0.45\textwidth}
\itshape
Selmer groups are dead. 
Long live Selmer complexes!

\medskip
\hfill --- Jan Nekov\'a\v{r} \cite{nekovar2006selmer} 
\end{minipage}
\end{flushright}
\tableofcontents
\section{Introduction}
Iwasawa theory studies the variation of arithmetic invariants in infinite towers of number fields. One of its fundamental objects is Iwasawa cohomology, which packages the Galois cohomology of a representation over all finite layers of a $p$-adic Lie extension into a single module over an Iwasawa algebra. An example of a $p$-adic Lie extension is the cyclotomic extension
$\mathbb{Q}_p(\mu_{p^{\infty}})/\mathbb{Q}_p$, whose Galois group is a
$p$-adic Lie group. For such an extension, the Iwasawa cohomology is defined as
$$
H^i_{\mathrm{Iw}}({\mathbb{Q}_p}(\mu_{p^\infty})/\mathbb{Q}_p,T)
:=\varprojlim_F H^i(G_F,T),
\qquad
H^i_{\mathrm{Iw}}({\mathbb{Q}_p}(\mu_{p^\infty})/\mathbb{Q}_p,V):=H^i_{\mathrm{Iw}}({\mathbb{Q}_p}(\mu_{p^\infty})/\mathbb{Q}_p,T)
  \otimes_{\mathbb Z_p}\mathbb Q_p
$$
where $T$ is a lattice of $V$ and $F$ runs through the finite extensions of $\mathbb{Q}_p$ contained in
$\mathbb{Q}_p(\mu_{p^{\infty}})$, $G_F=\operatorname{Gal}(\overline{\mathbb{Q}}_p/F)$,
and the transition maps are the corestriction maps. This provides a natural framework for studying the variation of arithmetic objects such as Galois groups, Selmer groups, and $p$-adic $L$-functions.

In this article, we aim to study Iwasawa cohomology of representations of Galois groups of $p$-adic fields on modules over rings in a family whose coefficients are varying \textit{pseudorigid} analytically. These rings, which occur naturally while studying eigenvarieties at the boundary of the weight space \cite{johansson2019extended,andreatta2018halo}, are Tate algebras where $p$ is not necessarily invertible and can also be 0 (unlike rigid analytic rings studied by Tate where $p$ is invertible) and have non-Archimedean topology. The study of continuous representations of $\textrm{Gal}_{\mathbb{Q}_p}$ over $p$-adic vector spaces has been exceptionally hard because of similar topologies on both the Galois group and the vector space on which it acts. Such representations are studied using the theory of $(\varphi, \Gamma)$-modules over rings with complicated coefficients (called the \textit{Robba} rings), turning the data of the entire representation into semilinear data of a module with an action of a Frobenius operator $\varphi$ and a one-dimensional $p$-adic Lie group $\Gamma$. While the theory of rigid analytic families of $(\varphi,\Gamma)$-modules has been extensively developed by Fontaine, Colmez, Kedlaya, Berger, and others, recent work of Bellovin \cite{bellovin2023galois} extends this framework to the pseudorigid setting.

Given a $(\varphi, \Gamma)$-module $D$ that comes from a Galois representation, there exists an explicit two term complex $C_{\psi}^{\bullet}(D)$ that studies the Iwasawa cohomology of the Galois representation associated to $D$. Given such a module with coefficients in a pseudorigid space, we prove the finiteness of Iwasawa cohomology over a certain distribution algebra, extending the result of \cite{kedlaya2014cohomology}. The main results of this paper are the following:
\begin{theorem}
    Let $D = D_{(0, b]}$ be a $(\varphi, \Gamma_K)$ module over $\Lambda_{R, (0, b], K}$. Then for sufficiently small $a, b$ satisfying $0<a \leq b/p^3$, there exists a constant $\mu(a)$ depending on $a$ and $K$ such that $D_{[a, b]}^{\psi = 1}$ is finitely generated over $\Lambda_{R, [\mu(a), \infty]}(\Gamma_K)$.
\end{theorem}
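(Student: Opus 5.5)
We follow the strategy of Kedlaya--Pottharst--Xiao for the rigid analytic case, adapting each step to pseudorigid coefficients via Bellovin's theory. The idea is to compare $D_{[a,b]}^{\psi=1}$ with a module on which $\Gamma_K$ acts through a \emph{finitely generated} structure. We use the decomposition of $D_{[a,b]}$ (for $a\le b/p$) into $\varphi(D_{[a/p,b/p]})$-translates: $\psi$ maps $D_{[a,b]}$ to $D_{[pa,pb]}$, hence into $D_{[a,b]}$ after restriction when the interval is chosen as in the hypothesis $a\le b/p^3$. The key object is the subspace $D_{[a,b]}^{\psi=0}$. As a module over $\Lambda_{R,[\mu(a),\infty]}(\Gamma_K)$ it is free (or at least finitely generated) of rank $[\,\cdot\,]$ equal to the rank of $D$. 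This is the pseudorigid analogue of the statement that $(\Lambda^{\psi=0})$ is free of rank one over the distribution algebra on $\Gamma_K$, generated by $(1+\pi)$ times a basis. We prove it by first establishing it for the trivial module, $\Lambda_{R,[a,b],K}^{\psi=0}$, using the explicit Mellin-type description $\sum_{i} (1+\pi)^i\varphi(\Lambda_{R,[a/p,b/p]})$. We then bootstrap to general $D$ by choosing a model of $D$ over a small interval with a basis on which $\gamma-1$ acts with small norm. The constant $\mu(a)$ arises exactly here: the action of $\gamma-1$ on $\varphi^n$-translates converges only on a disk whose radius is controlled by $a$ and by the ramification of $K$.

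Next we control the cokernel of $\psi-1$ and relate it to the $\psi=0$ part via the map $1-\varphi$. Concretely, we show that $\psi-1:D_{[a,b]}\to D_{[a,b]}$ (suitably with the interval shrunk) has the following property. Any $x\in D^{\psi=1}_{[a,b]}$ satisfies $(1-\varphi)x\in D^{\psi=0}$. The map $x\mapsto(1-\varphi)x$ from $D^{\psi=1}$ to $D^{\psi=0}$ is $\Gamma_K$-equivariant. Its kernel is $D^{\varphi=1}$, and we will show this kernel is finite over $R$ by a slope/contraction argument: $\varphi$ enlarges the radius, so $\varphi$-invariants are forced into a bounded-rank $R$-module. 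Since $\Lambda_{R,[\mu(a),\infty]}(\Gamma_K)$ is noetherian in the pseudorigid setting, any submodule of the finitely generated module $D^{\psi=0}$ is finitely generated. Noetherianity for the pseudorigid distribution algebra on a closed disc is a Tate-algebra-type statement, which we verify using that $R$ is noetherian Tate and $\Lambda_{R,[\mu,\infty]}(\Gamma_K)$ is a finite étale-over-group-ring extension of an affinoid-type disc algebra over $R$. Hence $D^{\psi=1}$, being an extension of a submodule of a finitely generated module by a module finite over $R$, is finitely generated.

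The main obstacle is the first step: freeness or finite generation of $D^{\psi=0}_{[a,b]}$ over $\Lambda_{R,[\mu(a),\infty]}(\Gamma_K)$ when $p$ is not invertible in $R$. In the rigid case one uses $\log$ and the explicit formula for $\gamma-1$ on $\varphi^n((1+\pi))$ with estimates in $p$-adic norms. In the pseudorigid case the norms on $R$ are not $p$-adic, so these estimates must be redone with a pseudouniformizer $\varpi$ of $R$. Our first attempt is to reduce to $R$ of the form $\mathcal{O}_L[[u]]\langle \varpi^k/u\rangle$-type (by Bellovin's structure results) and to run the contraction estimates uniformly there. If that fails, we will argue by dévissage modulo powers of $\varpi$ combined with a completeness argument.
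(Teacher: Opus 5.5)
The argument breaks at its central claim: that $D_{[a,b]}^{\psi=0}$ is free, or at least finitely generated, over the disc algebra $A=\Lambda_{R,[\mu(a),\infty]}(\Gamma_K)$. What the paper actually has (Lemma~\ref{lem:closed-psi-zero-projective}, the annular form of KPX Theorem~3.1.1) is weaker. The module is finite projective, of rank $[K:\mathbb{Q}_p]\,\mathrm{rk}(D)$, over the \emph{annular} algebra $B=\Lambda_{R,[\mu(a),\mu(b)]}(\Gamma_K)$, and in $B$ the element $T=\gamma-1$ is a unit. The reason is that elements of $D_{[a,b]}$ involve negative powers of $\pi_K$. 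So your analogy with $(\mathcal{R}^+)^{\psi=0}=\mathcal{D}(\Gamma)\cdot(1+\pi)$ does not transfer, and the obstruction has nothing to do with $p$ failing to be invertible.

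Finite generation over $A$ is in fact impossible for $D$ of positive rank:
\begin{itemize}
\item $T$ acts bijectively on $M=D_{[a,b]}^{\psi=0}$, so $M=TM$.
\item $\mathrm{Ann}_A(M)=0$, because $A\hookrightarrow B$ and $M$ is projective of positive rank over $B$.
\item If $M$ were finite over the Noetherian ring $A$, its support would therefore be all of $\mathrm{Spec}\,A$.
\item Nakayama at a prime containing the non-unit $T$ (e.g.\ via the augmentation) would then give $M/TM\neq 0$, a contradiction.
\end{itemize}
Consequently your Noetherian-submodule step has nothing to act on. The image $(1-\varphi)D_{[a,b]}^{\psi=1}$ is an $A$-submodule of a module that is finite only over the flat but non-finite $A$-algebra $B$, and such submodules need not be finitely generated; $B$ itself is an example. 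Separately, your slope heuristic for the $R$-finiteness of $D^{\varphi=1}$ is not a proof. The paper obtains it from the residue pairing and the finiteness of $D^*/(\psi-1)$ (Lemma~\ref{phiinv}), but this is a secondary issue.

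The missing idea is how to extract finiteness over the disc algebra from a structure that exists only over annuli. The paper never asks for finiteness of $D^{\psi=0}$ over $A$. Instead it uses an integral Cartan--Serre criterion (Corollary~\ref{integralcompcontcohomology}). It compares the two-term complexes $[D_{[a,b]}\xrightarrow{\psi-1}D_{[ap,b]}]$ and the same complex with $b$ replaced by $b/p^2$. The restriction morphism between them is homotopic, via the diagonal map $\varphi$, to the morphism $(1-\varphi\psi+\varphi,\ \varphi)$.

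This second morphism is integrally completely continuous over $A$ (Lemma~\ref{modpcompletecontinuity}). For $\varphi$, this holds because restriction improves both boundaries. For $1-\varphi\psi$, it holds because the map factors through $D^{\psi=0}$. There, passing from $\Lambda_{R,[\mu(a),\mu(b/p)]}(\Gamma_K)$ to $\Lambda_{R,[\mu(a),\mu(b/p^2)]}(\Gamma_K)$ damps the negative powers of $T$, so it is approximated by the cyclic modules $u^{-h}T^{-n}A_0$, with $A_0$ the integral model of $A$. This is exactly the point where annular projectivity of $D^{\psi=0}$ is used.

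Since restriction is an isomorphism on $\psi=1$ invariants (Lemma~\ref{basic}), the cokernel on $H^1$ vanishes. Finiteness of $H^1=D_{[a,b]}^{\psi=1}$ over $A$ then follows. Your exact sequence $0\to D^{\varphi=1}\to D^{\psi=1}\xrightarrow{1-\varphi}D^{\psi=0}$ does appear in the paper (Theorem~\ref{redinj}), but only to get torsion-freeness, not finiteness.
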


\begin{theorem}(Theorem \ref{perfectiwa})
\label{perf02iw}
        Let $D = D_{(0, b]}$ be a $(\varphi, \Gamma_K)$ module over $\Lambda_{R, (0, b], K}$. 
        \begin{enumerate}
           
            \item The Iwasawa complex $C_{\psi}^{\bullet}(D) \in \textbf{D}_{\text{perf}}^{[0, 2]}(\Lambda_{R, (0, \infty]}(\Gamma_K))$.
            \item  The first cohomology group of the Iwasawa complex is finitely generated and coadmissible over $\Lambda_{R, (0, \infty]}(\Gamma_K)$ of rank $[K: \mathbb{Q}_p] \text{rk}(D)$.
            \item (Euler Characteristic formula) We have $\chi_{\Lambda_{R, (0, \infty]}(\Gamma_K)}(C_{\psi}^{\bullet}(D)) = -[K: \mathbb{Q}_p] \text{rk}(D)$. 
        \end{enumerate}
      \begin{remark}
          Note that it's perfect in degree $[0, 2]$ and not $[1, 2]$ as the first Iwasawa cohomology can have torsion.
      \end{remark}  
\end{theorem}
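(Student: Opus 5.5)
The strategy is the one of Kedlaya--Pottharst--Xiao, adapted to pseudorigid coefficients. We combine the finiteness theorem just stated with a Mittag-Leffler / coadmissibility argument over the Fréchet--Stein algebra $\Lambda_{R,(0,\infty]}(\Gamma_K) = \varprojlim_{a} \Lambda_{R,[\mu(a),\infty]}(\Gamma_K)$.

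\textbf{Step 1: identify the Iwasawa complex.} Recall that $C_\psi^\bullet(D)$ is the two-term complex $D \xrightarrow{\psi - 1} D$, placed in degrees $1,2$. Its cohomology is $H^1 = D^{\psi=1}$ and $H^2 = D/(\psi-1)$. We first show that $\psi-1$ is compatible with shrinking the annulus: the natural maps $D_{[a,b]}^{\psi=1} \to D_{[a',b']}^{\psi=1}$ and the analogous maps on $D/(\psi-1)$ stabilise. This is where the hypothesis $a\le b/p^3$ enters, through the comparison of $\psi$ on $D_{[a,b]}$ and $D_{[a/p,b/p]}$.

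\textbf{Step 2: finiteness at each level.} Using the preceding theorem, $D_{[a,b]}^{\psi=1}$ is finitely generated over $\Lambda_{R,[\mu(a),\infty]}(\Gamma_K)$. For $H^2$ we show that $D_{[a,b]}/(\psi-1)$ is finitely generated, indeed finite over $R$. The argument decomposes $D = \varphi(D)\otimes$ (a basis of $\Lambda$ over $\varphi(\Lambda)$) and uses that $\psi-1$ is bijective on the part of slope bounded away from zero, as in the rigid case.

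\textbf{Step 3: coadmissibility and base change.} Next we check that the modules $M_a = H^i(C_\psi^\bullet(D_{[a,b]}))$ form a coherent sheaf. That is, we need
\[
M_a \otimes_{\Lambda_{R,[\mu(a),\infty]}} \Lambda_{R,[\mu(a'),\infty]} \cong M_{a'}
\]
for $a' \le a$. This follows because the complex $C_\psi^\bullet(D_{[a,b]})$ itself base-changes correctly, i.e. $D_{[a',b]} \cong D_{[a,b]}\widehat\otimes \Lambda_{R,[\mu(a'),\infty]}$ as a $\Lambda(\Gamma_K)$-module. Here one uses the description $D^{\psi=0}\cong \Lambda(\Gamma_K)\otimes(\text{finite free})$.

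Once this holds, the Fréchet--Stein formalism gives coadmissibility of $H^i$, and so of $H^1$ in part (2). Perfectness in part (1) is then checked levelwise. Each $\Lambda_{R,[\mu(a),\infty]}(\Gamma_K)$ is noetherian, being a pseudorigid affinoid of finite global dimension after inverting torsion in $\Gamma_K$. So a complex with finitely generated cohomology in degrees $1,2$ is pseudocoherent. We obtain Tor-amplitude $[0,2]$ by a base-change argument to points of $\mathrm{Spec}$: after reduction to fibres, $C_\psi^\bullet$ computes Herr cohomology, which lives in degrees $[0,2]$. A perfect complex over the limit then follows from compatible perfect complexes, since coherent sheaves on a quasi-Stein space with bounded uniform amplitude glue.

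\textbf{Step 4: rank and Euler characteristic.} For part (3) it suffices to compute $\chi$ after specialising to a generic point of each component of $\mathrm{Spec}\, R$. There $D$ becomes a $(\varphi,\Gamma)$-module over a rigid (or characteristic-$p$ Laurent) field, and the Euler characteristic $-[K:\mathbb{Q}_p]\mathrm{rk}(D)$ is the known result of Kedlaya--Pottharst--Xiao, or of Bellovin in the pseudorigid case. Since $H^2$ and the torsion of $H^1$ are torsion over $\Lambda(\Gamma_K)$, we get that $H^1$ has rank $[K:\mathbb{Q}_p]\mathrm{rk}(D)$.

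\textbf{Main obstacle.} I expect the hardest step to be Step 3: proving that $\psi$-invariants commute with the flat base change $\Lambda_{R,[\mu(a),\infty]}\to \Lambda_{R,[\mu(a'),\infty]}$ when $p$ need not be invertible in $R$. My first approach would be to reduce to the case of $D^{\psi=0}$ being free over $\Lambda(\Gamma_K)$ via the exact sequence $0\to D^{\psi=1}\to D\to D\to D/(\psi-1)\to0$. I would then control the error terms by the contraction estimates on $\psi$ for $a\le b/p^3$.
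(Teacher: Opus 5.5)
Step~3, which you flag as the main obstacle, is where the proposal breaks. The paper deliberately avoids the claim you make there.

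\textbf{The coherence claim in Step~3.} The displayed compatibility goes the wrong way: for $a'\le a$ the restriction map is $\Lambda_{R,[\mu(a'),\infty]}(\Gamma_K)\to\Lambda_{R,[\mu(a),\infty]}(\Gamma_K)$, and $D_{[a',b]}$ is not a base change of $D_{[a,b]}$. More importantly, $D_{[a,b]}$ is not finite over the disk algebra. Only $D^{\psi=0}_{[a,b]}$ is finite projective, and that is over the \emph{annular} algebra $\Lambda_{R,[\mu(a),\mu(b)]}(\Gamma_K)$, on which $\gamma-1$ is invertible.

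The annular invariants $D^{\psi=1}_{[a,b]}$ are also in general strictly larger than the image of $D^{\psi=1}_{(0,b]}$. An element $x$ extends to $D^{\psi=1}_{[a/p,b]}$ exactly when $(1-\varphi\psi)x\in D^{\psi=0}_{[a,b/p]}$ extends to $D^{\psi=0}_{[a/p,b/p]}$. So there is no reason for the annular $H^1$'s to form a coherent sheaf, and ``contraction estimates'' do not address this.

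The paper (Theorem~\ref{redinj}) instead sets $V_n=\operatorname{im}\bigl(D^{\psi=1}_{(0,b]}\otimes_\Lambda A_n\to D^{\psi=1}_{[a_n,b]}\bigr)$. This is finite because $A_n$ is Noetherian and $D^{\psi=1}_{[a_n,b]}$ is finite by Proposition~\ref{firstcoh}. That proposition is the only place $a\le b/p^3$ is used, to get integral complete continuity of $\varphi$ and $1-\varphi\psi$ into $D_{[a,b/p^2]}$. The paper then shows that the surjections $V_{m+1}\otimes A_n\to V_m\otimes A_n$ are eventually isomorphisms. Their kernels embed into $(\varphi-1)V_{m+1}\otimes_{A_{m+1}}A_n$, which is torsion-free because $(\varphi-1)V_{m+1}\subset D^{\psi=0}$ and $R$ is reduced. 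Their generic ranks are bounded uniformly in $m$ by a fibre-dimension bound. Reducedness is removed afterwards by d\'evissage.

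\textbf{Passing from the stages to $\Lambda=\Lambda_{R,(0,\infty]}(\Gamma_K)$.} Compatible perfect complexes of uniformly bounded amplitude do not glue to a perfect complex over $\Lambda$. For example, over a field, $\prod_k\kappa(z_k)^{\oplus k}$ with points $z_k$ tending to the boundary is coadmissible and perfect of amplitude $[-1,0]$ at every stage, but it is not finitely generated. What is needed is a bound on fibre dimensions that is uniform in both the stage and the maximal point. This is the real content of Lemma~\ref{rankind} and Theorem~\ref{iwasawacomplexisbounded}:
\begin{enumerate}
\item Theorem~\ref{pointbasechange} identifies $C^\bullet_\psi(D)\otimes^{\mathbf L}_\Lambda L_z$ with the Herr complex of the twist $N_z$, using the resolution $P_z\xrightarrow{\gamma_K-c_z}P_z\to L_z$.
\item Bellovin's field Euler characteristic formula and Tate duality give $\dim H^1(N_z)\le([K:\mathbb{Q}_p]+2)d$.
\item A fixed finite projective resolution of the $R$-finite module $H^2_\psi(D)$ (Lemma~\ref{resolutionfiniteRmod}) bounds the Tor terms.
\item Forster--Swan together with Lemma~\ref{uniformfinite} then gives finite generation of $H^1_\psi(D)$ and of each syzygy.
\end{enumerate}
Amplitude $[0,2]$ then follows from the local flatness criterion applied to $Q=\operatorname{coker}(N^{-1}\to N^0)$, using vanishing of $H^{-1}$ of the point fibres, and then the ideal criterion over $\Lambda$. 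This base change is for $C^\bullet_\psi(D)\otimes^{\mathbf L}_\Lambda A_a$, not for the annular complexes $C^\bullet_\psi(D_{[a,b]})$ you propose to check levelwise.

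\textbf{Smaller points.}
\begin{itemize}
\item $R$-finiteness of $H^2_\psi(D)$ should be quoted from Bellovin, since slope shifting by $t$ is unavailable when $p$ is not invertible.
\item The Banach stages need not have finite global dimension when $R$ is singular or non-reduced, and this is not needed anyway.
\item The Euler characteristic should be tested at maximal points of $R$, whose residue fields are the complete fields covered by the field results, not at generic points.
\end{itemize}
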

As a consequence of above, we prove:
\begin{theorem} (Theorem \ref{cyclotomicdeform})
Let $M$ be a $(\varphi,\Gamma_K)$-module over
$\Lambda_{R,\mathrm{rig},K}$. Put $A=\Lambda_{R,(0,\infty]}(\Gamma_K)$ and let
$M^*=M^\vee(1)$ be the Cartier dual.
Let $\iota:A\longrightarrow A$ be the involution induced
by $[\gamma]\longmapsto[\gamma^{-1}]$. There is a module $\textbf{Dfm}(M)$ over a relative distribution algebra with coefficients in $\Lambda_{R, (0, \infty]}(\Gamma_K)$ such that
we have a natural quasi-isomorphism of complexes of $A$-modules
$$
C^\bullet_{\psi,\gamma_K}(\mathbf{Dfm}(M))
\;\tilde{\longrightarrow}\;
C^\bullet_\psi(M),
$$
Consequently, there is a natural Iwasawa duality quasi-isomorphism
$$
C^\bullet_\psi(M^*)
\;\tilde{\longrightarrow}\;
\mathbf{R}\!\operatorname{Hom}_A
\left(C^\bullet_\psi(M),A\right)^\iota[-2].
$$
 \end{theorem}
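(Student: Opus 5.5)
The plan follows the strategy of Kedlaya--Pottharst--Xiao for rigid analytic families, adapted to pseudorigid coefficients. We work with the cyclotomic deformation $\mathbf{Dfm}(M) := M \widehat{\otimes}_{R} A$. Concretely, $\mathbf{Dfm}(M)$ is obtained by letting $\varphi$ act diagonally, with $\varphi$ acting trivially on $A$. Its $\Gamma_K$-action is the diagonal one twisted by $[\gamma]^{-1}$ on the $A$-factor. The resulting object is a $(\varphi,\Gamma_K)$-module over the relative Robba ring $\Lambda_{A,\mathrm{rig},K}$, whose coefficients lie in the Fréchet--Stein algebra $A=\Lambda_{R,(0,\infty]}(\Gamma_K)$. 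To make sense of it, we write $A=\varprojlim_n A_n$ with $A_n=\Lambda_{R,[\mu_n,\infty]}(\Gamma_K)$ pseudoaffinoid. Then $\mathbf{Dfm}(M)_n := M\widehat\otimes_R A_n$ is a $(\varphi,\Gamma_K)$-module over $\Lambda_{A_n,\mathrm{rig},K}$ in the sense of Bellovin, and $\mathbf{Dfm}(M)=\varprojlim_n \mathbf{Dfm}(M)_n$.

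\textbf{Step 1: comparison quasi-isomorphism.} The first step is to construct the map $C^\bullet_{\psi,\gamma_K}(\mathbf{Dfm}(M))\to C^\bullet_\psi(M)$, which is a map of total complexes. It is induced by the ``evaluation at $1$'' map
\[
M\widehat\otimes_R A\to M,\qquad m\otimes\lambda\mapsto \lambda\cdot m,
\]
where $A$ acts on $M^{\psi=1}$-type objects through the $\Gamma_K$-action. To prove it is a quasi-isomorphism, we filter the double complex by the $\gamma_K$-direction. The claim to establish is that $\gamma_K-1$ acts injectively on $\mathbf{Dfm}(M)$ and that its cokernel identifies $A$-linearly with $M$. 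This holds because $[\gamma_K]-1$ is a non-zero-divisor in $A$ and $A/([\gamma_K]-1)\cong$ the coefficient ring on the $\Gamma_K$-torsion quotient. After passing to the $\gamma_K$-cokernel, the $\psi$-complex becomes exactly $C^\bullet_\psi(M)$. We verify this on each $\mathbf{Dfm}(M)_n$ and then pass to the limit. The passage uses Mittag-Leffler, which holds because the transition maps $A_{n+1}\to A_n$ have dense image and the cohomology is coadmissible by Theorem \ref{perfectiwa}.

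\textbf{Step 2: duality.} Each $\mathbf{Dfm}(M)_n$ is a $(\varphi,\Gamma_K)$-module over a pseudoaffinoid base, so Tate local duality for the Herr complex in pseudorigid families applies (this is part of the perfectness and duality package available over pseudorigid coefficients). It gives
\[
C^\bullet_{\varphi,\gamma_K}(\mathbf{Dfm}(M)_n^\vee(1))\simeq \mathbf R\mathrm{Hom}_{A_n}\bigl(C^\bullet_{\varphi,\gamma_K}(\mathbf{Dfm}(M)_n),A_n\bigr)[-2].
\]
There is also a standard quasi-isomorphism $C_{\varphi,\gamma_K}\simeq C_{\psi,\gamma_K}$, coming from $\psi-1$ being surjective onto the right quotient. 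Next we identify $\mathbf{Dfm}(M)^\vee(1)$ with $\mathbf{Dfm}(M^*)^\iota$. This is a direct computation: dualizing over $A$ inverts the twist $[\gamma]^{-1}$, which produces the involution $\iota$. Combining this identification with Step 1 gives the duality at each level $n$, namely
\[
C^\bullet_\psi(M^*)\otimes^{\mathbf L}_A A_n\simeq \mathbf R\mathrm{Hom}_{A_n}\bigl(C^\bullet_\psi(M)\otimes^{\mathbf L}_A A_n,A_n\bigr)^\iota[-2].
\]
This step also uses the compatibility of $C_\psi$ with base change $A\to A_n$, which follows from perfectness in Theorem \ref{perfectiwa}.

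\textbf{Step 3: limit.} We take $\mathbf R\varprojlim_n$ of the level-$n$ dualities. Since $C^\bullet_\psi(M)$ is perfect over $A$ by Theorem \ref{perfectiwa}, we have $\mathbf R\mathrm{Hom}_A(C,A)=\mathbf R\varprojlim_n \mathbf R\mathrm{Hom}_{A_n}(C\otimes A_n,A_n)$. The higher $\varprojlim$ vanishes for coadmissible systems over a Fréchet--Stein algebra, so the level-$n$ quasi-isomorphisms assemble to the claimed global one.

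The main obstacle is Step 1 in the pseudorigid setting, and specifically the control of the completed tensor product $M\widehat\otimes_R A_n$. Because $p$ need not be invertible, $A_n$ is not a rigid affinoid, so we must first check that $\mathbf{Dfm}(M)_n$ is genuinely a $(\varphi,\Gamma_K)$-module over $\Lambda_{A_n,\mathrm{rig},K}$ in Bellovin's sense, with $\Gamma_K$ acting continuously. The danger is that the twisted action by $[\gamma]^{-1}$ may fail to converge on the relevant annuli $[a,b]$. I would handle this first by shrinking $b$, using the constant $\mu(a)$ of the first theorem to match the radii of $A_n$ with those of the annuli.
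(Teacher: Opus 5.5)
Your overall architecture matches the paper's: a comparison through the $\gamma_K$-cokernel of the deformation, levelwise Tate duality using $\mathbf{Dfm}(M)_n^\vee(1)\cong\mathbf{Dfm}(M^*)_n^\iota$, and then $\mathbf R\varprojlim_n$. Step 1, however, has a genuine gap.

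\textbf{The evaluation map does not exist at a fixed level.} The map $m\otimes\lambda\mapsto\lambda\cdot m$ is not defined on $\mathbf{Dfm}(M)_n=M\widehat\otimes_RA_n$. By Lemma \ref{moduleaction}, the action on $M_{[a,b]}$ extends only to $\Lambda_{R,[\mu(a),\infty]}(\Gamma_K)$. So a fixed Banach stage $A_n$ acts on $M_{[a,b]}$ only when $a$ is bounded below in terms of $n$, and never on all of $M_{(0,b]}$. Your claim that, at each level $n$, $\gamma_K-1$ is injective on $\mathbf{Dfm}(M)_n$ with cokernel $M$ is therefore false. If it held, it would give $C^\bullet_{\psi,\gamma_K}(\mathbf{Dfm}(M)_n)\simeq C^\bullet_\psi(M)$. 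That is not the level-$n$ statement your Step 2 needs, namely $C^\bullet_{\psi,\gamma_K}(\mathbf{Dfm}(M)_n)\simeq A_n\otimes^{\mathbf L}_AC^\bullet_\psi(M)$.

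\textbf{The missing untwisting step.} Your justification that $[\gamma_K]-1$ is a non-zero-divisor in $A$ only covers $\gamma_K$ acting on the coefficient factor. In fact $\gamma_K$ acts diagonally. The missing idea is the paper's untwisting automorphism $\Theta([\delta]\otimes m)=[\delta]\otimes\delta^{-1}(m)$. It is defined on $A_n\widehat\otimes_RM_{[a,b]}$ only for $n\geq n_0(a)$, and it satisfies $\gamma\Theta=\Theta U_\gamma$, where $U_\gamma$ acts through the coefficient factor alone. Hence it only makes sense on $N^b=\varprojlim_n\varprojlim_aA_n\widehat\otimes_RM_{[a,b]}$, where for each annulus you may take $n$ large. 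Conjugating by $\Theta$ turns $\gamma_K-1$ into $-[\gamma_K^{-1}]T$. This gives the strict exact sequence $0\to(N^b)^{\Delta_K}\to(N^b)^{\Delta_K}\to M_{(0,b]}\to0$ of Lemma \ref{cyclotomicgammaexact}. You then need two further arguments:
\begin{enumerate}
\item a topological Mittag--Leffler argument on the \emph{terms}, not coadmissibility of cohomology, to identify $\mathbf R\varprojlim_nC^\bullet_{\psi,\gamma_K}(M_n^b)$ with the Herr complex of $N^b$;
\item a proof that one radius $b$ works for every $n$. Here $\psi$ is locally nilpotent on $M_n/M_n^b$, so $\psi-1$ is invertible there. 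Without this you are silently interchanging $\varprojlim_n$ with $\varinjlim_b$.
\end{enumerate}

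\textbf{The level-$n$ comparison for Step 2 needs its own argument.} Since Step 1 only holds in the limit, the comparison used in Step 2 does not follow from perfectness of $C^\bullet_\psi(M)$ alone. The paper argues as follows:
\begin{enumerate}
\item Herr finiteness and coefficient base change over the pseudoaffinoid algebras $A_n$ show that the $K_n=C^\bullet_{\psi,\gamma_K}(M_n)$ form a compatible system of perfect complexes with coadmissible cohomology.
\item The limit form of Step 1 identifies $\mathbf R\varprojlim_nK_n$ with a finite projective representative $P^\bullet$ of $C^\bullet_\psi(M)$, supplied by Theorem \ref{perfectiwa}.
\item Exactness of $A_n\otimes_A-$ on coadmissible modules then shows that $A_n\otimes_AP^\bullet\to K_n$ is a quasi-isomorphism.
\end{enumerate}

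\textbf{The obstacle you single out is misplaced.} The twisted action $\gamma\mathbf e=[\gamma^{-1}]\mathbf e$ is continuous over $A_n$ without difficulty, and shrinking $b$ does not help. The real radius mismatch is at the inner endpoint $a\to0$ with $n$ fixed.
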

(An informal guide to the rings appearing is given in Section \ref{notsec}.)
In \cite{kedlaya2014cohomology}, the authors establish the finiteness of Iwasawa complex using which they establish the perfectness of Herr complex (that computes Galois cohomology) in rigid analytic families. In the paper \cite{bellovin2024cohomology}, Bellovin proves finiteness of Herr complex for pseudorigid family of $(\varphi, \Gamma)$-modules without using the Iwasawa complex. Using Bellovin's field Euler-characteristic formula as
an input, we obtain Herr perfectness from Iwasawa
perfectness and character base change, following the
strategy of \cite{kedlaya2014cohomology}.

The main technical difficulty in the pseudorigid setting is that $p$ is not invertible in pseudoaffinoid algebras. As a consequence, one cannot use Fontaine’s element $t$ in the de Rham period ring to shift slopes or carry out arguments as in \cite{kedlaya2014cohomology}. Our approach circumvents this issue using an idea inspired by a proof of Kedlaya, Liu and Bellovin on the perfectness of the Herr complex. 

A key input is a theorem attributed to Cartan–Serre, which states that if the cokernel cohomology of a completely continuous morphism of complexes of Banach modules over a Noetherian Banach ring $R$ has finite $R$-cohomology, then the target complex also has finite $R$-cohomology.
The precise version of this theorem that we use is Corollary \ref{integralcompcontcohomology}, which is closely modeled on Lemma 1.10 of \cite{kedlaya2016finiteness}, modified slightly to work integrally so that finiteness carries out.
We aim to apply this result to the Iwasawa complex. A priori, however, the coefficient ring for the Iwasawa complex (which is a distribution algebra) is an inverse limit of Banach algebras and is far from being Noetherian or Banach. Sections 4 and 5 are devoted to developing the necessary machinery to overcome this difficulty and work at Banach level. Rather than showing complete continuity of the desired map between Iwasawa complexes over a "Banach type'' distribution algebra, we show that it is completely continuous up to homotopy (we refer the reader to Proposition \ref{firstcoh} for the precise homotopy). Another place where we take a different approach from \cite{kedlaya2014cohomology} is that we directly prove coadmissibility of the first Iwasawa cohomology group by explicitly writing it down as an inverse limit of some finitely generated modules. 

We give an application of this result in understanding the variation of the ranks of Selmer groups in analytic family, generalizing a result of Bella\"{\i}che to pseudorigid setting and making it more explicit. We prove the theorem given below, where the complexes denote a pseudorigid version of Selmer complexes:
\begin{theorem}
Let $R$ be a pseudoaffinoid algebra, and let $V$ be a finite projective $R$-module of rank $d$ equipped with a continuous $R$-linear action of $G_{\mathbb{Q},S}$. Assume that $H^1(I_\ell,V)$ is projective over $R$ for every $\ell\in S$, $\ell\neq p$.
Put
$D=D_{\mathrm{rig}}\left(V|_{G_{\mathbb{Q}_p}}\right),
$
and suppose that $D$ admits a fixed saturated triangulation
$0=\mathrm{Fil}^0D\subset \mathrm{Fil}^1D\subset\cdots\subset \mathrm{Fil}^dD=D
$
whose graded pieces are finite projective. Fix $0\leq a\leq d$, and put
$D^a=\mathrm{Fil}^aD.$

Let $\Delta$ be the torsion subgroup of $\Gamma_{\mathbb{Q}_p}$, and fix a primitive idempotent $e\in\mathbb{Z}_p[\Delta]$ corresponding to one character of $\Delta$.
For $x\in\mathrm{Max}(R)$, put
$\Lambda_{\kappa(x)}
= e\Lambda_{\kappa(x),(0,\infty]}(\Gamma_{\mathbb{Q}_p}),
$
and define
$h_{\varnothing}(x)
=\dim_{\kappa(x)} H^1\left(C^\bullet(V_x,D_x^a)\right)$
and
$h_{\mathrm{Iw}}(x)
=
\operatorname{rank}_{\Lambda_{\kappa(x)}}
\left(
eH^1\left(C_{\mathrm{Iw}}^\bullet(V_x,D_x^a)\right)
\right).$
For each $\star\in\{\varnothing,\mathrm{Iw}\}$ and each integer $q\geq0$, the locus
$\left\{
x\in\mathrm{Max}(R):
h_\star(x)\geq q
\right\}$
is Zariski closed. On each irreducible component of $\mathrm{Max}(R)$, the function $h_\star$ has a generic minimum, and the locus where it is strictly larger than this minimum is cut out on that component by a finitely generated ideal of $R$.

If, moreover, $R$ is a regular pseudoaffinoid domain of dimension $1$, then there are integers $n_a^\star\geq0$ such that
$h_\star(x)\geq n_a^\star$
for every $x\in\mathrm{Max}(R)$, with equality away from a finite set; in this case the rank-jump loci admit the more explicit descriptions of Theorem~\ref{selmerrankjump}.
Finally, at a characteristic-zero point $x$ for which the specialized fixed triangulation is non-critical and non-exceptional and has exactly $a$ strictly negative Hodge--Tate weights, one has
$h_{\varnothing}(x)=
\dim_{\kappa(x)}H_f^1(\mathbb{Q},V_x).$
\end{theorem}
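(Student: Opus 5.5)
The strategy is to reduce everything to perfectness of the relevant Selmer complexes together with their compatibility with base change. Once that is in place, the ranks of $H^1$ become upper semicontinuous functions on $\mathrm{Max}(R)$ by a standard Fitting-ideal argument.

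\textbf{Step 1: the Selmer complex and its perfectness.} Define the Selmer complex $C^\bullet(V,D^a)$ in Nekov\'a\v{r}'s style. It is the mapping cone, shifted by $-1$, of
\[
C^\bullet(G_{\mathbb{Q},S},V)\oplus C^\bullet_{\varphi,\gamma}(D^a)\oplus\bigoplus_{\ell\neq p}C^\bullet_{\mathrm{ur}}(V)\longrightarrow C^\bullet_{\varphi,\gamma}(D)\oplus\bigoplus_{\ell\ne p}C^\bullet(G_{\mathbb{Q}_\ell},V).
\]
The Iwasawa version is defined the same way, with $C_\psi^\bullet$ at $p$ and cyclotomic deformations elsewhere. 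Each piece is perfect and commutes with base change:
\begin{enumerate}
\item At $p$ this is the Herr perfectness deduced from Theorem~\ref{perf02iw}, applied to $D$ and to $D^a$. Here we use that $D^a$ is a successive extension of finite projective rank-one pieces, hence itself a pseudorigid $(\varphi,\Gamma)$-module.
\item Global Galois cohomology over $R$ is perfect by the usual continuous-cochain argument for $G_{\mathbb{Q},S}$ with pseudoaffinoid coefficients.
\item At $\ell\neq p$, the unramified complex $[V^{I_\ell}\xrightarrow{\mathrm{Fr}-1}V^{I_\ell}]$ is perfect precisely because $H^1(I_\ell,V)$ is projective. This projectivity also forces $V^{I_\ell}$ to be projective and to commute with specialization.
\end{enumerate}
A cone of perfect complexes is perfect, and derived base change $-\otimes^{\mathbf L}_R\kappa(x)$ commutes with cones. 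It follows that $C^\bullet(V,D^a)\otimes^{\mathbf L}_R\kappa(x)\simeq C^\bullet(V_x,D^a_x)$, and likewise over $\Lambda_{R}=e\Lambda_{R,(0,\infty]}(\Gamma_{\mathbb{Q}_p})$ for the Iwasawa complex. In the Iwasawa case we work on the affinoid pieces $\Lambda_{R,[\mu,\infty]}$ and pass to the limit using coadmissibility.

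\textbf{Step 2: semicontinuity and generic minimum.} Represent the perfect complex by a bounded complex of finite projectives $P^0\to P^1\to P^2$ over $R$. Then $\dim H^1$ of the fibre at $x$ equals
\[
\operatorname{rk}P^1-\operatorname{rk}(d^0_x)-\operatorname{rk}(d^1_x).
\]
The locus where the ranks of $d^0_x$ and $d^1_x$ are at most given values is cut out by Fitting or minor ideals, which are finitely generated. Hence $\{h_\varnothing\geq q\}$ is Zariski closed, and on each irreducible component $h_\varnothing$ attains its minimum on a dense open set. The complement of that open set is the vanishing locus of a finitely generated ideal, namely the minors of the appropriate size.

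For $h_{\mathrm{Iw}}$ one must first pass from $\Lambda_{\kappa(x)}$-ranks to something computable. After applying $e$, the ring $e\Lambda_{R}$ becomes a relative one-variable Robba/distribution algebra. I would compute the rank at the generic point of each $\Lambda_{\kappa(x)}$, i.e.\ after base change to $\mathrm{Frac}(\Lambda_{\kappa(x)})$. The same minors argument then applies to a perfect representative over an affinoid $e\Lambda_{R,[\mu,\infty]}$, using the fact that the rank is detected on any single such affinoid. This step is the main obstacle: $\Lambda_{R}$ is only Fréchet–Stein, so one has to check that the rank-jump locus is independent of $\mu$. My plan is to use the base change of Theorem~\ref{perf02iw}(1) and the fact that ranks over a domain are detected on any nonempty open piece.

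\textbf{Step 3: the dimension-one case and the comparison with $H^1_f$.} If $R$ is a regular domain of dimension $1$, it is a Dedekind-like ring, so the closed proper jump locus is finite. We set $n_a^\star$ to be the generic value. For the comparison with $H^1_f$, non-criticality at $x$ together with exactly $a$ negative Hodge–Tate weights identifies $H^1(D_x^a)$ with the Bloch–Kato local condition $H^1_f(\mathbb{Q}_p,V_x)$. This is the standard Benois/Pottharst comparison, and it is valid because non-exceptional points exclude the extra $H^0$ and $H^2$ classes of $D_x/D^a_x$. The Selmer $H^1$ therefore equals $H^1_f(\mathbb{Q},V_x)$, since the unramified conditions at $\ell\neq p$ agree with the finite ones for these representations.
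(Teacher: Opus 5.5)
Your overall architecture matches the paper's: perfectness and derived base change of the two Selmer complexes (Proposition~\ref{selmerperfect}, Corollary~\ref{selmerbasechange}), the fibre formula $h_\star(x)=\operatorname{rank}P^1-\operatorname{rank}(d^0_x)-\operatorname{rank}(d^1_x)$, determinantal loci, and the Bloch--Kato comparison via non-exceptionality and Bella\"{\i}che's local identification. The ordinary case of Step~2 is fine, modulo two details: you should embed the $P^i$ as summands of free modules, and you should take the union over pairs $(r,s)$, which produces a product of sums of minor ideals.

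The genuine gap is in the Iwasawa case of Step~2. There the minors of $d^0$ and $d^1$ are elements of $e\Lambda_R$ (or of a Banach stage), not of $R$. The condition $\operatorname{rank}(d^i_x)\le r$ over $\operatorname{Frac}(\Lambda_{\kappa(x)})$ says that all $(r+1)$-minors lie in $\mathfrak m_x e\Lambda_R$, i.e.\ vanish on the whole fibre. ``The same minors argument'' only gives a closed subset of $\operatorname{Spec}$ of the distribution algebra. You still have to show that the set of $x\in\mathrm{Max}(R)$ whose entire fibre lies in that subset is Zariski closed in $\mathrm{Max}(R)$ and cut out by a finitely generated ideal of $R$. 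Since $e\Lambda_R$ is not of finite type over $R$, you cannot get this by a pushforward or openness argument.

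The missing idea is the paper's \emph{coefficient ideal}. Write each of the finitely many minors as $f=\sum_{j\ge0}c_jT^j$ with $c_j\in R$. Using $e\Lambda_R\otimes_R\kappa(x)\cong\Lambda_{\kappa(x)}$ and uniqueness of power-series expansions, $f$ maps to $0$ in $\Lambda_{\kappa(x)}$ exactly when every $c_j\in\mathfrak m_x$. The ideal of $R$ generated by all these infinitely many coefficients is finitely generated because $R$ is Noetherian. Theorem~\ref{thm:determinantal-rank-locus} then follows exactly as in the ordinary case. The generic minimum on a component is likewise detected by a single coefficient of a minor lying outside that component's minimal prime.

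This also dissolves the obstacle you flagged; independence of $\mu$ is not the real issue. The paper works directly with the finite projective representative over the full $e\Lambda_R$, and in any case restriction to a Banach stage does not change the $T$-expansion, so it does not change the coefficient ideal. Note also that ``passing to the limit using coadmissibility'' does not by itself produce a bounded finite projective complex over the Fr\'echet--Stein algebra. For the Galois pieces the paper needs uniform fibre bounds, via Lemma~\ref{uniformperfectcomplex}.

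In dimension one, your argument is a legitimate shortcut: a nonzero ideal in a one-dimensional Noetherian domain has only finitely many maximal ideals above it, so $h_\star\ge n_a^\star$ with equality off a finite set. It does not, however, yield the explicit jump-locus descriptions of Theorem~\ref{selmerrankjump} that the statement invokes. Those come from the two-column base-change spectral sequence over the DVRs $R_{\mathfrak m_x}$ and $(A_n)_{\mathfrak m_xA_n}$. That sequence expresses the jump through the torsion of $H^1$ and $H^2$, via characteristic ideals in the ordinary case and the set $S_{\mathrm{Iw}}$ in the Iwasawa case. This is where the regularity hypothesis is actually used.
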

Finally, we give a geometric picture of what we are trying to do. The motivation of studying such representations comes from the study of eigencurve over the boundary of weight space. The eigencurve is a moduli space of all modular forms of finite slope that have eigenvalues with respect to the Hecke operators acting on them. The usual modular forms studied in number theory have an integral weight $k$, which specifies the symmetry that the modular form satisfies.  One can think of $k \in \mathbb{Z}$ as a function from $\mathbb{Z}_p^{\times} \to \mathbb{Z}_p^{\times}$ given by $a \mapsto a^k$. The general notion of weight of a modular form with coefficients in $R$ is given by all continuous characters from $\mathbb{Z}_p^{\times} \to R^{\times}$. Thus, in general, the group algebra $\mathbb{Z}_p[[\mathbb{Z}_p^{\times}]]$ serves as a natural candidate to parametrize weights of modular forms. One can study the eigencurve or the weight space in different (related) contexts by taking formal/analytic/adic spectrum of the group algebra $\mathbb{Z}_p[[\mathbb{Z}_p^{\times}]]$. 
The analytic adic weight space $\text{Spa}(\mathbb{Z}_p[[\mathbb{Z}_p^{\times}]])^{an}$ is isomorphic to finitely many copies of $(\text{Spa } \mathbb{Z}_p[[T]])^{an}$, which naturally also contains pseudorigid coefficients. We show the picture of $\text{Spa } \mathbb{Z}_p[[T]]$ below and refer the reader to \cite{scholze2020berkeley}, Proposition 4.2.6 for an understanding of what this means. 
\begin{figure}[H]
\centering
\includegraphics[width=0.5\textwidth]{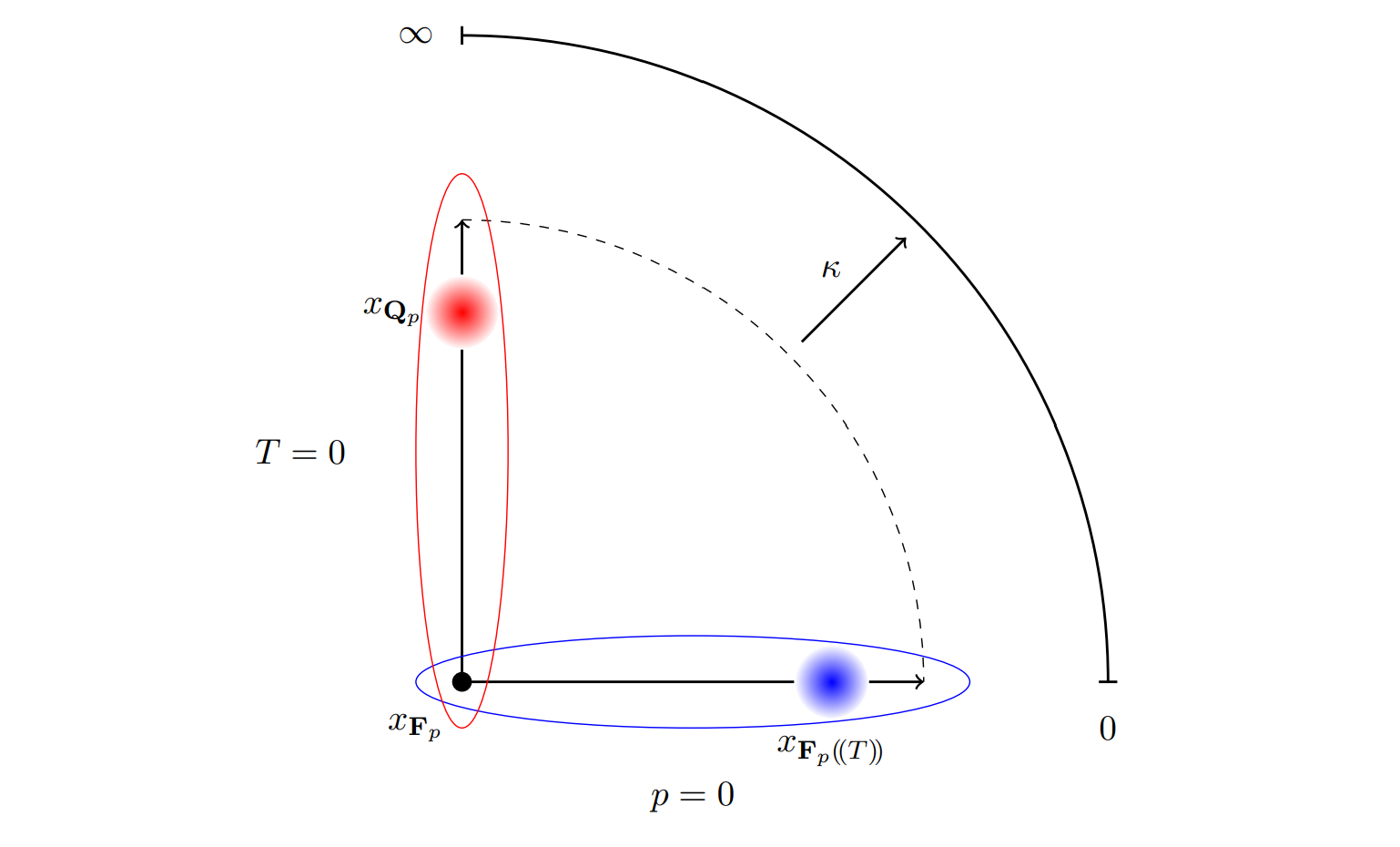}
\caption{Figure from \cite{scholze2020berkeley} that describes $\text{Spa } \mathbb{Z}_p[[T]]$}
\end{figure}
The earlier work of \cite{kedlaya2014cohomology} deals with proving perfection of Iwasawa complex for families in neighborhood of the $y$-axis shown above (where $p$ is invertible). Our work deals with proving perfection of Iwasawa complex for families over the lines dictated by $\kappa$.

\section{Notations and structure}
\label{notsec}
 Throughout this paper, we fix a prime number $p\neq 2$. We fix a number $\omega = p^{-1/(p-1)}$\\
 $\Gamma_n$ is the subgroup of $\mathbb{Z}_p^{\times}$ that consists of elements of the form $1 +p^n\mathbb{Z}_p$.\\
 $\chi$ is fixed to be the cyclotomic character. \\

 \subsection{Structure of the paper}
 Section 3 of the paper introduces the definitions of Robba rings in the pseudorigid setting. All these definitions are taken from \cite{bellovin2023galois} and \cite{bellovin2024cohomology}. In section 3.4, we define "group versions" of these Robba rings for one-dimensional $p$-adic Lie groups. These are defined because the Iwasawa cohomology groups are finite over such rings. We explain through various lemmas and theorems in section 5 on why the Iwasawa cohomology groups naturally live over the group version of Robba rings.
 Section 4 deals with giving definition of $(\varphi, \Gamma)$-modules, again from \cite{bellovin2024cohomology} and explaining why $(\varphi,\Gamma)$ modules viewed as sheaves (or vector bundles) on disks are the same as actual modules.
 Section 5 introduces the definition of Iwasawa complex, a base change theorem from Iwasawa to Galois (Herr) cohomology and develops machinery necessary to prove its perfectness.
 In section 6, we prove perfectness of the Selmer complex and study variation of ranks of Selmer group in analytic pseudorigid family.
 We now give an informal guide to the coefficient rings that appear below. This is an imprecise but intuitive way of thinking about these rings. Proper definitions are given in the main body of the paper. 
\begin{table}[H]
\begin{tabular}{|l|l|}
\hline
Rings                                             & Intuitive (but imprecise) way to think about this ring                                                                                                                                                                                                                                                                                \\ \hline
$\Lambda_{R, [a, b], K}$                          & \begin{tabular}[c]{@{}l@{}}Power series ring (negative powers allowed) with coefficients in the \\ pseudoaffinoid algebra $R$ and formal variable $\pi_K$ and growth\\ conditions of $\pi_K$ depending on $a, b$. The growth condition of \\ negative powers of $\pi_K$ depends on $b$ and positive powers depend on $a$.\end{tabular} \\ \hline
$\Lambda_{R, [a, \infty]}$                     & \begin{tabular}[c]{@{}l@{}}Power series ring (negative powers not allowed) with growth conditions\\ on positive powers of $\pi$ dependent on $a$. \\
This is a Banach algebra containing the distribution algebra below\end{tabular}                                                                                                                                                                      \\ \hline
$\Lambda_{R, (0, \infty]}$                     & \begin{tabular}[c]{@{}l@{}}Power series ring (negative powers not allowed) such that the power series \\ has to converge for all numbers $r<1$ such that $|\pi| = r$. This is a distribution algebra.\end{tabular}                                                                                                                                                   \\ \hline
$\Lambda_{R, [a, b]}(1+p\mathbb{Z}_p)$ & \begin{tabular}[c]{@{}l@{}}The ring $\Lambda_{R, [a, b], \mathbb{Q}_p}$ with formal variable\\ $\pi_{\mathbb{Q}_p}$ replaced by $\gamma-1$, where $\gamma$ is \\ topological generator of $1+p\mathbb{Z}_p$.\end{tabular}                                                                                                                \\ \hline
\end{tabular}
\end{table}
 \subsection{Acknowledgements} I would like to thank my advisor, David Savitt, for introducing me to pseudorigid coefficients and weekly meetings and helpful conversations. I am grateful to Toan Pham, Milton Lin and Rustam Steingart for helpful conversations. I am particularly grateful to Muhammad Manji for going through the initial draft of the paper and providing detailed feedback.

\subsection{Statement of AI use}
A complete first draft of the article was written without any AI use. The ideas and strategy are entirely due to the author and not AI, building on previous works of \cite{kedlaya2014cohomology} and \cite{bellovin2024cohomology}. The author then shared the draft with her PhD advisor David Savitt, who then used ChatGPT (5.5, 5.6 and 6) for pointing out errors and giving feedback. The LLM pointed out remaining gaps in the initial drafts that were resolved by the author.

The feedback generated in this process by the LLM suggested the use of Lemma \ref{uniformperfectcomplex} and the proof was given by the AI model, which was understood and written by the author. The author also used ChatGPT for stylistic editing of the paper.

 \section{Robba rings and their group versions}
\subsection{Robba rings}
Let $K$ be a finite extension of $\mathbb{Q}_p$.
\begin{definition}
A pseudoaffinoid $\mathcal{O}_K$-algebra is a complete Tate $\mathcal{O}_K$-algebra $A$ having a Noetherian ring of definition $A_0$, which is formally of finite type over $\mathcal{O}_K$. A homomorphism of pseudoaffinoid $\mathcal{O}_K$-algebras $A$ and $B$ is a continuous $\mathcal{O}_K$-homomorphism $\phi : A \to B$.
\end{definition}
For $K$ as above, we denote $\varpi_K$ as a chosen uniformizer for $\mathcal{O}_K$

\textbf{Example:}
Let $\lambda = \frac{n}{m} \in \mathbb{Q}_{>0}$ be a positive rational number 
with $(n,m) = 1$, and set 
$D^\circ_\lambda := \mathbb{Z}_p [[u]]\left\langle \frac{p^m}{u^n} \right\rangle$ 
and $D_\lambda := D^\circ_\lambda\left[\frac{1}{u}\right]$. 
Then $D_\lambda$ is a pseudoaffinoid algebra. \\
Every element of $D_\lambda$ can be written uniquely as a power series $\sum_{i \in \mathbb{Z}} a_i u^i$ with $a_i \in \mathbb{Z}_p$, and we define a valuation $v_{D_\lambda}$ on $D_\lambda$ via
$$
v_{D_\lambda}\!\left(\sum_{i \in \mathbb{Z}} a_i u^i\right)
:= \inf_i \left\{ v_p(a_i) + \frac{i}{\lambda} \right\}.
$$
We define the norm on $D_{\lambda}$ such that $|u|_{D_{\lambda}} = p^{-1/p}$.
Every pseudoaffinoid algebra $R$ is a topologically finite type 
$D_\lambda$-algebra for some sufficiently small $\lambda > 0$, 
by Lemma 4.8 \cite{lourencco2017riemannian}.\\

Let $K/\mathbb{Q}_p$ be a finite extension, let $\chi_{cyc}: \text{Gal}_K \to \mathbb{Z}_p^{\times}$ be the cyclotomic character, we let $H_K = \text{ker}(\chi_{cyc})$ and $\Gamma_K:= \text{Gal}_K/H_K$. Let $A_{inf} = W(\mathcal{O}_{\mathbb{C}_K}^{\flat})$. Let $\varepsilon:= (\varepsilon^0, \varepsilon^1, \ldots)$ be a compatible sequence of primitive $p$-power roots of unity. Let $\bar{\pi} = \varepsilon-1$ and $\pi = [\varepsilon]-1 \in A_{inf}$. The norm of the reduction of the element modulo $p$ is given by $|\bar{\pi}|_{\mathbb{C}_p^{\flat}} = p^{-p/(p-1)} $.\\
For any unramified extension $F/\mathbf{Q}_p$, the choice of $\varepsilon$ gives us a natural map
$k_F((\overline{\pi})) \to \mathbf{C}_K^{\flat}$; let $E_F$ denote its image, and let $\mathbf{E} \subset \mathbf{C}_K^{\flat}$ be its separable
closure. Then $\mathrm{Gal}(\mathbf{E}/E_F) \cong H_F$ (by the theory of the field of norms), and
for any extension $K/F$, we set $\mathbf{E}_K := \mathbf{E}^{H_K}$. Then $\mathbf{E}_K$ is a discretely valued field, and we may choose a uniformizer $\overline{\pi}_K$. Lifting its minimal polynomial to characteristic $0$, Hensel's lemma gives a lift
$
\pi_K \in W(\mathbf{C}_K^\flat)
$
which is integral over
$
\mathcal{O}_F[[\pi]]\left[\frac{1}{\pi}\right]^{\wedge}.
$
Here $F$ denotes the maximal subfield of $K(\mu_{p^\infty})$ that is unramified over $\mathbf{Q}_p$. We fix such a choice of $\pi_K$ and use it throughout. When $K=F$ is unramified over $\mathbf{Q}_p$, we take $\pi_F=\pi$.

The following is Definition 3.40, \cite{bellovin2023galois}.
\begin{definition}
    \label{lambdaqp}
   
Let $R$ be a pseudoaffinoid $\mathbb{Z}_p$-algebra such that $R$ is topologically of finite type over $D_\lambda$, and let $K / \mathbb{Q}_p$ be a finite extension. 
Fix rational numbers $a \in \mathbb{Q}_{>0}$ and $b \in \mathbb{Q}_{\geq 0}$ with $a \leq b $ such that 
$1/(a \cdot v_{\mathbb{C}_p^{\flat}}(\overline{\pi}_K))$ and $1/(b \cdot v_{\mathbb{C}_p^{\flat}}(\overline{\pi}_K))$ belong to $\mathbb{Z}$. 
Then we define the $\mathbb{Z}_p$-algebra $\Lambda_{R_0,[a,b],K}$ to be the evaluation of the sheaf of rings $\mathcal{O}_{(R_0 \otimes \mathcal{O}_F)[[\pi_K]]}$ 
on the affinoid subspace of $\operatorname{Spa}(R_0 \otimes \mathcal{O}_F)[[\pi_K]]$ defined by the conditions 
$u \leq \pi_K^{1/(b \cdot v_{\mathbb{C}_p^{\flat}}(\overline{\pi}_K))}$ and $\pi_K^{1/(a \cdot v_{\mathbb{C}_p^{\flat}}(\overline{\pi}_K))} \leq u$. 

This is a pseudoaffinoid algebra with ring of definition\\
$\Lambda_{R_0,[a,b],0,K} := (R_0 \otimes \mathcal{O}_F)[[\pi_K]] 
\left\langle 
\frac{u}{\pi_K^{1/(b \cdot v_{\mathbb{C}_p^{\flat}}(\overline{\pi}_K))}}, 
\frac{\pi_K^{1/(a \cdot v_{\mathbb{C}_p^{\flat}}(\overline{\pi}_K))}}{u}
\right\rangle.$\\
For an interval $I$, $\Lambda_{R, I, K}:= \Lambda_{R_0, I, K}\left[\dfrac{1}{u}\right]$
In the case $b = \infty$, define $$\Lambda_{R_0, [a, \infty]} := R_0 [[\pi]] \left\langle \frac{\pi^{1/(a \cdot v_{\mathbb{C}_p^{\flat}}(\overline{\pi}))}}{u}
\right \rangle $$
and $\Lambda_{R, [a, \infty]} := \Lambda_{R_0, [a, \infty]}[1/u]$.\\
Clearly $\Lambda_{R, [a, \infty]}$ is a pseudoaffinoid algebra with Noetherian ring of definition $\Lambda_{R_0, [a, \infty]}$. \\
We define $\Lambda_{R, (0, b], ?}:= \varprojlim_{a \to 0}\Lambda_{R,[a,b],?}$ and $\Lambda_{R, \text{rig}, ?}:= \varinjlim_{b \to 0}\Lambda_{R, (0, b], ?}$, for $? \in \{\emptyset, K\}$.
\end{definition}

\begin{lemma}
\label{ringofdef}
    Let $R$ be a pseudoaffinoid $D_{\lambda}$ algebra. The ring $\Lambda_{R_0,[a,b],0,\mathbb{Q}_p}[1/u]$ is a subset of the ring containing power series $f(\pi)=\sum_{i\in\mathbb{Z}}d_i\pi^i$ such that $|d_i|\omega^{ai}\to0$ as $i\to\infty$ and $|d_i|\omega^{bi}\to0$ as $i\to-\infty$, with $d_i\in R$.
\end{lemma}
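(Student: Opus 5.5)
The plan is to read off the structure of $\Lambda_{R_0,[a,b],0,\mathbb{Q}_p}$ directly from its presentation and then check that each generator and each admissible operation stays inside the target ring. By Definition~\ref{lambdaqp} with $K=\mathbb{Q}_p$, write $r = 1/(b\, v(\overline{\pi}))$ and $s = 1/(a\, v(\overline{\pi}))$, both integers, with $v(\overline{\pi}) = p/(p-1)$. Then
\[
\Lambda_{R_0,[a,b],0} = R_0[[\pi]]\langle u/\pi^{r},\ \pi^{s}/u\rangle .
\]
Let $\mathcal{S}$ denote the set of Laurent series $\sum_{i} d_i \pi^i$ with $d_i\in R$ satisfying the two decay conditions. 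I will first show that $\mathcal{S}$ is a ring, complete for the Gauss-type norm
\[
|f| = \sup_i \max\bigl(|d_i|\omega^{ai},\ |d_i|\omega^{bi}\bigr).
\]
This is the standard argument for Laurent series on a closed annulus. Once that is in place, it suffices to check three things: $R_0[[\pi]]\subset\mathcal{S}$; the elements $u/\pi^{r}$ and $\pi^{s}/u$ lie in $\mathcal{S}$ and are power-bounded for this norm; and inverting $u$ preserves $\mathcal{S}$. Here $u$ is invertible in $R$, so $u^{-1}$ is simply a coefficient.

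For $R_0[[\pi]]$, the coefficients lie in $R_0$, hence are bounded, and $\omega^{ai}\to 0$ since $a>0$. So convergence holds as $i\to\infty$, and there are no negative terms.

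For the generators, the element $u/\pi^{r}$ is the single term $u\pi^{-r}$, and $\pi^{s}/u$ is the single term $u^{-1}\pi^{s}$. Their powers are $u^n\pi^{-rn}$ and $u^{-n}\pi^{sn}$. The key estimate is the normalization $|u| = p^{-1/p}$ together with the choice of $r$ and $s$. One computes
\[
|u^n|\,\omega^{-brn} = \bigl(p^{-1/p}\,\omega^{-br}\bigr)^n
\quad\text{and}\quad
|u^{-n}|\,\omega^{asn} = \bigl(p^{1/p}\,\omega^{as}\bigr)^n .
\]
Because $br\cdot v(\overline{\pi}) = 1$ and $as\cdot v(\overline{\pi}) = 1$, the bases are $\le 1$ (in fact $=1$ up to the chosen normalization of $\omega$ versus $|\overline{\pi}|$). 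Hence these powers are bounded in $\mathcal{S}$. The same computation bounds the cross-norms, namely the $b$-weight on the positive powers and the $a$-weight on the negative powers, because $a\le b$.

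A restricted power series $\sum_{m,n} c_{mn}(u/\pi^r)^m(\pi^s/u)^n$ has coefficients $c_{mn}\to 0$ in $R_0[[\pi]]$. It therefore converges in the complete ring $\mathcal{S}$, and regrouping by powers of $\pi$ gives well-defined $d_i$ with the required decay.

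The main obstacle I expect is exactly this normalization and convergence step. One has to make sure that the $\pi$-adic topology on $R_0[[\pi]]$, which is the ring of definition's own topology where $\pi$ is topologically nilpotent but not of a fixed norm, is compatible with the $\omega$-weighted norms. One must also check that convergence in the adic topology of the Tate ring implies coefficientwise convergence in $R$.

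I would handle this by using the two generators to bound $u$ against powers of $\pi$. A term $c\,\pi^j$ with $j$ large is then small in $\mathcal{S}$ because of the factor $\omega^{aj}$. Completeness of $R$, being pseudoaffinoid, handles the regrouped coefficients.

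The inclusion is only claimed as a subset. So I do not need to prove that every element of $\mathcal{S}$ arises, which avoids the delicate surjectivity question.
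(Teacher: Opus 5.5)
Your proof is correct, but it is organized differently from the paper's.

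The paper argues element by element. After clearing the $u$-denominator it splits $g=g_++g_-$. It writes $g_+=\sum_n c_nu^{-n}g_n$ with $g_n\in R_0[[\pi]]$ starting in degree $n/(a\,v_{\mathbb{C}_p^\flat}(\bar\pi))$ and $|c_n|\to0$, and writes $g_-$ via the integral annulus presentation $\sum_s\sum_{i\geq1}r_{i,s}u^i\pi^{s-ie_b}$. It then bounds the contributions to each coefficient directly, using an $\epsilon$-argument for the positive part.

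You instead build the complete normed ring $\mathcal{S}$ once. You check that $R_0[[\pi]]$ maps continuously into it and that $u/\pi^r$ and $\pi^s/u$ have norm exactly $1$. The universal property of the completed ring of definition then does the rest. Both proofs rest on the same identity $|u|=\omega^{1/v_{\mathbb{C}_p^\flat}(\bar\pi)}$.

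What each route buys: yours avoids the explicit normal forms for $g_\pm$, which the paper reads off from the presentation with little justification. It also yields more, namely a continuous ring map into the unit ball of $\max(|\cdot|_a,|\cdot|_b)$, the norm introduced right after the lemma. The paper's route needs no ring or completeness structure on the target.

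Three small points:
\begin{enumerate}
\item The continuity step you flag as the main obstacle does not need the generators at all, since $(u,\pi)^N R_0[[\pi]]$ lands in the $\mathcal{S}$-ball of radius $\max(|u|,\omega^a)^N$.
\item You should fix the norm on $R$ to be the one attached to $(R_0,u)$, so that $|u^{-1}|=|u|^{-1}$. Your bound on the powers of $\pi^s/u$ uses this, as does the paper tacitly.
\item Both arguments really produce a natural map into $\mathcal{S}$ rather than a literal inclusion. The paper takes injectivity for granted by treating elements as Laurent series from the outset. In your framework injectivity is easy to add: for this norm, the unit ball of $\mathcal{S}$ is exactly the $u$-adic completion of the $R_0$-span of the monomials $u^{-n}\pi^{ns+r'}$ ($n\geq0$, $0\leq r'<s$) and $u^{m}\pi^{r'-mr}$ ($m\geq1$, $0\leq r'<r$). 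This gives a continuous inverse back to the ring of definition.
\end{enumerate}
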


\begin{proof}
    Let $f(\pi)\in\Lambda_{R_0,[a,b],0,\mathbb{Q}_p}[1/u]$, then for some power $m$, $g(\pi)=u^m f(\pi)\in\Lambda_{R_0,[a,b],0,\mathbb{Q}_p}$. Let $g(\pi)=g_+(\pi)+g_-(\pi)$, where $g_+$ consists of non-negative powers of $\pi$ and $g_-$ consists of negative powers of $\pi$. Let
    $$
    g_+(\pi)=g_0(\pi)+\frac{c_1}{u}g_1(\pi)+\cdots+\frac{c_n}{u^n}g_n(\pi)+\cdots
    $$
    with $g_n(\pi)$ a power series in $\pi$ with all its coefficients in $R_0$ and $|c_n|\to0$. From the definition of $\Lambda_{R_0,[a,b],0,\mathbb{Q}_p}$, we must have that the lowest power of $\pi$ in $g_n(\pi)$ is $\pi^{\frac{n}{av_{\mathbb{C}_p^\flat}(\bar{\pi})}}$. Write $g_n(\pi)=\sum_j a_{n,j}\pi^j$. Then $|a_{n,j}|\leq1$ and $a_{n,j}=0$ unless $j\geq\frac{n}{a v_{\mathbb{C}_p^\flat}(\bar{\pi})}$.

    By our normalization, $|u|=\omega^{1/v_{\mathbb{C}_p^\flat}(\bar{\pi})}$. Hence, whenever $a_{n,j}\neq0$, we have
    $$
    |u|^{-n}\omega^{aj}
    \leq |u|^{-n}\omega^{n/ v_{\mathbb{C}_p^\flat}(\bar{\pi})}
    =1.
    $$
    Therefore,
    $$
    \left|\frac{c_n}{u^n}a_{n,j}\right|\omega^{aj}\leq |c_n|.
    $$

    Suppose that $g_+(\pi)=\sum_{j\geq0}d_j\pi^j$. We show that $|d_j|\omega^{aj}\to0$. Let $\epsilon>0$. Since $|c_n|\to0$, there exists $N$ such that $|c_n|<\epsilon$ for all $n>N$. For $n>N$, the observation above gives
    $$
    \left|\frac{c_n}{u^n}a_{n,j}\right|\omega^{aj}<\epsilon.
    $$
    On the other hand, for each of the finitely many $n\leq N$, we have
    $$
    \left|\frac{c_n}{u^n}a_{n,j}\right|\omega^{aj}
    \leq |c_n||u|^{-n}\omega^{aj}\to0
    $$
    as $j\to\infty$. Therefore, by the non-archimedean triangle inequality, $|d_j|\omega^{aj}<\epsilon$ for all sufficiently large $j$. Hence $|d_j|\omega^{aj}\to0$ as $j\to\infty$.

   First assume $b< \infty$. Put $e_b=1/(b\,v_{\mathbb{C}_p^\flat}(\bar{\pi}))$,
which is a positive integer by our convention.
The integral annulus presentation gives
$$
g_-(\pi)
=
\sum_{s=0}^{e_b-1}\sum_{i\geq1}
r_{i,s}u^i\pi^{s-ie_b},
$$
where $r_{i,s}\in R_0$ and $|r_{i,s}|\to0$
as $i\to\infty$, for each $s$.
This expansion includes every negative exponent of $\pi$.

Since $be_b=1/v_{\mathbb{C}_p^\flat}(\bar{\pi})$
and $|u|=\omega^{1/v_{\mathbb{C}_p^\flat}(\bar{\pi})}$,
we have
$$
|r_{i,s}u^i|\omega^{b(s-ie_b)}
\leq
|r_{i,s}||u|^i\omega^{bs-i/v_{\mathbb{C}_p^\flat}(\bar{\pi})}
=
|r_{i,s}|\omega^{bs}
\longrightarrow0.
$$
There are only finitely many residue classes $s$.
Hence, writing $g_-(\pi)=\sum_{j<0}d_j\pi^j$,
we obtain $|d_j|\omega^{bj}\to0$ as $j\to-\infty$.

In case $b = \infty$, there are only positive powers of $\pi$, so that case is handled.
\end{proof}

Given an element $f(\pi) = \sum_{i \in \mathbb{Z}} c_i \pi^{i}$ in $\Lambda_{R, [a, b], \mathbb{Q}_p}$, define $|.|_r = sup_{i \in \mathbb{Z}} |c_i| |\omega^{ri}|$. One can define a norm on $\Lambda_{R_0, [a, b], \mathbb{Q}_p}$ $|.| = max(|.|_a, |.|_b)$.\\

Since $\Lambda_{R, [a, b], \mathbb{Q}_p}$ is a Tate algebra with Noetherian ring of definition, the adic space $\mathrm{Spa}(\Lambda_{R, [a, b], \mathbb{Q}_p}, \Lambda_{R_0, [a, b], 0, \mathbb{Q}_p})$ (denoted shorthand by $\mathrm{Spa}(\Lambda_{R, [a, b], \mathbb{Q}_p})$) behaves like a Noetherian affine space.

\begin{lemma}
    \label{Noetheriantate}
    The ring $\Lambda_{R, [a, b],K}$ is Noetherian.
\end{lemma}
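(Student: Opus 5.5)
We reduce to the integral ring of definition. By Definition \ref{lambdaqp}, $\Lambda_{R,[a,b],K}=\Lambda_{R_0,[a,b],0,K}[1/u]$. A localization of a Noetherian ring is Noetherian. So it suffices to show that the ring of definition
$$\Lambda_{R_0,[a,b],0,K}=(R_0\otimes\mathcal{O}_F)[[\pi_K]]\left\langle \frac{u}{\pi_K^{e_b}},\frac{\pi_K^{e_a}}{u}\right\rangle$$
is Noetherian, where $e_a=1/(a\,v_{\mathbb{C}_p^\flat}(\overline{\pi}_K))$ and $e_b=1/(b\,v_{\mathbb{C}_p^\flat}(\overline{\pi}_K))$ are integers. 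The case $b=\infty$ is handled identically, with only the second generator.

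First, $R_0$ is Noetherian and formally of finite type over $\mathcal{O}_K$. Hence $R_0\otimes\mathcal{O}_F$ is Noetherian, being a finite free $R_0$-module and therefore a finite $R_0$-algebra. The power series ring $B:=(R_0\otimes\mathcal{O}_F)[[\pi_K]]$ is then Noetherian by the standard Hilbert basis theorem for formal power series.

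Next, write the rational localization as a quotient of a restricted power series ring:
$$B\langle X,Y\rangle/\overline{(\pi_K^{e_b}X-u,\;uY-\pi_K^{e_a})}.$$
This presentation is exactly the definition of the ring of global sections on the rational subset $\{u\le \pi_K^{e_b},\ \pi_K^{e_a}\le u\}$ of $\operatorname{Spa}B$. It uses the fact that $\operatorname{Spa} B$ carries an ideal-adic topology, with ideal of definition $I=(\varpi, u,\pi_K)$ or a suitable variant, and that the ideals generated by $(u,\pi_K^{e_b})$ and $(\pi_K^{e_a},u)$ are open.

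The ring $B\langle X,Y\rangle$, meaning the $I$-adic completion of $B[X,Y]$, is Noetherian. Indeed, it is the $I$-adic completion of the Noetherian ring $B[X,Y]$, and the adic completion of a Noetherian ring along an ideal is Noetherian. Quotients of Noetherian rings are Noetherian. In a Noetherian adic ring every ideal is closed, so the closure appearing in the presentation causes no trouble.

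The main obstacle is justifying that the ring $\Lambda_{R_0,[a,b],0,K}$ as defined really coincides with the completed quotient above. This amounts to checking that $\operatorname{Spa}B$ is a Noetherian adic space in which the relevant rational subset has this standard ring of definition. The cleanest route is to cite the general fact, due to Huber and used by Bellovin and Lourenço, that for a Noetherian adic ring $B$ with finitely generated ideal of definition, rational localizations $B\langle f_i/g\rangle$ with open ideal $(f_i)$ are Noetherian. These localizations are quotients of $B\langle X_1,\dots,X_n\rangle$, which is Noetherian. Alternatively, as in Bellovin's Definition 3.40, one notes directly that the ring of definition is pseudoaffinoid, being formally of finite type over $\mathcal{O}_K$, and so is Noetherian by definition. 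Inverting $u$ then gives the claim.
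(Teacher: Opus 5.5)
Your argument is correct and is essentially the paper's: the paper also just notes that $\Lambda_{R,[a,b],K}$ is obtained by inverting $u$ on the Noetherian ring of definition $\Lambda_{R_0,[a,b],0,K}$ (citing \cite{wedhorn2019adic}, Remark 6.37(3)), and it takes Noetherianity of that ring of definition from Definition~\ref{lambdaqp} rather than reproving it. Your extra sketch of that input is fine as long as you only use that the ring of definition is \emph{some} quotient of $B\langle X,Y\rangle$; the actual ring of definition is your displayed quotient modulo its $u$-power torsion, not literally the displayed ring, and being such a quotient is all Noetherianity requires.
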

\begin{proof}
    Since $\Lambda_{R, [a, b],K}$ is a Tate ring with a Noetherian ring of definition, our assertion follows from Remark 6.37(3), \cite{wedhorn2019adic}.
\end{proof}

    \begin{remark}
\label{compdifflamb}
For $K$ a finite extension of $\mathbb{Q}_p$ and $b$
sufficiently small, the ring $\Lambda_{R,(0,b],K}$ is
naturally a finite etale algebra over
$\Lambda_{R,(0,b],\mathbb{Q}_p}$, free of rank
$[K(\mu_{p^\infty}):\mathbb{Q}_p(\mu_{p^\infty})]$.
The same comparison holds over closed intervals $[a,b]$.
We explain the difference between this statement and
Remark~2.2.4 of \cite{kedlaya2014cohomology}.
Write $\nu=p/(p-1)$, and let $\tilde e_K$ be the
ramification index of
$K(\mu_{p^\infty})/\mathbb{Q}_p(\mu_{p^\infty})$.
Then
$
v_{\mathbb{C}_p^\flat}(\bar\pi_K)
=
\frac{\nu}{\tilde e_K}.
$
Our Definition~\ref{lambdaqp} incorporates this valuation
into the interval parameter. More precisely, with our
normalization $|u|=p^{-1/p}$ and
$\omega=p^{-1/(p-1)}$, the coordinate radius associated
with a parameter $t$ is
\[
|\pi_K|_t
=
|u|^{t v_{\mathbb{C}_p^\flat}(\bar\pi_K)}
=
\omega^{t/\tilde e_K}.
\]
In contrast, Definition~2.2.2 of
\cite{kedlaya2014cohomology} uses the coordinate radius
$\omega^r$ for $\pi_K$. Thus our interval $[a,b]$
has coordinate endpoints corresponding to
$[a/\tilde e_K,b/\tilde e_K]$ in their normalization.
Consequently, the factor $1/\tilde e_K$ appearing in
their comparison is already incorporated into our
definition. In our notation, the comparison therefore
uses the same interval $[a,b]$ on both sides.

Passing to the full Robba rings, we obtain that
$\Lambda_{R,\mathrm{rig},K}$ is finite etale and free
over $\Lambda_{R,\mathrm{rig},\mathbb{Q}_p}$ of rank
$[K(\mu_{p^\infty}):\mathbb{Q}_p(\mu_{p^\infty})]$. We define $e_K = [\Gamma: \Gamma_K]$, where $\Gamma = \Gamma_{\mathbb{Q}_p}$
\end{remark}

\begin{lemma}
    \label{robbaflat}
    The rings $\Lambda_{R, \text{rig}, K}$ and $\Lambda_{R, (0, b], K}$ are flat over $R$.
\end{lemma}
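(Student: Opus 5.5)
The plan is to reduce flatness of the Fréchet ring $\Lambda_{R,(0,b],K}$ and of the ind-Fréchet ring $\Lambda_{R,\mathrm{rig},K}$ to flatness of the Noetherian Banach rings $\Lambda_{R,[a,b],K}$. Then pass to the limits: an inverse limit along flat transition maps for the Fréchet ring, and a filtered colimit for the Robba ring. By Remark~\ref{compdifflamb}, $\Lambda_{R,[a,b],K}$ is finite étale and free over $\Lambda_{R,[a,b],\mathbb{Q}_p}$ for $b$ small, and the same holds after taking the limits. Since a free module over a flat $R$-algebra is flat over $R$, it suffices to treat $K=\mathbb{Q}_p$.

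\emph{Step 1: flatness at a closed interval.} We show $\Lambda_{R,[a,b]}:=\Lambda_{R,[a,b],\mathbb{Q}_p}$ is flat over $R$. The ring of definition $\Lambda_{R_0,[a,b],0}$ is obtained from $R_0[[\pi]]$ by adjoining $T_1,T_2$ in a restricted power series ring, quotienting by the relations $\pi^{e_b}T_1-u$ and $uT_2-\pi^{e_a}$, and then $u$-adically completing. First, $R_0\to R_0[[\pi]]\langle T_1,T_2\rangle$ is flat, since $R_0$ is Noetherian and adic completions of flat Noetherian algebras stay flat. To handle the quotient, we use that flatness of $R=R_0[1/u]$-modules can be tested after inverting $u$. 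Concretely, I would identify $\Lambda_{R,[a,b]}$ with a rational localization of the pseudoaffinoid $R\widehat\otimes\Lambda_{\mathbb{Z}_p,[a,b]}$. Here $\Lambda_{\mathbb{Z}_p,[a,b]}\cong \mathbb{Z}_p[[\pi]]\langle\cdots\rangle$ is a base-change-style construction. Rational localizations of Noetherian Tate rings are flat (Wedhorn/Huber), so we reduce to showing that $R\widehat\otimes_{D_\lambda}\Lambda_{D_\lambda,[a,b]}$ is flat over $R$.

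This completed tensor product is a completion of $R\otimes_{D_\lambda}\Lambda_{D_\lambda,[a,b]}$ with respect to a Noetherian ring of definition. Since $\Lambda_{D_\lambda,[a,b]}$ is flat over $D_\lambda$, the uncompleted tensor product is flat over $R$, and completion of a finitely generated algebra over a Noetherian adic ring preserves flatness. For $D_\lambda$ itself, Lemma~\ref{ringofdef} gives an explicit description of $\Lambda_{D_\lambda,[a,b]}$ as Laurent series with decaying coefficients. This description is essentially an orthonormalizable Banach module, and I would deduce flatness from the standard fact that a topologically free module over a Noetherian Banach ring is flat. Flatness is checked via ideals $I$, using Artin–Rees to identify $I\otimes M$ with $I\widehat{\otimes}M$.

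\emph{Step 2: the Fréchet limit.} This is the main obstacle, since inverse limits do not preserve flatness in general. The approach is to use that $\Lambda_{R,(0,b]}=\varprojlim_a\Lambda_{R,[a,b]}$ is a Fréchet–Stein-type limit: the transition maps $\Lambda_{R,[a',b]}\to\Lambda_{R,[a,b]}$ for $a'<a$ are flat with dense image, being rational localizations. It suffices to prove $\operatorname{Tor}_1^R(R/I,\Lambda_{R,(0,b]})=0$ for finitely generated ideals $I$, i.e.\ that $I\otimes_R\Lambda_{R,(0,b]}\to\Lambda_{R,(0,b]}$ is injective.

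Choose a finite presentation $R^m\to R^n\to I\to 0$. Tensoring commutes with the limit for finitely presented modules, because the relevant $\varprojlim^1$ terms vanish. This vanishing follows from Mittag-Leffler, which holds because the maps have dense image and the systems consist of Fréchet spaces (the topological Mittag-Leffler lemma). Hence
$$I\otimes_R\Lambda_{R,(0,b]}=\varprojlim_a I\otimes_R\Lambda_{R,[a,b]},$$
and injectivity follows from Step 1 by left exactness of $\varprojlim$.

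\emph{Step 3: the Robba ring.} $\Lambda_{R,\mathrm{rig}}=\varinjlim_b\Lambda_{R,(0,b]}$ is a filtered colimit of flat $R$-modules, hence flat. Finally, the $K$ case follows from the freeness in Remark~\ref{compdifflamb}, as explained at the start.
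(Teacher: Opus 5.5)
Your architecture matches the paper's. You prove flatness on closed annuli, then injectivity of $I\otimes_R\Lambda_{R,(0,b],K}\to\Lambda_{R,(0,b],K}$ via a finite presentation of $I$ and vanishing of $\varprojlim^1$, then pass to a filtered colimit.

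Steps~2 and~3 are fine. The paper phrases the $\varprojlim^1$-vanishing as exactness of the inverse limit on coadmissible systems, and your topological Mittag-Leffler argument is the same thing. You should add one remark: Step~1 identifies the kernels of $\Lambda_{R,[a,b]}^m\to\Lambda_{R,[a,b]}^n$ with $\ker(R^m\to R^n)\otimes_R\Lambda_{R,[a,b]}$, so these kernels also have dense transition maps.

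Step~1, however, does not hold together as written.
\begin{enumerate}
\item There is no ring $\Lambda_{\mathbb{Z}_p,[a,b]}$ in this setup, since the annulus conditions $u\le\pi^{e_b}$, $\pi^{e_a}\le u$ are phrased through $u\in D_\lambda$. Even granting a rational-localization description, you would need flatness of the ring being localized. So the reduction to $R\widehat\otimes_{D_\lambda}\Lambda_{D_\lambda,[a,b]}$ does not follow.
\item The completion statement you quote carries a finite-generation hypothesis. The relevant integral tensor product over $D_\lambda^\circ$ is not finitely generated over $R_0$. What is true, and what the paper uses via \cite[Tag~06LE]{stacks-project}, is that the $u$-adic completion of a flat (e.g.\ free) module over a Noetherian ring is flat.
\item Lemma~\ref{ringofdef} only embeds the ring into a ring of Laurent series. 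It does not produce an orthonormal basis.
\end{enumerate}

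The concrete input you are missing is an explicit basis for the integral model $(R_0\otimes_{\mathbb{Z}_p}\mathcal{O}_F)[[\pi_K]]\langle u/\pi_K^{e_b},\pi_K^{e_a}/u\rangle$. It is the $u$-adic completion of the free module on the normal monomials $u^{-n}\pi_K^{ne_a+r}$ ($n\geq 0$, $0\leq r<e_a$) and $u^{n}\pi_K^{r-ne_b}$ ($n\geq 1$, $0\leq r<e_b$). Since $\mathcal{O}_F$ is finite free over $\mathbb{Z}_p$, this is a completed free $R_0$-module, hence flat over the Noetherian ring $R_0$. Inverting $u$ then gives flatness of $\Lambda_{R,[a,b],K}$ over $R$.

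That is the paper's proof. It makes unnecessary both your passage through $D_\lambda$ and rational localizations and your reduction to $K=\mathbb{Q}_p$. That reduction is also weaker than needed. Remark~\ref{compdifflamb} gives freeness over $\Lambda_{R,(0,b],\mathbb{Q}_p}$ only for $b$ sufficiently small. This suffices for $\Lambda_{R,\mathrm{rig},K}$, but not for $\Lambda_{R,(0,b],K}$ with arbitrary admissible $b$.
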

\begin{proof}
First consider a closed annulus. Put
$e_a=1/(a\,v_{\mathbb{C}_p^{\flat}}(\overline\pi_K))$ and
$e_b=1/(b\,v_{\mathbb{C}_p^{\flat}}(\overline\pi_K))$.
As a topological $R_0\otimes_{\mathbb{Z}_p}\mathcal O_F$-module,
its ring of definition is the $u$-adic completion of the
free module with normal monomials
$$
\left\{
u^{-n}\pi_K^{ne_a+r}:
n\geq0,\ 0\leq r<e_a
\right\}
\ \cup\
\left\{
u^n\pi_K^{r-ne_b}:
n\geq1,\ 0\leq r<e_b
\right\}.
$$
Since $\mathcal O_F$ is finite free over $\mathbb{Z}_p$,
this is also the completion of a free $R_0$-module
after choosing a $\mathbb{Z}_p$-basis of $\mathcal O_F$.
The coefficient completion theorem
\cite[Tag~06LE]{stacks-project} therefore gives flatness over
$R_0$. Inverting $u$ gives flatness of
$\Lambda_{R,[a,b],K}$ over $R=R_0[1/u]$.

For the open annulus, take a decreasing sequence $a_n\to0$
and let $J\subset R$ be a finitely generated ideal.
The finite modules
$J\otimes_R\Lambda_{R,[a_n,b],K}$ form a compatible
coadmissible system. A finite presentation of $J$,
together with exactness of the coadmissible inverse-limit
functor, identifies
$
J\otimes_R\Lambda_{R,(0,b],K}
\cong
\varprojlim_n
\left(J\otimes_R\Lambda_{R,[a_n,b],K}\right).
$
At every closed stage, flatness makes
$J\otimes_R\Lambda_{R,[a_n,b],K}
\longrightarrow\Lambda_{R,[a_n,b],K}$ injective.
The inverse limit is therefore injective as well.
The ideal criterion for flatness proves that
$\Lambda_{R,(0,b],K}$ is $R$-flat.
Finally, the Robba ring is a filtered colimit of these
open-annulus rings and is consequently $R$-flat.
\end{proof}

\subsection{Topology on Robba rings}
The contents of this section are standard, but we briefly recall the relevant topologies for completeness.

The ring $\Lambda_{R,[a,b],\mathbb{Q}_p}$ is a Banach $R$-algebra equipped with the norm
$|\cdot|_{a,b}:=\max(|\cdot|_a,|\cdot|_b).$
If $f(T)=\sum_{i\in\mathbb Z}a_iT^i\in\Lambda_{R,[a,b]},$
then
$
|f|_{a,b}
=
\max\left(
\sup_{i\ge0}|a_i|\omega^{ai},
\;
\sup_{i\le0}|a_i|\omega^{bi}
\right).
$

The ring
$\Lambda_{R,(0,b],\mathbb Q_p}
=
\varprojlim_a
\Lambda_{R,[a,b],\mathbb Q_p}$
is equipped with the inverse limit topology in the category of topological $R$-modules. Equivalently, it is the coarsest topology for which every projection
$
\pi_a:\Lambda_{R,(0,b],\mathbb Q_p}
\longrightarrow
\Lambda_{R,[a,b],\mathbb Q_p}
$
is continuous. This topology is defined by the family of seminorms
$
f\mapsto |\pi_a(f)|_{a,b}.
$
A fundamental system of neighbourhoods of $0$ is given by finite intersections of inverse images of open balls in the Banach $R$-algebras
$\Lambda_{R,[a,b],\mathbb Q_p}$.
Since we can choose a countable cofinal sequence of values of $a$ tending to $0$, this makes $\Lambda_{R,(0,b],\mathbb Q_p}$ a complete metrizable topological $R$-module.

Finally, $\Lambda_{R,\mathrm{rig},\mathbb Q_p}
=
\varinjlim_b
\Lambda_{R,(0,b],\mathbb Q_p}
$
is equipped with the inductive limit topology in the category of topological $R$-modules. That is, it is the finest topology of a topological $R$-module for which every natural inclusion
$
i_b:
\Lambda_{R,(0,b],\mathbb Q_p}
\longrightarrow
\Lambda_{R,\mathrm{rig},\mathbb Q_p}
$
is continuous. Since each
$\Lambda_{R,(0,b],\mathbb Q_p}$
is a complete metrizable topological $R$-module, this gives a countable inductive limit of complete metrizable topological $R$-modules.
\subsection{Duality of Robba rings}

Denote $\Omega_{\Lambda_{R, \text{rig}, \mathbb{Q}_p}} = \widehat{\Omega}_{\Lambda_{R, \text{rig}, \mathbb{Q}_p}/R}$, the module of continuous $R$-linear differentials of $\Lambda_{R, \text{rig}, \mathbb{Q}_p}$. If one denotes $\pi_{\mathbb{Q}_p}$ by $T$, the basis of $\Omega_{\Lambda_{R, \text{rig}, \mathbb{Q}_p}}$ is generated by $dT$, which we prove in the lemma below:

\begin{lemma}
\label{nongroupdiff}
    The module $\Omega_{\Lambda_{R, \text{rig}, \mathbb{Q}_p}}$ over $\Lambda_{R, \text{rig}, \mathbb{Q}_p}$ is free of rank 1 generated by $dT$.
\end{lemma}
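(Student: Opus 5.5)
The plan is to prove the statement first on closed annuli, then pass to the open-annulus rings by inverse limits, and finally to the Robba ring by a filtered colimit. Throughout, $\widehat{\Omega}$ is understood as the limit/colimit of the corresponding completed differential modules at each stage.

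\emph{Step 1: closed annuli.} Fix $[a,b]$ and write $A=\Lambda_{R,[a,b],\mathbb{Q}_p}$. There is a continuous $R$-linear derivation $\frac{d}{dT}\colon A\to A$, given on power series by termwise differentiation, $\sum c_iT^i\mapsto\sum ic_iT^{i-1}$. It is well defined because $|ic_i|\le|c_i|$. For the convergence conditions of Lemma \ref{ringofdef}, the shift by one multiplies $|c_i|\omega^{ai}$ by the fixed factor $\omega^{-a}$, so convergence is preserved, and similarly at the $b$-end. To check that the image actually lands in $A$, and not merely in the larger ring of Lemma \ref{ringofdef}, I would argue on the ring of definition. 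The derivative of a generator $u/T^{e_b}$ or $T^{e_a}/u$ is a bounded multiple of $T^{-1}$ times a generator, and $T^{-1}\in A$. Hence $\frac{d}{dT}$ maps $\Lambda_{R_0,[a,b],0}$ into $u^{-N}T^{-1}\Lambda_{R_0,[a,b],0}$ for a fixed $N$, and is therefore continuous.

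\emph{Step 2: the universal property on closed annuli.} I then show that $f\mapsto \frac{df}{dT}\,dT$ identifies $\widehat\Omega_{A/R}$ with $A\,dT$.
\begin{itemize}
\item For any continuous $R$-derivation $\delta\colon A\to M$ into a complete module, the Leibniz rule gives $\delta(T^i)=iT^{i-1}\delta(T)$ for all $i\in\mathbb{Z}$, using $\delta(T^{-1})=-T^{-2}\delta(T)$.
\item The Laurent polynomials $R[T,T^{-1}]$ are dense in $A$. This follows from the monomial description of the ring of definition used in Lemma \ref{robbaflat}, since truncations converge in the $u$-adic topology.
\item Density and continuity then force $\delta=\frac{d}{dT}(\cdot)\,\delta(T)$.
\end{itemize}
Thus $\widehat\Omega_{A/R}$ is generated by $dT$. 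Freeness holds because the $A$-linear map $\widehat\Omega_{A/R}\to A$ induced by $\frac{d}{dT}$ sends $dT\mapsto 1$; this map splits the surjection $A\to\widehat\Omega_{A/R}$, $f\mapsto f\,dT$, so that surjection is an isomorphism.

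\emph{Step 3: open annuli and the Robba ring.} For $\Lambda_{R,(0,b]}=\varprojlim_a\Lambda_{R,[a,b]}$, continuous derivations into complete modules factor compatibly through the closed stages. This gives $\widehat\Omega=\varprojlim_a\Lambda_{R,[a,b]}\,dT=\Lambda_{R,(0,b]}\,dT$, and it is free because the generator $dT$ is compatible across the stages. The same density-plus-continuity argument applies directly, since the inverse-limit topology is defined by the seminorms of the closed stages. Taking the colimit over $b\to0$ then gives $\Omega_{\Lambda_{R,\mathrm{rig},\mathbb{Q}_p}}=\Lambda_{R,\mathrm{rig},\mathbb{Q}_p}\,dT$. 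Freeness is preserved because the maps $\Lambda_{R,(0,b]}\to\Lambda_{R,(0,b']}$ are injective and compatible with $\frac{d}{dT}$.

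\emph{Main obstacle.} The hardest step is the claim in Step 1 that $\frac{d}{dT}$ preserves $A$ itself, together with the continuity estimate. Lemma \ref{ringofdef} only gives an inclusion of $A$ into the coefficient-growth ring, not an equality, so the norm estimate alone does not suffice. I would therefore rely on the generator computation on the ring of definition as sketched above. A secondary point is that, because $p$ need not be invertible, no factor such as $1/i$ is ever needed; this is why only $|i|\le 1$ is used in the estimates.
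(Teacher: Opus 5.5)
Your proposal is correct. Its skeleton is the paper's: reduce to a closed annulus $\Lambda_{R,[a,b],\mathbb{Q}_p}$ and differentiate termwise.

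The paper's proof, however, records only the growth estimate showing that $\sum \bar{i}a_iT^{i-1}$ again satisfies the convergence conditions of Lemma~\ref{ringofdef}. It takes three things for granted:
\begin{enumerate}
\item that this puts the derivative back in $\Lambda_{R,[a,b],\mathbb{Q}_p}$, even though that lemma is stated only as an inclusion;
\item that $df=\frac{df}{dT}\,dT$ for every $f$;
\item that $dT$ is a free generator.
\end{enumerate}

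Your route supplies exactly these points.
\begin{enumerate}
\item The computation on the integral generators $u/T^{e_b}$ and $T^{e_a}/u$ replaces the growth estimate. Their derivatives are $-e_bT^{-1}\cdot u/T^{e_b}$ and $e_aT^{-1}\cdot T^{e_a}/u$, so in fact $\frac{d}{dT}(\Lambda_{R_0,[a,b],0})\subset T^{-1}\Lambda_{R_0,[a,b],0}$ and $N=0$ works.
\item Density of $R[T,T^{-1}]$, together with the Leibniz identity $\delta(T^{-1})=-T^{-2}\delta(T)$ and continuity, gives $d(A)\subset A\,dT$.
\item The retraction $\widehat\Omega_{A/R}\to A$ induced by $\frac{d}{dT}$ shows that $f\mapsto f\,dT$ is injective with closed image. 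Since $\widehat\Omega_{A/R}$ is topologically generated by $d(A)$, this gives freeness.
\end{enumerate}
The extra length buys an argument that actually establishes the universal-property statement the lemma asserts, rather than only the convergence of the formal derivative.

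In Step 3, the phrase that continuous derivations \emph{factor through the closed stages} is loose for a general complete target: a Fr\'echet module need not be a module over a single $\Lambda_{R,[a,b],\mathbb{Q}_p}$. Your alternative is sound and suffices: run the density-plus-continuity argument directly on $\Lambda_{R,(0,b],\mathbb{Q}_p}$, whose topology is given by the closed-stage seminorms, and then on the LF space $\Lambda_{R,\mathrm{rig},\mathbb{Q}_p}$.
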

\begin{proof}
    It is enough to show that $\Omega_{\Lambda_{R, [a, b], \mathbb{Q}_p}}$ is free of rank one generated by $dT$ over $\Lambda_{R, [a, b], \mathbb{Q}_p}$. Given $f(T) = \sum_{i \in \mathbb{Z}} a_i T^i \in \Lambda_{R, [a, b], \mathbb{Q}_p}$, for $i \geq 0$ $ \lim_{i \to \infty}|a_i| \omega^{ia} = 0$ and $\lim_{i \to -\infty} |a_i| \omega^{bi} = 0$ then $d f(T) = \sum_{i \in \mathbb{Z}} \bar{i}a_i T^{i-1} dT$. For $i \geq 0, |a_i| \omega^{a(i-1)} \to 0 $, and the same thing holds for $i \leq 0$ and $b$. So, $\sum_{i \in \mathbb{Z}} \bar{i}a_i T^{i-1} \in \Lambda_{R, [a, b], \mathbb{Q}_p}$. Hence, the assertion is proved.
\end{proof}
We define the residue map $Res: \Omega_{\Lambda_{R, [a, b], \mathbb{Q}_p}} \to R$ as $Res(\sum a_n T^n dT) = a_{-1}$
\begin{equation}
    \label{pairr}
    R:\Lambda_{R, \text{rig}, \mathbb{Q}_p} \times \Omega_{\Lambda_{R, \text{rig}, \mathbb{Q}_p}} \to R 
\end{equation}
that sends $(f, \omega) \to Res(f \omega)$, that is the coefficient of $\frac{dT}{T}$ in $f \omega$.
We will now check that the pairing in \ref{pairr} defines a perfect pairing. The idea of the proof is the same as that of \cite{crew1998finiteness}, theorem 5.4 and \cite{kedlaya2014cohomology}, Lemma 2.1.19.

\begin{theorem}
    \label{robbaduality}
    The pairing defined by:
    \begin{equation}
    \label{pair}
    \Phi:\Lambda_{R, \text{rig}, \mathbb{Q}_p} \times \Omega_{\Lambda_{R, \text{rig}, \mathbb{Q}_p}} \to R 
\end{equation}
that sends $(f, \omega) \mapsto Res(f \omega)$ is perfect.\\
This gives us a natural pairing:
\begin{equation}
    \label{pairrobba}
    \Lambda_{R, \text{rig}, \mathbb{Q}_p} \times \Lambda_{R, \text{rig}, \mathbb{Q}_p} \to R
\end{equation}
sending $(f(T), g(T)) \mapsto Res(f(T) g(T) dT)$.
\end{theorem}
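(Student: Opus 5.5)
Perfectness here means that $\Phi$ identifies $\Omega_{\Lambda_{R,\mathrm{rig},\mathbb{Q}_p}}$ with the continuous $R$-dual $\operatorname{Hom}_{R,\mathrm{cont}}(\Lambda_{R,\mathrm{rig},\mathbb{Q}_p},R)$, and conversely. We follow Crew and \cite{kedlaya2014cohomology}, Lemma 2.1.19, replacing the field of coefficients by the Banach ring $R$. By Lemma \ref{nongroupdiff} every differential is $g(T)\,dT$, so the pairing becomes $(f,g)\mapsto \operatorname{Res}(fg\,dT)=\sum_{i\in\mathbb Z}a_i b_{-1-i}$ for $f=\sum a_iT^i$ and $g=\sum b_jT^j$. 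The first step is to check that this series converges in $R$, that it is continuous, and that the pairing is nondegenerate.

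Choose $b$ small enough that both $f,g\in\Lambda_{R,(0,b],\mathbb{Q}_p}$. For $i\to+\infty$, $|a_i|\omega^{ai}\to0$ holds for every $a>0$, while $g$ has $|b_{-1-i}|\omega^{-b(1+i)}\to 0$; taking $a<b$ gives $|a_ib_{-1-i}|\to0$. The case $i\to-\infty$ is symmetric. This yields the estimate $|\Phi(f,g)|\le C\,|f|_{[a,b]}\,|g|_{[a,b]}$, which gives continuity on each $\Lambda_{R,[a,b]}$ and hence on the limits. Nondegeneracy is immediate by pairing with the monomials $T^{-1-i}$.

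For surjectivity onto the continuous dual, decompose $\Lambda_{R,\mathrm{rig},\mathbb{Q}_p}=T^{-1}\mathcal{B}^-\oplus \mathcal{B}^+$. Here $\mathcal{B}^+=\Lambda_{R,(0,\infty]}$ is the ring of power series converging on the open unit disc, and $T^{-1}\mathcal{B}^-$ consists of series in $T^{-1}$ without constant term converging on some region $|T|>\omega^{b}$; the latter is a colimit over $b$ of Banach modules. A continuous functional $\lambda$ is determined by the values $c_i=\lambda(T^i)$, because finite sums of monomials are dense in each $\Lambda_{R,[a,b]}$. Continuity on the Fréchet space $\mathcal{B}^+=\varprojlim_a$ means $\lambda$ factors through some $\Lambda_{R,[a,\infty]}$, which gives $|c_i|\le C\omega^{ai}$ for $i\ge0$. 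Continuity on the LF-space $T^{-1}\mathcal{B}^-$ means continuity on each Banach piece, which gives $|c_{-i}|\le C_b\,\omega^{-bi}$ for every $b>0$. Setting $g=\sum_i c_iT^{-1-i}$, these bounds say exactly that the negative part of $g$ lies in $T^{-1}\mathcal{B}^-$ with radius $a'$ for any $a'>a$, and that the nonnegative part lies in $\mathcal{B}^+$. Hence $g\in\Lambda_{R,\mathrm{rig},\mathbb{Q}_p}$ and $\lambda=\Phi(\cdot,g\,dT)$. Running the same argument with the roles of $f$ and $g$ swapped gives the reverse identification. The second pairing \eqref{pairrobba} then follows formally by composing with the isomorphism $g\mapsto g\,dT$ of Lemma \ref{nongroupdiff}.

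The main obstacle is the integral structure of $R$. Since $p$ need not be invertible, the norm on $R$ is controlled by $u$, and deducing the bound $|c_i|\le C\omega^{ai}$ from continuity requires knowing that the Banach norm on $\Lambda_{R,[a,\infty]}$ is equivalent to the coefficient-wise sup norm $\max_i|a_i|\omega^{ai}$. We would establish this first, using the description of elements in Lemma \ref{ringofdef} together with the converse inclusion via the normal monomials listed in the proof of Lemma \ref{robbaflat}. Once the norm equivalence is in hand, the remaining estimates are routine.
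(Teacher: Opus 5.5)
Your proposal is correct and follows essentially the paper's route. The paper also forms $f_\mu=\sum_n\mu(T^{-n-1})T^n$, extracts $|\mu(T^q)|\le C\omega^{qa}$ for $q\ge0$ and $|\mu(T^{-q-1})|\le C\omega^{-(q+1)b}$ (with $b$ arbitrarily small) from continuity, and concludes by density of Laurent polynomials. Your splitting $T^{-1}\mathcal B^-\oplus\mathcal B^+$ is a repackaging of these two estimates, and the norm-equivalence point you flag is one the paper simply takes as the definition of the Banach norm in Section~3.2.

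One small slip: the negative part $\sum_{i\ge0}c_iT^{-1-i}$ of $g$ satisfies $|c_i|\omega^{-a'(1+i)}\to0$ only for $a'<a$ (compare the paper's $0<l<a$), i.e.\ it converges on $|T|>\omega^{a}$, not for $a'>a$. The conclusion $g\in\Lambda_{R,\mathrm{rig},\mathbb Q_p}$ is unaffected.
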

\begin{proof}
To prove that the pairing \eqref{pair} is perfect, it suffices to show that the map
$\Phi:\Lambda_{R,\mathrm{rig},\mathbb{Q}_p}\to
\operatorname{Hom}_{R,\mathrm{cont}}(\Lambda_{R,\mathrm{rig},\mathbb{Q}_p}, R)$,
given by $\Phi(f)(g)=\operatorname{Res}(fg dT)=a_{-1}(fg)$, is a continuous
isomorphism.

Let $f\in\Lambda_{R,(0,b'],\mathbb{Q}_p}$ and
$g\in\Lambda_{R,(0,b''],\mathbb{Q}_p}$. For every closed interval
$I=[a,b]$, we have
$$
|a_{-1}(fg)|\,\omega^{-b}\le |f|_I|g|_I,
$$
and hence
$|a_{-1}(fg)|\le |f|_I|g|_I\omega^b$. Thus, for every $b$, the restriction
of $\Phi$ to $\Lambda_{R,(0,b],\mathbb{Q}_p}$ is continuous. Since
$\Lambda_{R,\mathrm{rig},\mathbb{Q}_p}
=\varinjlim_b\Lambda_{R,(0,b],\mathbb{Q}_p}$
is equipped with the LF topology, it follows that $\Phi$ is continuous. 

Given
$\mu\in\operatorname{Hom}_{R,\mathrm{cont}}
(\Lambda_{R,\mathrm{rig},\mathbb Q_p},R)$, put
$$
f_\mu(T)=\sum_{n\in\mathbb Z}\mu(T^{-n-1})T^n.
$$
We check that this series belongs to the Robba ring.
For any $b_0>0$, continuity of the restriction of $\mu$
to $\Lambda_{R,(0,b_0],\mathbb Q_p}$ gives
$0<a\leq b\leq b_0$ and $C>0$ such that
$$
|\mu(g)|\leq C|g|_{[a,b]}
\qquad
(g\in\Lambda_{R,(0,b_0],\mathbb Q_p}).
$$
In particular this holds for all Laurent monomials.
For $q\geq0$, it gives
$|\mu(T^q)|\leq C\omega^{qa}$.
If $0<l<a$, then
$$
|\mu(T^q)|\omega^{-(q+1)l}
\leq C\omega^{-l}\omega^{q(a-l)}\longrightarrow0.
$$
Thus the negative-power part
$\sum_{q\geq0}\mu(T^q)T^{-q-1}$ converges on all
sufficiently small Robba annuli.

For the nonnegative-power part, fix $l>0$ and first choose
$0<b_0<l$. Apply the preceding continuity bound on that
model, so that $b\leq b_0<l$. Then
$$
|\mu(T^{-q-1})|\omega^{ql}
\leq C\omega^{-(q+1)b}\omega^{ql}
=C\omega^{-b}\omega^{q(l-b)}\longrightarrow0.
$$
Hence $\sum_{q\geq0}\mu(T^{-q-1})T^q$ converges at
every positive radius. Together these estimates show that
$f_\mu\in\Lambda_{R,\mathrm{rig},\mathbb Q_p}$.
Moreover,
$\operatorname{Res}(f_\mu T^m\,dT)=\mu(T^m)$
for every $m\in\mathbb Z$.
By continuity and density of Laurent polynomials, the
functional defined by $f_\mu$ is $\mu$.
Conversely, the coefficient formula recovers $f$ from
the functional $g\mapsto\operatorname{Res}(fg\,dT)$.
This constructs the asserted inverse.
\end{proof}
\begin{remark}
    
\label{indetind}
    The residue map $\text{Res}$ depends only on $\Lambda_{R, \text{rig}, \mathbb{Q}_p}$ up to $R$-linear continuous
isomorphism, and not on the choice of indeterminate $T$.
\end{remark}
\subsection{Group version of Robba rings}
\begin{definition}
    Assume that we are given the data of a topological abelian group $C$ whose torsion group $C_{tor}$ is finite and $C/C_{tor}$ is non-canonically isomorphic to $\mathbb{Z}_p$. \\
    Let $R$ be a pseudoaffinoid $\mathbb{Z}_p$ algebra such that it is topologically of finite type over $D_{\lambda}$. Assume for now that $C$ is torsion free. For any interval $I$, define $\Lambda_{R,I,\mathbb{Q}_p}(C)$, also denoted simply by $\Lambda_{R,I}(C)$, as in \cite{kedlaya2014cohomology}: starting from the definition of $\Lambda_{R,I,\mathbb{Q}_p}$, replace the variable $\pi$ by $\gamma-1$, where $\gamma$ is a fixed topological generator of $C$. Since the coefficient field plays no role in this group-theoretic construction, we usually omit $\mathbb{Q}_p$ from the notation. As in Definition \ref{lambdaqp}, we also allow $I = [a, b], (0, b]$ where $b=\infty$. Define $\Lambda_{R, \text{rig}}(C) = \varinjlim_{b \to 0} \Lambda_{R, (0, b]}(C)$ \\
For a torsion-free group $C$, let $T=\gamma-1$.
In mixed characteristic, the change of variable
$T\mapsto(1+T)^p-1$ satisfies
$$
|f((1+T)^p-1)|_r=|f(T)|_{pr}
$$
provided $0<r<\lambda/p$.
Indeed, $|p|\leq p^{-\lambda/p}$, and this condition implies
$|p|\omega^{ir}\leq\omega^{pr}$ for $1\leq i<p$.
For $p=0$ in $R$, the equality holds for every $r>0$.
Consequently,
$$
\Lambda_{R,[a,b]}(C)
\cong\Lambda_{R,[pa,pb]}(C^p)
\otimes_{\mathbb{Z}[C^p]}\mathbb{Z}[C]
$$
for $0<a\leq b<\lambda/p$, and
$$
\Lambda_{R,[a,\infty]}(C)
\cong\Lambda_{R,[pa,\infty]}(C^p)
\otimes_{\mathbb{Z}[C^p]}\mathbb{Z}[C]
$$
for $0<a<\lambda/p$.
In characteristic $p$, no such small-radius restriction is needed.

For a group $C$ with finite torsion, choose a torsion-free open
subgroup $D\subset C$ and use the full subgroup index:
$$
\Lambda_{R,[a,b]}(C)
=\Lambda_{R,[[C:D]a,[C:D]b]}(D)
\otimes_{\mathbb{Z}[D]}\mathbb{Z}[C].
$$
The same convention applies when $b = \infty$. To compare two subgroup
descriptions, we pass to a common torsion-free open subgroup and
apply the preceding formulas at sufficiently small radii.
For an index $p^h$ between torsion-free groups, it suffices that
$p^hb<\lambda$ on an annulus or $p^ha<\lambda$ when $b = \infty$. When
a finite torsion factor has already rescaled the coordinate radii, apply these inequalities to those
rescaled radii. Thus, the subgroup descriptions are canonically compatible at sufficiently small radii; in particular, the resulting ring \(\Lambda_{R,\mathrm{rig}}(C)\) is independent of the choice of torsion-free open subgroup.
\end{definition}

\textbf{Description of $\Omega_{\Lambda_{R, \text{rig}, \mathbb{Q}_p}(C)}$:}
    Given any non-torsion $c' \in C$ such that the closed subgroup generated by it is $C' \subset C$, then we define $\omega = \frac{1}{[C: C']}\frac{dc'}{c'} \in \Omega_{\Lambda_{R, \text{rig}, \mathbb{Q}_p}(C)}$ whenever $[C:C']$ is a $p$-adic unit. 
     \begin{lemma}
      The space $\Omega_{\Lambda_{R, \text{rig}, \mathbb{Q}_p}(C)}$ is free of rank 1 over $\Lambda_{R, \text{rig}, \mathbb{Q}_p}(C)$ generated by $\omega$ as defined above whenever $|C_{tors}|$ is coprime to $p$.
    \end{lemma}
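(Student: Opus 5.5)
The plan is to reduce to the torsion-free case and then to Lemma \ref{nongroupdiff}. Write $C=C_{tor}\times D$, where $D\subset C$ is a torsion-free open subgroup with topological generator $\gamma$. The hypothesis that $|C_{tors}|$ is prime to $p$ makes this splitting available: $C_{tor}$ is a finite group of order prime to $p$ and $C/C_{tor}\cong\mathbb{Z}_p$, so $C\cong C_{tor}\times\mathbb{Z}_p$. With this choice $[C:D]=|C_{tor}|$ is a $p$-adic unit.

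By the definition of the group Robba ring,
\[
\Lambda_{R,\mathrm{rig}}(C)=\Lambda_{R,\mathrm{rig}}(D)\otimes_{\mathbb{Z}[D]}\mathbb{Z}[C]=\Lambda_{R,\mathrm{rig}}(D)[C_{tor}]
\]
(the rescaling of radii by $[C:D]$ disappears in the colimit). This is a finite free, and in fact finite étale, extension of $\Lambda_{R,\mathrm{rig}}(D)$, because $|C_{tor}|$ is invertible in $\mathbb{Z}_p\subset R$, so $\mathbb{Z}_p[C_{tor}]$ is étale over $\mathbb{Z}_p$. For a finite étale extension $A\to B$, continuous differentials base change: $\Omega_B\cong B\otimes_A\Omega_A$, since $\Omega_{B/A}=0$. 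It therefore suffices to treat $\Lambda_{R,\mathrm{rig}}(D)$. Under $T=\gamma-1$, this ring is $\Lambda_{R,\mathrm{rig},\mathbb{Q}_p}$, so Lemma \ref{nongroupdiff} shows that $\Omega$ is free of rank one on $dT=d\gamma$.

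Next I would show that $d\gamma/\gamma$ is a generator. This is immediate, since $\gamma=1+T$ is a unit in $\Lambda_{R,\mathrm{rig}}(D)$.

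Finally I would show that $\omega$ is independent of the choice of $c'$, up to a unit. Take any non-torsion $c'$ with $[C:C']$ a $p$-adic unit. Its image in $C/C_{tor}\cong\mathbb{Z}_p$ must then be a generator times a unit, so $c'=\zeta\gamma^{u}$ with $\zeta\in C_{tor}$ and $u\in\mathbb{Z}_p^\times$, after possibly changing $D$. We have $d\zeta=0$, because $\zeta^{m}=1$ with $m$ invertible gives $m\zeta^{m-1}d\zeta=0$. For $\gamma^u=\sum_k\binom{u}{k}T^k$ the formal computation gives $d(\gamma^u)=u\gamma^{u-1}d\gamma$. Hence
\[
\frac{dc'}{c'}=u\,\frac{d\gamma}{\gamma}.
\]
The index $[C:C']$ is a unit, so $\omega$ differs from $d\gamma/\gamma$ by the unit $u/[C:C']\in\mathbb{Z}_p^\times$ and is also a generator.

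The main obstacle I expect is justifying $d(\gamma^u)=u\gamma^{u-1}d\gamma$ for $u\in\mathbb{Z}_p$, and more generally the finite-étale base change of continuous differentials, in the pseudorigid setting where $p$ may be $0$ in $R$. For the first, I would approximate $u$ by integers $u_n\to u$ $p$-adically. Then $\gamma^{u_n}\to\gamma^u$ in every $\Lambda_{R,[a,b]}(D)$, and $d$ is continuous by the explicit formula in the proof of Lemma \ref{nongroupdiff}. For the second, I would argue at each closed annulus $[a,b]$, where the rings are Noetherian by Lemma \ref{Noetheriantate}, and then pass to limits.
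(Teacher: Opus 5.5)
Your proposal is correct and follows the same route as the paper. You split off $C_{\mathrm{tors}}$, use that $\Lambda_{R,\mathrm{rig}}(C)=\Lambda_{R,\mathrm{rig}}(D)[C_{\mathrm{tors}}]$ is finite \'etale over $\Lambda_{R,\mathrm{rig}}(D)$ (with $d\zeta=0$ for torsion $\zeta$) so that differentials base change, and conclude from Lemma~\ref{nongroupdiff}; your extra computation $dc'/c'=u\,d\gamma/\gamma$ with $u\in\mathbb{Z}_p^\times$ shows that $\omega$ is a unit multiple of $d\gamma/\gamma$ for every admissible $c'$, a step the paper's proof leaves implicit.
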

    \begin{proof}
If $C$ is torsion free, the result follows from Lemma \ref{nongroupdiff}.
Otherwise choose a torsion-free direct factor $D\subset C$ with
$[C:D]=|C_{\mathrm{tors}}|$ prime to $p$.
For $c\in C_{\mathrm{tors}}$ of order $m$, differentiating $c^m=1$ gives
$mc^{m-1}dc=0$. Since $m$ and $c$ are units, $dc=0$, and therefore
$d(f\otimes c)=(1\otimes c)df$.
Moreover, the extension from the group Robba ring for $D$ to that for $C$
is finite \'etale, since $|C_{\mathrm{tors}}|$ is invertible. Its module of
continuous differentials is the scalar extension of the module for $D$.
Thus it is free of rank one generated by $\omega$.
\end{proof}
Fix a torsion free subgroup $D \subset C$ so that $[C:D]$ is prime to $p$. The residue pairing map $\text{Res}_{{\Lambda_{R, rig}}(D)}$ is defined the same way on $\Lambda_{R, \text{rig}, D}$, then we can extend $Res_C$ to all of $\Omega_{\Lambda_{R, \text{rig}}(C)}$ as follows:
    \begin{equation}
        \label{grouppair}
         \text{Res}_C : \Omega_{\Lambda_{R, \text{rig}, \mathbb{Q}_p}(C)} \to R, \qquad f\omega \mapsto \text{Res}_D(\text{Tr}_{\Lambda_{R, \text{rig}, \mathbb{Q}_p}(C)/\Lambda_{R, \text{rig}, \mathbb{Q}_p}(D)}f \omega)
    \end{equation}
    This gives us natural pairing:
    \begin{equation}
        \{,\} : \Lambda_{R, rig, \mathbb{Q}_p}(C) \times \Lambda_{R, rig, \mathbb{Q}_p}(C) \to R, 
        (f,g) \mapsto Res_C(fg\omega)
    \end{equation}
    This induces topological isomorphisms\footnote{
We equip $\operatorname{Hom}_{R,\mathrm{cont}}(M,R)$
with the strong dual topology, i.e., the topology
of uniform convergence on bounded subsets of $M$.
More precisely, for every bounded subset $B\subset M$,
we consider the seminorm
$
p_B(f)=\sup_{m\in B}|f(m)|_R.$
For quotient modules, we use the quotient topology
on the source. These are the dual topologies
considered in Lemma 2.1.19 of \cite{kedlaya2014cohomology}.
}:\\
    \begin{equation}
\label{cycloimp}
\begin{aligned}
\Lambda_{R,\mathrm{rig}}(C)
&\cong
\operatorname{Hom}_{R,\mathrm{cont}}
\bigl(\Lambda_{R,\mathrm{rig}}(C),R\bigr),\\
\Lambda_{R,(0,\infty]}(C)
&\cong
\operatorname{Hom}_{R,\mathrm{cont}}
\bigl(
\Lambda_{R,\mathrm{rig}}(C)/
\Lambda_{R,(0,\infty]}(C),
R
\bigr).
\end{aligned}
\end{equation}
\begin{lemma}\label{max}
Write $\Gamma_K=\Delta_K\times\Gamma_K^{\mathrm{tf}}$, put
$d_K=|\Delta_K|$, and choose a generator $\gamma$ of
$\Gamma_K^{\mathrm{tf}}$. For an admissible radius $a$, put
$$e(a)=\frac{1}{d_Ka\,v_{\mathbb{C}_p^\flat}(\bar\pi)}.$$
Every maximal point of $\Lambda_{R,[a,\infty]}(\Gamma_K)$
has an evaluation map with residue field $L$, a coefficient map
$\lambda:R\to L$ whose kernel is maximal, and a continuous
character $\eta:\Gamma_K\to L^\times$ satisfying
$$|\eta(\gamma)-1|_L^{e(a)}\leq|\lambda(u)|_L.$$
Here $L$ is finite over $R/\ker(\lambda)$, and is generated over
that field by the values of $\eta$. Conversely, such compatible
data determine a maximal point, up to isomorphism of the residue
field. The coefficient map is part of the point data.

The same data define an evaluation map on
$\Lambda_{R,(0,\infty]}(\Gamma_K)$ whenever
$|\eta(\gamma)-1|_L<1$. Its kernel is the actual maximal
evaluation ideal, and its quotient is $L$.
\end{lemma}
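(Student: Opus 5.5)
Since $\Delta_K$ is finite and we work with the full-index convention, we first reduce to the torsion-free factor. Following the definition of the group Robba rings, we have
\[
\Lambda_{R,[a,\infty]}(\Gamma_K)=\Lambda_{R,[d_Ka,\infty]}(\Gamma_K^{\mathrm{tf}})\otimes_{\mathbb{Z}[\Gamma_K^{\mathrm{tf}}]}\mathbb{Z}[\Gamma_K],
\]
so the ring is finite free over $B:=\Lambda_{R,[d_Ka,\infty]}(\Gamma_K^{\mathrm{tf}})$, generated by the elements of $\Delta_K$, and integral over $B$. In $B$ the variable $T=\gamma-1$ sits inside $R_0[[T]]\langle T^{e(a)}/u\rangle[1/u]$, where $e(a)$ is exactly the exponent from Definition~\ref{lambdaqp} after rescaling the radius by $d_K$. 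Because $\Lambda_{R,[a,\infty]}(\Gamma_K)$ is a Noetherian Tate (pseudoaffinoid) algebra, it is topologically of finite type over $D_\lambda$. We then invoke the Nullstellensatz for pseudoaffinoid algebras (Lourenço, or Johansson--Newton): every maximal ideal $\mathfrak m$ has residue field $L$ that is a finite extension of $\mathbb{Q}_p$ or of $\mathbb{F}_p((u))$, and $L$ carries a complete nonarchimedean absolute value. Continuity of the quotient map follows because maximal ideals in a Noetherian Banach-type ring are closed.

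Given $\mathfrak m$, we define $\lambda\colon R\to L$ as the composite of the structure map with the quotient map. Its image is a subring of the field $L$, and $L$ is integral over it. Indeed, $L$ is generated over $\lambda(R)$ by the images of $T$ and of $\Delta_K$; the $\Delta_K$ images are roots of unity, and the image of $T$ is topologically nilpotent, hence algebraic by the Nullstellensatz finiteness. It follows that $\lambda(R)$ is a field, so $\ker\lambda$ is maximal. Next we set $\eta(\gamma)=1+\bar T$ and let $\eta|_{\Delta_K}$ be the image of the group elements; this defines a continuous character of $\Gamma_K$, since $\bar T$ is topologically nilpotent. The inequality follows from the generator $T^{e(a)}/u$ of the ring of definition. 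Its image $\bar T^{e(a)}/\lambda(u)$ is power-bounded in $L$, so it has absolute value at most $1$, which gives $|\eta(\gamma)-1|^{e(a)}\le|\lambda(u)|$. By construction, $L$ is generated over $R/\ker\lambda$ by the values of $\eta$.

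For the converse, suppose we are given $(\lambda,\eta)$ satisfying the inequality. We define evaluation on $B$ by $\sum d_iT^i\mapsto\sum\lambda(d_i)(\eta(\gamma)-1)^i$. This converges because of the growth condition on the coefficients (Lemma~\ref{ringofdef}) combined with the radius bound, and it extends along $\Delta_K$ via $\eta$. The map is surjective onto $L$ by the generation statement, so its kernel is maximal. The same formula works on $\Lambda_{R,(0,\infty]}(\Gamma_K)=\varprojlim_a$. When $|\eta(\gamma)-1|<1$, we choose $a$ small enough that the inequality holds, since $e(a)\to\infty$ as $a\to0$. The map then factors through some $\Lambda_{R,[a,\infty]}(\Gamma_K)$, is surjective onto a field, and therefore has maximal kernel.

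The main obstacle is the first step: showing that residue fields at maximal ideals are finite over $R/\ker\lambda$, and that $\ker\lambda$ is maximal, in the pseudorigid setting where $p$ may vanish. We would try to obtain this by citing the pseudoaffinoid Nullstellensatz and Noether normalization over $D_\lambda$.
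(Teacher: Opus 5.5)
Your proposal follows the paper's proof essentially step for step. The paper likewise takes as its input that the residue field at a maximal point of the Banach stage is finite over the coefficient residue field. It reads the inequality $|\eta(\gamma)-1|_L^{e(a)}\leq|\lambda(u)|_L$ off the power-bounded element $T^{e(a)}/u$ of the integral presentation $R_0[\Delta_K][[T]]\langle T^{e(a)}/u\rangle$. It proves the converse by evaluation, using completeness of the finite extension $L$ to get surjectivity. It treats $\Lambda_{R,(0,\infty]}(\Gamma_K)$ by evaluating at $T=\eta(\gamma)-1$, and also notes that $|\eta(\gamma)-1|_L<1$ is automatic for a continuous character.

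The one point to repair is your interim argument that $\lambda(R)$ is a field, which does not work as written. Over $\lambda(R)$, $L$ is only topologically generated by $\bar T$ and the image of $\Delta_K$, and algebraicity of $\bar T$ over $\mathbb{Q}_p$ or $\mathbb{F}_p((u))$ is not integrality over $\lambda(R)$. The clean form of the citation you anticipate is that the image of $D_\lambda$ in $L$ is already a field over which $L$ is finite, so the intermediate ring $\lambda(R)$ is a field.
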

\begin{proof}
The residue field at a maximal point of the Banach stage is a
finite extension of the coefficient residue field. Evaluating the
group elements gives $\eta$, and the integral presentation
$$R_0[\Delta_K][[T]]\left\langle T^{e(a)}/u\right\rangle,
\qquad T=\gamma-1,$$
gives precisely the displayed radius inequality.
Conversely, the coefficient map and character define evaluation
on this presentation when that inequality holds. The image
contains the field generated by the coefficient residue field and
the character values. This finite extension is complete, so the
values of convergent power series also lie in it. With the stated
choice of $L$, evaluation is surjective and its kernel is maximal.

For the full distribution algebra, evaluation at $T=\eta(\gamma)-1$
is defined when $|T|_L<1$. This strict inequality also follows
from continuity of a character on the pro-$p$ factor:
$\eta(\gamma)^{p^n}\to1$.
For large $n$, reduction in the characteristic-$p$ residue field
gives $(\overline{\eta(\gamma)}-1)^{p^n}=0$, hence
$\overline{\eta(\gamma)}=1$.
Thus $|\eta(\gamma)-1|_L<1$.
\end{proof}

\section{$(\varphi, \Gamma)$ bundles and $(\varphi, \Gamma)$-modules}
\begin{definition}
    A pseudorigid space $Y$ is said to be quasi-stein if it admits an admissible covering by a rising union of pseudoaffinoid subdomains $Y_0 \subset Y_1 \subset \ldots Y_n \subset \ldots$  such that the transition maps $\Gamma(Y_{n+1}, \mathcal{O}_{Y_{n+1}}) \to \Gamma(Y_{n}, \mathcal{O}_{Y_{n}})$ are flat with dense image.
\end{definition}
\textbf{Example:} The space $X = \bigcup_{a \to 0} \textrm{Spa}(\Lambda_{R, [a, b], K})$ is a quasi-Stein space where we denote $X_n := \textrm{Spa}(\Lambda_{R, [1/p^n, b], K})$.\\
A coherent sheaf
$\mathscr{F}$ on $Y$ is simply a compatible system of coherent sheaves
$\{\mathscr{F}_n\}$ on $\{Y_n\}$. Then
$$
F_\infty := \Gamma(Y,\mathscr{F})
= \varprojlim_n \Gamma(Y_n,\mathscr{F}_n)
$$
is a coadmissible module over $A_\infty = \varprojlim \Gamma(Y_n, \mathcal{O}_{Y_n})$, in the sense of \cite{schneider2002algebras}.\\
We take the following definition from \cite{bellovin2024cohomology}, section 2.2.
\begin{definition}
    A $\varphi$ module over $\Lambda_{R, (0, b], K}$ is a coherent sheaf of modules over the pseudorigid quasi-Stein space $\bigcup_{a \to 0} \textrm{Spa}(\Lambda_{R, [a, b], K})$ equipped with an isomorphism
    $$\varphi_D: \varphi^*D \to \Lambda_{R, (0, b/p], K} \otimes_{ \Lambda_{R, (0, b], K}}D$$
    If $a \in (0,b/p]$, a $\varphi$-module over $\Lambda_{R,[a,b],K}$ is a finite 
$\Lambda_{R,[a,b],K}$-module $D$ equipped with an isomorphism
$$
\varphi_{D,[a,b/p]} :
\Lambda_{R,[a,b/p],K} \otimes_{\Lambda_{R,[a/p,b/p],K}} \varphi^{*}D
\xrightarrow{\sim}
\Lambda_{R,[a,b/p],K} \otimes_{\Lambda_{R,[a,b],K}} D.
$$

A $(\varphi,\Gamma_{K})$-module over $\Lambda_{R,(0,b],K}$ 
(resp. $\Lambda_{R,[a,b],K}$) is a $\varphi$-module over 
$\Lambda_{R,(0,b],K}$ (resp. $\Lambda_{R,[a,b],K}$) equipped with a 
semilinear action of $\Gamma_{K}$ which commutes with 
$\varphi_{D}$ (resp. $\varphi_{D,[a,b/p]}$).

Unless explicitly stated otherwise, the $(\varphi,\Gamma_K)$-modules
considered below are finite projective, and their semilinear
$\Gamma_K$-actions are continuous. In the sheaf formulation, this
means that the underlying sheaf is locally free of finite rank and
that its restrictions to closed annuli are finite projective modules.

A $(\varphi,\Gamma_{K})$-module over $R$ is a module $D$ over 
$\Lambda_{R,\mathrm{rig},K}$ which arises via base change from a 
$(\varphi,\Gamma_{K})$-module over $\Lambda_{R,(0,b],K}$ for some $b>0$.

If $D$ is a $(\varphi,\Gamma)$-module over $\Lambda_{R,(0,b],K}$ and 
$I \subset (0,b]$ is a sub-interval, we write
$$
D_{I} := \Lambda_{R,I,K} \otimes_{\Lambda_{R,(0,b],K}} D
$$
to denote its restriction to the annulus $I$.
\end{definition}

\begin{lemma}
\label{uniformfinite}
Let $M$ be a coadmissible module over
$\Lambda_{R,(0,\infty]}(\Gamma_K)$, and suppose that there exists an integer
$n$ and an admissible covering
$\{[a_i,b_i]\}_{i\geq 0}$ of $(0,\infty]$ such that
$$
M\otimes_{\Lambda_{R,(0,\infty]}(\Gamma_K)}
\Lambda_{R,[a_i,b_i]}(\Gamma_K)
$$
can be generated by at most $n$ elements over
$\Lambda_{R,[a_i,b_i]}(\Gamma_K)$ for every $i$. Then $M$ is finitely
generated over $\Lambda_{R,(0,\infty]}(\Gamma_K)$.
\end{lemma}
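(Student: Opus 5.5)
This is the pseudorigid analogue of the uniform finiteness criterion of Kedlaya--Pottharst--Xiao; I would argue as follows. Write $A=\Lambda_{R,(0,\infty]}(\Gamma_K)$ and $A_i=\Lambda_{R,[a_i,b_i]}(\Gamma_K)$. Since $b_i=\infty$ is allowed, the covering is essentially an increasing family, and it is harmless to replace it by a cofinal increasing sequence of closed balls. Concretely, put $a'_n=\min_{i\le n}a_i$, which tends to $0$ by admissibility. The module $M\otimes_A\Lambda_{R,[a'_n,\infty]}(\Gamma_K)$ is glued from the finitely many pieces covering $[a'_n,\infty]$, each generated by at most $n$ elements.

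The first real step is to obtain a bound on the number of generators over the \emph{larger} closed stages $B_n=\Lambda_{R,[a'_n,\infty]}(\Gamma_K)$ that is independent of $n$. Local bounds on a finite cover do not directly give a global bound. Instead I would use the standard trick: $\mathrm{Spa}(B_n)$ has Krull dimension bounded by $\dim R+1$, uniformly in $n$. Take a module $N$ over a Noetherian ring that is locally generated by $\le n$ elements on a finite affinoid cover, and whose fibres at maximal points have dimension $\le n$. Then by a Forster--Swan type argument, $N$ is generated by $n+\dim$ elements. The fibre condition follows from the local bounds, because every maximal point (described by Lemma~\ref{max}) lies in some piece. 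This gives a uniform bound $m=n+\dim R+1$.

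The second step is to produce global generators by successive approximation. Write $M_k=M\otimes_A B_k$, so that $M=\varprojlim M_k$ by coadmissibility. The transition maps $M_{k+1}\to M_k$ have dense image, since $B_{k+1}\to B_k$ does. Choose generators $x^{(k)}_1,\dots,x^{(k)}_m$ of $M_k$ inductively. Given generators of $M_k$, approximate them by images of elements of $M_{k+1}$; by a Nakayama/open mapping argument for finite modules over Banach rings, sufficiently close approximations still generate $M_k$. Then adjust these $m$ elements, using the uniform bound, so that they generate $M_{k+1}$. Doing this with errors tending to zero in each seminorm, the limits $x_j=\lim_k x^{(k)}_j$ exist in $M$ and generate every $M_k$. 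Finally, the map $A^m\to M$ has coadmissible image that agrees with $M$ at every stage, so it is surjective by Schneider--Teitelbaum exactness of coadmissible modules.

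I expect the main obstacle to be the adjustment in the second step. An approximation to generators of $M_k$ coming from $M_{k+1}$ need not generate $M_{k+1}$. To fix this, I would follow Kedlaya--Pottharst--Xiao and use $2m$ or so elements rather than $m$. At each stage, a generic linear combination of a fixed finite generating set of $M_{k+1}$ with the lifted approximate generators for $M_k$ still generates both. Genericity here is ensured by the fibre-dimension count over the bounded-dimensional space. The open-mapping step for finite modules over the pseudoaffinoid Banach rings $B_k$ also needs care, since $p$ may not be invertible. It should follow because the $B_k$ are Noetherian Tate rings (Lemma~\ref{Noetheriantate}) and finite modules carry canonical Banach topologies.
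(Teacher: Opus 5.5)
Your Step 1 (passing to closed discs $B_k$ and bounding their numbers of generators by Forster--Swan) and your final coadmissibility argument are essentially fine. The adjustment in Step 2, however, is the whole content of the lemma, and it is asserted rather than proved; as formulated it does not work.

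You correctly note that $m$ elements cannot simply be corrected. Already for $M$ free of rank one over a field, take $T-\beta$ with $|\beta|$ strictly between the radii of $B_k$ and $B_{k+1}$. It generates $M_k$, yet any $f$ with $|f-(T-\beta)|_{B_k}$ small keeps the absolute values of its constant and linear coefficients, so it is not a unit in $B_{k+1}$. The obstruction behind this is the annulus just outside $B_k$. There, a correction $\sum_l c_{jl}y_l$ that is small on $B_k$ is neither small (so the old elements need not keep generating) nor dominant (so the $y_l$ do not take over).

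Invoking a ``generic'' choice of the $c_{jl}$ and a fibre-dimension count does not resolve this. You would have to show that, among coefficients small in the $B_k$-norm, some choice avoids the bad locus at every maximal point of $B_{k+1}$. That space has dimension up to $\dim R+1$ and residue fields of both characteristics. Basic-element arguments of Eisenbud--Evans/Forster--Swan type give existence of corrections, but no control of their size on $B_k$. A count of that kind also naturally yields bounds of the shape $m+\dim B_{k+1}$, not $2m$. Kedlaya--Pottharst--Xiao do not argue by genericity either.

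The missing idea, which the paper uses following Kedlaya--Pottharst--Xiao, is to abandon nested discs and work with separated annuli. Refine the covering so that the even-indexed intervals are pairwise disjoint, and likewise the odd-indexed ones. On each even annulus, choose $n$ generators and approximate them by elements $g_{2i,j}\in M$, using density of $M[1/T]$ and clearing powers of $T$ (a unit on the annulus). Then set $g_j^{\mathrm{even}}=\sum_i\alpha_{2i,j}T^{\lambda_{2i,j}}g_{2i,j}$. Here $\alpha_{2i,j}$ is a power of $u$, and $\lambda_{2i,j}$ is chosen so that on the $2i$-th annulus the unit $c_{2i,j}=\alpha_{2i,j}T^{\lambda_{2i,j}}$ makes the $i$-th term dominate all others. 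Disjointness of the annuli is exactly what makes these finitely many inequalities satisfiable at each step. The odd family is built the same way.

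Every point lies in an even or an odd annulus, so there is no transition region. The $2n$ elements generate after base change to every member of the covering, and the coadmissible cokernel vanishes. This route needs neither a uniform bound over discs nor any genericity.
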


\begin{proof}
We follow the argument of \cite[Proposition 2.1.13]{kedlaya2014cohomology}. After refining
and reordering the admissible covering, we may assume that the two
sub-collections
$
\{[a_{2i},b_{2i}]\}_{i\geq 0}
\qquad\text{and}\qquad
\{[a_{2i+1},b_{2i+1}]\}_{i\geq 0}
$
consist of pairwise disjoint intervals, ordered towards $0$. We will show
that there exist $n$ elements of $M$ whose images generate
$
M\otimes_{\Lambda_{R,(0,\infty]}(\Gamma_K)}
\Lambda_{R,[a_{2i},b_{2i}]}(\Gamma_K)
$
for every $i$. The same argument for the odd intervals then gives another
$n$ elements. These $2n$ elements generate $M$.

For every $i$, choose generators
$
f_{2i,1},\ldots,f_{2i,n}
$
of
$
M\otimes_{\Lambda_{R,(0,\infty]}(\Gamma_K)}
\Lambda_{R,[a_{2i},b_{2i}]}(\Gamma_K).
$
Fix a Banach norm $|\cdot|_{M,[a_{2i},b_{2i}]}$ on this module. Since it
is a finite Banach module, there exists $\epsilon_{2i}>0$ such that any
$n$ elements
$
f'_{2i,1},\ldots,f'_{2i,n}
$
satisfying
$
|f'_{2i,j}-f_{2i,j}|_{M,[a_{2i},b_{2i}]}
<\epsilon_{2i}$
for every $j$ still generate
$
M\otimes_{\Lambda_{R,(0,\infty]}(\Gamma_K)}
\Lambda_{R,[a_{2i},b_{2i}]}(\Gamma_K).
$
This follows from the Banach open mapping theorem.

On an annulus not containing $T=0$, the image of $M[1/T]$ is dense,
by the quasi-Stein approximation used in
\cite[Proposition 2.1.13]{kedlaya2014cohomology}.
Choose approximations to the $f_{2i,j}$ in $M[1/T]$, and clear powers of
$T$ to obtain $g_{2i,1},\ldots,g_{2i,n}\in M$ that still generate on
$[a_{2i},b_{2i}]$, since $T$ is invertible there.
On the initial disk use density of $M$ itself.
After decreasing $\epsilon_{2i}$, the same perturbation property holds
for these chosen generators.

Let $T=\gamma-1$, where $\gamma$ is a topological generator of an open
torsion-free subgroup of $\Gamma_K$. Put
$c_{2i,j}=\alpha_{2i,j}T^{\lambda_{2i,j}}$.
Following the same separated-annulus estimates as in
\cite[Proposition 2.1.13]{kedlaya2014cohomology}, choose
$\alpha_{2i,j}\in u^{\mathbb Z}$ and $\lambda_{2i,j}\geq0$ so that, for $k<i$,
$$\left|\frac{c_{2i,j}}{c_{2k,j}}g_{2i,j}\right|_{M,[a_{2k},b_{2k}]}
<\epsilon_{2k},\qquad
\left|\frac{c_{2k,j}}{c_{2i,j}}g_{2k,j}\right|_{M,[a_{2i},b_{2i}]}
<\epsilon_{2i}.$$
The exponents may also be chosen so that $c_{2i,j}g_{2i,j}$ tends to zero
on every fixed closed interval. The strict separation of the intervals
allows these finitely many inequalities at each step: increasing
$\lambda_{2i,j}$ separates the required lower and upper bounds for
$|\alpha_{2i,j}|$, and a power of the topologically nilpotent unit $u$
can then be chosen between them. On an initial interval containing
$\infty$, take $c_{0,j}=1$.

Consequently, for $1\leq j\leq n$, the series
$
g_j^{\mathrm{even}}
=
\sum_{i\geq 0}
\alpha_{2i,j}T^{\lambda_{2i,j}}g_{2i,j}
$
converges in $M$. By construction, on $[a_{2i},b_{2i}]$,
$$\left|c_{2i,j}^{-1}g_j^{\mathrm{even}}-g_{2i,j}\right|_{M,[a_{2i},b_{2i}]}
<\epsilon_{2i}.$$
Since $c_{2i,j}$ is a unit on this interval (and is $1$ on the initial disk),
the elements
$
g_1^{\mathrm{even}},\ldots,g_n^{\mathrm{even}}
$
still generate
$
M\otimes_{\Lambda_{R,(0,\infty]}(\Gamma_K)}
\Lambda_{R,[a_{2i},b_{2i}]}(\Gamma_K).
$
Thus the same $n$ global elements generate on every even interval.

Repeating the construction for the odd intervals gives elements
$
g_1^{\mathrm{odd}},\ldots,g_n^{\mathrm{odd}}\in M
$
which generate after base change to
$\Lambda_{R,[a_{2i+1},b_{2i+1}]}(\Gamma_K)$ for every $i$.

Hence the $2n$ elements
$
g_1^{\mathrm{even}},\ldots,g_n^{\mathrm{even}},
g_1^{\mathrm{odd}},\ldots,g_n^{\mathrm{odd}}
$
generate $M$ after base change to every member of the admissible covering.
Since $M$ is coadmissible, the cokernel of the resulting map
$$
\Lambda_{R,(0,\infty]}(\Gamma_K)^{2n}\longrightarrow M
$$
is coadmissible, and its base change to every member of the admissible
covering is zero. It is therefore zero. Hence, the above map is
surjective, and $M$ is finitely generated over
$\Lambda_{R,(0,\infty]}(\Gamma_K)$.
\end{proof}
\begin{lemma}
\label{uniformperfectcomplex}
Put $A=\Lambda_{R,(0,\infty]}(\Gamma_K)$, and choose a
sequence of Banach stage algebras $A_n = \Lambda_{R, [m_n, \infty]}(\Gamma_K)$ with $A=\varprojlim_nA_n$.
Let $C_n$ be compatible perfect complexes over $A_n$, meaning that
$$
C_{n+1}\otimes_{A_{n+1}}^{\mathbf L}A_n\simeq C_n.
$$
Suppose that their Tor-amplitudes are contained in $[0,e]$
and that, for each $0\leq j\leq e$, there is an integer $b_j$
independent of $n$ and of the maximal ideal $\mathfrak m\subset A_n$
such that
$$
\dim_{\kappa(\mathfrak m)}
H^j(C_n\otimes_{A_n}^{\mathbf L}\kappa(\mathfrak m))
\leq b_j.
$$
Then $C=\mathbf R\varprojlim_nC_n$ is quasi-isomorphic to
finite projective $A$-modules in degrees $[0,e]$.
The representative restricts to $C_n$ at every stage.
\end{lemma}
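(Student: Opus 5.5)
The strategy is to build, stage by stage, minimal representatives of the $C_n$ whose terms have ranks bounded independently of $n$. Then I glue them using the uniform-generation argument of Lemma~\ref{uniformfinite}, applied to each term of the representative.

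\emph{Step 1: local minimal representatives.} Each $A_n$ is a Noetherian Banach algebra (Lemma~\ref{Noetheriantate} in its group version) and $C_n$ is perfect with Tor-amplitude in $[0,e]$. Near any maximal ideal $\mathfrak m$, $C_n$ can therefore be represented by a complex of finite free modules in degrees $[0,e]$ with differentials vanishing mod $\mathfrak m$. This is the standard minimal resolution over the local ring, spread out to a neighbourhood. The rank in degree $j$ of such a representative equals $\dim H^j(C_n\otimes^{\mathbf L}\kappa(\mathfrak m))\le b_j$. The proof goes by descending induction from the top degree $e$, where $H^e$ is right exact and $\dim H^e(C_n\otimes\kappa)\le b_e$ bounds the number of local generators, down to degree $0$.

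Globalising on $A_n$, I want a representative $P_n^\bullet$ with $P_n^j$ finite projective and generated by a number $N_j$ of elements that depends only on $b_0,\dots,b_e$ and not on $n$. I would first produce a surjection from a free module onto the top-degree structure whose fibres are bounded. For this I use the same even/odd trick as in Lemma~\ref{uniformfinite}, applied to $H^e$ and then to the successive syzygies. Taking a truncation of a free resolution then gives projective terms in degrees $[0,e]$, because the Tor-amplitude is in $[0,e]$.

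\emph{Step 2: compatibility across stages.} I would choose the representatives inductively, so that $P_{n+1}^\bullet\otimes_{A_{n+1}}A_n\cong P_n^\bullet$ as complexes. The idea is to lift the generators of $P_n^j$ to $P_{n+1}^j$ by density of $A_{n+1}\to A_n$, together with the perturbation property from the open mapping theorem, exactly as in the proof of Lemma~\ref{uniformfinite}. Because the ranks are bounded, each $\{P_n^j\}_n$ is a coherent sheaf on the quasi-Stein space that is locally free and generated by boundedly many sections. Lemma~\ref{uniformfinite} (with the $2N_j$ generators it produces) then shows that $P^j:=\varprojlim_n P_n^j$ is finitely generated over $A$. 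It is also projective: it is a direct summand of $A^{2N_j}$, since the splittings at finite level can be chosen compatibly once the surjection $A^{2N_j}\to P^j$ is fixed, and the kernels form a coadmissible system of projectives. By coadmissibility, $P^j\otimes_A A_n=P_n^j$.

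\emph{Step 3: conclusion.} The transition maps $P_{n+1}^j\to P_n^j$ have dense image, so the system satisfies Mittag-Leffler and $\varprojlim^1$ vanishes (Theorem~B for coadmissible modules). Hence $\mathbf R\varprojlim C_n\simeq\varprojlim P_n^\bullet=P^\bullet$, a complex of finite projective $A$-modules in degrees $[0,e]$ that restricts to $P_n^\bullet\simeq C_n$ at every stage.

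The main obstacle is Step 2: making the minimal representatives strictly compatible, and not merely compatible up to quasi-isomorphism. A quasi-isomorphism $C_{n+1}\otimes^{\mathbf L} A_n\simeq C_n$ only determines representatives up to homotopy and addition of contractible pieces. I would first try to avoid this issue altogether. The plan is to define $P^\bullet$ directly as an $A$-complex, built by descending induction on degree with global generators chosen via Lemma~\ref{uniformfinite} applied to the coadmissible modules $\varprojlim H^e(C_n)$ and the successive kernels, and then to check stage by stage that $P^\bullet\otimes A_n\to C_n$ is a quasi-isomorphism using the fibre bounds.
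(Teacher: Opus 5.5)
Your main route (Steps 1--3) has a genuine gap in Step~2, which is where you suspected. Density of $A_{n+1}\to A_n$ lets you approximate elements of an already chosen $P_n^j$ by images from an already chosen $P_{n+1}^j$. It does not let you produce a complex over the larger disk $A_{n+1}$ whose base change is \emph{isomorphic as a complex} to a prescribed $P_n^\bullet$. The quasi-isomorphism $C_{n+1}\otimes^{\mathbf L}_{A_{n+1}}A_n\simeq C_n$ determines representatives only up to homotopy equivalence. Passing from $A_n$ to $A_{n+1}$ is an extension problem, and the perturbation argument of Lemma~\ref{uniformfinite} does not address it. Likewise, the claim that the splittings ``can be chosen compatibly'' has no argument behind it. A splitting of $A_n^{2N_j}\to P_n^j$ need not be the restriction of one over $A_{n+1}$, so you would need at least a $\varprojlim^1$-vanishing argument for the torsors of splittings. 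In Step~1 the even/odd trick is also the wrong tool at a single stage. What turns the fibre bounds into generator bounds independent of $n$ is Forster--Swan together with the uniform bound on $\dim A_n$. The even/odd trick is what then turns uniform stagewise bounds into finite generation over $A$.

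Your fallback is the right idea and is essentially the paper's route, but it omits the steps that make it work. The paper does not resolve $H^e$ by successive kernels. Starting from $D_e=C$, it kills the top cohomology of $D_q$ by a map $F^q[-q]\to D_q$ from a finite free $A$-module, and passes to $D_{q-1}=\mathrm{Cone}(F^q[-q]\to D_q)$, with $D_{q-1,n}$ the same cone at stage $n$. This construction does three things:
\begin{enumerate}
\item it takes the lower cohomology $H^{q-1}(D_q)$ into account;
\item it makes compatibility with the stages automatic, so separately chosen representatives never have to be matched;
\item it propagates the fibre bounds, since the bound in degree $q-1$ becomes the old bound plus $\operatorname{rank}F^q$.
\end{enumerate}
Together with $H^q(D_{q,n})\otimes\kappa(\mathfrak m)\simeq H^q(D_{q,n}\otimes^{\mathbf L}\kappa(\mathfrak m))$, Forster--Swan and Lemma~\ref{uniformfinite}, this gives finite generation at each step.

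Even so, this only produces a bounded-above finite free complex $P^\bullet\simeq C$. To get projective terms in degrees $[0,e]$ you must show that $Q=\operatorname{coker}(P^{-1}\to P^0)$ is projective over the non-Noetherian ring $A$. The paper first uses the Tor-amplitude to get $\operatorname{Tor}_1^{A_n}(Q_n,\kappa(\mathfrak m))\simeq H^{-1}(C_n\otimes^{\mathbf L}_{A_n}\kappa(\mathfrak m))=0$. The local flatness criterion then makes each $Q_n$ projective. It then shows $Q$ is flat by the ideal criterion: for a finitely generated ideal $I$, the kernel of $I\otimes_AQ\to Q$ is coadmissible and has zero restriction to every stage, hence is zero. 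Finite presentation then gives projectivity. Your proposal has no substitute for this step, so as it stands it does not reach the amplitude $[0,e]$ claimed in the lemma.
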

\begin{proof}
The flat transition maps imply that the modules $H^j(C_n)$
are coadmissible for every $j$. Their higher inverse limits vanish.
Hence, $H^j(C)=\varprojlim_nH^j(C_n)$ is coadmissible,
with $n$-th stage module $H^j(C_n)$, and $H^j(C)=0$ outside $[0,e]$.
The Krull dimensions of the Banach stages are uniformly bounded.

We first construct a bounded-above finite free representative.
Start with $D_e=C$ and $D_{e,n}=C_n$.
Suppose inductively that $D_q$ is the inverse limit of compatible
perfect complexes $D_{q,n}$ whose derived fibres vanish above $q$,
and whose derived-fibre dimensions in each fixed degree have a
bound independent of $n$ and of the maximal point.
Since $q$ is the top degree, we have
$$
H^q(D_{q,n})\otimes_{A_n}\kappa(\mathfrak m)
\simeq
H^q(D_{q,n}\otimes_{A_n}^{\mathbf L}\kappa(\mathfrak m)).
$$
Forster--Swan therefore bounds, uniformly in $n$, the number of
generators of $H^q(D_{q,n})$. Lemma~\ref{uniformfinite} implies
that the coadmissible module $H^q(D_q)$ is finitely generated over $A$.
Choose a finite free module $F^q$ mapping onto $H^q(D_q)$,
represent this map by $F^q[-q]\to D_q$, and put
$$
D_{q-1}=\mathrm{Cone}(F^q[-q]\to D_q).
$$
Use the corresponding cones to define $D_{q-1,n}$.
The map is surjective on top cohomology at every stage and after
every residue-field specialization. Thus the new derived fibres
vanish above $q-1$; in degree $q-1$ their dimensions are bounded by
the previous bound plus $\mathrm{rank}(F^q)$, and in lower degrees
they agree with the previous derived fibres.
The cohomology remains coadmissible because that category is
abelian and restriction to a stage is exact.
This proves the induction, including when $q<0$.

The resulting free-resolution construction gives a complex
$P^\bullet$ of finite free $A$-modules, with $P^j=0$ for $j>e$,
and quasi-isomorphisms
$$
P^\bullet\simeq C,\qquad
P^\bullet\otimes_AA_n\simeq C_n.
$$
Put $Q=\mathrm{coker}(P^{-1}\to P^0)$.
It is finitely presented and coadmissible.
Since there is no negative cohomology, truncation gives
$
C\simeq[Q\longrightarrow P^1\longrightarrow\cdots
\longrightarrow P^e].
$
Write $Q_n=Q\otimes_AA_n$. Exactness of restriction on
coadmissible modules gives the analogous representative of $C_n$.
The triangle with the projective subcomplex in degrees $[1,e]$
shows that, for every maximal ideal $\mathfrak m\subset A_n$,
$
\operatorname{Tor}^{A_n}_1(Q_n,\kappa(\mathfrak m))
\simeq
H^{-1}(C_n\otimes_{A_n}^{\mathbf L}\kappa(\mathfrak m))=0.
$
For $e=0$ the same identity follows directly.
The local criterion for flatness implies that $Q_n$ is finite
projective over $A_n$.

To prove projectivity over $A$, let $I\subset A$ be a finitely
generated ideal. It is coadmissible. A finite presentation of $Q$
shows that $I\otimes_AQ$ is coadmissible as well.
At every stage the map $I\otimes_AQ\to Q$ becomes
$
(IA_n)\otimes_{A_n}Q_n\longrightarrow Q_n,
$
which is injective because $Q_n$ is flat.
Its coadmissible kernel therefore has zero restriction to every
stage and is zero. The ideal criterion for flatness implies
that $Q$ is flat; finite presentation then implies that $Q$
is finite projective. This proves the assertion.
\end{proof}

\begin{remark}
\label{sheafmodule}
A priori, a $(\varphi, \Gamma_K)$ module over $\Lambda_{R,(0,b],K}$ is a sheaf over $\bigcup_{a \to 0} \textrm{Spa}(\Lambda_{R,[a,b],K})$ and not a module over $\Lambda_{R,(0,b],K}$. This gives a module $D_a$ over $\Lambda_{R,[a,b],K}$ for every $a$. Viewing $\bigcup_{a \to 0} \textrm{Spa}(\Lambda_{R,[a,b],K})$ as a quasi-Stein space (or $\Lambda_{R,(0,b],K}$ as a quasi-Stein algebra), we can go back and forth between these points of view by adapting the argument of Lemma 2.1.7 in \cite{bellovin2015p} to our pseudorigid setting, using the bounded-generation and projectivity results established above (for instance in Lemma \ref{uniformfinite}).
\end{remark}
 \textbf{Convention for completed tensor product:} Let $M, N$ be normed modules over a ring $S$, then $M {\otimes}_S N$ would have the tensor seminorm defined by $$|v|:= \inf_{r} \max |x_i| |y_i|$$ where $r$ ranges over all presentations on $v$ as a sum of elementary tensors $v = \sum_{i} x_i \otimes y_i$. The projective completed tensor product of normed $S$-modules is defined as the completion of the usual tensor product with respect to the tensor product norm. If the topologies on $M$ and $N$ are defined by a family of semi-norms, we can extend this notion in the obvious way. We write
\[
M \widehat{\otimes}_S N := M \widehat{\otimes}_{S,\pi} N
\]
for the projective completed tensor product. If one of $M$ or $N$ is a quasi-stein system given by a family of compatible norm, then we extend this notion in the obvious way.

\begin{definition}
Let $K/L$ be a finite extension, and let $D$ be a $(\varphi,\Gamma_K)$-module
over $\Lambda_{R,\mathrm{rig},K}$. We define the induced
$(\varphi,\Gamma_L)$-module by
$$\mathrm{Ind}_K^L(D)=
\operatorname{Hom}_{R[\Gamma_K]}(R[\Gamma_L],D),$$
with the usual induced $\Gamma_L$-action and coefficient restriction from
$\Lambda_{R,\mathrm{rig},L}$ to $\Lambda_{R,\mathrm{rig},K}$.
The Frobenius acts on the values of a homomorphism.
As an $R$-module this is a direct sum of $[\Gamma_L:\Gamma_K]$ copies of $D$.
The same construction is used at sufficiently small radii, with the
necessary radius change for the coefficient-ring inclusion.
\end{definition}
We now fix a $b$ and discuss some properties of the ring $\Lambda_{R, (0, b], K}$. Given $a'<a$, we note that the map $\Lambda_{R, [a', b], K} \to \Lambda_{R, [a, b], K}$ is flat and has topologically dense image from \cite{bellovin2024cohomology} and \cite{bellovin2023galois}. This makes $\Lambda_{R, (0, b], K}$ a quasi-Stein algebra in the sense of \cite{schneider2002algebras}, section 3. 
\begin{remark}
\label{spamax}
    Since $\text{Max}(\Lambda_{R,[a, b],K})$ is dense in $\mathrm{Spa}(\Lambda_{R,[a, b],K})$, from Prop 3.3(3), \cite{lourencco2017riemannian}, a sheaf over $\bigcup_{a \to 0} \mathrm{Spa}
(\Lambda_{R,[a, b],K})$ is also a sheaf over $\bigcup_{a \to 0} \text{Max}(\Lambda_{R,[a, b],K})$. Hence, we can view a projective $(\varphi, \Gamma_K)$ module over $R$ as a sheaf on a quasi-stein space and from Lemma 2.1.7, \cite{bellovin2015p}, we can view a finite projective $(\varphi, \Gamma_K)$ module (A priori defined as a sheaf) as an actual finite module over $\Lambda_{R, (0, b], K}$. 
\end{remark}   

To define Iwasawa cohomology (which we will do in the next section), we need to define the operator $\psi$, a one sided inverse to $\varphi$. The isomorphism $\varphi^{*}D_{(0,b]} \xrightarrow{\sim} D_{(0,b/p]}$ 
induces an isomorphism
$$
\Lambda_{R,(0,b/p],K} \otimes_{\varphi(\Lambda_{R,(0,b],K})} 
\varphi(D_{(0,b]})
\xrightarrow{\sim}
D_{(0,b/p]}.
$$

We therefore have a surjective homomorphism 
$\psi_{D} : D_{(0,b/p]} \to D_{(0,b]}$ defined by setting 
$\psi_{D}(a \otimes \varphi(d)) = \psi(a)\, d$, where 
$a \in \Lambda_{R,(0,b/p],K}$ and $d \in D_{(0,b]}$.

For a $(\varphi, \Gamma_K)$ module $M$ over the Robba ring over a pseudoaffinoid $R$, we define its Tate dual
$$
M^* =
\operatorname{Hom}_{\Lambda_{R,\mathrm{rig},K}}
(M,\Lambda_{R,\mathrm{rig},K})(\chi_{cyc}).
$$
When $M^*$ is viewed as a sheaf,
$$
M^*_{(0,b]}=
\operatorname{Hom}_{\Lambda_{R,(0,b],K}}
(M_{(0,b]},\Lambda_{R,(0,b],K})(\chi_{cyc}).
$$
The $(\varphi,\Gamma_K)$-structure on the dual is the usual dual
$(\varphi,\Gamma_K)$-structure. In particular, before applying the Tate
twist, for
$f\in
\operatorname{Hom}_{\Lambda_{R,\mathrm{rig},K}}
(M,\Lambda_{R,\mathrm{rig},K})$
we have
$
(\gamma f)(m)
=
\gamma\left(f(\gamma^{-1}m)\right),
$
and $\varphi(f)$ is uniquely characterized by
$
(\varphi f)(\varphi m)=\varphi(f(m)).
$
The operators on $M^*$ are obtained after applying the Tate twist.

\begin{definition}
\label{defpairM}

Write $e_1$ for the standard basis of
$\Lambda_{R,\mathrm{rig},\mathbb Q_p}(1)$, so that
$\varphi(e_1)=e_1$ and
$\gamma(e_1)=\chi_{\mathrm{cyc}}(\gamma)e_1$.
Define
$
\operatorname{Res}_1(fe_1)
=\operatorname{Res}\left(f(T)\frac{dT}{1+T}\right).
$
Here $\operatorname{Res}$ is the residue on differentials
defined in Section~3.3.
For a $(\varphi,\Gamma_{\mathbb Q_p})$-module $M$, let
$\langle\ ,\ \rangle:M\times M^*\to
\Lambda_{R,\mathrm{rig},\mathbb Q_p}(1)$
be the tautological pairing, and put
$\{x,y\}=\operatorname{Res}_1\langle x,y\rangle.
$
\end{definition}

\begin{lemma}
\label{gammaphipsiprop}
Let $M$ be a finite projective $(\varphi,\Gamma_{\mathbb Q_p})$-module,
and use the pairing of Definition~\ref{defpairM}.
For $x\in M$, $y\in M^*$, and $\gamma\in\Gamma_{\mathbb Q_p}$,
$$
\{\gamma(x),\gamma(y)\}
=\{\varphi(x),\varphi(y)\}
=\{x,y\},
$$
and
$$
\{\varphi(x),y\}=\{x,\psi(y)\},
\qquad
\{\psi(x),y\}=\{x,\varphi(y)\}.
$$
\end{lemma}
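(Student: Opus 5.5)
The plan is to reduce all four identities to rank-one statements about the functional $\operatorname{Res}_1$ on $\Lambda_{R,\mathrm{rig},\mathbb Q_p}(1)$. These rank-one statements will then be checked by a universal computation on Laurent monomials, which avoids dividing by $p$.

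\emph{Functoriality of the tautological pairing.} With the dual structure recalled before Definition~\ref{defpairM}, together with the twist, we have
\[
\langle\gamma x,\gamma y\rangle=\gamma\langle x,y\rangle
\qquad\text{and}\qquad
\langle\varphi x,\varphi y\rangle=\varphi\langle x,y\rangle
\]
in $\Lambda_{R,\mathrm{rig},\mathbb Q_p}(1)$. Hence the first line of the lemma reduces to the two identities
\[
\operatorname{Res}_1(\gamma(fe_1))=\operatorname{Res}_1(fe_1),
\qquad
\operatorname{Res}_1(\varphi(f)e_1)=\operatorname{Res}_1(fe_1).
\]

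\emph{The $\Gamma$-identity.} Write $c=\chi_{\mathrm{cyc}}(\gamma)$. Then $\gamma(T)=(1+T)^{c}-1$, so that
\[
\frac{d\gamma(T)}{1+\gamma(T)}=c\,\frac{dT}{1+T}.
\]
Consequently
\[
\operatorname{Res}_1(\gamma(fe_1))=\operatorname{Res}\Bigl(\gamma(f)\,\frac{d\gamma(T)}{1+\gamma(T)}\Bigr).
\]
By Remark~\ref{indetind}, applied to the continuous automorphism $\gamma$ and the new indeterminate $\gamma(T)$, this equals $\operatorname{Res}(f\,dT/(1+T))$. No division occurs here, since $c\in\mathbb Z_p^\times$.

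\emph{The $\varphi$-identity.} Here the naive argument gives $p$ times the desired identity, which is useless when $p$ is not invertible in $R$. Instead I will argue as follows.
\begin{enumerate}
\item Both sides are continuous and $R$-linear in $f$, so by density of Laurent polynomials (as in the proof of Theorem~\ref{robbaduality}) it suffices to treat $f=T^n$, $n\in\mathbb Z$.
\item For $f=T^n$, both $\operatorname{Res}(\varphi(T^n)\,dT/(1+T))$ and $\operatorname{Res}(T^n\,dT/(1+T))$ are images of universal elements of $\mathbb Z_p$. They are computed by the same formal expansions over $\mathbb Z_p[[T]][T^{-1}]^{\wedge}$, pushed into $R$.
\item Over $\mathbb Q_p$ the classical identity holds: $\varphi$ has degree $p$, so the residue picks up a factor $p$, which cancels the factor $p$ in $d\varphi(T)/(1+\varphi(T))=p\,dT/(1+T)$.
\item Since $\mathbb Z_p$ is torsion free, the identity of universal constants already holds in $\mathbb Z_p$, and therefore holds in $R$.
\end{enumerate}
The same universal argument gives the relation $\operatorname{Res}_1(\psi(f)e_1)=\operatorname{Res}_1(fe_1)$.

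\emph{The adjunction formulas.} These follow from the decomposition
\[
M^*=\bigoplus_{i=0}^{p-1}(1+T)^i\varphi(M^*),
\qquad
y=\sum_{i=0}^{p-1}(1+T)^i\varphi(y_i),
\quad
y_0=\psi(y),
\]
which is implicit in the definition of $\psi$. The construction of $\psi$ uses the $\varphi$-isomorphism and freeness of $\Lambda$ over $\varphi(\Lambda)$ with basis $(1+T)^i$; the relation $y_0=\psi(y)$ uses $\psi((1+T)^i)=0$ for $0<i<p$. Then
\[
\{\varphi x,y\}=\sum_{i=0}^{p-1}\operatorname{Res}_1\bigl((1+T)^i\varphi\langle x,y_i\rangle\bigr).
\]
I will apply $\operatorname{Res}_1=\operatorname{Res}_1\circ\psi$ to each term, together with $\psi((1+T)^i\varphi(g))=\psi((1+T)^i)\,g$. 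The terms with $i\neq0$ vanish, and the term $i=0$ gives $\{x,\psi(y)\}$. The formula $\{\psi x,y\}=\{x,\varphi y\}$ follows by the symmetric argument, decomposing $x$ instead of $y$.

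The main obstacle is the $\varphi$-invariance of $\operatorname{Res}_1$ and its $\psi$-analogue without inverting $p$. The reduction to universal integer constants on monomials is the step that needs care. In particular, one must check that continuity holds in the pseudorigid LF topology, so that density of Laurent polynomials applies.
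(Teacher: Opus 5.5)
Your proof is correct. Its skeleton is the paper's: functoriality of $\langle\ ,\ \rangle$, the change of variable via Remark~\ref{indetind} for $\Gamma$, and the decomposition $y=\sum_i(1+T)^i\varphi(y_i)$ for the adjunctions. The difference lies in how you prove the scalar identity.

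Note first that your relation $\operatorname{Res}_1\circ\psi=\operatorname{Res}_1$ is equivalent to the paper's identity $\operatorname{Res}_1\bigl((1+T)^i\varphi(f)e_1\bigr)=\delta_{i0}\operatorname{Res}_1(fe_1)$. It already gives $\varphi$-invariance via $\psi\varphi=1$, so your separate $\varphi$-argument is redundant.

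The paper proves the scalar identity by a direct computation on monomials. For $n=-m<0$ it uses $\operatorname{Res}\bigl(g\,\frac{dT}{1+T}\bigr)=-g(-1)$ for negative Laurent series with coefficients tending to zero. It then evaluates $\varphi(T^{-m})$ at $T=-1$, which is legitimate since $p\mid\binom pj$, and kills the terms $1\le i\le p-1$ by a degree count. You instead observe that the monomial residues are images of universal constants in $\mathbb Z_p$. You then import the characteristic-zero identities and cancel $p$ in the torsion-free ring $\mathbb Z_p$.

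The paper's route is self-contained and gives explicit values uniformly in all characteristics. Yours explains conceptually why the factor $p$ disappears, but it rests on the degree formula $\operatorname{Res}(\varphi^*\omega)=p\operatorname{Res}(\omega)$ on the Robba ring over $\mathbb Q_p$. That formula's proof integrates differentials with zero residue, so it is genuinely a characteristic-zero input that you cite rather than prove. Your route also relies on two checks you leave implicit:
\begin{enumerate}
\item The universal objects must be overconvergent series with $\mathbb Z_p$-coefficients. The full completion $\mathbb Z_p[[T]][T^{-1}]^\wedge$ does not map to $\Lambda_{R,\mathrm{rig},\mathbb Q_p}$.
\item For $\psi$, you need $\psi(T^{-m})=T^{-m}q_0(T)$, where $q_0\in\mathbb Z[T]$ is the $0$-th component of $(\varphi(T)/T)^m$ in the basis $(1+T)^i$. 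This is what makes its residue a universal integer.
\end{enumerate}
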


\begin{proof}
The adjunction identities follow from the fact that
$\operatorname{Res}_1$ extracts the zeroth Frobenius summand.
We first establish the scalar identity
$$
\operatorname{Res}_1\bigl((1+T)^i\varphi(f)e_1\bigr)
=
\begin{cases}
\operatorname{Res}_1(fe_1),&i=0,\\
0,&1\leq i\leq p-1.
\end{cases}
$$
By continuity, it suffices to check this for Laurent monomials
$f=T^n$. For $n\geq0$, both sides vanish.
For $n=-m<0$, write
$$
h(T):=\varphi(T^{-m})
=T^{-pm}\left(1+
\sum_{j=1}^{p-1}\binom pj T^{j-p}\right)^{-m},
$$
using the convergent expansion on sufficiently small Robba
annuli. This expansion contains only powers at most $-pm$.
Since each coefficient $\binom pj$ is divisible by $p$,
the expansion also converges at $T=-1$, where
$h(-1)=(-1)^{-m}$.
For a negative Laurent series $g$ whose coefficients tend
to zero, coefficient extraction gives
$$
\operatorname{Res}\left(g(T)\frac{dT}{1+T}\right)=-g(-1).
$$
Applying this to $h$ and $T^{-m}$ proves the identity for
$i=0$. For $1\leq i\leq p-1$, the series
$(1+T)^{i-1}h(T)$ has no $T^{-1}$ term, since its powers
are at most $-pm+p-2\leq-2$.
This proves the scalar identity, including in
characteristic $p$, without dividing by $p$.

In particular,
$\operatorname{Res}_1(\varphi z)=\operatorname{Res}_1(z)$.
For $\gamma\in\Gamma_{\mathbb Q_p}$, put
$a=\chi_{\mathrm{cyc}}(\gamma)$ and $U=(1+T)^a-1$.
Since $\gamma(fe_1)=a f(U)e_1$ and
$dU/(1+U)=a\,dT/(1+T)$, invariance of residue under this
invertible change of variable gives
$
\operatorname{Res}_1\bigl(\gamma(fe_1)\bigr)
=\operatorname{Res}\left(f(U)\frac{dU}{1+U}\right)
=\operatorname{Res}_1(fe_1).
$
Equivariance of the tautological pairing now gives
$
\{\gamma(x),\gamma(y)\}
=\{\varphi(x),\varphi(y)\}
=\{x,y\}.
$

Finally, write
$$
x=\sum_{i=0}^{p-1}(1+T)^i\varphi(x_i),
\qquad
y=\sum_{i=0}^{p-1}(1+T)^i\varphi(y_i).
$$
By the definition of $\psi$, we have
$\psi(x)=x_0$ and $\psi(y)=y_0$.
Bilinearity, Frobenius equivariance and the scalar identity
therefore give
$$
\begin{aligned}
\{\varphi(x),y\}
&=\sum_{i=0}^{p-1}
\operatorname{Res}_1\bigl((1+T)^i
\varphi\langle x,y_i\rangle\bigr)
=\{x,y_0\}=\{x,\psi(y)\},\\
\{x,\varphi(y)\}
&=\sum_{i=0}^{p-1}
\operatorname{Res}_1\bigl((1+T)^i
\varphi\langle x_i,y\rangle\bigr)
=\{x_0,y\}=\{\psi(x),y\}.
\end{aligned}
$$
\end{proof}
\subsection{Analyzing the analytic action of $\gamma-1$ on $(\varphi, \Gamma)$-modules}

In this section, we analyze how the action of power series in $\gamma-1$ looks like on a given $(\varphi, \Gamma)$-module.

The following Lemma is essentially Proposition 2.2.14 of \cite{kedlaya2014cohomology}, but we state the proof here again, highlighting those aspects of the proof more clearly that will be crucial for us later. This will be used extensively to prove the main theorem.
\begin{lemma}
    \label{2214}
    Consider the ring $\Lambda_{R, (0, b], K}$, and for each
    $0<a\leq b$, let $|\cdot|_{[a, b]}$ be the chosen Banach
    norm on $\Lambda_{R, [a, b], K}$, and
    $\|\cdot\|_{[a, b]}$ the associated operator norm.
    \begin{enumerate}
        \item The action of $\Gamma_K$ on
        $\Lambda_{R, (0, b], K}$ is continuous. Moreover, for
        every $0<a\leq b$,
        $$
        \lim_{\gamma\to 1}\|\gamma-1\|_{[a, b]}=0.
        $$
        \item The action of $R[\Gamma_K]$ on
        $\Lambda_{R, (0, b], K}$ extends continuously to an
        action of $\Lambda_{R, (0, \infty]}(\Gamma_K)$.
        More precisely, fix $n\geq 1$ ($n\geq 2$ if $p=2$)
        such that $\Gamma_n\subseteq\Gamma_K$. For every
        $0<a\leq b$, there exists $k_a=p^{n_a}$ such that
        $$
        \|(\gamma-1)^{k_a}\|_{[a, b]}\leq\omega
        $$
        for every $\gamma\in\Gamma_n$.
        \item We can choose the $k_a$ in (b) such that
        $k_a\to\infty$ as $a\to 0$.
    \end{enumerate}
\end{lemma}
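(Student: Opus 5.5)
We follow the strategy of \cite[Proposition 2.2.14]{kedlaya2014cohomology} and reduce to $K=\mathbb{Q}_p$. By Remark~\ref{compdifflamb}, $\Lambda_{R,[a,b],K}$ is finite free over $\Lambda_{R,[a,b],\mathbb{Q}_p}$, with basis elements built from $\pi_K$ and $\Gamma_K$ acting semilinearly. Continuity of $\Gamma_K$ and smallness of $\|\gamma-1\|$ on the basis then transfer the estimates from $\mathbb{Q}_p$ to $K$. This costs only a constant, which is absorbed by raising $k_a$ by a further power of $p$.

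Over $\mathbb{Q}_p$ we first compute $\gamma-1$ on the variable $\pi$. For $\gamma$ with $\chi(\gamma)=1+p^m c$, we have
\[
(\gamma-1)(\pi)=(1+\pi)\bigl((1+\pi)^{p^m c}-1\bigr).
\]
The factor $(1+\pi)^{p^m c}-1$ is congruent to $\varphi^m(\pi)$ modulo $p$ times lower terms. Its $|\cdot|_r$-norm is at most $\max\bigl(|\pi|_r^{p^m},\ |p|\cdot|\pi|_r\bigr)$, and this is small relative to $|\pi|_r$ once $m$ is large. When $p$ is not invertible we cannot divide by $p$. Instead we use $|p|\le p^{-\lambda/p}<1$ in $R$ (and $p=0$ in characteristic $p$), together with $|\pi|_r=\omega^r$, and we track only the binomial coefficients, as in the change-of-variable estimate $|f((1+T)^p-1)|_r=|f(T)|_{pr}$ for $r<\lambda/p$ from the group-Robba definition.

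Next comes the twisted Leibniz rule
\[
(\gamma-1)(fg)=(\gamma-1)(f)\,\gamma(g)+f\,(\gamma-1)(g).
\]
Applied to monomials $\pi^i$ with $i\in\mathbb{Z}$ (negative powers via $\gamma(\pi^{-1})=\gamma(\pi)^{-1}$), it gives
\[
|(\gamma-1)\pi^i|_r\le \epsilon(\gamma,r)\,|\pi^i|_r,
\]
with $\epsilon(\gamma,r)\to0$ as $\gamma\to1$ uniformly for $r\in[a,b]$. Since $|\cdot|_{[a,b]}$ is the supremum over coefficients, the bound passes to all of $\Lambda_{R,[a,b]}$, and this gives (a).

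For (b), a bound $\epsilon<1$ on all of $\Gamma_n$ does not follow directly for large $n$, so we use the identity
\[
(\gamma-1)^{p^k}\equiv \gamma^{p^k}-1 \pmod{p\,R[\gamma]}.
\]
The reduction of this congruence is the main obstacle. Since $\gamma^{p^k}$ lies in $\Gamma_{n+k}$, part (a) makes $\gamma^{p^k}-1$ small. The correction terms are $\binom{p^k}{j}(\gamma-1)^j$ with $0<j<p^k$, and each carries a factor of $p$ with $|p|<1$ in $R$. Using $\|\gamma-1\|\le C$ uniformly (power-boundedness of $\gamma$ gives $C\le1$ for a suitable norm), all terms are bounded by $\max(|p|,\|\gamma^{p^k}-1\|)$. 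Iterating $(\gamma-1)^{p^{k+1}}=\bigl((\gamma-1)^{p^k}\bigr)^p$ drives the bound below $\omega$ for some $k=n_a$. In characteristic $p$ the congruence is an equality, which simplifies matters.

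For (c), we note that $\|\gamma-1\|_{[a,b]}$ on $\pi$ is governed by $|p|$ and $\omega^{a(p^m-1)}$. As $a\to0$, the radius $\omega^a\to1$, so any fixed number of iterations eventually fails to reach $\omega$. Hence $n_a\to\infty$, and we may choose the $k_a$ monotone, so that $k_a\to\infty$.
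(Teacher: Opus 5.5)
Your overall route matches the paper's. You reduce to $\mathbb{Q}_p$ through a finite basis and the twisted Leibniz rule, and you control $\gamma(\pi^i)-\pi^i$ through $\gamma(\pi)-\pi=(1+\pi)\bigl((1+\pi)^{p^mc}-1\bigr)$. For (b) you pass to a $\Gamma_n$-invariant norm, write $(\gamma-1)^{p^k}=(\gamma^{p^k}-1)+pP(\gamma-1)$, and iterate.

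However, the key estimate in (a) is too weak for the stated conclusion. Your bound $|(1+\pi)^{p^mc}-1|_r\le\max\bigl(|\pi|_r^{p^m},|p|\,|\pi|_r\bigr)$ is true. But relative to $|\pi|_r$ it only gives $\max\bigl(|\pi|_r^{p^m-1},|p|\bigr)$. Whenever $p\neq0$ in $R$, this tends to the fixed constant $|p|$ as $m\to\infty$, not to $0$. So $\epsilon(\gamma,r)\to0$ does not follow, and (a) is not proved.

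This also undermines (b). There you need $\|\gamma^{p^k}-1\|'<1$ in the invariant norm, but you only know $\|F\|'\le C_a\|F\|$ for operators $F$. A bound of size $|p|$ in the original norm therefore does not suffice.

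The repair is the sharper divisibility $v_p\binom{p^mc}{j}\ge m-v_p(j)$, i.e.\ $\bigl|\binom{p^mc}{j}\bigr|\le|p^{m-v_p(j)}|$. Split $\sum_{j\ge1}\binom{p^mc}{j}\pi^{j-1}$ into the ranges $j<p^s$ and $j\ge p^s$ with $s\approx m/2$. This gives $\bigl|\bigl((1+\pi)^{p^mc}-1\bigr)/\pi\bigr|_r\le\max\bigl(\sup_{t\ge m-s+1}|p^t|,\ \omega^{a(p^s-1)}\bigr)$ for all $r\in[a,b]$. This tends to $0$ because $p$ is topologically nilpotent in $R$.

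Your argument for (c) is also not a proof. Noting that your upper bounds (governed by $|p|$ and $\omega^{a(p^m-1)}$) approach $1$ as $a\to0$ says nothing about the actual operator norms. So it does not show that the least admissible $n_a$ tends to infinity, and choosing $k_a$ monotone does not by itself give $k_a\to\infty$.

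No lower bound is needed. By submultiplicativity, $\|(\gamma-1)^{p^{n_a+h}}\|_{[a,b]}\le\|(\gamma-1)^{p^{n_a}}\|_{[a,b]}^{p^h}\le\omega^{p^h}\le\omega$, so any admissible $n_a$ may be enlarged. Taking $n_a\ge\lceil\log_p(b/a)\rceil$ then gives $k_a\ge b/a\to\infty$. This is exactly the paper's argument, and the observation is already implicit in your iteration step for (b).
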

\begin{proof}
For (a), the continuity of the action follows from the
construction of the rings. We prove the stronger statement
about operator norms.

The module
$\operatorname{Ind}_K^{\mathbb{Q}_p}
\Lambda_{R, (0, b], K}$
is non-canonically a direct sum of finitely many
$\Gamma_K$-stable copies of $\Lambda_{R, (0, b], K}$.
By Remark \ref{compdifflamb}, it is finite over
$\Lambda_{R, (0,  b], \mathbb{Q}_p}$.
To reduce to the scalar case, choose finitely many generators
$e_1,\ldots,e_d$. On each closed annulus, finite generation
allows us to write $f=\sum_i f_i e_i$ with
$\max_i|f_i|\leq C|f|$, where $C$ is independent of $f$.
Now
$$
\gamma(f_i e_i)-f_i e_i
=
(\gamma(f_i)-f_i)e_i
+
\gamma(f_i)(\gamma(e_i)-e_i).
$$
Since $\gamma(e_i)\to e_i$ for each $i$, a uniform
operator-norm estimate on the scalar ring gives the required
estimate on the induced module, and hence on the original
ring. We therefore work with $\mathbb{Q}_p$.

Write $\pi=\pi_{\mathbb{Q}_p}$. We may use the usual Gauss
norms, since convergence to zero in operator norm is
unchanged upon replacing a Banach norm by an equivalent one.
We must show that, for every $\epsilon>0$, there exists $k$
such that
$$
|\gamma(f)-f|_{[a, b]}
\leq\epsilon|f|_{[a, b]}
$$
for every $\gamma\in\Gamma_k$ and every
$f\in\Lambda_{R, [a, b], \mathbb{Q}_p}$.

Suppose $\gamma\in\Gamma_k$, so that
$\chi(\gamma)=1+tp^k$ for some $t\in\mathbb{Z}_p$. Then
\begin{equation}
    \label{pii}
    \gamma(\pi)-\pi
    =
    (1+\pi)((1+\pi)^{tp^k}-1)
\end{equation}
and
\begin{equation}
    \label{1bypi}
    \frac{1}{\gamma(\pi)}-\frac{1}{\pi}
    =
    -\frac{1+\pi}{\pi}
    \cdot
    \frac{(1+\pi)^{tp^k}-1}
    {(1+\pi)((1+\pi)^{tp^k}-1)+\pi}.
\end{equation}
Since $|\pi|_{[a, b]}<1$, taking higher powers of $p$ gives
$$
\sup_{t\in\mathbb{Z}_p}
|(1+\pi)^{tp^k}-1|_{[a, b]}
\longrightarrow 0.
$$
Thus, for $0<\epsilon<1$, we can choose $k$ such that
$$
\left|\frac{\gamma(\pi)}{\pi}-1\right|_r<\epsilon
$$
for every $\gamma\in\Gamma_k$ and both $r=a,b$.
In particular, $|\gamma(\pi)|_r=|\pi|_r$. Using
\eqref{pii} for positive powers and \eqref{1bypi} for
negative powers, we obtain
$
|\gamma(\pi^i)-\pi^i|_r
\leq\epsilon|\pi^i|_r
$
for every $i\in\mathbb{Z}$ and both $r=a,b$.
For a general convergent Laurent series
$f=\sum_{i\in\mathbb{Z}}a_i\pi^i$, it follows that
$
|\gamma(f)-f|_r
\leq
\epsilon\sup_{i\in\mathbb{Z}}|a_i\pi^i|_r
=
\epsilon|f|_r.
$
Taking the maximum over $r=a,b$ proves (a).

For (b), fix $n$ such that $\Gamma_n\subseteq\Gamma_K$.
By (a) and compactness of $\Gamma_n$,
$
C_a=\sup_{\gamma\in\Gamma_n}\|\gamma\|_{[a, b]}<\infty.
$
We first use the equivalent $\Gamma_n$-invariant norm
$$
|f|'_{[a, b]}
=
\sup_{\gamma\in\Gamma_n}|\gamma(f)|_{[a, b]},
$$
for which
$$
|f|_{[a, b]}
\leq |f|'_{[a, b]}
\leq C_a|f|_{[a, b]}.
$$
Let $\|\cdot\|'_{[a, b]}$ be its associated operator norm.
Then
$
\|\gamma-1\|'_{[a, b]}\leq 1
$
for every $\gamma\in\Gamma_n$.

By (a), choose $q_a\geq n$ such that
$
\|\eta-1\|'_{[a, b]}\leq\omega
$
for every $\eta\in\Gamma_{q_a}$.
For $\gamma\in\Gamma_n$, we have
$\gamma^{p^{q_a-n}}\in\Gamma_{q_a}$, and the binomial
identity gives
$
(\gamma-1)^{p^{q_a-n}}
=
(\gamma^{p^{q_a-n}}-1)
+
pP_a(\gamma-1),
$
where $P_a(X)\in\mathbb{Z}[X]$. Consequently,
$
\|P_a(\gamma-1)\|'_{[a, b]}\leq 1.
$
The two terms on the right commute. Raising this identity
to the power $p^h$ therefore gives
$$
\|(\gamma-1)^{p^{q_a-n+h}}\|'_{[a, b]}
\leq
\max_{0\leq j\leq p^h}
\left\{
\omega^{p^h-j}\|p^j\|'_{[a, b]}
\right\},
$$
where $\|p^j\|'_{[a, b]}$ denotes the operator norm of
multiplication by $p^j$. Since $p$ is topologically
nilpotent, this maximum tends to zero as $h\to\infty$.
The bound is independent of $\gamma\in\Gamma_n$.
Since
$$
\|F\|_{[a, b]}\leq C_a\|F\|'_{[a, b]}
$$
for every bounded operator $F$, we can choose $h$ large
enough that, with $n_a=q_a-n+h$ and $k_a=p^{n_a}$,
$
\|(\gamma-1)^{k_a}\|_{[a, b]}\leq\omega
$
for every $\gamma\in\Gamma_n$.

It remains to explain the extension of the action.
Choose a topological generator $\gamma_n$ of $\Gamma_n$.
The binomial formula and the definition of $C_a$ give
$
\|(\gamma_n-1)^r\|_{[a, b]}\leq C_a
$
for every nonnegative integer $r$. Writing $j=qk_a+r$,
where $0\leq r<k_a$, we obtain
$
\|(\gamma_n-1)^j\|_{[a, b]}
\leq C_a\omega^q.
$
If
$
\sum_{j\geq 0}b_j(\gamma_n-1)^j
\in\Lambda_{R, (0, \infty]}(\Gamma_n),
$
then
$
|b_j|\omega^{j/k_a}\longrightarrow 0.
$
It follows that the corresponding series converges in
operator norm on $\Lambda_{R, [a, b], K}$.
These actions are compatible with restriction as $a$
varies. Taking the inverse limit over $a$ and using
$
\Lambda_{R, (0, \infty]}(\Gamma_K)
=
\Lambda_{R, (0, \infty]}(\Gamma_n)
\otimes_{R[\Gamma_n]}R[\Gamma_K]
$
proves (b).

For (c), observe that replacing $n_a$ by any larger integer
preserves the estimate in (b), since
$$
\|(\gamma-1)^{p^{n_a+h}}\|_{[a, b]}
\leq
\|(\gamma-1)^{p^{n_a}}\|_{[a, b]}^{p^h}
\leq\omega^{p^h}\leq\omega.
$$
We may therefore enlarge $n_a$ so that
$
n_a\geq\left\lceil\log_p(b/a)\right\rceil.
$
Then $k_a=p^{n_a}\geq b/a$, and hence $k_a\to\infty$
as $a\to 0$.
\end{proof}
Given $x \in D_{(0, b]}^{\psi = 1}$, we can verify that $(\varphi-1)x \in D_{(0, b]}^{\psi = 0}$. We can also check that $D_{(0, b]}^{\varphi = 1} \subset D_{(0, b]}^{\psi = 1}$.

\begin{proposition}
\label{mapnat}
    The action of $R[\Gamma_K]$ on $\Lambda_{R, [a, b], K}$ extends to an action of $\Lambda_{R, [m_a, \infty]}(\Gamma_K)$ on $\Lambda_{R, [a, b], K}$ for some $m_a$. 
\end{proposition}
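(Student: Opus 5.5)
The estimate in Lemma~\ref{2214}(b) already gives what we need; the only task is to recast it in terms of the Banach algebra $\Lambda_{R,[m,\infty]}(\Gamma_K)$ and pick $m_a$ correctly.

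\textbf{Step 1: reduction to $\Gamma_n$.} Fix $n$ with $\Gamma_n\subseteq\Gamma_K$, and let $\gamma_n$ be a topological generator of $\Gamma_n$. By the convention for groups with finite torsion in the definition of the group Robba rings,
$$\Lambda_{R,[m,\infty]}(\Gamma_K)=\Lambda_{R,[[\Gamma_K:\Gamma_n]m,\infty]}(\Gamma_n)\otimes_{\mathbb{Z}[\Gamma_n]}\mathbb{Z}[\Gamma_K].$$
The factor $\mathbb{Z}[\Gamma_K]$ acts through the given action of $R[\Gamma_K]$. So it suffices to extend the action of $\Lambda_{R,[m',\infty]}(\Gamma_n)$ for a suitable $m'$, and then set $m_a=m'/[\Gamma_K:\Gamma_n]$. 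Compatibility with the tensor product holds because the action of $\Gamma_K$ commutes with that of $\Gamma_n$.

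\textbf{Step 2: convergence on the Banach stage.} By Lemma~\ref{2214}(b) there is $k_a=p^{n_a}$ with $\|(\gamma_n-1)^{k_a}\|_{[a,b]}\le\omega$. As in the end of that proof, writing $j=qk_a+r$ gives
$$\|(\gamma_n-1)^j\|_{[a,b]}\le C_a\,\omega^{q}\le C_a\,\omega^{-1}\,\omega^{j/k_a}.$$
An element of $\Lambda_{R,[m',\infty]}(\Gamma_n)$ is a series $\sum_j b_j(\gamma_n-1)^j$ with $|b_j|\omega^{m'j}\to0$. This uses Lemma~\ref{ringofdef} to describe the ring of definition, with $[m',\infty]$ in place of $[a,b]$; there are no negative powers.

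Choose $m'$ rational and admissible with $m'\le 1/k_a$. Then
$$|b_j|\,\|(\gamma_n-1)^j\|_{[a,b]}\le C_a\omega^{-1}|b_j|\omega^{j/k_a}\le C_a\omega^{-1}|b_j|\omega^{m'j}\to0.$$
Hence the series converges in operator norm, and we get a bounded map $\Lambda_{R,[m',\infty]}(\Gamma_n)\to\operatorname{End}_{\mathrm{cont}}(\Lambda_{R,[a,b],K})$.

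\textbf{Step 3: the map is an action.} The map is a ring homomorphism because it is one on polynomials in $\gamma_n-1$, and polynomials are dense in $\Lambda_{R,[m',\infty]}(\Gamma_n)$ for its norm. Multiplication is continuous on both sides, so the homomorphism property passes to the limit. The action is also compatible with the action of $\Lambda_{R,(0,\infty]}(\Gamma_K)$ from Lemma~\ref{2214}, since that ring maps into the Banach stage.

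\textbf{Main obstacle: matching the norms.} The delicate point is that the norm on $\Lambda_{R,[m',\infty]}$ comes from the pseudoaffinoid presentation in terms of $u$, not from a genuine Gauss norm. We need the bound $|b_j|\omega^{m'j}\to0$ to actually follow from membership in the ring. Lemma~\ref{ringofdef} gives exactly this for the $\pi$-variable, with $|u|$ normalized so that radii and $\omega$ exponents agree, and replacing $\pi$ by $\gamma_n-1$ transfers it verbatim. If the admissibility condition (that $1/(m'v(\bar\pi))$ be an integer) conflicts with $m'\le1/k_a$, shrink $m'$ further. This is harmless, because enlarging $k_a$ is allowed by Lemma~\ref{2214}(c), and smaller $m'$ only strengthens the decay.
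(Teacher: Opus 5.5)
Your proposal is correct and follows essentially the same route as the paper: it uses the bounds $\|(\gamma_n-1)^{k_a}\|_{[a,b]}\le\omega$ and $\|(\gamma_n-1)^r\|_{[a,b]}\le C_a$ from Lemma~\ref{2214} to get operator-norm convergence of $\sum_j b_j(\gamma_n-1)^j$ on $\Lambda_{R,[1/k_a,\infty]}(\Gamma_n)$, and then applies the finite-index convention to take $m_a=1/([\Gamma_K:\Gamma_n]k_a)$. Your density argument for multiplicativity and your allowance for any admissible $m'\le 1/k_a$ only spell out details that the paper leaves implicit.
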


\begin{proof}
Choose the smallest integer $n$ such that $\Gamma_n\subset\Gamma_K$. Then
$\Gamma_K=\Gamma_n\times\Gamma_{K,\mathrm{tors}}$, where
$\Gamma_{K,\mathrm{tors}}$ is finite. We construct the action of $\Lambda_{R,[1/k_a,\infty]}(\Gamma_n)$ below and then extend it using the finite-index convention.

By Lemma~\ref{2214}, there exists $k_a\in\mathbb N$ such that
$\|(\gamma_n-1)^{k_a}\|_{\Lambda_{R,[a,b],K}}\le\omega$. We claim that one may take $m_a=1/([\Gamma_K:\Gamma_n]k_a)$.

Let
$f(\gamma_n-1)=\sum_{j\ge0}b_j(\gamma_n-1)^j
\in\Lambda_{R,[1/k_a,\infty]}(\Gamma_n)$.
Since
$\lim_{j\to\infty}|b_j|\omega^{j/k_a}=0$, writing
$j=mk_a+r$ with $0\le r<k_a$, we obtain
$\lim_{m\to\infty}|b_{mk_a+r}|\omega^{m+r/k_a}=0$ for every
$r\in\{0,\ldots,k_a-1\}$. Hence
$$
f(\gamma_n-1)
=
\sum_{r=0}^{k_a-1}
\sum_{m\ge0}
b_{mk_a+r}(\gamma_n-1)^{mk_a+r}.
$$

Now, let
$g(T)=\sum_{i\in\mathbb Z}c_iT^i
\in\Lambda_{R,[a,b],K}$.
By Lemma \ref{2214}, for $j=mk_a+r$, where
$0\le r<k_a$, we have
$$
|b_{mk_a+r}(\gamma_n-1)^{mk_a+r}\cdot g(T)|_{[a,b]}
\le
C_a|b_{mk_a+r}|\omega^m|g(T)|_{[a,b]}.
$$
Since
$f(T)=\sum_{j\ge0}b_jT^j
\in\Lambda_{R,[1/k_a,\infty]}(\Gamma_n)$,
we have
$\lim_{j\to\infty}|b_j|\omega^{j/k_a}=0$.
As $m\le j/k_a<m+1$, the right-hand side
tends to $0$ as $m\to\infty$. Hence
$f(\gamma_n-1)\cdot g(T)$ converges in
$\Lambda_{R,[a,b],K}$, giving an action of
$\Lambda_{R,[1/k_a,\infty]}(\Gamma_n)$ on
$\Lambda_{R,[a,b],K}$.

Finally,
$$
\Lambda_{R,[1/[\Gamma_K:\Gamma_n]k_a,\infty]}(\Gamma_K)
=
\Lambda_{R,[1/k_a,\infty]}(\Gamma_n)
\otimes_{R[\Gamma_n]}
R[\Gamma_K],
$$
since $\Gamma_{K,\mathrm{tors}}$ is finite. Extending scalars therefore
gives the desired action of
$\Lambda_{R,[1/[\Gamma_K:\Gamma_n]k_a,\infty]}(\Gamma_K)$ on
$\Lambda_{R,[a,b],K}$.
\end{proof}

The following lemma is exactly the same as Theorem 3.1.1 and Lemma 3.1.2 from \cite{kedlaya2014cohomology}, we however include the proof for completeness.

\begin{lemma}\label{lem:closed-psi-zero-projective}
Put $\Gamma=\Gamma_{\mathbb{Q}_p}$ and
$\mu(t)=\nu e_K t$, where
$\nu=p/(p-1)$. For small enough $b$ and every $0<a\leq b$,
the module $D_{[a,b]}^{\psi=0}$ is finite projective over
$\Lambda_{R,[\mu(a),\mu(b)]}(\Gamma_K)$.
\end{lemma}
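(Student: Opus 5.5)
We follow the strategy of \cite{kedlaya2014cohomology}, Theorem~3.1.1 and Lemma~3.1.2. The plan is to reduce to the rank-one base case $D=\Lambda_{R,[a,b],\mathbb Q_p}$. There we compute $\Lambda_{R,[a,b],\mathbb Q_p}^{\psi=0}$ explicitly as a free module of rank one over the group ring. We then bootstrap to a general projective $D$ using the Frobenius structure.

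\textbf{Base case.} First we reduce from $K$ to $\mathbb Q_p$. By Remark~\ref{compdifflamb}, $\Lambda_{R,[a,b],K}$ is finite free over $\Lambda_{R,[a,b],\mathbb Q_p}$, and $\operatorname{Ind}_K^{\mathbb Q_p}$ turns a $(\varphi,\Gamma_K)$-module into a $(\varphi,\Gamma)$-module. Projectivity over $\Lambda_{R,[\mu(a),\mu(b)]}(\Gamma_K)$ is equivalent to projectivity over $\Lambda(\Gamma)$ after inducing, since $\Lambda(\Gamma)$ is finite free over $\Lambda(\Gamma_K)$. The factor $e_K$ in $\mu$ records exactly the radius rescaling in the finite-index convention.

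For $\mathbb Q_p$, we have $\Lambda_{R,[a,b]}^{\psi=0}=\bigoplus_{i=1}^{p-1}(1+\pi)^i\varphi(\Lambda_{R,[pa,pb]})$. We claim that the map $\Lambda_{R,[\nu a,\nu b]}(\Gamma)\to \Lambda^{\psi=0}_{R,[a,b]}$, $\lambda\mapsto \lambda\cdot(1+\pi)$, is an isomorphism. Injectivity and surjectivity come from comparing the $\Gamma_1$-action $(1+\pi)\mapsto(1+\pi)^{\chi(\gamma)}$ with the change of variable $\gamma-1\leftrightarrow \varphi(\pi)$ on the torsion-free part. The torsion part $\Delta$ permutes the $p-1$ summands.

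The norm comparison is where $\nu=p/(p-1)$ enters: we must show that $|(\gamma-1)^j(1+\pi)|_{[a,b]}$ matches $\omega^{\nu j t}$ up to bounded constants. We check this on monomials, as in the proof of Lemma~\ref{2214}. The check must be done without inverting $p$, using the estimate $|f((1+T)^p-1)|_r=|f(T)|_{pr}$ for small $r<\lambda/p$ from the group-Robba definition. This is where "$b$ small enough" is used.

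\textbf{General case.} For general $D$ projective of rank $d$, we shrink $b$ so that $D_{[a,b]}$ has a model coming from $D_{[pa,pb]}$ via $\varphi^*$. Then
\[
D_{[a,b]}^{\psi=0}=\Lambda_{R,[a,b]}^{\psi=0}\otimes_{\varphi(\Lambda_{R,[pa,pb]})}\varphi(D_{[pa,pb]}).
\]
When $D_{[pa,pb]}$ is free, choose a basis $e_i$. The map
\[
\bigoplus_i \Lambda_{R,[\mu(a),\mu(b)]}(\Gamma)\,(1+\pi)\varphi(e_i)\to D^{\psi=0}_{[a,b]}
\]
differs from the base-case isomorphism tensored with $\varphi(D)$ only by the non-trivial $\Gamma$-action on the $\varphi(e_i)$. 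Write $\gamma\varphi(e_i)=\varphi(\gamma e_i)$ with $\gamma e_i=\sum_j A_{ij}(\gamma)e_j$. The difference operator is then controlled by $\|\gamma-1\|$ on $D_{[pa,pb]}$. By Lemma~\ref{2214}, applied after replacing $[a,b]$ by $[a/p^k,b/p^k]$ and iterating $\varphi$, this has small operator norm. A Neumann series argument, valid in the Banach module $\operatorname{End}$ over the group algebra, then shows the map is an isomorphism. So $D^{\psi=0}$ is free in this case.

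For merely projective $D$, write $D\oplus D'$ free as $(\varphi,\Gamma)$-modules. The cleanest option is to take a $\Gamma$-stable trivialization locally on $\operatorname{Max}R$ and glue. Alternatively, apply the free case to $D\oplus D^\vee$-type constructions. Either way, $D^{\psi=0}$ is a direct summand of a free module and hence projective.

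\textbf{Main obstacle.} The hard step is the smallness estimate in the general case. The operator $\varphi(\gamma)-1$ is not contracting in the naive sense, and without $1/p$ one cannot use $\log\gamma$ or $t$. I would first try the binomial trick from Lemma~\ref{2214}(b). This bounds $(\gamma-1)^{p^n}$ uniformly, and passing through $\varphi^n$ improves the radius from $[a,b]$ to $[p^na,p^nb]$, so Frobenius is used to shrink $\gamma-1$ rather than $t$.
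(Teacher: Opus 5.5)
Your reduction to $K=\mathbb{Q}_p$ and the description $D^{\psi=0}_{[a,b]}=\bigoplus_{i=1}^{p-1}(1+\pi)^i\varphi(D_{[pa,pb]})$ are fine. The general case, however, has a genuine gap exactly at the step you call the main obstacle, and in your set-up it cannot be closed.

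With one Frobenius the relevant group is $\Gamma_1$. A fixed generator $\gamma$ (with $\chi(\gamma)=1+p$) acts on $(1+\pi)\varphi(D_{[pa,pb]})$ through $G_\gamma=(1+\pi)\gamma-1$ on $D_{[pa,pb]}$, since $\gamma((1+\pi)\varphi x)-(1+\pi)\varphi x=(1+\pi)\varphi(G_\gamma x)$. A Neumann-series comparison with the scalar model therefore needs $\|\pi^{-1}(G_\gamma-\pi)\|_s=\|\pi^{-1}(1+\pi)(\gamma-1)\|_s\le c<1$ at every radius $s$. It also has to control $(\gamma-1)^j$ for all $j\in\mathbb{Z}$, because $\Lambda_{R,[\nu a,\nu b]}(\Gamma)$ contains $(\gamma-1)^{-1}$. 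An estimate on group elements, or on $\gamma\varphi(e_i)-\varphi(e_i)$, does not give this.

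Nothing in your argument makes $\|\gamma-1\|$ small on $D$. Lemma~\ref{2214}(a) only says $\|\gamma-1\|\to0$ as $\gamma\to1$, not for a fixed generator. Iterating $\varphi$ with the same $\gamma$ only transports the estimate to larger radii, and those are capped by the radius of definition of $D$. The bound on $(\gamma-1)^{p^n}$ from Lemma~\ref{2214}(b) is not the estimate that is needed. Your base case has a milder version of the same weakness: matching the norms of $(\gamma-1)^j(1+\pi)$ does not by itself give a topological isomorphism, and it does not show that $\gamma-1$ is invertible on $\Lambda^{\psi=0}$.

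The missing idea is to pair $\varphi^n$ with $\Gamma_n$. The paper writes $D^{\psi=0}_{I/p^{n}}=\bigoplus_{i\in(\mathbb{Z}/p^{n}\mathbb{Z})^\times}(1+\pi)^{\tilde i}\varphi^{n}D_I$. This exhibits $D^{\psi=0}$ as induced from $(1+\pi)\varphi^nD_I$ over $\Gamma_n$, and there $\gamma_n-1$ (with $\chi(\gamma_n)=1+p^n$) corresponds to $G_{\gamma_n}=(1+\pi)\gamma_n-1$ on $D_I$.

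On a \emph{fixed} annulus $I_0=[r_0/p,r_0]$, Lemma~\ref{2214}(a) now gives $n_0$ and $c<1$ with $\|\pi^{-1}(G_{\gamma_n}-\pi)\|_r\le c$ for all $n\ge n_0$ and $r\in I_0$. Hence $G_{\gamma_n}$ is invertible, and telescoping gives $\|G_{\gamma_n}^j-\pi^j\|_r\le c|\pi^j|_r$ for every $j\in\mathbb{Z}$. So $f(\gamma_n-1)\mapsto f(G_{\gamma_n})$ is a Neumann perturbation of the isomorphism $f(\gamma_n-1)\mapsto f(\pi)$, for every Laurent series $f$.

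Smaller annuli $I_\ell=I_0/p^\ell$ are handled by using $\Gamma_{n_0+\ell}$ on subintervals of the same $I_0$; this is what keeps $c$ uniform as $a\to0$. The index $[\Gamma:\Gamma_{n_0}]=(p-1)p^{n_0-1}$ then produces $\nu$ through the subgroup convention, and the pieces are glued along boundary circles.

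Your passage from free to projective $D$ is also not justified as stated: $D\oplus D^\vee$ need not be free, and gluing local trivializations would need a separate descent argument. The paper avoids this by taking an arbitrary module complement $N$ with $D_{I_0}\oplus N$ free and letting $G_{\gamma_n}$ act on $N$ as multiplication by $\pi$. No $(\varphi,\Gamma)$-structure on $N$ is needed, and $(1+\pi)\varphi^nD_J$ becomes a $G$-stable direct summand of a free module.
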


\begin{proof}
We follow the finite-radius argument of
\cite[Lemma~3.1.2 and Corollary~3.1.3]{kedlaya2014cohomology},
keeping track of the subgroup indices. We first deal with
$K=\mathbb{Q}_p$, in which case $\mu(t)=\nu t$.
Choose $r_0>0$ sufficiently small that $D$ is defined and the
Frobenius decompositions below hold. Put
$I_\ell=[r_0/p^{\ell+1},r_0/p^\ell]$ for $\ell\geq 0$.
For $n\geq 2$, choose $\gamma_n\in\Gamma_n$ such that
$\chi(\gamma_n)=1+p^n$. In particular, $\gamma_n$ is a
topological generator of $\Gamma_n$.

Choose a finite projective complement $N_{I_0}$ such that
$D_{I_0}\oplus N_{I_0}$ is finite free over
$\Lambda_{R,I_0}$, and fix a basis $e_1,\ldots,e_d$.
For a closed subinterval $J\subseteq I_0$, let $N_J$ be
the restriction of this complement. We use the Gauss norms
coming from the chosen basis.

On $D_J$, define $G_{\gamma_n}=(1+\pi)\gamma_n-1$. Then
\begin{equation}\label{eq:Ggamma-factor}
G_{\gamma_n}
=
\pi\left(
1+\frac{1+\pi}{\pi}(\gamma_n-1)
\right).
\end{equation}
Extend $G_{\gamma_n}$ to $N_J$ by letting it act as
multiplication by $\pi$.

We first choose $n_0$ so that these operators are uniformly
close to multiplication by $\pi$ on the fixed annulus $I_0$.
The scalar estimates in Lemma \ref{2214} are uniform for
the Gauss norms $|\cdot|_r$ with $r\in I_0$. For a finite
module, the same conclusion follows by writing
$
\gamma_n(fx)-fx
=
(\gamma_n(f)-f)x
+
\gamma_n(f)(\gamma_n(x)-x)
$
for finitely many generators $x$. Indeed, the coordinates
of $\gamma_n(x)-x$ tend to zero in the norm on $I_0$, which
bounds their norms at every $r\in I_0$.
Applying this to the projections of the chosen basis
vectors onto $D_{I_0}$ shows that
$G_{\gamma_n}-\pi$ tends to zero uniformly in these norms.

Since $|\pi|_r$ is bounded away from zero on $I_0$, we may
therefore choose $n_0$ and $0<c<1$ such that, for every
$n\geq n_0$ and every $r\in I_0$,
$
\|\pi^{-1}(G_{\gamma_n}-\pi)\|_r\leq c.
$
Consequently, $G_{\gamma_n}$ is invertible. On $D_J$,
its inverse is given by the convergent series
\[
G_{\gamma_n}^{-1}
=
\sum_{k\geq 0}
\left(
-\frac{1+\pi}{\pi}(\gamma_n-1)
\right)^k\pi^{-1}.
\]
On $N_J$, its inverse is multiplication by $\pi^{-1}$.
We obtain
\begin{equation}\label{eq:Ggamma-close-pi}
\|G_{\gamma_n}^{\pm1}-\pi^{\pm1}\|_r
\leq c|\pi^{\pm1}|_r
\end{equation}
for every $r\in I_0$.

Our choice of $\gamma_n$ gives
\[
\gamma_n((1+\pi)\varphi^n x)-(1+\pi)\varphi^n x
=
(1+\pi)\varphi^n(G_{\gamma_n}(x)).
\]
Thus the action of $\gamma_n-1$ on
$(1+\pi)\varphi^nD_J$ corresponds to the action of
$G_{\gamma_n}$ on $D_J$.

By telescoping positive and negative powers,
\eqref{eq:Ggamma-close-pi} gives
$
\|G_{\gamma_n}^j-\pi^j\|_r
\leq c|\pi^j|_r
$
for every $j\in\mathbb{Z}$ and every $r\in I_0$.
In particular, if
$f(\pi)=\sum_{j\in\mathbb{Z}}a_j\pi^j$ converges on $J$,
then $f(G_{\gamma_n})=\sum_j a_jG_{\gamma_n}^j$
converges in operator norm on $D_J\oplus N_J$.

Consider the two maps from
$\Lambda_{R,J}(\Gamma_n)^d$ to $D_J\oplus N_J$ given by
\[
\Phi\left(\sum_i f_i(\gamma_n-1)e_i\right)
=
\sum_i f_i(\pi)e_i,
\qquad
\Phi_n\left(\sum_i f_i(\gamma_n-1)e_i\right)
=
\sum_i f_i(G_{\gamma_n})e_i.
\]
The map $\Phi$ is a topological isomorphism, and the
preceding estimates give
$\|\Phi_n\Phi^{-1}-1\|_J\leq c<1$.
Hence $\Phi_n\Phi^{-1}$ is invertible by a Neumann series.
Since $\Phi_n$ is linear for the action defined by
$G_{\gamma_n}$, this makes $D_J\oplus N_J$ finite free over
$\Lambda_{R,J}(\Gamma_n)$.

The projection onto $D_J$ commutes with $G_{\gamma_n}$.
It follows that $(1+\pi)\varphi^nD_J$ is finite projective
over $\Lambda_{R,J}(\Gamma_n)$.
Moreover, the maps $\Phi_n$ commute with restriction of
$J$. Thus, for closed subintervals $J'\subseteq J\subseteq I_0$,
we have a natural isomorphism
\[
(1+\pi)\varphi^nD_J
\otimes_{\Lambda_{R,J}(\Gamma_n)}
\Lambda_{R,J'}(\Gamma_n)
\cong
(1+\pi)\varphi^nD_{J'}.
\]

For a closed interval $I$, put
$Q_I=(1+\pi)\varphi^{n_0}D_I$.
We now pass from $I_0$ to $I_\ell$.
For every closed subinterval $J\subseteq I_0$, the
Frobenius decomposition gives
\[
Q_{J/p^\ell}
\cong
(1+\pi)\varphi^{n_0+\ell}D_J
\otimes_{R[\Gamma_{n_0+\ell}]}R[\Gamma_{n_0}].
\]
Since $[\Gamma_{n_0}:\Gamma_{n_0+\ell}]=p^\ell$, our
subgroup convention gives
\[
\Lambda_{R,J/p^\ell}(\Gamma_{n_0})
\cong
\Lambda_{R,J}(\Gamma_{n_0+\ell})
\otimes_{R[\Gamma_{n_0+\ell}]}R[\Gamma_{n_0}].
\]
Consequently,
\[
Q_{J/p^\ell}
\cong
(1+\pi)\varphi^{n_0+\ell}D_J
\otimes_{\Lambda_{R,J}(\Gamma_{n_0+\ell})}
\Lambda_{R,J/p^\ell}(\Gamma_{n_0}).
\]
Applying the preceding argument with $n=n_0+\ell$, we see
that $Q_{J/p^\ell}$ is finite projective over
$\Lambda_{R,J/p^\ell}(\Gamma_{n_0})$.
These isomorphisms commute with restriction of $J$.
In particular, this proves finite projectivity and
compatibility with restriction for $Q_{I_\ell}$ and all
its closed subannuli.

We next use the decomposition
\begin{equation}\label{eq:psi-zero-decomposition}
D_{I_\ell/p^{n_0}}^{\psi=0}
=
\bigoplus_{i\in(\mathbb{Z}/p^{n_0}\mathbb{Z})^\times}
(1+\pi)^{\tilde i}\varphi^{n_0}D_{I_\ell}.
\end{equation}
Equivalently,
\begin{equation}\label{eq:psi-zero-induced}
D_{I_\ell/p^{n_0}}^{\psi=0}
\cong
Q_{I_\ell}\otimes_{R[\Gamma_{n_0}]}R[\Gamma].
\end{equation}
The same identities hold with $I_\ell$ replaced by any
closed subinterval.

Here $[\Gamma:\Gamma_{n_0}]=(p-1)p^{n_0-1}$, so
$
\frac{I_\ell}{[\Gamma:\Gamma_{n_0}]}
=
\frac{\nu I_\ell}{p^{n_0}}.
$
Thus the subgroup change formula is
\begin{equation}\label{eq:distribution-change-subgroup}
\Lambda_{R,\nu I_\ell/p^{n_0}}(\Gamma)
\cong
\Lambda_{R,I_\ell}(\Gamma_{n_0})
\otimes_{R[\Gamma_{n_0}]}R[\Gamma].
\end{equation}
Combining this with \eqref{eq:psi-zero-induced}, we obtain
\begin{equation}\label{eq:psi-zero-base-change}
D_{I_\ell/p^{n_0}}^{\psi=0}
\cong
Q_{I_\ell}
\otimes_{\Lambda_{R,I_\ell}(\Gamma_{n_0})}
\Lambda_{R,\nu I_\ell/p^{n_0}}(\Gamma).
\end{equation}
It follows that $D_{I_\ell/p^{n_0}}^{\psi=0}$ is finite
projective over $\Lambda_{R,\nu I_\ell/p^{n_0}}(\Gamma)$,
compatibly with restriction to every closed subannulus.

It remains to glue these modules. For $\ell\geq 1$, put
$c_\ell=r_0/p^{n_0+\ell}$.
The physical annuli $I_{\ell-1}/p^{n_0}$ and
$I_\ell/p^{n_0}$ meet along $[c_\ell,c_\ell]$.
Their distribution annuli meet along
$[\nu c_\ell,\nu c_\ell]$.
The restriction isomorphisms just proved give
\begin{equation}\label{eq:left-boundary-restriction}
D_{I_{\ell-1}/p^{n_0}}^{\psi=0}
\otimes_{\Lambda_{R,\nu I_{\ell-1}/p^{n_0}}(\Gamma)}
\Lambda_{R,[\nu c_\ell,\nu c_\ell]}(\Gamma)
\cong
D_{[c_\ell,c_\ell]}^{\psi=0}
\end{equation}
and
\begin{equation}\label{eq:right-boundary-restriction}
D_{I_\ell/p^{n_0}}^{\psi=0}
\otimes_{\Lambda_{R,\nu I_\ell/p^{n_0}}(\Gamma)}
\Lambda_{R,[\nu c_\ell,\nu c_\ell]}(\Gamma)
\cong
D_{[c_\ell,c_\ell]}^{\psi=0}.
\end{equation}
These identify the restrictions to the common boundary.
Since the identifications come from restriction maps,
the cocycle condition is automatic.

Take $b\leq r_0/p^{n_0}$ and $0<a\leq b$.
Intersecting $[a,b]$ with the annuli $I_\ell/p^{n_0}$
gives a finite cover by closed subannuli. The associated
finite projective modules therefore glue on the distribution
annulus $[\nu a,\nu b]$.
Their sections identify with $D_{[a,b]}^{\psi=0}$:
sections of $D$ glue on the physical annuli, and restriction
commutes with $\psi$.
By affinoid descent for finite projective modules,
$D_{[a,b]}^{\psi=0}$ is finite projective over
$\Lambda_{R,[\nu a,\nu b]}(\Gamma)$.

Finally, let $K$ be arbitrary. Put
$M=\operatorname{Ind}_K^{\mathbb{Q}_p}D$.
By Remark \ref{compdifflamb}, restriction of $M$ to
$[a,b]$ corresponds to induction of
$D_{[a,b]}$. Induction commutes with $\psi$, so
$
M_{[a,b]}^{\psi=0}
\cong
D_{[a,b]}^{\psi=0}
\otimes_{R[\Gamma_K]}R[\Gamma].
$
For small enough $b$, the case already proved shows that
this module is finite projective over
$\Lambda_{R,[\nu a,\nu b]}(\Gamma)$.

Our original subgroup convention gives
$
\Lambda_{R,[\nu a,\nu b]}(\Gamma)
\cong
\Lambda_{R,[\mu(a),\mu(b)]}(\Gamma_K)
\otimes_{R[\Gamma_K]}R[\Gamma].
$
This ring is finite free over
$\Lambda_{R,[\mu(a),\mu(b)]}(\Gamma_K)$.
Hence
$M_{[a, b]}^{\psi=0}$
is finite projective over the latter ring.

As a $\Gamma_K$-module, the induced module is a finite
direct sum of copies of $D_{[a,b]}$. The projection onto
each copy is continuous and commutes with $\Gamma_K$ and
$\psi$. It also commutes with the inverse of
$\gamma_n-1$, for sufficiently large $n$ with
$\Gamma_n\subseteq\Gamma_K$, since this operator is
invertible on the induced $\psi=0$ module.
By continuity, the projection therefore commutes with
the action of
$\Lambda_{R,[\mu(a),\mu(b)]}(\Gamma_K)$.
Thus $D_{[a,b]}^{\psi=0}$ is a direct summand of a finite
projective module over this ring, and is itself finite
projective.
\end{proof}
\begin{lemma}
\label{moduleaction}
Let $D$ be a finite projective
$(\varphi,\Gamma_K)$-module over
$\Lambda_{R,\mathrm{rig},K}$.
For sufficiently small $b>0$ and every
$0<a\leq b$, the action of $R[\Gamma_K]$
on $D_{[a,b]}$ extends continuously
to an action of
$$
\Lambda_{R,[\mu(a),\infty]}(\Gamma_K),
\qquad
\mu(a)=e_K\frac{p}{p-1}a.
$$
These actions are compatible with
restriction to closed subannuli.
\end{lemma}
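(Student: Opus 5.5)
The strategy is to run the argument of Lemma~\ref{2214} and Proposition~\ref{mapnat}, which treats the scalar ring, on the module $D_{[a,b]}$, and to pin down the radius using the normalization of Lemma~\ref{lem:closed-psi-zero-projective}.

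\emph{Reduction to the scalar case.} First reduce to $K=\mathbb{Q}_p$. Replace $D$ by $\operatorname{Ind}_K^{\mathbb{Q}_p}D$. As a $\Gamma_K$-module this is a finite direct sum of copies of $D_{[a,b]}$, and the projections commute with $\Gamma_K$. An action of $\Lambda_{R,[\nu a,\infty]}(\Gamma)$ on the induced module restricts to the subring $\Lambda_{R,[\mu(a),\infty]}(\Gamma_K)$; here $\Lambda_{R,[\nu a,\infty]}(\Gamma)$ is finite free over $\Lambda_{R,[\mu(a),\infty]}(\Gamma_K)$ by the subgroup convention, which is what produces the factor $e_K$. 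This action then passes to each summand by continuity.

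\emph{Operator-norm estimate on $D_{[a,b]}$.} Now take $K=\mathbb{Q}_p$. Choose a finite projective complement, so that $D_{[a,b]}\oplus N$ is free, and use the Gauss norms from a basis. Write $\gamma(fx)-fx=(\gamma(f)-f)x+\gamma(f)(\gamma(x)-x)$ for the basis vectors $x$. Together with Lemma~\ref{2214}(a), this gives $\|\gamma-1\|_{[a,b]}\to0$ on $D_{[a,b]}$ as $\gamma\to1$. Continuity of the $\Gamma$-action on $D$ is assumed in the definition, so the coordinates of $\gamma(x)-x$ tend to $0$. The argument of Lemma~\ref{2214}(b) then applies verbatim: pass to a $\Gamma_n$-invariant norm, use the binomial identity $(\gamma-1)^{p^m}=(\gamma^{p^m}-1)+pP(\gamma-1)$, and use topological nilpotence of $p$. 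This yields $\|(\gamma_n-1)^{k}\|_{[a,b]}\le\omega$ for some $k$, and the convergence argument of Proposition~\ref{mapnat} extends the action to $\Lambda_{R,[m,\infty]}(\Gamma)$ for some radius $m$.

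\emph{The sharp radius $\nu a$ (main obstacle).} Lemma~\ref{2214} gives some $k_a$ but no explicit control, so the real content is that $m$ can be taken to be $\nu a$. I would get it from the Frobenius structure as in Lemma~\ref{lem:closed-psi-zero-projective}. Write $D_{[a,b]}=\bigoplus_{i=0}^{p^{n}-1}(1+\pi)^i\varphi^{n}D_{[p^na,p^nb]}$ for suitably large $n$, with $b$ small. On a summand, $\gamma_n-1$ acts through the twisted operator $G_{\gamma_n}=(1+\pi)\gamma_n-1$ (more precisely its $\varphi^n$-conjugate) acting on $D_{[p^na,p^nb]}$. By \eqref{eq:Ggamma-close-pi}, $G_{\gamma_n}$ is close to multiplication by $\pi$, whose norm on the radius-$p^na$ annulus is $\omega^{p^na}$. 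So a power series in $\gamma_n-1$ converging at Gauss radius $p^na$ acts boundedly. Translating back through the subgroup convention for $\Gamma_n\subset\Gamma$ of index $(p-1)p^{n-1}$ gives convergence of the $\Lambda_{R,[\nu a,\infty]}(\Gamma)$-action, exactly as in \eqref{eq:distribution-change-subgroup}.

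One point needs care. The summands with $i$ not a unit are not $\psi=0$ pieces, so on them $G_{\gamma_n}$ is not invertible. Here only boundedness of nonnegative powers is needed, not invertibility, so the estimate $\|G^j_{\gamma_n}-\pi^j\|\le c|\pi^j|$ for $j\ge0$ should suffice. Handling these summands uniformly across the covering annuli $I_\ell$, and making sure one fixed $n$ works on all of $[a,b]$, is where I expect the difficulty.

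\emph{Compatibility.} All operators are defined by series in $\gamma_n-1$, and restriction commutes with $\Gamma$. The actions are therefore compatible with restriction to closed subannuli, and continuity follows from the operator-norm bounds.
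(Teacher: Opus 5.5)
Your reduction to $K=\mathbb{Q}_p$ and your crude operator-norm step are fine. The paper does the same, but applies the crude estimate only on one fixed annulus. The step that is supposed to produce the sharp radius $\nu a$, however, has a genuine gap.

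On the summand $(1+\pi)^i\varphi^nD_{[p^na,p^nb]}$, the operator induced by $\gamma_n-1$ is $(1+\pi)^i\gamma_n-1$ acting on $D_{[p^na,p^nb]}$, not $G_{\gamma_n}=(1+\pi)\gamma_n-1$. For $i$ prime to $p$ this is harmless: those summands are $\Gamma$-translates of the $i=1$ summand, and together they form $D^{\psi=0}_{[a,b]}$. For $p\mid i$, though, the operator is not close to $\pi$ at all. For $i=0$ it is just $\gamma_n-1$ on $D_{[p^na,p^nb]}$, which is the original problem at a larger radius. In general the summands with $p\mid i$ assemble to $\varphi(D_{[pa,pb]})$. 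So the estimate $\|G^j_{\gamma_n}-\pi^j\|\le c|\pi^j|$ says nothing about them, and the remark about needing boundedness rather than invertibility misses the point.

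There is a second problem: \eqref{eq:Ggamma-close-pi} is only available for radii in the fixed annulus $I_0$. So a single $n$ cannot serve a wide interval $[a,b]$. You flag this but do not resolve it, and the smallness of $b$ never does any real work in your argument, which signals that something is missing.

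The missing idea is to peel off the $\psi=0$ parts recursively, instead of splitting by $\varphi^n$-coordinates. With $I_\ell=[r_0/p^{\ell+1},r_0/p^\ell]$, one has the $\Gamma$-equivariant decomposition
\[
D_{I_\ell}=\varphi^\ell(D_{I_0})\oplus\bigoplus_{j=0}^{\ell-1}\varphi^j\bigl(D^{\psi=0}_{p^jI_\ell}\bigr).
\]
On the $\psi=0$ pieces, Lemma \ref{lem:closed-psi-zero-projective} already supplies an action of the annular distribution algebra with the sharp radius. All the $G_{\gamma_n}$ bookkeeping, including the transfer from $I_0$ to $I_\ell$, is done there, and that algebra receives $A_\ell=\Lambda_{R,[\nu r_0/p^{\ell+1},\infty]}(\Gamma)$.

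The only piece that is not of type $\psi=0$ is $\varphi^\ell(D_{I_0})$. Your crude estimate on the \emph{fixed} annulus $I_0$ gives an action of some $\Lambda_{R,[s_0,\infty]}(\Gamma)$ on $D_{I_0}$, which transports through $\varphi^\ell$ because $\varphi$ commutes with $\Gamma$. This algebra contains $A_\ell$ as soon as $\nu r_0/p^{\ell+1}\le s_0$, and that is exactly what choosing $b$ small guarantees. One then glues these compatible actions over the finitely many $I_\ell$ meeting $[a,b]$.

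In your own coordinates, the fix is as follows. Recognize each summand with $p\mid i$, $i\neq 0$, as lying in $\varphi^{v}\bigl(D^{\psi=0}_{[p^va,p^vb]}\bigr)$ with $v=v_p(i)$. Handle the summand $i=0$ by the crude estimate on a fixed annulus, which again forces $a$, hence $b$, to be small.
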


\begin{proof}
We first assume that $K=\mathbb Q_p$.
Put $\nu=p/(p-1)$.
Choose $r_0>0$ sufficiently small
that $D$ is defined on the annulus
$
I_0=[r_0/p,r_0]
$
and that the Frobenius decompositions
and the finite-radius results for
the $\psi=0$ part hold on the
annuli considered below.

We first observe that the action
of $\Gamma$ on $D_{I_0}$ extends
to an action of
$\Lambda_{R,[s_0,\infty]}(\Gamma)$
for some $s_0>0$. Indeed, choose finitely many generators
$x_1,\ldots,x_d$ of $D_{I_0}$.
For $f\in\Lambda_{R,I_0}$, we have
$
(\gamma-1)(fx_i)
=
(\gamma(f)-f)x_i
+
\gamma(f)(\gamma(x_i)-x_i).
$
By Lemma~\ref{2214}, the action of
$\Gamma$ on the scalar ring tends to
the identity in operator norm as
$\gamma\to1$.
Since $\gamma(x_i)\to x_i$ for every $i$,
finite generation shows that
$$
\|\gamma-1\|_{D_{I_0}}\longrightarrow0
\qquad(\gamma\to1).
$$
Choosing a sufficiently small open
subgroup $\Gamma_n$ and applying the
same power-series argument as in
Proposition~\ref{mapnat}, we obtain
the asserted action for some $s_0>0$.

For $\ell\geq0$, put
$
I_\ell=p^{-\ell}I_0
=
[r_0/p^{\ell+1},r_0/p^\ell].
$
Using $\psi\varphi=1$, we have a
$\Gamma$-equivariant decomposition
$
D_{I_\ell}
=
\varphi^\ell(D_{I_0})
\oplus
\bigoplus_{j=0}^{\ell-1}
\varphi^j
\left(
D_{p^jI_\ell}^{\psi=0}
\right).
$
We show that, for sufficiently large
$\ell$, each summand admits an action
of
$
A_\ell
=
\Lambda_{R,[\nu r_0/p^{\ell+1},\infty]}
(\Gamma).
$
On the first summand,
$\varphi^\ell(D_{I_0})$, the action
is transported from $D_{I_0}$,
since $\varphi$ commutes with $\Gamma$.
For sufficiently large $\ell$,
$
\nu r_0/p^{\ell+1}\leq s_0.
$
The restriction homomorphism
$
A_\ell
\longrightarrow
\Lambda_{R,[s_0,\infty]}(\Gamma)$
therefore gives the required action
on this summand.

For $0\leq j<\ell$, Lemma \ref{lem:closed-psi-zero-projective} gives an action of
$
\Lambda_{R,
[\nu r_0/p^{\ell-j+1},
 \nu r_0/p^{\ell-j}]}(\Gamma)
$
on
$
D_{p^jI_\ell}^{\psi=0}.
$
Since
$
\nu r_0/p^{\ell+1}
\leq
\nu r_0/p^{\ell-j+1},
$
restriction of analytic functions
gives a homomorphism from $A_\ell$
to this annular distribution algebra.
Transporting the action through
$\varphi^j$ gives the required action
on
$
\varphi^j
\left(D_{p^jI_\ell}^{\psi=0}\right).
$
Thus, for all sufficiently large
$\ell$, the above decomposition
defines a continuous action of
$A_\ell$ on $D_{I_\ell}$.
It extends the original
$R[\Gamma]$-action.
These actions are compatible with
restriction to closed subannuli:
the Frobenius and $\psi$-decompositions
are compatible with restriction,
and all the resulting actions
agree with the original
$R[\Gamma]$-action.
Their agreement on the distribution
algebras follows by continuity
and density of $R[\Gamma]$.

Now choose $b>0$ sufficiently small
that every annulus $I_\ell$
meeting $(0,b]$ has the preceding
action. For $0<a\leq b$, cover
$[a,b]$ by its intersections with
the finitely many annuli $I_\ell$.
The action on each intersection
restricts to an action of
$\Lambda_{R,[\nu a,\infty]}(\Gamma)$,
because its inner endpoint is
at least $a$. Compatibility on overlaps allows
these actions to glue, giving
$$
\Lambda_{R,[\nu a,\infty]}(\Gamma)
\curvearrowright D_{[a,b]}.
$$

Finally, let $K$ be arbitrary.
Apply the result just proved to
the induced $(\varphi,\Gamma_{\mathbb Q_p})$-
module
$
M=\operatorname{Ind}_K^{\mathbb Q_p}D.
$
By the subgroup and radius conventions
of Remark~\ref{compdifflamb},
the distribution action on $M$
restricts to an action of
$
\Lambda_{R,[e_K\nu a,\infty]}(\Gamma_K).$
As a $\Gamma_K$-module, the induced
module is a finite direct sum of
copies of $D$, and the projection
onto any copy is continuous and
$\Gamma_K$-equivariant.
By density, the projection is also
equivariant for the extended
distribution action.
Consequently, the action restricts
to $D_{[a,b]}$, as required.
\end{proof}

\begin{lemma}
\label{311}
[Theorem 3.1.1, \cite{kedlaya2014cohomology}]
    For a $(\varphi, \Gamma_K)$ module $D$, there exists a small enough $b$ such that $D_{(0, b]}^{\psi = 0}$ is a finite projective module of rank $[K: \mathbb{Q}_p] \textrm{rank}(D) $ over $\Lambda_{R, (0, \mu(b)]}(\Gamma_K)$.
\end{lemma}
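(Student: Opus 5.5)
The plan is to deduce the statement from the closed-annulus result, Lemma \ref{lem:closed-psi-zero-projective}, by passing to the inverse limit over $a \to 0$ and then controlling the rank at a single point.

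\textbf{Step 1: compatible projective stages.} Fix $b$ small enough that Lemma \ref{lem:closed-psi-zero-projective} applies. Choose a decreasing sequence $a_n \to 0$. Each $D_{[a_n,b]}^{\psi=0}$ is finite projective over $B_n := \Lambda_{R,[\mu(a_n),\mu(b)]}(\Gamma_K)$. The proof of that lemma builds these modules from the restriction-compatible isomorphisms $\Phi_n$ and the gluing along the boundary annuli. So for $n' \geq n$ we have natural isomorphisms
$$D_{[a_{n'},b]}^{\psi=0} \otimes_{B_{n'}} B_n \cong D_{[a_n,b]}^{\psi=0}.$$
This uses that $\psi$ commutes with restriction, and that restriction is flat on $D$, hence exact on the $\psi=0$ kernel.

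\textbf{Step 2: the limit module.} The family $\{D_{[a_n,b]}^{\psi=0}\}$ is therefore a coherent sheaf on the quasi-Stein space attached to $\Lambda_{R,(0,\mu(b)]}(\Gamma_K)$, and it is locally free. Since $D_{(0,b]} = \varprojlim_n D_{[a_n,b]}$ and inverse limits commute with kernels, its global sections are $D_{(0,b]}^{\psi=0}$. To show that this coadmissible module is finite projective, I would argue as in Remark \ref{sheafmodule}, via the Bellovin/Kedlaya--Liu argument. First, bound the number of local generators uniformly. The explicit construction helps here: on every annulus $I_\ell/p^{n_0}$ the module is a direct summand of a free module of rank $d\,[\Gamma:\Gamma_{n_0}]$, where $d$ is the fixed rank of $D_{I_0}\oplus N_{I_0}$ independent of $\ell$. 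Lemma \ref{uniformfinite}, applied verbatim with $(0,\mu(b)]$ in place of $(0,\infty]$, then gives finite generation. Finally, the flatness argument at the end of the proof of Lemma \ref{uniformperfectcomplex} (ideal criterion plus exactness on coadmissible modules) upgrades this to finite projectivity.

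\textbf{Step 3: the rank.} The rank is locally constant, so it can be computed on a single annulus, and after inducing I may take $K=\mathbb{Q}_p$. By \eqref{eq:psi-zero-base-change}, $D^{\psi=0}_{I_\ell/p^{n_0}}$ is $Q_{I_\ell}$ base-changed from $\Gamma_{n_0}$ to $\Gamma$. The map $\Phi_n$ identifies $D_J\oplus N_J$ with a free module over $\Lambda_{R,J}(\Gamma_n)$ of the same rank as over $\Lambda_{R,J}$. Hence $(1+\pi)\varphi^{n_0}D_J$ has rank $\mathrm{rk}(D)$ over $\Lambda_{R,J}(\Gamma_{n_0})$, and inducing to $\Gamma$ keeps the rank over the induced ring. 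So the rank over $\Lambda(\Gamma)$ is $\mathrm{rk}(D)$. For general $K$, $\mathrm{Ind}_K^{\mathbb{Q}_p}D$ has rank $[K(\mu_{p^\infty}):\mathbb{Q}_p(\mu_{p^\infty})]\,\mathrm{rk}(D)$ over $\Lambda_{R,\mathrm{rig},\mathbb{Q}_p}$, and restricting the distribution algebra from $\Gamma$ to $\Gamma_K$ multiplies the rank by $[\Gamma:\Gamma_K]$. The product of these two indices is $[K:\mathbb{Q}_p]$. The summand $D^{\psi=0}$ then has rank $[K:\mathbb{Q}_p]\,\mathrm{rk}(D)$ by counting the $[\Gamma:\Gamma_K]$ copies.

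\textbf{Main obstacle.} I expect the hard step to be the uniform generator bound in Step 2. It must hold uniformly as the annuli shrink to $0$, where $n$ grows with $\ell$. What keeps the bound constant is the identity $Q_{J/p^\ell}\cong(1+\pi)\varphi^{n_0+\ell}D_J\otimes\Lambda(\Gamma_{n_0})$, and I would check this carefully.
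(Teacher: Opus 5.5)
Your overall route is the paper's. You reduce to $K=\mathbb{Q}_p$ by induction, take the closed-annulus projectivity of Lemma~\ref{lem:closed-psi-zero-projective}, and glue over the quasi-Stein space using a uniform generator bound; this is the argument of Theorem~3.1.1 of \cite{kedlaya2014cohomology}, which the paper defers to. The paper gets the $\Lambda_{R,(0,\mu(b)]}(\Gamma_K)$-action on $D^{\psi=0}_{(0,b]}$ from the continuous inverse of $\gamma-1$ (Lemma~3.7 of \cite{bellovin2024cohomology}). Getting it as the inverse limit of the stagewise actions, as you do, works just as well.

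\textbf{The rank count for general $K$ in Step~3 is wrong as written.}
\begin{itemize}
\item By definition, $\mathrm{Ind}_K^{\mathbb{Q}_p}D$ is a direct sum of $[\Gamma:\Gamma_K]$ copies of $D$.
\item By Remark~\ref{compdifflamb}, each copy has rank $[K(\mu_{p^\infty}):\mathbb{Q}_p(\mu_{p^\infty})]\,\mathrm{rk}(D)$ over $\Lambda_{R,\mathrm{rig},\mathbb{Q}_p}$.
\item So the induced module has rank $[K:\mathbb{Q}_p]\,\mathrm{rk}(D)$. The number you wrote is the rank of $D$ itself under restriction of scalars.
\end{itemize}
With your number, your last step gives the wrong answer. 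You get $(\mathrm{Ind}\,D)^{\psi=0}$ of rank $[K:\mathbb{Q}_p]\,\mathrm{rk}(D)$ over $\Lambda(\Gamma_K)$. It splits into $[\Gamma:\Gamma_K]$ isomorphic $\Gamma_K$- and $\psi$-stable copies of $D^{\psi=0}$. That forces $\mathrm{rk}\,D^{\psi=0}=[K(\mu_{p^\infty}):\mathbb{Q}_p(\mu_{p^\infty})]\,\mathrm{rk}(D)$, so ``counting the copies'' contradicts your preceding count.

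The correct chain is as follows. By the $\mathbb{Q}_p$-case, $(\mathrm{Ind}\,D)^{\psi=0}$ has rank $[K:\mathbb{Q}_p]\,\mathrm{rk}(D)$ over $\Lambda(\Gamma)$. Hence it has rank $[\Gamma:\Gamma_K][K:\mathbb{Q}_p]\,\mathrm{rk}(D)$ over $\Lambda(\Gamma_K)$. Dividing by the $[\Gamma:\Gamma_K]$ copies gives $[K:\mathbb{Q}_p]\,\mathrm{rk}(D)$.

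In the $\mathbb{Q}_p$-case, your ``hence'' after $\Phi_n$ also silently uses additivity of rank. The summand $N_J$, on which $T$ acts as $\pi$, has rank $d-\mathrm{rk}(D)$ over $\Lambda_{R,J}(\Gamma_n)\cong\Lambda_{R,J}$, and this must be subtracted.

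\textbf{A smaller omission in Step~2.} The ideal criterion only gives flatness, and you need finite presentation to conclude projectivity. For $Q$ in Lemma~\ref{uniformperfectcomplex} this was automatic, since $Q$ was defined as a cokernel of free modules. Here it follows from one more application of Lemma~\ref{uniformfinite}. The kernel of a surjection $\Lambda_{R,(0,\mu(b)]}(\Gamma_K)^k\to D^{\psi=0}_{(0,b]}$ is coadmissible. Its stages are kernels onto projective modules, hence direct summands of free modules of rank $k$, so they are uniformly generated.
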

\begin{proof}
From the observation:
$$
\left( \operatorname{Ind}_{K}^{\mathbb{Q}_p} D \right)^{\psi = 0}
=
\left( \operatorname{Hom}_{\mathbb{Z}[\Gamma_K]}\!\left( \mathbb{Z}[\Gamma], D \right) \right)^{\psi = 0}
=
\operatorname{Hom}_{\mathbb{Z}[\Gamma_K]}\!\left( \mathbb{Z}[\Gamma], D^{\psi = 0} \right).
$$
it is enough to assume that $D$ is a $(\varphi, \Gamma_{\mathbb{Q}_p})$ module over $\Lambda_{R, rig, \mathbb{Q}_p}$.

  From Lemma 3.7, \cite{bellovin2024cohomology}, for some small enough $b$, the action of $\gamma-1$ on $D_{(0, b]}^{\psi = 0}$ admits a continuous inverse. This extends to an action of $\Lambda_{R, (0, \mu(b)]}(\Gamma_K)$. After this, the proof follows from Theorem 3.1.1, \cite{kedlaya2014cohomology} and Lemma \ref{lem:closed-psi-zero-projective}.
\end{proof}
\begin{corollary}
\label{annihilatorbehavior}
    Let $f \in \Lambda_{R, (0, \infty]}(\Gamma_K)$ such that for some $x \in D_{(0, b]}$, $f(\gamma-1). x = 0$ and $f^{-1} \in \Lambda_{R, [\mu(a), \infty]}(\Gamma_K)$, then $x = 0$.
\end{corollary}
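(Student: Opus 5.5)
The idea is to restrict to the closed annulus $[a,b]$, where $f$ acts invertibly, and then use that restriction to $D_{[a,b]}$ is injective on the quasi-Stein module.

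By Lemma \ref{moduleaction}, for $b$ sufficiently small the action of $R[\Gamma_K]$ on $D_{[a,b]}$ extends continuously to an action of $\Lambda_{R,[\mu(a),\infty]}(\Gamma_K)$. These actions are compatible with restriction to closed subannuli. They are also compatible with the action of $\Lambda_{R,(0,\infty]}(\Gamma_K)$ on $D_{(0,b]}$ from Lemma \ref{2214}(b), through the restriction map
\[
\Lambda_{R,(0,\infty]}(\Gamma_K)\to\Lambda_{R,[\mu(a),\infty]}(\Gamma_K).
\]
To check this compatibility, note that both actions extend the $R[\Gamma_K]$-action and are continuous. Since $R[\Gamma_K]$ is dense, the two actions agree. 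I expect this compatibility check to be the only step needing care: the ring $\Lambda_{R,(0,\infty]}(\Gamma_K)$ acts on $D_{(0,b]}$ through operator-norm convergence on each closed annulus. One therefore has to verify that this agrees with the Banach-stage action after restriction. This follows because the restriction map $D_{(0,b]}\to D_{[a,b]}$ is continuous and $\Gamma_K$-equivariant.

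Now let $x_{[a,b]}$ be the image of $x$ in $D_{[a,b]}$. Equivariance gives $f\cdot x_{[a,b]}=0$. Since $f$ is invertible in $\Lambda_{R,[\mu(a),\infty]}(\Gamma_K)$, we get
\[
x_{[a,b]}=f^{-1}\cdot(f\cdot x_{[a,b]})=0.
\]

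It remains to pass from $x_{[a,b]}=0$ to $x=0$, and for this I need $x_{[a',b]}=0$ for all $a'\le a$. The hypothesis says $f$ is a unit in $\Lambda_{R,[\mu(a),\infty]}(\Gamma_K)$. This ring maps to $\Lambda_{R,[\mu(a'),\infty]}(\Gamma_K)$ for $a'\geq a$, but not obviously for $a'<a$, so a further argument is needed. I will use that $D_{(0,b]}$ is a finite projective module over $\Lambda_{R,(0,b],K}$ (Remark \ref{spamax}), so it embeds into a finite free module. The restriction $\Lambda_{R,(0,b],K}\to\Lambda_{R,[a,b],K}$ is injective, because a Laurent series converging on $(0,b]$ is determined by its coefficients, which are visible on any annulus (Lemma \ref{ringofdef}). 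Hence $D_{(0,b]}\to D_{[a,b]}$ is injective, and $x=0$.

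If one prefers to avoid the coefficient argument, an alternative is to use that $D_{(0,b]}=\varprojlim_{a'}D_{[a',b]}$ with flat, dense transition maps, together with the identity theorem on the connected annulus. The direct Laurent-coefficient argument is simpler, so I would use that.
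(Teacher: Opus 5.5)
Your proposal is correct and follows the paper's route: restrict $x$ to $D_{[a,b]}$, where Lemma~\ref{moduleaction} makes $\Lambda_{R,[\mu(a),\infty]}(\Gamma_K)$ act and $f$ becomes a unit, so the image of $x$ there is zero. You also spell out two steps the paper's one-line proof leaves implicit. The first is compatibility of the distribution actions under restriction, which follows from density of $R[\Gamma_K]$. The second is injectivity of $D_{(0,b]}\to D_{[a,b]}$, which the paper itself invokes in Lemma~\ref{algebraicphibasechange}; it is what lets you conclude $x=0$ rather than merely that the image of $x$ in $D_{[a,b]}$ vanishes.
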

\begin{proof}
    For a given $0<a\leq b$, once $\mu(a)$ has been chosen so that the extended action in Lemma \ref{moduleaction} exists, no unit element in $\Lambda_{R, [\mu(a), \infty]}(\Gamma_K)$ can annihilate a nonzero element of $D_{[a, b]}$.
\end{proof}

\begin{definition}
     Let $\iota: \Gamma_K \to \Gamma_K$ be the involution sending $\gamma \mapsto \gamma^{-1}$. This induces an involution $\iota: \Lambda_{R, I}(\Gamma_K) \to \Lambda_{R, I}(\Gamma_K)$. For a $\Lambda_{R, I}(\Gamma_K)$-module $N$, we write $N^{\iota}$ to be the same group $N$ but with module structure twisted through $\iota$.
\end{definition}
\section{Iwasawa cohomology}
\begin{definition}
Let $R$ be a commutative ring. We denote by
$$
D^{[a,b]}_{\mathrm{perf}}(R)
\quad (\text{resp. } D^{b}_{\mathrm{perf}}(R),\; D^{-}_{\mathrm{perf}}(R))
$$
the full subcategory of the derived category of complexes of $R$-modules
consisting of those complexes which are quasi-isomorphic to a complex of
finite projective $R$-modules concentrated in degrees in $[a,b]$
(resp. bounded degrees, degrees bounded above).
\end{definition}
Let $D$ be a sheaf of projective modules over the pseudorigid space $\bigcup_{a \to 0} \textrm{Spa}(\Lambda_{R, [a, b], K})$. Then the Iwasawa complex $C_{\psi}^{\bullet}(D)$ of $D$ is defined as
$$colim_{b' \to 0, b' \leq b}(\begin{tikzcd}
	{D_{(0, b']}} && {D_{(0, b']}}
	\arrow["{\psi-1}", from=1-1, to=1-3]
\end{tikzcd})$$
where the transition maps are standard inclusions.

Using remark \ref{sheafmodule}, one could also view the sheaf $D$ as a module $D_{(0, b]}$ over $\Lambda_{R, (0, b], K}$ and define another Iwasawa complex in degree $1, 2$ denoted by $C'^{\bullet}_{\psi}(D)$ as:
$$\begin{tikzcd}
	{D_{(0, b]} \otimes \Lambda_{R, \text{rig}, K}} && {D_{(0, b]} \otimes \Lambda_{R, \text{rig}, K}}
	\arrow["{\psi-1}", from=1-1, to=1-3]
\end{tikzcd}.$$ Intuitively, these two definitions should be equivalent as $\Lambda_{R, \text{rig}, K} = \varinjlim_{b' \to 0} \Lambda_{R, (0, b'], K}$ and tensoring with $\Lambda_{R, \text{rig}, K}$ involves tensoring with its parts $\Lambda_{R, (0, b'], K}$, which means accounting for $D_{(0, b']}$ for arbitrary small $b'$ in the cohomology as well. Later in Lemma \ref{iwasawadefnequiv}, we prove their equivalence.

Later we will see that to compute Iwasawa cohomology, restricting to a sufficiently small annulus is enough.
\subsection{Base change from Iwasawa to Galois cohomology}

Let $\gamma$ be the topological generator of $\Gamma_K$ (note that this convention can differ in other sections of this paper, where we will take $\gamma$ to be topological generator of torsion free part of $\Gamma_K$). The Herr (Galois cohomology) complex of a $(\varphi, \Gamma)$-module $D$ over $\Lambda_{R, \text{rig}, K}$ is defined in degrees $0, 1, 2$ as:
$$C_{\varphi, \Gamma}^{\bullet}:= \varinjlim_{b \to 0} \begin{tikzcd}
	{D_{(0, b]}} && {D_{(0, b/p]} \oplus D_{(0, b]}} && {D_{(0, b/p]}}
	\arrow["{(\varphi-1, \gamma-1)}", from=1-1, to=1-3]
	\arrow["{((\gamma-1) \oplus (1 -\varphi))}", from=1-3, to=1-5]
\end{tikzcd}$$
 This is quasi-isomorphic to the $C_{\psi, \Gamma}^{\bullet}$ complex defined by 
 $$C_{\psi, \Gamma}^{\bullet}:= \varinjlim_{b \to 0} \begin{tikzcd}
	{D_{(0, b]}} && {D_{(0, b]} \oplus D_{(0, b]}} && {D_{(0, b]}}
	\arrow["{(\psi-1, \gamma-1)}", from=1-1, to=1-3]
	\arrow["{((\gamma-1) \oplus (1 -\psi))}", from=1-3, to=1-5]
\end{tikzcd}$$
Therefore, we will use them interchangeably. The quasi-isomorphism is standard and given in references such as \cite{kedlaya2014cohomology}.
Let $\eta:\Gamma_K\to L^\times$ be a continuous character, where
$L$ is a finite extension of either $\mathbf Q_p$ or
$\mathbf F_p((u))$. In characteristic zero, use the base-character
evaluation map
$$
\Lambda_{D_\lambda,(0,\infty]}(\Gamma_K)
\longrightarrow \mathcal O_L\otimes_{\mathbb{Z}_p}D_\lambda.
$$
In characteristic $p$, use evaluation in $L$ with the specified
coefficient map $D_\lambda\to\mathbf F_p((u))\to L$.
Let $I_\eta$ denote the kernel of the appropriate map. Put
$$
\kappa_\eta
=\Lambda_{D_\lambda,(0,\infty]}(\Gamma_K)/I_\eta,
\qquad
S_\eta=R\widehat\otimes_{D_\lambda}\kappa_\eta.
$$
We write $I_\eta\Lambda_R$ for the extended ideal, where
$\Lambda_R=\Lambda_{R,(0,\infty]}(\Gamma_K)$. Thus
$$
\Lambda_R/I_\eta\Lambda_R\cong S_\eta.
$$
This is a finite $R$-algebra, but it need not be a field or be reduced.
In particular, $I_\eta\Lambda_R$ is not in general a maximal ideal.
For a maximal point $z$ of a Banach stage, we instead use its actual
evaluation map and residue field, as in
Theorem~\ref{pointbasechange} below.

We prove the following base change theorem in detail, analogous to Proposition 4.2.3, \cite{kedlaya2014cohomology}.

  \begin{theorem}[Base-character specialization]
\label{basechange}
With the notation above, there is a natural quasi-isomorphism
$$
C_\psi^\bullet(D)
\otimes_{\Lambda_R}^{\mathbf L}S_\eta
\simeq
C_{\psi,\gamma_K}^\bullet
\left((D\widehat\otimes_R S_\eta)(\eta^{-1})\right).
$$
Here the $\Lambda_R$-action on $S_\eta$ is induced by the
base-character evaluation, and the character twist has trivial
$\varphi$-action.
\end{theorem}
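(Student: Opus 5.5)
We follow the strategy of \cite[Proposition~4.2.3]{kedlaya2014cohomology}. The specialization $\Lambda_R\to S_\eta$ is cut out by a Koszul-type resolution, and then both sides are compared at the level of $\psi$-complexes.

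\textbf{Step 1: resolve $S_\eta$.} We would first show that $I_\eta\Lambda_R$ is generated by a single element of the form $f_\eta=\gamma_K-\eta(\gamma_K)$, or $\gamma-\eta(\gamma)$ on the torsion-free part, followed by a finite étale correction over $R[\Delta_K]$ for the torsion part. We would also show that $f_\eta$ is a non-zero-divisor on $\Lambda_R$. Granting this, $S_\eta$ has the two-term free resolution
\[
[\Lambda_R\xrightarrow{f_\eta}\Lambda_R].
\]
Consequently $C_\psi^\bullet(D)\otimes^{\mathbf L}_{\Lambda_R}S_\eta$ is the total complex of the double complex
\[
C_\psi^\bullet(D)\xrightarrow{f_\eta}C_\psi^\bullet(D).
\]
In characteristic zero, the base-character evaluation lands in $\mathcal O_L\otimes D_\lambda$. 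Its kernel is principal because it is a ``point'' of the one-dimensional disc in the variable $T=\gamma-1$: Weierstrass division by $T-(\eta(\gamma)-1)$ works in each Banach stage $\Lambda_{R,[a,\infty]}(\Gamma_K)$ for $a$ small, and is compatible along the stages. In characteristic $p$, the same division is performed over $\mathbf F_p((u))$. We use the flatness of $R$ over $D_\lambda$ (Lemma \ref{robbaflat}) to extend the ideal to $\Lambda_R$ without creating torsion.

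\textbf{Step 2: identify the total complex.} The action of $\Lambda_R$ on $D$ is through Lemma \ref{moduleaction}, so $f_\eta$ acts on $D_{(0,b]}$ as $\gamma_K-\eta(\gamma_K)$. This operator is precisely $\eta(\gamma_K)\bigl(\eta^{-1}(\gamma_K)\gamma_K-1\bigr)$, which is $\gamma_K-1$ acting on the twist $D(\eta^{-1})$, up to a unit. The total complex is then
\[
D\to D\oplus D\to D,
\]
with differentials $(\psi-1,\gamma_K-1)$ and $(\gamma_K-1)\oplus(1-\psi)$ on the twisted module. This is exactly $C_{\psi,\gamma_K}^\bullet(D(\eta^{-1}))$, except that the coefficients have not yet been changed. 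To pass from $D$ to $D\widehat\otimes_R S_\eta$, we note that $\eta$ takes values in $S_\eta$. Hence $\gamma_K-\eta(\gamma_K)$ on $D\widehat\otimes_R S_\eta$ is the twisted $\gamma_K-1$, while on $D\otimes_{\Lambda_R}\Lambda_R=D$ it is the untwisted-coefficient version. Since $S_\eta$ is finite over $R$, the completed tensor product agrees with the ordinary tensor product $D\otimes_R S_\eta$.

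The comparison map is then constructed explicitly. We write $C_\psi^\bullet(D)\otimes_{\Lambda_R}\mathrm{Koszul}(f_\eta)\otimes_R S_\eta$ and check that tensoring with $S_\eta$ over $R$ (instead of over $\Lambda_R$) introduces only an acyclic Koszul factor. The tensor factor $S_\eta$ is $R$-finite and $D$ is $R$-flat (Lemma \ref{robbaflat}), so this factor is $[S_\eta\otimes\Lambda_R\xrightarrow{\,f_\eta\otimes1-1\otimes f_\eta\,}S_\eta\otimes\Lambda_R]$, which resolves the diagonal. Naturality in $D$ and compatibility with the colimit over $b'\to0$ are formal, because all constructions commute with restriction of annuli and with $\psi$.

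\textbf{Main obstacle.} The delicate point is Step 1 together with the exactness needed to commute $\otimes^{\mathbf L}$ with the colimit and the inverse limit. Concretely, we need $f_\eta$ to be a non-zero-divisor both on $\Lambda_R$ and on $D_{(0,b]}$, and we need $\Lambda_R/f_\eta\cong S_\eta$ uniformly in the Banach stages, even though $p$ may be $0$ and $I_\eta$ is not maximal. For the first, we would try Corollary \ref{annihilatorbehavior}-style arguments, combined with a stagewise Weierstrass preparation in $T$ and coadmissibility (exactness of $\varprojlim$ on coadmissible systems) to pass to $\Lambda_R$. If principality fails because of the torsion factor $\Delta_K$, we would decompose by the idempotents of $R[\Delta_K]$ when $|\Delta_K|$ is invertible, and otherwise reduce to the torsion-free subgroup via induction and Shapiro, using the $\operatorname{Ind}$ constructions above.
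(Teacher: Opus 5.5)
\textbf{There is a genuine gap.} It sits in your Step~1, and Step~2 inherits it.

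\emph{Why Step~1 fails.} The element $f_\eta=\gamma_K-\eta(\gamma_K)$ is in general not an element of $\Lambda_R$. The character $\eta$ takes values in a finite extension $L$ of $\mathbb{Q}_p$ or $\mathbf{F}_p((u))$, and $\eta(\gamma_K)$ only becomes a scalar in $S_\eta=R\otimes_{D_\lambda}\kappa_\eta$, which is usually a nontrivial finite $R$-algebra. For the same reason, Weierstrass division by $T-(\eta(\gamma)-1)$ is only available over $S_\eta$, not over $R$.

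Worse, $I_\eta\Lambda_R$ need not be principal at all. In the characteristic-$p$ case one has $p\in I_\eta$. For instance, take $R=D_\lambda$, $\Gamma_K$ torsion free, and $\eta$ the trivial character into $\mathbf{F}_p((u))$. Then $I_\eta\Lambda_R=(p,\gamma-1)$ is generated by a regular sequence of length two. So $S_\eta$ has projective dimension $2$ over $\Lambda_R$, and no resolution $[\Lambda_R\xrightarrow{f}\Lambda_R]$ exists.

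\emph{Consequences for Step~2.} The claims that ``$f_\eta$ acts on $D_{(0,b]}$'' and that its Koszul complex on $C_\psi^\bullet(D)$ is the Herr complex of ``$D(\eta^{-1})$ before changing coefficients'' only make sense when $S_\eta=R$ and $\eta(\gamma_K)$ is already a scalar of $R$ (e.g.\ $\eta$ valued in $\mathbb{Z}_p^\times$). As written, that is the only case your argument covers. The proposed repair via $C_\psi^\bullet(D)\otimes_{\Lambda_R}\mathrm{Koszul}(f_\eta)\otimes_RS_\eta$ does not help. Even when $f_\eta\in\Lambda_R$, it computes $(C_\psi^\bullet(D)\otimes^{\mathbf L}_{\Lambda_R}S_\eta)\otimes_RS_\eta$, so it tensors with $S_\eta$ twice, and the extra factor is not acyclic.

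\emph{The missing idea.} As in the paper, separate the coefficient change from the specialization in the group variable. Factor $\Lambda_R\to S_\eta$ as $\Lambda_R\to\Lambda_R\widehat\otimes_RS_\eta\cong\Lambda_{S_\eta,(0,\infty]}(\Gamma_K)\to S_\eta$.
\begin{itemize}
\item Since $\Lambda_R$ and the terms of $C_\psi^\bullet(D)$ are $R$-flat and $S_\eta$ is $R$-finite, $C_\psi^\bullet(D)\otimes^{\mathbf L}_{\Lambda_R}\Lambda_{S_\eta,(0,\infty]}(\Gamma_K)\simeq C_\psi^\bullet(D)\otimes_RS_\eta=C_\psi^\bullet(D\widehat\otimes_RS_\eta)$. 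This absorbs the $p$-direction and the non-principality of $I_\eta\Lambda_R$ with no Koszul complex at all.
\item Only now is $c=\eta(\gamma)$ a scalar. Over $\Lambda_{S_\eta,(0,\infty]}(\Gamma_K)$ one has the genuine resolution $0\to e_\eta\Lambda_{S_\eta,(0,\infty]}(\Gamma_K)\xrightarrow{\gamma-c}e_\eta\Lambda_{S_\eta,(0,\infty]}(\Gamma_K)\to S_\eta\to0$, by monic division by $T-(c-1)$ at the Banach stages.
\item The resulting total complex lives on $e_\eta(D\widehat\otimes_RS_\eta)\cong\bigl((D\widehat\otimes_RS_\eta)(\eta^{-1})\bigr)^{\Delta_K}$. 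It becomes the Herr complex after rescaling the $\gamma$-components by $c^{-1}$.
\end{itemize}

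\emph{How to salvage your write-up.} The two-term complex $[\Lambda_R\otimes_RS_\eta\to\Lambda_R\otimes_RS_\eta]$ in your last paragraph, read with differential $[\gamma]\otimes1-1\otimes c$ and cut by $e_\eta$, is exactly this resolution viewed over $\Lambda_R$. Take it as your starting point instead of $\mathrm{Koszul}(f_\eta)$. Then check that its terms are acyclic against the terms $M$ of $C_\psi^\bullet(D)$: because $\Lambda_R$ is $R$-flat, $\mathrm{Tor}_i^{\Lambda_R}(M,\Lambda_R\otimes_RS_\eta)\cong\mathrm{Tor}_i^R(M,S_\eta)$, and this vanishes since $M$ is $R$-flat. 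With that, you recover the paper's proof.

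Finally, your ``main obstacle'' is not an obstacle. No non-zero-divisor property on $D_{(0,b]}$ is needed, because the derived tensor product is computed by resolving $S_\eta$, not $D$.
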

\begin{proof}
Define
$$
\Lambda=\Lambda_{R,(0,\infty]}(\Gamma_K),
\qquad
\kappa_\eta=
\Lambda_{D_\lambda,(0,\infty]}(\Gamma_K)/I_\eta,
$$
and
$
S_\eta=
R\widehat{\otimes}_{D_\lambda}\kappa_\eta.
$
Then
$
\Lambda/I_\eta\Lambda\cong S_\eta.
$
We first separate the change of coefficients from the specialization in the
$\Gamma_K$-variable.
We claim that there is a natural isomorphism
$
\Lambda_{R,(0,\infty]}(\Gamma_K)
\widehat{\otimes}_R S_\eta
\cong
\Lambda_{S_\eta,(0,\infty]}(\Gamma_K).
$
Indeed, this can be checked at the Banach stages. Suppose first that
$\Gamma_K$ is torsion free, choose a topological generator $\gamma$, and put
$T=\gamma-1$. For every admissible $a$, if
$
e_a=\frac{1}{a\,v_{\mathbb C_p^{\flat}}(\bar\pi)},
$
then
$
\Lambda_{R_0,[a,\infty]}(\Gamma_K)
=
R_0[[T]]
\left\langle
\frac{T^{e_a}}{u}
\right\rangle.
$
If $S_\eta^\circ\subset S_\eta$ is the image of a ring of definition under
coefficient extension, then completed tensoring with $S_\eta^\circ$ replaces
the coefficients $R_0$ by $S_\eta^\circ$ and leaves the variable $T$ and its
growth condition unchanged. Hence
$
\Lambda_{R_0,[a,\infty]}(\Gamma_K)
\widehat{\otimes}_{R_0}S_\eta^\circ
\cong
S_\eta^\circ[[T]]
\left\langle
\frac{T^{e_a}}{u}
\right\rangle
=
\Lambda_{S_\eta^\circ,[a,\infty]}(\Gamma_K).
$
After inverting $u$, we obtain
$
\Lambda_{R,[a,\infty]}(\Gamma_K)
\widehat{\otimes}_R S_\eta
\cong
\Lambda_{S_\eta,[a,\infty]}(\Gamma_K).
$
Since
$
\Lambda_{R,(0,\infty]}(\Gamma_K)
=
\varprojlim_{a\to0}
\Lambda_{R,[a,\infty]}(\Gamma_K)
$
with its Frechet topology. Therefore
$$
\Lambda_{R,(0,\infty]}(\Gamma_K)
\widehat{\otimes}_R S_\eta
\cong
\varprojlim_{a\to0}
\Lambda_{S_\eta,[a,\infty]}(\Gamma_K)
=
\Lambda_{S_\eta,(0,\infty]}(\Gamma_K).
$$
If $\Delta_K$ is nontrivial, put $d_K=|\Delta_K|$ and perform the same calculation with
$R_0[\Delta_K][[T]]$ and exponent
$e(s)=1/(d_Ks\,v_{\mathbb{C}_p^\flat}(\bar\pi))$; coefficient
extension leaves both $\Delta_K$ and this rescaled radius unchanged.
Likewise, using the explicit presentation of the closed annulus rings, for
every $0<a\leq b$ we have
$
\Lambda_{R,[a,b],K}
\widehat{\otimes}_R S_\eta
\cong
\Lambda_{S_\eta,[a,b],K}.
$
Passing to the inverse limit over $a$ gives
$
\Lambda_{R,(0,b],K}
\widehat{\otimes}_R S_\eta
\cong
\Lambda_{S_\eta,(0,b],K}.
$
Define
$
D_{S_\eta}:=
D\widehat{\otimes}_R S_\eta.
$
Since $D$ is finite projective over $\Lambda_{R,(0,b],K}$, the preceding
coefficient-base-change identification gives
$
D_{S_\eta}
\cong
D\otimes_{\Lambda_{R,(0,b],K}}
\Lambda_{S_\eta,(0,b],K}.
$
Thus $D_{S_\eta}$ is naturally a $(\varphi,\Gamma_K)$-module over
$\Lambda_{S_\eta,(0,b],K}$.
We now compare the Iwasawa complexes before and after coefficient base
change. The ring $\Lambda_{R,(0,\infty]}(\Gamma_K)$ is flat over $R$ by the
same topological-basis argument as in Lemma \ref{robbaflat}. Moreover,
$\Lambda_{R,(0,b],K}$ is flat over $R$, and $D$ is finite projective over
$\Lambda_{R,(0,b],K}$. Hence $D$ is flat over $R$.
And so,
$$
\begin{aligned}
C_\psi^\bullet(D)
\otimes_\Lambda^{\mathbf L}
\Lambda_{S_\eta,(0,\infty]}(\Gamma_K)
&\cong
C_\psi^\bullet(D)
\otimes_\Lambda^{\mathbf L}
\left(
\Lambda\widehat{\otimes}_R S_\eta
\right) \\
&\cong
C_\psi^\bullet(D)
\otimes_R^{\mathbf L}S_\eta.
\end{aligned}
$$
We first note that $S_{\eta}$ is finite over $R$.\footnote{We justify that $\kappa_\eta$ is finite over $D_\lambda$. Suppose first that $L$ is a finite extension of $\mathbb Q_p$. Since $\mathcal O_L$ is a finite free $\mathbb Z_p$-module, the algebra $\mathcal O_L\otimes_{\mathbb Z_p}D_\lambda$ is finite free over $D_\lambda$ and is already complete. By the definition of $I_\eta$ as the kernel of evaluation, we have an injection
$$\kappa_\eta=\Lambda_{D_\lambda,(0,\infty]}(\Gamma_K)/I_\eta\hookrightarrow\mathcal O_L\otimes_{\mathbb Z_p}D_\lambda.$$
Since $D_\lambda$ is Noetherian, this implies that $\kappa_\eta$ is finite over $D_\lambda$.
Suppose now that $L$ is a finite extension of $\mathbb F_p((u))$, and define evaluation using the scalar map $D_\lambda\to D_\lambda/pD_\lambda=\mathbb F_p((u))\to L$. Then $\kappa_\eta$ is the image of evaluation in $L$. It is therefore a $\mathbb F_p((u))$-subspace of the finite-dimensional vector space $L$, so it is finite over $\mathbb F_p((u))$, and hence over $D_\lambda$. In either case, choose a finite presentation $D_\lambda^s\to D_\lambda^r\to\kappa_\eta\to0$. Tensoring with $R$ gives
$$R^s\longrightarrow R^r\longrightarrow R\otimes_{D_\lambda}\kappa_\eta\longrightarrow0.$$
Thus $R\otimes_{D_\lambda}\kappa_\eta$ is a finite $R$-module. With its canonical finite-module topology it is already complete, so
$$S_\eta=R\widehat{\otimes}_{D_\lambda}\kappa_\eta\cong R\otimes_{D_\lambda}\kappa_\eta.$$
Consequently $S_\eta$ is finite over $R$. This does not imply that $S_\eta$ is a field.} Since both terms of $C_\psi^\bullet(D)$ are $R$-flat, the last derived tensor
product is represented by the ordinary completed coefficient base change.
Therefore
$
C_\psi^\bullet(D)
\otimes_\Lambda^{\mathbf L}
\Lambda_{S_\eta,(0,\infty]}(\Gamma_K)
\cong
C_\psi^\bullet(D_{S_\eta}).
$
We now perform the character specialization.
Write
$
\Gamma_K=\Delta_K\times\Gamma,
$
where $\Delta_K$ is the torsion subgroup and $\Gamma$ is torsion free.
Choose a topological generator $\gamma$ of $\Gamma$ and put
$
c=\eta(\gamma).
$
Since $|\Delta_K|$ is prime to $p$, it is invertible in $S_\eta$. Define
$
e_\eta
=
\frac{1}{|\Delta_K|}
\sum_{\delta\in\Delta_K}
\eta(\delta)^{-1}\delta
\in
\Lambda_{S_\eta,(0,\infty]}(\Gamma_K),
$
and
$
P_\eta=
e_\eta\Lambda_{S_\eta,(0,\infty]}(\Gamma_K).
$
Since $e_\eta$ is an idempotent, $P_\eta$ is a finite projective
$\Lambda_{S_\eta,(0,\infty]}(\Gamma_K)$-module.
There is an exact sequence
$$
0
\longrightarrow
P_\eta
\xrightarrow{\gamma-c}
P_\eta
\xrightarrow{\varepsilon_\eta}
S_\eta
\longrightarrow
0,
$$
where $\varepsilon_\eta$ is evaluation through the character $\eta$. Indeed, on the $e_\eta$-isotypic component for $\Delta_K$ we have
$
P_\eta
\cong
\Lambda_{S_\eta,(0,\infty]}(\Gamma).
$
Writing $T=\gamma-1$ and $\alpha=c-1$, the map $\varepsilon_\eta$ becomes
evaluation at $T=\alpha$. Since $T-\alpha$ is monic, division by
$T-\alpha$ at every sufficiently small Banach stage containing $\alpha$ gives
$
\ker(\varepsilon_\eta)
=
(T-\alpha)\Lambda_{S_\eta,(0,\infty]}(\Gamma).
$
Moreover multiplication by $T-\alpha$ is injective. This proves the exact
sequence above.
We now have
$$
\begin{aligned}
C_\psi^\bullet(D)
\otimes_\Lambda^{\mathbf L}S_\eta
&\cong
\left(
C_\psi^\bullet(D)
\otimes_\Lambda^{\mathbf L}
\Lambda_{S_\eta,(0,\infty]}(\Gamma_K)
\right)
\otimes_{\Lambda_{S_\eta,(0,\infty]}(\Gamma_K)}^{\mathbf L}
S_\eta \\
&\cong
C_\psi^\bullet(D_{S_\eta})
\otimes_{\Lambda_{S_\eta,(0,\infty]}(\Gamma_K)}^{\mathbf L}
S_\eta.
\end{aligned}
$$
Using the projective resolution
$$
0
\longrightarrow
P_\eta
\xrightarrow{\gamma-c}
P_\eta
\longrightarrow
S_\eta
\longrightarrow
0,
$$
the last derived tensor product is represented by the total complex
$$
e_\eta D_{S_\eta}
\xrightarrow{d_0}
e_\eta D_{S_\eta}\oplus e_\eta D_{S_\eta}
\xrightarrow{d_1}
e_\eta D_{S_\eta},
$$
where
$
d_0(x)
=
\bigl(
(\psi-1)x,
(\gamma-c)x
\bigr)
$ and
$
d_1(y,z)
=
-(\gamma-c)y+(\psi-1)z.
$
These maps form a complex because $\psi$ commutes with $\gamma$. Let $
N=D_{S_\eta}(\eta^{-1}).
$
For $x\in D_{S_\eta}$, the corresponding element of $N$ is fixed by
$\Delta_K$ if and only if
$
\eta(\delta)^{-1}\delta(x)=x
$
for every $\delta\in\Delta_K$, or equivalently
$
\delta(x)=\eta(\delta)x.
$
Hence
$
N^{\Delta_K}
\cong
e_\eta D_{S_\eta}.
$

Under this identification, the twisted action of $\gamma$ is
$
\gamma_{\mathrm{tw}}=c^{-1}\gamma.
$
Therefore
$
\gamma_{\mathrm{tw}}-1
=
c^{-1}(\gamma-c).
$
Thus the Herr complex of $N$ is represented by
$$
e_\eta D_{S_\eta}
\xrightarrow{d_0'}
e_\eta D_{S_\eta}\oplus e_\eta D_{S_\eta}
\xrightarrow{d_1'}
e_\eta D_{S_\eta},
$$
where
$
d_0'(x)
=
\bigl(
(\psi-1)x,
c^{-1}(\gamma-c)x
\bigr)
$ and
$
d_1'(y,z)
=
-c^{-1}(\gamma-c)y+(\psi-1)z.
$
Define
$$
F_0(x)=x,
\qquad
F_1(y,z)=(y,c^{-1}z),
\qquad
F_2(x)=c^{-1}x.
$$
Then
$
F_1\circ d_0=d_0'\circ F_0
$
and
$
d_1'\circ F_1=F_2\circ d_1.
$
Since $c\in S_\eta^\times$, each $F_i$ is an isomorphism. Hence
$$
C_\psi^\bullet(D_{S_\eta})
\otimes_{\Lambda_{S_\eta,(0,\infty]}(\Gamma_K)}^{\mathbf L}
S_\eta
\cong
C_{\psi,\gamma}^\bullet
\left(
D_{S_\eta}(\eta^{-1})
\right).
$$
Finally,
$
D_{S_\eta}
=
D\widehat{\otimes}_R
\left(
R\widehat{\otimes}_{D_\lambda}\kappa_\eta
\right)
\cong
D\widehat{\otimes}_{D_\lambda}\kappa_\eta,
$
and
$
S_\eta
\cong
\Lambda_{R,(0,\infty]}(\Gamma_K)/
I_\eta\Lambda.
$
Therefore
$
C_\psi^\bullet(D)
\otimes_{\Lambda_{R,(0,\infty]}(\Gamma_K)}^{\mathbf L}
\frac{\Lambda_{R,(0,\infty]}(\Gamma_K)}
{I_\eta\Lambda}
\cong
C_{\psi,\gamma}^\bullet
\left(
D\widehat{\otimes}_{D_\lambda}
\kappa_\eta(\eta^{-1})
\right).
$
\end{proof}

\begin{theorem}[Specialization at a maximal point]
\label{pointbasechange}
Put $\Lambda_R=\Lambda_{R,(0,\infty]}(\Gamma_K)$ and
$A_a=\Lambda_{R,[\mu(a),\infty]}(\Gamma_K)$.
Let $\mathfrak n_{z,a}$ be a maximal ideal of $A_a$, and put
$$
L_z=A_a/\mathfrak n_{z,a}.
$$
Write $\varepsilon_z:\Lambda_R\to L_z$ for the restricted
evaluation map, $\eta_z:\Gamma_K\to L_z^\times$ for the induced
character, and
$$
N_z=(D\widehat\otimes_R L_z)(\eta_z^{-1}).
$$
There is a natural quasi-isomorphism
$$
C_\psi^\bullet(D)
\otimes_{\Lambda_R,\varepsilon_z}^{\mathbf L}L_z
\simeq C_{\psi,\gamma_K}^\bullet(N_z).
$$
Consequently, if
$C_{\psi,a}^\bullet=C_\psi^\bullet(D)
\otimes_{\Lambda_R}^{\mathbf L}A_a$, then
$$
C_{\psi,a}^\bullet\otimes_{A_a}^{\mathbf L}L_z
\simeq C_{\psi,\gamma_K}^\bullet(N_z).
$$
\end{theorem}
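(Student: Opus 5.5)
The argument is a two-step specialization that mirrors the proof of Theorem~\ref{basechange}: first change coefficients from $R$ to the residue field of the point, then specialize in the group variable. By Lemma~\ref{max}, the maximal point $\mathfrak n_{z,a}$ of $A_a$ is determined by a coefficient map $\lambda_z:R\to L_z$ with maximal kernel, together with a continuous character $\eta_z:\Gamma_K\to L_z^\times$. Moreover, $L_z$ is finite over $\kappa=R/\ker\lambda_z$ and satisfies $|\eta_z(\gamma)-1|<1$, so the evaluation $\varepsilon_z$ is defined on all of $\Lambda_R$ and factors as $\Lambda_R\to A_a\to L_z$. This factorization gives the ``consequently'' clause at once from the main statement:
\[
C_{\psi,a}^\bullet\otimes^{\mathbf L}_{A_a}L_z
\simeq
C_\psi^\bullet(D)\otimes^{\mathbf L}_{\Lambda_R}A_a\otimes^{\mathbf L}_{A_a}L_z
\simeq
C_\psi^\bullet(D)\otimes^{\mathbf L}_{\Lambda_R,\varepsilon_z}L_z .
\]
So it suffices to prove the first quasi-isomorphism.

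For coefficient change, $L_z$ with the coefficient map $\lambda_z$ is a finite $R$-algebra, and therefore a finite Banach $R$-algebra. The computation at Banach stages in the proof of Theorem~\ref{basechange} applies verbatim with $S_\eta$ replaced by $L_z$, and gives
\[
\Lambda_R\widehat\otimes_R L_z\cong\Lambda_{L_z,(0,\infty]}(\Gamma_K),
\qquad
\Lambda_{R,(0,b],K}\widehat\otimes_RL_z\cong\Lambda_{L_z,(0,b],K}.
\]
For $L_z$ finite over $R$, the completed tensor product is the ordinary one. Both $\Lambda_R$ (by the argument of Lemma~\ref{robbaflat}) and the terms of $C^\bullet_\psi(D)$ are $R$-flat. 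Hence, exactly as before,
\[
C^\bullet_\psi(D)\otimes^{\mathbf L}_{\Lambda_R}\Lambda_{L_z,(0,\infty]}(\Gamma_K)\simeq C^\bullet_\psi(D\widehat\otimes_RL_z).
\]
The evaluation $\varepsilon_z$ factors through $\Lambda_{L_z,(0,\infty]}(\Gamma_K)$ as evaluation at the character $\eta_z$ with $L_z$-linear coefficients. It remains to compute $C^\bullet_\psi(D_{L_z})\otimes^{\mathbf L}L_z$ over $\Lambda_{L_z,(0,\infty]}(\Gamma_K)$.

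For character specialization, I take $c=\eta_z(\gamma)$, which satisfies $|c-1|<1$, and the idempotent $e_{\eta_z}$ for $\Delta_K$; this idempotent exists because $|\Delta_K|$ is prime to $p$. I then use the resolution
\[
0\to P\xrightarrow{\gamma-c}P\to L_z\to0,
\qquad P=e_{\eta_z}\Lambda_{L_z,(0,\infty]}(\Gamma_K).
\]
Exactness follows by Weierstrass-type division by the monic element $T-(c-1)$ at each Banach stage $[s,\infty]$ small enough to contain $c-1$, together with injectivity of multiplication by $T-(c-1)$ in the integral domain-like power series ring. Passing to the inverse limit uses coadmissibility, or the Mittag-Leffler property of these surjective systems. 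Tensoring gives the total complex built from $\psi-1$ and $\gamma-c$ on $e_{\eta_z}D_{L_z}$. The explicit rescaling isomorphisms $F_0,F_1,F_2$ from Theorem~\ref{basechange} then identify this complex with $C^\bullet_{\psi,\gamma_K}(N_z)$, using $N_z^{\Delta_K}=e_{\eta_z}D_{L_z}$.

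The main obstacle is the first step when $L_z$ has characteristic $p$ while $R$ does not, or more generally when $L_z$ is not obtained from $D_\lambda$ as in Theorem~\ref{basechange}. There one must check that $D\widehat\otimes_RL_z$ is still a $(\varphi,\Gamma_K)$-module over $\Lambda_{L_z,(0,b],K}$ and that the naturality of the evaluation is compatible. I would handle this by writing $L_z$ as a finite $\kappa$-algebra and $\kappa$ as a quotient $R/\mathfrak m$. The coefficient change then splits into a quotient by a finitely generated ideal, which is exact on flat complexes by the finite presentation argument of Lemma~\ref{robbaflat}, followed by a finite free extension, which is harmless.
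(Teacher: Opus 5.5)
Your proposal is correct and follows essentially the same route as the paper: a finite coefficient change from $R$ to $L_z$ using the $R$-flatness of $\Lambda_R$ and of the terms of $C_\psi^\bullet(D)$, then the resolution $0\to P\xrightarrow{\gamma_K-c}P\to L_z\to 0$ on the $\eta_z$-isotypic component followed by rescaling, with the Banach-stage statement obtained by associativity of derived tensor products. The characteristic-$p$ ``obstacle'' you flag needs no separate treatment: the paper's coefficient-change step uses only that $L_z$ is a finite $R$-module together with $R$-flatness, and this works uniformly in both characteristics.
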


\begin{proof}
The coefficient map $R\to L_z$ factors through a maximal
residue field of $R$, and $L_z$ is finite over that field.
In particular, $L_z$ is a finite $R$-module.
The finite coefficient-base-change identifications used in the
preceding proof give
$$
B_z:=\Lambda_{L_z,(0,\infty]}(\Gamma_K)
\cong\Lambda_R\otimes_R L_z.
$$
Since $\Lambda_R$ and the terms of $C_\psi^\bullet(D)$ are
$R$-flat, it follows that
$$
C_\psi^\bullet(D)\otimes_{\Lambda_R}^{\mathbf L}B_z
\simeq C_\psi^\bullet(D\widehat\otimes_R L_z).
$$
These are finite coefficient-base-change identifications, so no
perfectness statement is needed here.

Write $\Gamma_K=\Delta_K\times\Gamma_K^{\mathrm{tf}}$, and choose
a topological generator $\gamma_K$ of $\Gamma_K^{\mathrm{tf}}$.
Put
$$
c_z=\eta_z(\gamma_K),
\qquad
e_z=\frac{1}{|\Delta_K|}
\sum_{\delta\in\Delta_K}\eta_z(\delta)^{-1}\delta,
\qquad P_z=e_zB_z.
$$
The module $P_z$ is finite projective over $B_z$. Evaluation gives
the projective resolution
$$
0\longrightarrow P_z
\xrightarrow{\gamma_K-c_z}P_z
\longrightarrow L_z\longrightarrow0.
$$
Indeed, on this tame-character component, put $T=\gamma_K-1$.
The kernel of evaluation at $T=c_z-1$ is generated by
$T-(c_z-1)$, and multiplication by this element is injective.
This follows by division on the Banach disks containing the point
and passage to their inverse limit.

Tensoring with this resolution represents the derived specialization
by the three-term total complex of $\psi-1$ and $\gamma_K-c_z$ on
$e_z(D\widehat\otimes_R L_z)$, in degrees $0,1,2$.
Under the identification
$$
N_z^{\Delta_K}\cong e_z(D\widehat\otimes_R L_z),
$$
the twisted generator acts as $c_z^{-1}\gamma_K$, so its
Herr differential is $c_z^{-1}(\gamma_K-c_z)$.
Rescaling the corresponding summands identifies this total complex
with $C_{\psi,\gamma_K}^\bullet(N_z)$.
The Banach-stage assertion follows from associativity of derived
tensor products.
\end{proof}

\subsection{Coadmissibility and perfectness}

\begin{lemma}
\label{basic}
    Let $D$ be a $(\varphi, \Gamma_K)$ module over $\Lambda_{R, (0, pb], K}$. Then $\text{colim}_{n \to \infty} D_{(0, b/p^n]}^{\psi = 1} = D_{(0, b]}^{\psi = 1}$
    In other words, let $n>1$ and $m \in D_{(0, b/p^n]}^{\psi =1}$, then $m$ comes from $D_{(0, b/p^{n-1}]}^{\psi =1}$ under the natural map $D_{(0, b/p^{n-1}]}^{\psi =1} \to D_{(0, b/p^{n}]}^{\psi =1}$ (which we would also refer to as $m$ comes from a bigger radius). We also have for $a \leq b/p^{n+1}$ that the map $D_{[a, b/p^{n-1}]}^{\psi =1} \to D_{[a, b/p^{n}]}^{\psi =1}$ is an isomorphism. 
\end{lemma}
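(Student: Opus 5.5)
The key mechanism is that $\psi$ maps $D_{(0,r/p]}$ into $D_{(0,r]}$; in annulus terms, $\psi$ sends $D_{[a/p, r/p]}$ into $D_{[a, r]}$. So if $m\in D_{(0,b/p^n]}$ satisfies $\psi(m)=m$, then $m=\psi(m)$ is the image of an element defined on the larger annulus $(0,b/p^{n-1}]$. The plan is to make this precise and then deduce both claims.

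First I fix the conventions. $D$ is defined on $(0,pb]$, and for $r\le pb$ we have the Frobenius isomorphism $\varphi^*D_{(0,r]}\cong D_{(0,r/p]}$. From it, $\psi_D$ is defined as a map $D_{(0,r/p]}\to D_{(0,r]}$, compatible with restriction. The restriction maps $D_{(0,r]}\to D_{(0,r']}$ for $r'<r$ are injective: $D$ is projective, and $\Lambda_{R,(0,r],K}\to\Lambda_{R,(0,r'],K}$ is injective by the Laurent series description in Lemma \ref{ringofdef}.

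Let $m\in D_{(0,b/p^n]}^{\psi=1}$. Here $\psi$ on $D_{(0,b/p^n]}$ means the composite of $\psi\colon D_{(0,b/p^n]}\to D_{(0,b/p^{n-1}]}$ with restriction back to $(0,b/p^n]$. Put $m'=\psi(m)\in D_{(0,b/p^{n-1}]}$; its restriction equals $m$. Restriction commutes with $\psi$, so
\[
\psi(m')=\psi(m|)=\psi(m)|=m'|=m
\]
after restriction. By injectivity of restriction, $\psi(m')=m'$ as elements on the larger radius. This gives surjectivity. Injectivity of restriction gives injectivity, so the map $D_{(0,b/p^{n-1}]}^{\psi=1}\to D_{(0,b/p^n]}^{\psi=1}$ is a bijection. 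Iterating up to radius $b$ (allowed since $D$ lives on $(0,pb]$) shows the colimit equals $D_{(0,b]}^{\psi=1}$.

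The closed-annulus statement follows the same way, which is why the hypothesis $a\le b/p^{n+1}$ appears. For $m\in D_{[a,b/p^n]}$, $\psi(m)$ lies in $D_{[pa,b/p^{n-1}]}$, and the $\psi=1$ condition is read in $D_{[pa,b/p^n]}$; the hypothesis guarantees this overlap is nonempty. To see $m$ extends to $[a,b/p^{n-1}]$, I glue $m$ on $[a,b/p^n]$ with $\psi(m)$ on $[pa,b/p^{n-1}]$. They agree on the overlap $[pa,b/p^n]$ precisely because $\psi(m)=m$ there. Gluing uses the sheaf property of $D$ on the cover of $[a,b/p^{n-1}]$ by these two annuli, which are admissible since $pa\le b/p^n$. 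The glued section $\tilde m$ satisfies $\psi\tilde m=\tilde m$, because both sides restrict to the same sections on the cover. Injectivity again follows from injectivity of restriction between closed annuli.

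I expect the main obstacle to be the precise bookkeeping of the radius ranges of $\psi$ on closed annuli: checking that the $\psi=1$ condition is meaningful on the appropriate overlap, and that injectivity of restriction holds for closed annuli. The latter should follow from projectivity of $D_{[a,r]}$ together with injectivity of $\Lambda_{R,[a,r],K}\to\Lambda_{R,[a',r'],K}$ on Laurent series.
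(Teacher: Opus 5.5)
Your proof is correct and uses the same mechanism as the paper. The paper writes $m=\varphi(m')+n$ via the decomposition $D_{(0,b/p^n]}=\varphi(D_{(0,b/p^{n-1}]})\oplus D^{\psi=0}_{(0,b/p^n]}$ and reads off $m'=\psi(m)=m$; your direct choice $m'=\psi(m)$ on the larger radius skips the decomposition, and for the closed-annulus claim, where the paper only says ``same radius bookkeeping,'' your gluing of $m$ and $\psi(m)$ over $[a,b/p^n]\cup[pa,b/p^{n-1}]$ (which is exactly where $a\le b/p^{n+1}$ is needed) correctly fills in the omitted step.
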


\begin{proof}
Let $m\in D_{(0,b/p^n]}^{\psi=1}$. Using the decomposition
$D_{(0,b/p^n]}=\varphi(D_{(0,b/p^{n-1}]})\oplus D_{(0,b/p^n]}^{\psi=0}$,
write
$m=\varphi(m')+n$,
where $m'\in D_{(0,b/p^{n-1}]}$ and $n\in D_{(0,b/p^n]}^{\psi=0}$.
Applying $\psi$ to this decomposition and using $\psi(m)=m$, we obtain
$m'=m$. Since $m=\varphi(m')+n$, it follows that
$m'-\varphi(m')=n$. Applying $\psi$ once more gives
$\psi(m')-\psi(\varphi(m'))=\psi(n)$, that is,
$\psi(m')-m'=0$, since $\psi\circ\varphi=\mathrm{id}$ and
$\psi(n)=0$. Hence $m'\in D_{(0,b/p^{n-1}]}^{\psi=1}$.
Finally, the equality $m=m'$ shows that every element of
$D_{(0,b/p^n]}^{\psi=1}$ already lies in
$D_{(0,b/p^{n-1}]}^{\psi=1}$. Therefore, the natural inclusion
$D_{(0,b/p^{n-1}]}\hookrightarrow D_{(0,b/p^n]}$
induces an isomorphism on $\psi$-invariants. The second part of the Lemma follows from same radius bookkeeping.
\end{proof}
\begin{lemma}
\label{psicoinv}
    $D_{(0, b/p^{n-1}]}/(\psi-1)D_{(0, b/p^{n-1}]} \simeq D_{(0, b/p^{n}]}/(\psi-1)D_{(0, b/p^{n}]}$ under the natural map. 
\end{lemma}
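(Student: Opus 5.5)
We have to show that the natural map on $\psi$-coinvariants
$$
\iota: D_{(0,b/p^{n-1}]}/(\psi-1)D_{(0,b/p^{n-1}]}\longrightarrow D_{(0,b/p^{n}]}/(\psi-1)D_{(0,b/p^{n}]}
$$
is bijective. As in Lemma~\ref{basic}, the plan uses the decomposition
$$
D_{(0,b/p^n]}=\varphi\bigl(D_{(0,b/p^{n-1}]}\bigr)\oplus D_{(0,b/p^n]}^{\psi=0},
$$
together with two facts: $\psi$ maps $D_{(0,b/p^n]}$ into $D_{(0,b/p^{n-1}]}$, and $\psi\circ\varphi=\mathrm{id}$. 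The point is that $\psi$ itself gives an inverse to $\iota$ on coinvariants.

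\textbf{Surjectivity.} Take $x\in D_{(0,b/p^n]}$. Then $y:=\psi(x)$ lies in $D_{(0,b/p^{n-1}]}$, and
$$
x-y=-(\psi-1)x\in(\psi-1)D_{(0,b/p^n]}.
$$
So $x$ and $\iota(y)$ have the same class, and $\iota$ is surjective.

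\textbf{Injectivity.} Suppose $y\in D_{(0,b/p^{n-1}]}$ satisfies $y=(\psi-1)z$ for some $z\in D_{(0,b/p^n]}$. Put $z'=\psi(z)$, which lies in $D_{(0,b/p^{n-1}]}$. Since $\psi$ commutes with $\psi-1$, applying $\psi$ gives
$$
\psi(y)=(\psi-1)z'.
$$
Here we use that $y\in D_{(0,b/p^{n-1}]}$, so $\psi(y)$ is defined on the larger radius; it lies in $D_{(0,b/p^{n-2}]}\subset D_{(0,b/p^{n-1}]}$. Also $y-\psi(y)=-(\psi-1)y$ is a coboundary in $D_{(0,b/p^{n-1}]}$. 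Combining the two,
$$
y=\psi(y)-(\psi-1)y=(\psi-1)(z'-y)\in(\psi-1)D_{(0,b/p^{n-1}]},
$$
so $\iota$ is injective.

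\textbf{Radius bookkeeping.} The only delicate point is that $\psi$ really improves the radius: $\psi\colon D_{(0,r/p]}\to D_{(0,r]}$. This follows from the definition of $\psi_D$ after the $\varphi$-isomorphism, applied with $r=b/p^{n-1}$. For the step $\psi(y)\in D_{(0,b/p^{n-2}]}$ when $n=1$, we need the hypothesis that $D$ is defined over $(0,pb]$, as in Lemma~\ref{basic}; this is where that hypothesis enters. One should also check that $\psi$ is compatible with the restriction maps between radii, which holds because $\psi$ is defined sheaf-theoretically. With these in place, the argument above shows that $\psi$ and $\iota$ induce mutually inverse maps on coinvariants.
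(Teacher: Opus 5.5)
Your proof is correct and is essentially the paper's argument. For surjectivity you use $x\equiv\psi(x)$ modulo coboundaries; the paper reaches the same representative $m'=\psi(m)$ through the decomposition $\varphi(D_{(0,b/p^{n-1}]})\oplus D^{\psi=0}_{(0,b/p^n]}$, which your argument shows is not actually needed. For injectivity, your preimage $z'-y=\psi(z)-y$ is just $z$ itself, so your step reduces to the paper's one-line observation that $z=\psi(z)-y$ already lies in $D_{(0,b/p^{n-1}]}$.
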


\begin{proof}
Consider the natural map
$$
D_{(0,b/p^{n-1}]}\longrightarrow
D_{(0,b/p^n]}/(\psi-1)D_{(0,b/p^n]}.
$$
Its kernel is
$(\psi-1)D_{(0,b/p^n]}\cap D_{(0,b/p^{n-1}]}$.
Let $m\in D_{(0,b/p^n]}$ satisfy
$\psi(m)-m\in D_{(0,b/p^{n-1}]}$. Since
$\psi(m)\in D_{(0,b/p^{n-1}]}$, it follows that
$m\in D_{(0,b/p^{n-1}]}$. Hence
$
(\psi-1)D_{(0,b/p^n]}\cap D_{(0,b/p^{n-1}]}
=
(\psi-1)D_{(0,b/p^{n-1}]}.
$
It remains to prove surjectivity. Using the decomposition
$
D_{(0,b/p^n]}
=
\varphi(D_{(0,b/p^{n-1}]})
\oplus
D_{(0,b/p^n]}^{\psi=0},
$
write
$m=\varphi(m')+n$,
where $m'\in D_{(0,b/p^{n-1}]}$ and
$n\in D_{(0,b/p^n]}^{\psi=0}$.
Then
$
\psi(m)-m
=
m'-m,
$
since $\psi(\varphi(m'))=m'$ and $\psi(n)=0$. Therefore
$m\equiv m'\pmod{(\psi-1)D_{(0,b/p^n]}}$,
and $m'\in D_{(0,b/p^{n-1}]}$. Thus every class in
$D_{(0,b/p^n]}/(\psi-1)D_{(0,b/p^n]}$
is represented by an element of
$D_{(0,b/p^{n-1}]}$, proving that the above map is surjective.
\end{proof}

\begin{lemma}
\label{referenceforbanachbasechange}
Put $\Lambda=\Lambda_{R,(0,\infty]}(\Gamma_K)$,
$A_a=\Lambda_{R,[\mu(a),\infty]}(\Gamma_K)$, and
$S_\eta=\Lambda/I_\eta\Lambda$.
Write $\Gamma_K=\Delta_K\times\Gamma_K^{\mathrm{tf}}$,
put $d_K=|\Delta_K|$, choose a generator $\gamma$ of
$\Gamma_K^{\mathrm{tf}}$, and assume that
$$e_a=\frac{1}{d_K\mu(a)\,v_{\mathbb{C}_p^\flat}(\bar\pi)}
\in\mathbb{Z}_{>0}.$$
Then
$$
A_a/I_\eta A_a\cong
S_\eta\left\langle\frac{(\eta(\gamma)-1)^{e_a}}{u}\right\rangle.
$$
Consequently there is a natural isomorphism
$$
\left(D_{(0,b]}\otimes_\Lambda A_a\right)
\widehat\otimes_{A_a}(A_a/I_\eta A_a)
\cong
D_{(0,b]}\widehat\otimes_\Lambda
\left(S_\eta\left\langle\frac{(\eta(\gamma)-1)^{e_a}}{u}\right\rangle\right).
$$
This is an algebra-quotient calculation; the displayed coefficient
algebra is not asserted to be a residue field.
\end{lemma}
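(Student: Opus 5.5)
The plan is to compute the quotient $A_a/I_\eta A_a$ directly from the explicit integral presentation of the Banach stage, and then derive the tensor identity formally.

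\textbf{Step 1: the presentation.} As in the proof of Theorem~\ref{basechange} and Lemma~\ref{max}, the stage has the integral presentation
$$A_a=\bigl(R_0[\Delta_K][[T]]\langle T^{e_a}/u\rangle\bigr)[1/u],\qquad T=\gamma-1.$$
The hypothesis $e_a\in\mathbb Z_{>0}$ is exactly what makes this presentation available. Equivalently,
$$A_a\cong\Lambda_R\widehat\otimes_{R_0[\Delta_K][[T]]}R_0[\Delta_K][[T]]\langle X\rangle/(uX-T^{e_a}),$$
up to $u$-torsion. In other words, $A_a$ is obtained from $\Lambda$ by adjoining the element $X=T^{e_a}/u$.

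\textbf{Step 2: reduce modulo $I_\eta$.} By Theorem~\ref{basechange}, $\Lambda/I_\eta\Lambda\cong S_\eta$. Under this map $\delta\mapsto\eta(\delta)$ and $T\mapsto\eta(\gamma)-1$. Reducing the relation $uX=T^{e_a}$ therefore gives
$$A_a/I_\eta A_a\cong S_\eta\langle X\rangle/\bigl(uX-(\eta(\gamma)-1)^{e_a}\bigr).$$
This is the displayed algebra $S_\eta\langle(\eta(\gamma)-1)^{e_a}/u\rangle$, namely the rational localization of $S_\eta$ where $|(\eta(\gamma)-1)^{e_a}|\le|u|$.

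The care needed here is the interaction between completion and quotient.
\begin{itemize}
\item The ideal $I_\eta A_a$ is finitely generated: $A_a$ is Noetherian by Lemma~\ref{Noetheriantate}, and $I_\eta$ is generated by $T-\alpha$ and the idempotent relations. It is therefore closed, so the quotient is computed algebraically.
\item Right exactness of the completed tensor with the finite module $S_\eta$ over $R$ (finite by the footnote to Theorem~\ref{basechange}) lets us commute the quotient past the completion $\langle X\rangle$.
\end{itemize}
I expect this to be the main technical point, namely checking that $u$-torsion and the closure issue do not change the answer. I would handle it at the level of rings of definition $S_\eta^\circ\langle X\rangle/(uX-c^{e_a})$ and then invert $u$.

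\textbf{Step 3: the tensor identity.} This follows formally by associativity of (completed) tensor products:
$$(D_{(0,b]}\otimes_\Lambda A_a)\widehat\otimes_{A_a}(A_a/I_\eta A_a)\cong D_{(0,b]}\widehat\otimes_\Lambda(A_a/I_\eta A_a).$$
Substituting the isomorphism from Step 2 gives the claim. Naturality holds because every identification is induced by the evaluation map.
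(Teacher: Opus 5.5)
Your proposal is correct and follows the paper's own argument. Both use the integral presentation $R_0[\Delta_K][[T]]\langle T^{e_a}/u\rangle$ reduced under $T\mapsto\eta(\gamma)-1$ and $\delta\mapsto\eta(\delta)$, then closedness of $I_\eta A_a$ from Noetherianity of $A_a$ (so the completed quotient is the algebraic one), then associativity of tensor products followed by passage to completions.

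One small slip: $I_\eta$ is not generated by $T-\alpha$ in general, because $\alpha=\eta(\gamma)-1$ need not lie in the coefficient ring; one would need, for example, its minimal polynomial. This does not matter, since you only use finite generation and closedness, and Noetherianity of $A_a$ already gives both.
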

\begin{proof}
With $T=\gamma-1$, the full-index convention gives the integral
presentation
$$R_0[\Delta_K][[T]]\left\langle T^{e_a}/u\right\rangle.$$
Under base-character evaluation, $T$ maps to $\eta(\gamma)-1$
and $\delta\in\Delta_K$ maps to $\eta(\delta)$.
The quotient therefore has the asserted rational-localization
presentation over $S_\eta$.
Since $A_a$ is Noetherian, its ideal $I_\eta A_a$ is closed,
and the completed quotient is the ordinary quotient.
Associativity of algebraic tensor products gives
$$
(D_{(0,b]}\otimes_\Lambda A_a)\otimes_{A_a}(A_a/I_\eta A_a)
\cong D_{(0,b]}\otimes_\Lambda(A_a/I_\eta A_a).
$$
The maps on pure tensors and their inverses are continuous for
the tensor topologies, so passage to separated completions gives
the module isomorphism.
\end{proof}

\begin{proposition}
\label{iwasawann}
    The cohomology groups of $C_{\psi}^{\bullet}$ for a $(\varphi, \Gamma_K)$ module $D_{(0, b]}$ over $\Lambda_{R, (0, b], K}$ are computed by the complex $$\begin{tikzcd}
	{D_{(0, b/p^n]}} && {D_{(0, b/p^n]}}
	\arrow["{\psi-1}", from=1-1, to=1-3]
\end{tikzcd}$$ for some $n \geq 1$.
\end{proposition}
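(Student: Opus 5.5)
The plan is to show that the colimit defining $C_\psi^\bullet(D)$ stabilizes: for every $n\geq 1$, the transition map between the two-term complexes at radii $b/p^{n-1}$ and $b/p^{n}$ is a quasi-isomorphism. Fix $n\geq 1$ such that $D$ is defined on $(0,pb/p^{n}]$ and the Frobenius decomposition
$$D_{(0,b/p^{m}]}=\varphi\bigl(D_{(0,b/p^{m-1}]}\bigr)\oplus D_{(0,b/p^{m}]}^{\psi=0}$$
holds for all $m\geq n$. Both conditions hold for small radii, because $D$ comes from some $D_{(0,b_0]}$ and the decomposition is available for sufficiently small $b$. We may shrink $b$ and replace it by $b/p^{n-1}$, so that the hypothesis of Lemma~\ref{basic} is in force at every later stage. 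Since the transition maps are the inclusions $D_{(0,b/p^{m-1}]}\hookrightarrow D_{(0,b/p^{m}]}$, and these commute with $\psi-1$ because restriction commutes with $\psi$, we have a directed system of two-term complexes. Its colimit is $C_\psi^\bullet(D)$, and cohomology commutes with filtered colimits, since filtered colimits of modules are exact.

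Next we compute the induced maps on cohomology. On $H^0$, the kernel of $\psi-1$ is $D_{(0,b/p^{m}]}^{\psi=1}$. By Lemma~\ref{basic}, the inclusion $D^{\psi=1}_{(0,b/p^{m-1}]}\to D^{\psi=1}_{(0,b/p^{m}]}$ is an isomorphism for every $m\geq n$. On $H^1$, the cokernel of $\psi-1$ is $D_{(0,b/p^m]}/(\psi-1)D_{(0,b/p^m]}$. By Lemma~\ref{psicoinv}, the natural map from the cokernel at radius $b/p^{m-1}$ to the cokernel at radius $b/p^{m}$ is an isomorphism. Hence every transition map of the system is a quasi-isomorphism. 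The colimit of a directed system of quasi-isomorphisms is quasi-isomorphic to any of its terms, so the cohomology of $C_\psi^\bullet(D)$ is computed by $D_{(0,b/p^n]}\xrightarrow{\psi-1}D_{(0,b/p^n]}$, as claimed.

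The only delicate point is bookkeeping of radii. Lemmas~\ref{basic} and~\ref{psicoinv} need $D$ to be defined on a radius $p$ times larger, so that $\varphi$ and $\psi$ map between the correct annuli and the Frobenius decomposition applies. I would handle this by fixing once and for all a radius $b_0$ at which $D$ and the decomposition exist, and then taking $n$ with $b/p^{n-1}\leq b_0$. After that, the stabilization is formal. Finally, I would note that the $\Lambda_{R,(0,\infty]}(\Gamma_K)$-module structures are compatible along the transition maps, because $\Gamma_K$ acts compatibly with restriction and the distribution action extends by continuity (Lemma~\ref{moduleaction}). The identification is therefore one of $\Lambda_{R,(0,\infty]}(\Gamma_K)$-modules, not merely of $R$-modules.
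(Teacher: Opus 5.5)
Your proposal is correct and follows the paper's own route: the paper derives the statement directly from Lemma~\ref{basic} (the $\psi=1$ kernels stabilize) and Lemma~\ref{psicoinv} (the $\psi$-coinvariants stabilize), which is exactly your argument that every transition map is a quasi-isomorphism, combined with exactness of filtered colimits as in Lemma~\ref{iwasawadefnequiv}. You label the two cohomology groups $H^0,H^1$, whereas the paper places this complex in degrees $1,2$; this is only a shift and does not affect the argument.
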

\begin{proof}
    This is an easy corollary of Lemma \ref{basic} and Lemma \ref{psicoinv}.
\end{proof}
\begin{lemma}
    \label{iwasawadefnequiv}
    The complexes $C'^{\bullet}_{\psi}(D)$ and $C_{\psi}^{\bullet}(D)$ compute the same cohomology. (These two complexes were defined in the beginning of this section.)
\end{lemma}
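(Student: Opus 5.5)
Our plan is to compare both complexes with the single-radius complex of Proposition \ref{iwasawann} via filtered colimits. Since $D$ is finite projective over $\Lambda_{R,(0,b],K}$ (Remark \ref{sheafmodule}), and $\Lambda_{R,\mathrm{rig},K}=\varinjlim_{b'\to0}\Lambda_{R,(0,b'],K}$, we first identify
\[
D_{(0,b]}\otimes_{\Lambda_{R,(0,b],K}}\Lambda_{R,\mathrm{rig},K}
\cong\varinjlim_{b'\to 0} D_{(0,b]}\otimes_{\Lambda_{R,(0,b],K}}\Lambda_{R,(0,b'],K}
=\varinjlim_{b'\to0}D_{(0,b']},
\]
using that tensor products commute with filtered colimits. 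The key point is that for projective $D$ the base change $D_{(0,b]}\otimes\Lambda_{R,(0,b'],K}$ agrees with the sections $D_{(0,b']}$ of the sheaf on the smaller quasi-Stein space. We reduce this to the free case by taking a direct summand of a finite free module, where it is tautological.

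Next, we check that the differential matches. On $D\otimes\Lambda_{R,\mathrm{rig},K}$, $\psi$ is defined summand-wise on each $D_{(0,b']}$ for $b'\le b$, via the isomorphism $\varphi^*D_{(0,b']}\cong D_{(0,b'/p]}$. Here we have to be careful about the radius shift: $\psi$ maps $D_{(0,b'/p]}$ to $D_{(0,b']}$, so on the colimit it is a well-defined endomorphism compatible with the inclusions. Since the colimit is filtered and the transition maps are the standard inclusions, we obtain an isomorphism of two-term complexes
\[
C'^{\bullet}_\psi(D)\cong \varinjlim_{b'\to0}\bigl(D_{(0,b']}\xrightarrow{\psi-1}D_{(0,b']}\bigr)=C^\bullet_\psi(D).
\]
Filtered colimits are exact, so cohomology commutes with them, and the two complexes have the same cohomology. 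The identification with the single-radius complex then follows from Lemmas \ref{basic} and \ref{psicoinv}: these show the colimit systems of $H^1$ and $H^2$ are eventually constant.

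The main obstacle is justifying the identification of $D_{(0,b]}\otimes\Lambda_{R,(0,b'],K}$ with $D_{(0,b']}$, that is, that no completion is needed. We handle this by projectivity: $D_{(0,b]}\oplus N\cong\Lambda_{R,(0,b],K}^n$. The corresponding statement for $\Lambda^n$ is immediate, and it passes to direct summands. The sheaf restriction of a summand is a summand of the restriction, by the coadmissible/quasi-Stein formalism of \cite{schneider2002algebras} as used in Remark \ref{sheafmodule}.
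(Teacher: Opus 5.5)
Your proposal is correct and follows the paper's argument. Like the paper, you identify $D_{(0,b]}\otimes\Lambda_{R,\mathrm{rig},K}$ with $\varinjlim_{b'\to0}D_{(0,b']}$, use exactness of filtered colimits, and invoke the stabilization of Lemmas~\ref{basic} and~\ref{psicoinv} (packaged in the paper as Proposition~\ref{iwasawann}); your extra care with the radius shift in $\psi$ and with base change of projective $D$ only makes explicit what the paper leaves implicit, and since $D_I$ is defined as the uncompleted tensor product $\Lambda_{R,I,K}\otimes_{\Lambda_{R,(0,b],K}}D$, the step you flag as the main obstacle holds essentially by definition.
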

\begin{proof}
    $C_{\psi}^{'\bullet}(D)$ is defined as:
    $$\begin{tikzcd}
	{D_{(0, b]} \otimes \Lambda_{R, \text{rig}, K}} && {D_{(0, b]} \otimes \Lambda_{R, \text{rig}, K}}
	\arrow["{\psi-1}", from=1-1, to=1-3]
\end{tikzcd}.$$
Since ${D_{(0, b]} \otimes \Lambda_{R, \text{rig}, K}} = \varinjlim_{b'\to 0} D_{(0, b']}$, and filtered colimits are exact, we have $H^i_{\psi}(D_{\text{rig}}) = \varinjlim_{b' \to 0} H^i_{\psi}(D_{(0, b']})$.
Proposition \ref{iwasawann} tells us that this limit eventually stabilizes. Hence computing the cohomology at $b'=b/p^n$ for large enough $n$ gives us what we need.
\end{proof}
Hence, we just prove the perfectness of $\begin{tikzcd}
	{D_{(0, b']}} && {D_{(0, b']}}
	\arrow["{\psi-1}", from=1-1, to=1-3]
\end{tikzcd}$ for a small enough $b'$. This is different from Proposition 3.8, \cite{bellovin2024cohomology}, where the Herr cohomology can be just computed from $$D_{[a,b]}
\xrightarrow{(\varphi - 1,\ \gamma - 1)}
D_{[a,b/p]} \oplus D_{[a,b]}
\xrightarrow{(\gamma - 1)\ \oplus\ (1 - \varphi)}
D_{[a,b/p]}$$ for sufficiently small $a, b$. This is because taking $\Gamma$ invariants makes the cohomology Herr complex really small and passing to a finite interval is enough. 

\begin{lemma}
\label{phiinv}
    The module $\text{colim}_{b' \to 0} D_{(0, b']}^{\varphi = 1}$ is $R$-finite.
\end{lemma}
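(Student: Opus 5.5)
The plan has three steps: show that the colimit stabilizes, identify the stable value with $H^0$ of a Frobenius complex on a closed annulus, and then apply the integral Cartan--Serre argument (Corollary \ref{integralcompcontcohomology}). Here $\text{colim}$ and $\varinjlim$ denote the same colimit.

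\emph{Stabilization.} Let $x\in D_{(0,b']}$ satisfy $\varphi(x)=x$. Then
$$\psi(x)=\psi(\varphi(x))=x,$$
so $D_{(0,b']}^{\varphi=1}\subset D_{(0,b']}^{\psi=1}$. This is the inclusion observed just before Proposition \ref{mapnat}. Lemma \ref{basic} shows that $x$ already comes from $D_{(0,pb']}^{\psi=1}$, hence from $D_{(0,b]}^{\psi=1}$ by iterating. The relation $\varphi(x)=x$ persists on the larger radius, because $D_{(0,b]}\to D_{(0,b']}$ is injective: both are sections of a locally free sheaf on nested quasi-Stein spaces. Therefore
$$\varinjlim_{b'\to0}D_{(0,b']}^{\varphi=1}=D_{(0,b]}^{\varphi=1}$$
for a fixed sufficiently small $b$. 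It remains to prove that this single module is $R$-finite.

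\emph{Reduction to closed annuli.} I would next show that, for $0<a\leq b/p$, restriction gives an isomorphism
$$D_{(0,b]}^{\varphi=1}\xrightarrow{\ \sim\ }\ker\left(D_{[a,b]}\xrightarrow{\ \varphi-1\ }D_{[a,b/p]}\right),$$
where $\varphi:D_{[a,b]}\to D_{[a/p,b/p]}$ is followed by restriction. Injectivity is again injectivity of restriction. For surjectivity, take $y$ in the kernel and extend it step by step to $[a/p^n,b]$ by setting $y:=\varphi^n(y)$ on $[a/p^n,b/p^n]$. The compatibility $\varphi(y)=y$ on the overlap $[a,b/p]$ makes these pieces glue, by sheaf gluing on the quasi-Stein space (Remark \ref{sheafmodule}). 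This makes $D^{\varphi=1}_{(0,b]}$ the $H^0$ of the two-term complex
$$C_{[a,b]}:\; D_{[a,b]}\xrightarrow{\varphi-1}D_{[a,b/p]}$$
for every such $a$. The same argument gives the identical $H^0$ for a smaller $a'<a$.

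\emph{Finiteness.} Fix $a'<a\leq b/p$, and consider the restriction morphism of complexes $C_{[a',b]}\to C_{[a,b]}$. By the previous step it is an isomorphism on $H^0$. I would use Frobenius to factor it as the composite of an isomorphism and a restriction to a strictly smaller annulus in the interior. Restrictions $D_{I}\to D_{J}$ with $J$ contained in the interior of $I$ are completely continuous, after choosing integral lattices in the pseudoaffinoid rings as in Lemma \ref{ringofdef}. Applying Corollary \ref{integralcompcontcohomology} to the $H^0$-part then forces $H^0$ to be finite over the Noetherian Banach ring $R$. This works because the identity of a Banach $R$-module that factors through a completely continuous map has finite image.

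\emph{Main obstacle.} I expect the main difficulty to be this last step. We only need the $H^0$ statement, since $H^1$ of $C_{[a,b]}$ is genuinely infinite. The cleanest route is to work directly with the Banach $R$-module $H=D^{\varphi=1}_{(0,b]}$, with the norm induced from $D_{[a,b]}$. The identity of $H$ equals restriction to $[a',b]$, where $H$ embeds closedly, followed by restriction back to $[a,b]$ and the gluing inverse. So the identity is completely continuous. The integral version of the classical lemma then shows that an integral unit ball of $H$ is finite over $R_0$, hence $H$ is $R$-finite.

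The delicate points in this argument are these:

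\begin{itemize}
\item $H$ must be closed in $D_{[a,b]}$. This holds because it is a kernel.
\item The gluing inverse must be bounded. This follows from the open mapping theorem.
\item Compactness must be phrased with $R_0$-lattices, because $p$ need not be invertible in $R$.
\end{itemize}
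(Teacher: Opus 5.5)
Your first two steps are fine, but the complete-continuity step fails as you finally state it. You factor $\mathrm{id}_H$ through the restriction $D_{[a',b]}\to D_{[a,b]}$, which improves only the inner endpoint. On negative powers $\pi_K^{-j}$ both Banach norms are governed by the common outer endpoint $b$. So this restriction is isometric on the infinite-rank closed span of those powers, and it is not integrally completely continuous over $R_0$. For the same reason Corollary~\ref{integralcompcontcohomology} cannot be applied to the restriction morphism $C_{[a',b]}\to C_{[a,b]}$, whose degree-$0$ term is this map.

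The outer radius must shrink as well, and this is where Frobenius actually enters. Take $a\leq b/p^3$. By your stabilization and gluing steps, $H=D_{(0,b]}^{\varphi=1}$ is also the kernel of $\varphi-1$ on $D_{[a,b/p^2]}$, hence closed there. On $H$, restriction to $[a,b/p^2]$ equals $\operatorname{res}\circ\varphi:D_{[a,b]}\to D_{[a,b/p^2]}$. This map improves both endpoints, and it is integrally completely continuous over $R_0$ by the part of the proof of Lemma~\ref{integralresphi} carried out over $R_0$. Equivalently, the restriction morphism $C_{[a,b]}\to C_{[a,b/p^2]}$ is homotopic, via restriction from degree $1$ to degree $0$, to a morphism whose degree-$0$ term is $\operatorname{res}\circ\varphi$, exactly as in Proposition~\ref{firstcoh}. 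Lemma~\ref{integralcompcontstricttarget} then shows that the bijection $f=\operatorname{res}|_H$ is integrally completely continuous. Here $f$ goes from $H$ normed inside $D_{[a,b]}$ to $H$ normed inside $D_{[a,b/p^2]}$. Lemma~\ref{integralcompcontperturbation} with $g=-f$ then shows that $H=\operatorname{coker}(f+g)$ is finite over $R$. The principle you need is that a Banach module whose identity is \emph{integrally} completely continuous is finite. The non-integral version is false, by the paper's example $\mathbb{Q}_p\langle T/p\rangle$ over $\mathbb{Q}_p\langle T\rangle$.

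With that repair your argument is correct and genuinely different from the paper's. The paper reduces to $K=\mathbb{Q}_p$ by induction and uses the residue pairing. Since $\{x,\psi y\}=\{\varphi x,y\}$, a $\varphi$-invariant $x$ annihilates $(\psi-1)D^*$. So $D^{\varphi=1}$ injects into $\operatorname{Hom}_R(D^*/(\psi-1)D^*,R)$, which is $R$-finite by Bellovin's finiteness of $H^2_\psi$ \cite[Proposition~5.2]{bellovin2024cohomology}, and Noetherianity of $R$ finishes. That proof is short but rests on duality and an external finiteness theorem. Your route needs no induction, no pairing and no input about $\psi$-coinvariants. It uses only the closed-annulus description of $\varphi$-invariants, which you reprove by gluing and the paper cites as Corollary~3.6 of \cite{bellovin2024cohomology}. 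The rest is the paper's own integral Cartan--Serre lemmas: the mechanism of Proposition~\ref{firstcoh}, run in degree $0$ over the base ring $R$.
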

\begin{proof}
   We first reduce to the case $K=\mathbb{Q}_p$. Since $\Gamma_K$ is an open
subgroup of $\Gamma_{\mathbb{Q}_p}$, the module $R[\Gamma_{\mathbb{Q}_p}]$
is finite free over $R[\Gamma_K]$. Recall that
$
\operatorname{Ind}_{K}^{\mathbb{Q}_p}D_{(0,b']}
=
\operatorname{Hom}_{R[\Gamma_K]}
\left(
R[\Gamma_{\mathbb{Q}_p}],D_{(0,b']}
\right).
$
Since $\varphi$ acts trivially on the group-ring factor and commutes with
the $\Gamma$-action, we have
$
\left(
\operatorname{Ind}_{K}^{\mathbb{Q}_p}D_{(0,b']}
\right)^{\varphi=1}
=
\operatorname{Hom}_{R[\Gamma_K]}
\left(
R[\Gamma_{\mathbb{Q}_p}],
D_{(0,b']}^{\varphi=1}
\right).
$
As $R[\Gamma_{\mathbb{Q}_p}]$ is finite free over $R[\Gamma_K]$, the
right-hand side is a finite direct sum of copies of
$D_{(0,b']}^{\varphi=1}$. Passing to the colimit over $b'\to0$, it is
therefore enough to prove the lemma for a $(\varphi,\Gamma_{\mathbb{Q}_p})$-module.
    Let $D^*=D^\vee(1)$. The residue pairing gives a
nondegenerate continuous $R$-bilinear pairing
$D\times D^*\longrightarrow R$ satisfying
$
\{x,\psi y\}=\{\varphi x,y\}.
$
Thus, if $\varphi x=x$, then
$
\{x,(\psi-1)y\}
=\{\varphi x,y\}-\{x,y\}=0.
$
The pairing therefore induces an injection
$
D_{\mathrm{rig}}^{\varphi=1}
\longrightarrow
\operatorname{Hom}_R
\left(D_{\mathrm{rig}}^*/(\psi-1)D_{\mathrm{rig}}^*,R\right).
$
Injectivity follows from nondegeneracy of the residue
pairing. The quotient on the right is finite over $R$
by \cite[Proposition~5.2]{bellovin2024cohomology}.
Its $R$-dual is finite, and $R$ is Noetherian.
Hence $D_{\mathrm{rig}}^{\varphi=1}$ is finite over $R$.
\end{proof}

\subsection{Complete continuity of Banach modules}
\begin{definition}
\label{compcontdef}
(Completely continuous morphism of Banach modules)
Let $A$ be a Banach algebra, and let $f:M\to N$ be a bounded morphism of Banach $A$-modules. We say that $f$ is completely continuous if there exists a sequence of finite $A$-submodules $N_i\subset N$ such that the operator norm of $M\to N\to N/N_i$ tends to zero, where $N/N_i$ is equipped with the quotient seminorm.
\end{definition}

We wanted to apply Lemma 1.10, \cite{kedlaya2016finiteness} to a completely continuous map of complexes to deduce finiteness of cohomology groups. Consider $A = \mathbb{Q}_p \langle T \rangle$, $B = \mathbb{Q}_p\langle \dfrac{T}{p} \rangle$. Then the identity map $id: B \to B$ satisfies the above property by taking $N_i = A$. This means that from Lemma 1.10, \cite{kedlaya2016finiteness}, $B$ should be finite $A$-module which it isn't.

This example is however contrived and one can use another criterion inspired from methods of \cite{kedlaya2016finiteness} that will help us prove what we need.

In what follows, $A=A_0[1/u]$ is a Banach Tate algebra, where $A_0$ is a Noetherian, $u$-adically complete ring of definition and $u$ is a topologically nilpotent unit of $A$. By an $A_0$-lattice in a Banach $A$-module we mean an open bounded $A_0$-submodule.

\begin{definition}
\label{integralcompcontdef}
(Integrally completely continuous morphism of Banach modules)
A bounded $A$-linear map $g:M\to N$ is integrally completely continuous if, for $A_0$-lattices $M_0\subset M$ and $N_0\subset N$ and every $k\geq1$, there exists a finitely generated $A_0$-submodule $L_k\subset N$ such that
$$g(M_0)\subset L_k+u^kN_0.$$
\end{definition}

This condition is independent of the chosen lattices. Indeed, given lattices $M'_0,N'_0$, choose $c,d\geq0$ such that $M'_0\subset u^{-c}M_0$ and $N_0\subset u^{-d}N'_0$. Applying the condition at precision $k+c+d$ gives
$$g(M'_0)\subset u^{-c}L_{k+c+d}+u^kN'_0.$$
It also implies Definition~\ref{compcontdef}: take the finite $A$-submodules $AL_k$ and use that $u^kN_0$ tends uniformly to zero.

Writing $L_k=\sum_{j=1}^rA_0v_j$, the integral condition allows us to choose, for every $m\in M_0$,
$$g(m)=\sum_{j=1}^r a_j(m)v_j+u^kn(m),\qquad a_j(m)\in A_0,\quad n(m)\in N_0.$$
These choices need not be additive or $A$-linear.  

The preceding example fails the integral condition. Take $A_0=\mathbb Z_p\langle T\rangle$, $B_0=\mathbb Z_p\langle U\rangle$ and $U=T/p$. If $\operatorname{id}_B$ were integrally completely continuous, there would be a finite $A_0$-submodule $L\subset B$ such that $B_0\subset L+pB_0$. For each $b\in B_0$, any expression $b=\ell+pb'$ with $\ell\in L$ and $b'\in B_0$ has $\ell\in L\cap B_0$. Hence $L\cap B_0\to B_0/pB_0$ would be surjective. Since $A_0$ is Noetherian, $L\cap B_0$ is finite over $A_0$. But
$$B_0/pB_0=\mathbb F_p[U],\qquad T=pU=0\pmod p,$$
so the $A_0$-action on $B_0/pB_0$ factors through $A_0/(p,T)=\mathbb F_p$. This would make $\mathbb F_p[U]$ finite-dimensional over $\mathbb F_p$, a contradiction.

\begin{lemma}
\label{integralcompcontoperations}
Integral complete continuity is preserved under addition and bounded precomposition and postcomposition. If $h:M'\to M$ is a bounded surjection and $g\circ h$ is integrally completely continuous, then $g$ is integrally completely continuous.
\end{lemma}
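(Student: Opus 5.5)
The plan is to verify each claim directly from Definition~\ref{integralcompcontdef}. By the remark after the definition, the condition does not depend on the lattices, so we may fix convenient ones throughout.

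\emph{Addition.} Let $g_1,g_2:M\to N$ be integrally completely continuous. Fix lattices $M_0,N_0$ and $k\ge 1$. Choose finitely generated $L_k^{(1)},L_k^{(2)}$ with $g_i(M_0)\subset L_k^{(i)}+u^kN_0$. Then
\[
(g_1+g_2)(M_0)\subset L_k^{(1)}+L_k^{(2)}+u^kN_0,
\]
because $u^kN_0$ is an $A_0$-submodule. The sum $L_k^{(1)}+L_k^{(2)}$ is finitely generated.

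\emph{Precomposition.} Let $h:M'\to M$ be bounded and $M'_0$ a lattice. Boundedness gives $c\ge 0$ with $h(M'_0)\subset u^{-c}M_0$. Apply the condition for $g$ at precision $k+c$ and multiply by $u^{-c}$:
\[
g(h(M'_0))\subset u^{-c}L_{k+c}+u^kN_0.
\]
Here $u^{-c}L_{k+c}$ is still finitely generated over $A_0$.

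\emph{Postcomposition.} Let $h:N\to N'$ be bounded. Choose $d\ge0$ with $h(N_0)\subset u^{-d}N'_0$. Since $h$ is $A$-linear, $h(L)$ is finitely generated over $A_0$ whenever $L$ is. Using precision $k+d$,
\[
h(g(M_0))\subset h(L_{k+d})+u^{k+d}h(N_0)\subset h(L_{k+d})+u^kN'_0.
\]

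\emph{Surjective case.} This is the only step that uses something beyond bookkeeping, and it is where I expect the main obstacle to lie. Let $h:M'\to M$ be a bounded surjection of Banach $A$-modules. By the Banach open mapping theorem for Banach modules over the Tate ring $A$, the image $h(M'_0)$ of a lattice is open in $M$. It is also bounded, so $h(M'_0)$ is itself a lattice of $M$. If open mapping is unavailable in this generality, we can instead choose $c$ with $u^cM_0\subset h(M'_0)$ and replace $M_0$ by $u^cM_0$, which is allowed by lattice independence.

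Now take $M_0:=h(M'_0)$ as the lattice for $M$. Since $g\circ h$ is integrally completely continuous, for every $k$ there is a finitely generated $L_k$ with
\[
g(M_0)=g(h(M'_0))\subset L_k+u^kN_0.
\]
This is exactly the defining condition for $g$ with respect to the lattice pair $(M_0,N_0)$. By lattice independence, $g$ is therefore integrally completely continuous.

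The only delicate point is this appeal to open mapping. It is the same ingredient already used in the proof of Lemma~\ref{uniformfinite}, so I would cite it there.
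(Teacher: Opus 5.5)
Your proof is correct and follows the paper's argument step by step: addition by summing the finite submodules, pre- and postcomposition by absorbing a $u$-power at shifted precision, and the surjective case via the open mapping theorem making $h(M'_0)$ a lattice. One caveat: your proposed fallback of choosing $c$ with $u^cM_0\subset h(M'_0)$ is not an alternative to open mapping, since the existence of such a $c$ is exactly what the open mapping theorem asserts.
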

\begin{proof}
For addition, use the sum of the two finite $A_0$-submodules for each $k$. For bounded precomposition, the image of a source lattice is contained in a $u$-power multiple of a lattice in $M$. For bounded postcomposition $h:N\to P$, choose a target lattice $P_0$ and $c\geq0$ such that $h(N_0)\subset u^{-c}P_0$. Choosing finitely generated module $L_{k+c}$ so that $g(M_0)\subset L_{k+c}+u^{k+c}N_0$ then gives $(h\circ g)(M_0)\subset h(L_{k+c})+u^kP_0$, and $h(L_{k+c})$ is finite over $A_0$. Finally, if $h:M'\to M$ is surjective and $M'_0$ is a lattice in $M'$, the open mapping theorem shows that $h(M'_0)$ is a lattice in $M$. The approximations for $(g\circ h)(M'_0)$ are therefore approximations for $g(h(M'_0))$.
\end{proof}

\begin{lemma}
\label{integralcompcontperturbation}
Let $f,g:M\to N$ be bounded $A$-linear maps between Banach $A$-modules. Suppose that $\operatorname{coker}(f)$ is finite over $A$ and that $g$ is integrally completely continuous. Then $\operatorname{coker}(f+g)$ is finite over $A$.
\end{lemma}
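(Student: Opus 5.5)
The plan is to adapt the classical Schwartz/Kiehl perturbation argument to the integral setting. Lattices and the integral approximation property will replace the missing Noetherian Banach structure on $N$.

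First we reduce to a statement about lattices. Since $\operatorname{coker}(f)$ is finite over $A$, choose finitely many $w_1,\dots,w_s\in N$ whose images generate it. Then $f\oplus(\text{those generators}):M\oplus A^s\to N$ is a bounded surjection of Banach $A$-modules. By the open mapping theorem there are lattices $M_0\subset M$ and $N_0\subset N$ and an integer $c\geq 0$ with
$$
N_0\subset f(M_0)+\sum_j A_0w_j,
\qquad
f(M_0)\subset u^{-c}N_0 .
$$
Replacing $M_0$ by $u^{c}M_0$ normalizes this to $f(M_0)\subset N_0$ and $N_0\subset f(u^{-c}M_0)+\sum_jA_0w_j$. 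We may also enlarge the $w_j$ so that they lie in $N_0$.

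Next we apply integral complete continuity of $g$ to the lattice $u^{-c}M_0$ at precision $k=c+1$. This gives a finitely generated $A_0$-submodule $L\subset N$ with
$$
g(u^{-c}M_0)\subset L+u^{c+1}N_0 .
$$
We may enlarge $L$ to contain the $w_j$. Put $F=f+g$. For $n\in N_0$, write $n=f(m)+\sum a_jw_j$ with $m\in u^{-c}M_0$. Then
$$
n=F(m)-g(m)+\sum a_jw_j\in F(u^{-c}M_0)+L+u^{c+1}N_0 .
$$
Feeding the error term back in, scaled by $u^{c+1}$, gives $u^{c+1}N_0\subset F(u^{-c}\cdot u^{c+1}M_0)+u^{c+1}L+u^{2(c+1)}N_0$. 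Iterating and summing the resulting geometric series, which converges because $M$ is complete and $u$ is topologically nilpotent, yields
$$
N_0\subset F(u^{-c}M_0)+L'+\bigcap_r u^{r(c+1)}N_0,
$$
where $L'$ is the $u$-adic closure of $\sum_r u^{r(c+1)}L$.

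The main obstacle is controlling $L'$, that is, showing that the accumulated finite pieces stay inside a single finitely generated module. The point is that $\sum_r u^{r(c+1)}L\subset L\otimes_{A_0}A_0=L$ whenever $L$ is an $A_0$-module, because $u\in A_0$. So the sum is just $L$ itself, and no growth occurs. The closure issue is handled as follows. We choose the $N_0$-components successively, with $m$-components converging in $M$ and the $L$-coefficients converging in $A_0$ termwise on the finite generating set of $L$. Since $A_0$ is $u$-adically complete, the limit lies in $L$. Here the integral (rather than merely $A$-linear) form of Definition~\ref{integralcompcontdef} is exactly what prevents the counterexample discussed above.

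Putting this together, we get $N_0\subset F(u^{-c}M_0)+L$. Inverting $u$ then gives $N=F(M)+AL$, so $\operatorname{coker}(F)$ is generated by the images of the finitely many generators of $L$. Hence it is finite over $A$.
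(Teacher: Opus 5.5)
Your argument is correct and is essentially the paper's proof: the same successive-approximation scheme built from $g(M_0)\subset L+u^kN_0$, with completeness of $M$ summing the $M$-components and $u$-adic completeness of $A_0$ summing the coefficients on the fixed generators of $L$. The only difference is bookkeeping. The paper first treats surjective $f$ with $N_0=f(M_0)$, then reduces the general case by applying it to $(f,q):M\oplus A^r\to N$ and $(g,0)$; you instead absorb the cokernel generators $w_j$ directly into $L$. Your intermediate ``$\bigcap_r u^{r(c+1)}N_0$'' inclusion is imprecise as written and should be read as the successive-choice series argument you give immediately afterwards.
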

\begin{proof}
We first suppose that $f$ is surjective. Choose a lattice $M_0\subset M$ and put $N_0=f(M_0)$. The open mapping theorem implies that $N_0$ is an open bounded $A_0$-submodule of $N$. Choose $L=\sum_{j=1}^rA_0v_j$ such that $g(M_0)\subset L+uN_0$. Fix $n_0\in N_0$. Recursively choose $m_i\in M_0$, $a_{ij}\in A_0$ and $e_i\in N_0$ such that
$$f(m_i)=n_i,\qquad g(m_i)=\sum_{j=1}^r a_{ij}v_j+ue_i,$$
and put $n_{i+1}=-e_i$. Then $n_i=(f+g)(m_i)-\sum_j a_{ij}v_j+un_{i+1}$, and hence
$$n_0=(f+g)\left(\sum_{i=0}^{s-1}u^im_i\right)-\sum_{j=1}^r\left(\sum_{i=0}^{s-1}u^ia_{ij}\right)v_j+u^sn_s.$$
Since $M_0$ and $N_0$ are bounded, $\sum_{i\geq0}u^im_i$ converges in $M$ and $u^sn_s$ tends to zero in $N$. Completeness of $A_0$ gives $\sum_{i\geq0}u^ia_{ij}\in A_0$. Thus $N_0\subset(f+g)(M)+L$. After inverting $u$, we obtain $N=(f+g)(M)+AL$, so the images of the $v_j$ generate $\operatorname{coker}(f+g)$.

For general $f$, choose a bounded map $q:A^r\to N$ whose image generates $\operatorname{coker}(f)$, and apply the surjective case to $(f,q):M\oplus A^r\to N$ and $(g,0)$. The latter is integrally completely continuous by Lemma~\ref{integralcompcontoperations}. The resulting cokernel is finite. Adjoining the images of $q$ gives finitely many generators of the original cokernel.
\end{proof}

\begin{lemma}
\label{integralcompcontstricttarget}
Let $g:M\to N$ be a bounded $A$-linear map and let $h:N\hookrightarrow N'$ be a strict inclusion of Banach $A$-modules. If $h\circ g$ is integrally completely continuous, then $g$ is integrally completely continuous.
\end{lemma}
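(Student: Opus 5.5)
We work directly from Definition~\ref{integralcompcontdef}, reducing the claim to a statement about lattices. Choose an $A_0$-lattice $N'_0\subset N'$. Strictness of $h$ means $h$ is a homeomorphism onto its image with the subspace topology. Hence $N_0:=h^{-1}(N'_0)$ is an open bounded $A_0$-submodule of $N$, so it is a lattice there. We also fix a lattice $M_0\subset M$. The preceding remark says the condition does not depend on the chosen lattices, so it suffices to produce, for each $k$, a finitely generated $A_0$-submodule $L_k\subset N$ with $g(M_0)\subset L_k+u^kN_0$.

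By hypothesis, for each $k$ there is a finitely generated $A_0$-submodule $L'_k\subset N'$ such that
$$hg(M_0)\subset L'_k+u^kN'_0.$$
The difficulty is that $L'_k$ need not lie in $h(N)$, so we cannot simply pull it back. We handle this as in the counterexample discussion, by intersecting with the lattice. Put
$$L''_k:=(L'_k+u^kN'_0)\cap h(N)\cap u^{-c}N'_0,$$
where $c$ is chosen with $hg(M_0)\subset u^{-c}N'_0$, which is possible since $hg$ is bounded.

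The key step is to show that $hg(M_0)$ lies in a set of the form $P+u^k(N'_0\cap h(N))$ with $P$ finitely generated inside $h(N)$. Our first attempt is to enlarge $L'_k$ to $L'_k\cap u^{-c}N'_0$ plus a correction. This lies in $N'$, not obviously in $h(N)$, and closing that gap is the main obstacle.

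To get around it, we pass to the quotient. Consider the image $Q$ of $u^{-c}N'_0$ in
$$u^{-c}N'_0/u^kN'_0,$$
which is an $A_0/u^{k+c}$-module, and the image $\overline{L}$ of $L'_k$ there. The image of $h(N)\cap u^{-c}N'_0$ is a submodule, and we intersect it with $\overline{L}$. Since $A_0$ is Noetherian and $\overline{L}$ is finitely generated, $\overline{L}\cap \overline{h(N)}$ is finitely generated. Lift a finite set of its generators to elements $h(y_1),\dots,h(y_r)$ with $y_i\in N$, and set $L_k=\sum A_0 y_i$.

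Now take $m\in M_0$ and write $hg(m)=\ell+u^kn'$. The class of $hg(m)$ lies in $\overline{L}\cap\overline{h(N)}$, so
$$hg(m)-\sum a_ih(y_i)\in u^kN'_0\cap h(N)=h(u^kN_0),$$
where the last equality uses the definition of $N_0$ together with injectivity of $h$. Applying $h^{-1}$ gives $g(m)\in L_k+u^kN_0$.

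The one point needing care is the lifting step. A class in $\overline{L}\cap\overline{h(N)}$ is represented by an element of $h(N)\cap u^{-c}N'_0$, so the lifts exist inside $h(N)$. We do this modulo $u^kN'_0$ rather than trying to intersect modules in $N'$ directly, which is what makes the Noetherian argument available.
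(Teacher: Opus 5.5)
Your argument is correct and is essentially the paper's proof: the paper intersects $L_k$ with $N+u^kN'_0$ inside $N'$, uses Noetherianity of $A_0$ to choose generators $w_j=n_j+u^ke_j$ with $n_j\in N$, and concludes via $u^kN'_0\cap N=u^kN_0$, which is exactly your lifting of generators of $\overline{L}\cap\overline{h(N)}$, done modulo $u^kN'_0$. The only slip is that $L'_k$ need not lie in $u^{-c}N'_0$ for your choice of $c$. Either enlarge $c$ (possible because $L'_k$ is finitely generated, hence bounded) or simply work in $N'/u^kN'_0$. The auxiliary module $L''_k$ is never used and can be dropped.
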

\begin{proof}
Identify $N$ with its closed image in $N'$. Choose lattices $M_0\subset M$ and $N'_0\subset N'$, and put $N_0=N\cap N'_0$. For $k\geq1$, choose a finite $A_0$-submodule $L_k\subset N'$ such that $g(M_0)\subset L_k+u^kN'_0$. Since $A_0$ is Noetherian, $L_k\cap(N+u^kN'_0)$ is finite. Choose generators $w_1,\ldots,w_r$ and write $w_j=n_j+u^ke_j$, where $n_j\in N$ and $e_j\in N'_0$. For $m\in M_0$, choose $\ell\in L_k$ with $g(m)-\ell\in u^kN'_0$. Then $\ell\in L_k\cap(N+u^kN'_0)$, so $\ell=\sum_j a_jw_j$ with $a_j\in A_0$. It follows that
$$g(m)-\sum_j a_jn_j\in u^kN'_0\cap N=u^kN_0.$$
Thus $g(M_0)\subset\sum_jA_0n_j+u^kN_0$, as required.
\end{proof}

\begin{corollary}
\label{integralcompcontcohomology} (Cartan-Serre, Kedlaya-Liu, \cite{kedlaya2016finiteness})
Let $F^\bullet:C_1^\bullet\to C_2^\bullet$ be a morphism of complexes of Banach $A$-modules with bounded differentials. Suppose that $F^i$ is integrally completely continuous and that $H^i(C_1^\bullet)\to H^i(C_2^\bullet)$ has finite cokernel over $A$. Then $H^i(C_2^\bullet)$ is finite over $A$.
\end{corollary}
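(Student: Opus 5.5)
The plan is to reduce the statement about cohomology to Lemma~\ref{integralcompcontperturbation} by working with cocycles. Write the complexes as $C_1^{i-1}\xrightarrow{d_1}C_1^i\xrightarrow{d_1}C_1^{i+1}$ and $C_2^{i-1}\xrightarrow{d_2}C_2^i\xrightarrow{d_2}C_2^{i+1}$. Put $Z_j^i=\ker(d_j:C_j^i\to C_j^{i+1})$. Since $d_j$ is bounded, $Z_j^i$ is a closed submodule of $C_j^i$, hence a Banach $A$-module. Its inclusion into $C_j^i$ is strict, because $Z_j^i$ carries the subspace norm. Thus $H^i(C_2^\bullet)=Z_2^i/d_2(C_2^{i-1})$. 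I will exhibit a bounded map onto $Z_2^i$ whose cokernel is $H^i(C_2^\bullet)$, perturbed by an integrally completely continuous map.

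Consider the two maps
\[
\alpha,\ \beta:\ Z_1^i\oplus C_2^{i-1}\longrightarrow Z_2^i,\qquad \alpha(z,c)=F^i(z)+d_2(c),\qquad \beta(z,c)=-F^i(z).
\]
Both are bounded. The map $\alpha$ lands in $Z_2^i$ because $F$ is a chain map, and $\operatorname{coker}(\alpha)$ is exactly the cokernel of $H^i(C_1^\bullet)\to H^i(C_2^\bullet)$. By hypothesis this cokernel is finite over $A$. Next, $\alpha+\beta=(0,d_2)$, so $\operatorname{coker}(\alpha+\beta)=Z_2^i/d_2(C_2^{i-1})=H^i(C_2^\bullet)$. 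It therefore suffices to show that $\beta$ is integrally completely continuous; Lemma~\ref{integralcompcontperturbation} then gives finiteness of $H^i(C_2^\bullet)$.

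For integral complete continuity of $\beta$, first note that $F^i$ restricted to $Z_1^i$ is the precomposition of $F^i$ with the bounded inclusion $Z_1^i\hookrightarrow C_1^i$. Likewise, $\beta$ is the composition of this map with the bounded projection $Z_1^i\oplus C_2^{i-1}\to Z_1^i$ and multiplication by $-1$. By Lemma~\ref{integralcompcontoperations}, the composite $Z_1^i\oplus C_2^{i-1}\to C_2^i$ is integrally completely continuous. This composite factors through the strict inclusion $Z_2^i\hookrightarrow C_2^i$, so Lemma~\ref{integralcompcontstricttarget} shows that $\beta$, viewed as a map into $Z_2^i$, is integrally completely continuous.

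The main point needing care is this corestriction step. Integral complete continuity into $C_2^i$ does not automatically give finitely generated approximating modules inside $Z_2^i$; this is exactly what Lemma~\ref{integralcompcontstricttarget} provides, using the Noetherianity of $A_0$. One must also check that $Z_2^i$ is a legitimate Banach module with a lattice $Z_2^i\cap N_0$, which follows from closedness. No open-mapping hypothesis on $d_2$ is needed, because Lemma~\ref{integralcompcontperturbation} only requires that the cokernel of $\alpha$ be finite, not that $\alpha$ be surjective.
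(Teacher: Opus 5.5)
Your proposal is correct and follows the paper's proof essentially verbatim. The paper also uses the maps $f(z,b)=F^i(z)+d_2^{i-1}(b)$ and $g(z,b)=-F^i(z)$ from $Z_1^i\oplus C_2^{i-1}$ to $Z_2^i$. It likewise obtains integral complete continuity of $g$ into $C_2^i$ from Lemma~\ref{integralcompcontoperations}, corestricts to $Z_2^i$ via Lemma~\ref{integralcompcontstricttarget}, and concludes with Lemma~\ref{integralcompcontperturbation}, using $\operatorname{coker}(f+g)=H^i(C_2^\bullet)$.
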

\begin{proof}
Put $Z_j^i=\ker(d_j^i)$; these are closed Banach submodules. Consider $f,g:Z_1^i\oplus C_2^{i-1}\to Z_2^i$ given by
$$f(z,b)=F^i(z)+d_2^{i-1}(b),\qquad g(z,b)=-F^i(z).$$
The cokernel of $f$ is the cokernel of the induced map on cohomology, hence is finite. Lemma~\ref{integralcompcontoperations} shows that $g$, viewed as a map into $C_2^i$, is integrally completely continuous. Lemma~\ref{integralcompcontstricttarget}, applied to $Z_2^i\hookrightarrow C_2^i$, gives the same assertion with target $Z_2^i$. Lemma~\ref{integralcompcontperturbation} now gives the finiteness of $\operatorname{coker}(f+g)=Z_2^i/d_2^{i-1}(C_2^{i-1})=H^i(C_2^\bullet)$.
\end{proof}

The corollary also applies if $F^\bullet$ is homotopic to a morphism whose degree $i$ map is integrally completely continuous, since homotopic morphisms induce the same map on cohomology. Thus it is enough to verify integral complete continuity for the components of the comparison map obtained after the homotopy.

To use complete continuity over the Banach distribution stages, we fix the following integral model.

Write $\Gamma_K=\Delta_K\times\Gamma_K^{\mathrm{tf}}$, put
$d_K=|\Delta_K|$, choose a generator $\gamma$ of
$\Gamma_K^{\mathrm{tf}}$, and put $T=\gamma-1$.
Under our full-index convention,
$$
\Lambda_{R,[s,\infty]}(\Gamma_K)
=\Lambda_{R,[d_Ks,\infty]}(\Gamma_K^{\mathrm{tf}})
\otimes_{\mathbb{Z}[\Gamma_K^{\mathrm{tf}}]}\mathbb{Z}[\Gamma_K].
$$
For admissible $s$ with
$$
e(s)=\frac{1}{d_Ks\,v_{\mathbb{C}_p^\flat}(\bar\pi)}
\in\mathbb{Z}_{>0},
$$
its integral presentation is
$$
\Lambda_{R_0,[s,\infty]}(\Gamma_K)
=R_0[\Delta_K][[T]]
\left\langle\frac{T^{e(s)}}{u}\right\rangle.
$$
In particular, at the stage indexed by $\mu(a)$ we put
$$
e_a=e(\mu(a))
=\frac{1}{d_K\mu(a)\,v_{\mathbb{C}_p^\flat}(\bar\pi)},
\qquad Y=\frac{T^{e_a}}{u}.
$$
Suppressing $\Delta_K$ in the notation does not remove the factor
$d_K$ from $e_a$.
We choose a decreasing sequence of admissible radii for which these
exponents are integers.

\begin{definition}
With the exponent $e_a$ above, put
$$
P_a=R_0[\Delta_K][[T]][Y],
\qquad M_{R,\mu(a)}=1+uP_a,
$$
and
$$
S_{R_0,\mu(a)}^{\mathrm{int}}
=M_{R,\mu(a)}^{-1}P_a,
\qquad
S_{R,\mu(a)}=S_{R_0,\mu(a)}^{\mathrm{int}}[1/u].
$$
Here $P_a$ denotes the subring generated by
$R_0[\Delta_K][[T]]$ and $Y=T^{e_a}/u$.
\end{definition}

\begin{proposition}
\label{localfff}
The natural map $S_{R,\mu(a)}\to\Lambda_{R,[\mu(a),\infty]}(\Gamma_K)$ is faithfully flat. Moreover, every $x\in\Lambda_{R_0,[\mu(a),\infty]}(\Gamma_K)$ can be written as $x=\sum_{i\geq0}u^if_i$ with $f_i\in S_{R_0,\mu(a)}^{\mathrm{int}}$.
\end{proposition}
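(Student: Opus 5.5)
The plan is to identify $\Lambda_{R_0,[\mu(a),\infty]}(\Gamma_K)$ with the $u$-adic completion of $S^{\mathrm{int}}_{R_0,\mu(a)}$. Faithful flatness then comes from the standard fact that, for a Noetherian ring $B$ and an ideal $I$ contained in its Jacobson radical, the $I$-adic completion $\widehat B$ is faithfully flat over $B$.

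First, $P_a=R_0[\Delta_K][[T]][Y]$ is Noetherian: $R_0$ is Noetherian, hence so are $R_0[\Delta_K][[T]]$ and any finitely generated algebra over it. The localization $S^{\mathrm{int}}_{R_0,\mu(a)}=M_{R,\mu(a)}^{-1}P_a$ is therefore Noetherian. Since every element of $1+uS^{\mathrm{int}}$ is a unit (its numerator and denominator both lie in $M_{R,\mu(a)}$), the element $u$ lies in the Jacobson radical of $S^{\mathrm{int}}$. Hence $S^{\mathrm{int}}\to\widehat{S^{\mathrm{int}}}$ ($u$-adic completion) is faithfully flat, by Matsumura Theorem 8.14 / Stacks Tag 00MC.

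Next, identify this completion with the ring of definition. The inclusion $P_a\subset\Lambda_{R_0,[\mu(a),\infty]}(\Gamma_K)=R_0[\Delta_K][[T]]\langle T^{e_a}/u\rangle$ sends $M_{R,\mu(a)}$ to units, because the target is $u$-adically complete and $1+ux$ is inverted by a geometric series. So it extends to $S^{\mathrm{int}}\to\Lambda_{R_0,[\mu(a),\infty]}$, and then to the completion. Modulo $u^n$, the presentation $R_0[\Delta_K][[T]]\langle Y\rangle/(uY-T^{e_a})$ reduces to $R_0[\Delta_K][[T]][Y]/(u^n,\,uY-T^{e_a})$, since restricted power series in $Y$ become polynomials mod $u^n$. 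On the other side, $S^{\mathrm{int}}/u^n=P_a/u^n$, because $1+uP_a$ already consists of units mod $u^n$. Here $P_a$ should be taken as the subring of $\Lambda_{R_0}$, which is a quotient of $R_0[\Delta_K][[T]][Y]$ by the ideal generated by $uY-T^{e_a}$ and any $u$-torsion relations, matching the analogous quotient in $\Lambda_{R_0}$. Comparing these quotients for all $n$ gives $\widehat{S^{\mathrm{int}}}\cong\Lambda_{R_0,[\mu(a),\infty]}(\Gamma_K)$.

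The expansion $x=\sum u^if_i$ then follows directly. Pick $f_0\in S^{\mathrm{int}}$ with $x-f_0\in u\Lambda_{R_0}$ (the mod $u$ surjectivity), write $x-f_0=ux_1$, and iterate. Convergence holds because $\Lambda_{R_0}$ is $u$-adically complete and separated, and $u$-torsion causes no issue since we only use surjectivity at each stage.

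Inverting $u$ gives the statement for $S_{R,\mu(a)}\to\Lambda_{R,[\mu(a),\infty]}$. Flatness is preserved under localization. For faithfulness, every maximal ideal of $S_{R,\mu(a)}=S^{\mathrm{int}}[1/u]$ must be shown to extend to a proper ideal. I expect this to be the main obstacle. Faithful flatness of $S^{\mathrm{int}}\to\widehat{S^{\mathrm{int}}}$ gives surjectivity on Spec, and we need the restriction to primes not containing $u$. The approach is to use the Tate-ring fact that maximal ideals of $S_{R,\mu(a)}$ are cut out by closed points with bounded residue fields. Concretely, for $\mathfrak m\subset S_{R,\mu(a)}$ maximal, I would show $\mathfrak m\cap S^{\mathrm{int}}$ lies in a prime $\mathfrak q$ containing $\mathfrak m\cap S^{\mathrm{int}}$ but not $u$, using Artin–Rees or the generalized principal ideal theorem on the Noetherian ring $S^{\mathrm{int}}/(\mathfrak m\cap S^{\mathrm{int}})$. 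Then lift $\mathfrak q$ to $\widehat{S^{\mathrm{int}}}$ via faithful flatness and going down, so that the lifted prime avoids $u$. If this fails, the fallback is to show $\mathfrak m\Lambda_R\neq\Lambda_R$ directly: $S^{\mathrm{int}}/\mathfrak m'$ modulo $u$-torsion is a Noetherian domain whose completion is nonzero after inverting $u$, because $u$ is not nilpotent in it.
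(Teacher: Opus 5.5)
Your proposal follows essentially the same route as the paper: Noetherianity of $S^{\mathrm{int}}_{R_0,\mu(a)}$, $u\in\operatorname{Jac}(S^{\mathrm{int}}_{R_0,\mu(a)})$, faithful flatness of the $u$-adic completion, identification of that completion with $\Lambda_{R_0,[\mu(a),\infty]}(\Gamma_K)$ via $S^{\mathrm{int}}_{R_0,\mu(a)}/u^N\cong P_a/u^N$, and successive $u$-adic lifting for the expansion $x=\sum_{i\geq0}u^if_i$. The step you flagged as the main obstacle is immediate: faithful flatness is stable under base change, and $\Lambda_{R,[\mu(a),\infty]}(\Gamma_K)=\Lambda_{R_0,[\mu(a),\infty]}(\Gamma_K)\otimes_{S^{\mathrm{int}}_{R_0,\mu(a)}}S^{\mathrm{int}}_{R_0,\mu(a)}[1/u]$. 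Equivalently, for a maximal ideal $\mathfrak m$ of $S_{R,\mu(a)}$, the contraction $\mathfrak m\cap S^{\mathrm{int}}_{R_0,\mu(a)}$ is already a prime not containing $u$, and any prime of the completion lying over it also avoids $u$, so no Artin--Rees, principal ideal theorem, or going-down is needed.
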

\begin{proof}
Put $S_0=S_{R_0,\mu(a)}^{\mathrm{int}}$ and $A_0=\Lambda_{R_0,[\mu(a),\infty]}(\Gamma_K)$. The integral presentation identifies $A_0$ with the $u$-adic completion of $P_a$. Every element of $M_{R,\mu(a)}$ is already invertible modulo $u^N$, so
$$S_0/u^NS_0\cong P_a/u^N,\qquad A_0\cong\varprojlim_N S_0/u^NS_0.$$
The ring $S_0$ is Noetherian. Moreover, if $x=a/s\in S_0$, then $1+ux=(s+ua)/s$ is a unit, since $s+ua\in M_{R,\mu(a)}$. Thus $u\in\operatorname{Jac}(S_0)$, and the Noetherian completion theorem shows that $S_0\to A_0$ is faithfully flat. Inverting $u$ gives the asserted faithful flatness. Finally, lift $x$ modulo $u$ to $f_0\in S_0$, then lift $(x-f_0)/u$ modulo $u$ to $f_1$, and continue. Completeness gives $x=\sum_{i\geq0}u^if_i$.
\end{proof}

\begin{lemma}
\label{integralcoefficientaction}
Suppose that the action of $P_a$ on a Banach $R$-module $D$ is uniformly bounded. Then it extends to a uniformly bounded action of $S_0=S_{R_0,\mu(a)}^{\mathrm{int}}$. Moreover, $D$ has an open bounded $S_0$-stable lattice, and this action extends continuously to $\Lambda_{R_0,[\mu(a),\infty]}(\Gamma_K)$ while preserving that lattice.
\end{lemma}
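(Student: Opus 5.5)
The plan is to work entirely on the integral level, exploiting that $S_0=M^{-1}P_a$ with $M=M_{R,\mu(a)}=1+uP_a$, and that $A_0=\Lambda_{R_0,[\mu(a),\infty]}(\Gamma_K)$ is the $u$-adic completion of $P_a$ (Proposition~\ref{localfff}).

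\textbf{Step 1: a $P_a$-stable lattice.} Fix an open bounded $R_0$-lattice $D_0\subset D$. Uniform boundedness of the $P_a$-action gives $c\ge 0$ with $x D_0\subset u^{-c}D_0$ for all $x\in P_a$. Set
\[
D_0'=\sum_{x\in P_a} xD_0 .
\]
This lies between $D_0$ and $u^{-c}D_0$, so it is open and bounded. It is stable under $P_a$ because $P_a$ is a ring. Replacing $D_0$ by $D_0'$, we may assume $D_0$ is a $P_a$-submodule. Since $D_0$ is open and bounded and $D$ is complete, $D_0$ is then $u$-adically separated and complete (take its closure if needed, which remains $P_a$-stable by continuity).

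\textbf{Step 2: invert $M$ on the lattice.} For $s=1+uy\in M$ with $y\in P_a$, the operator $s$ on $D_0$ is $1+uy$, where $y$ preserves $D_0$. The Neumann series
\[
s^{-1}=\sum_{n\ge 0}(-uy)^n
\]
therefore converges in operator norm on $D_0$ and preserves it. This defines an action of $a/s\in S_0$ by $a\circ s^{-1}$. One checks that it is well defined, i.e.\ compatible with the equivalence $a/s=a'/s'$ when $t(as'-a's)=0$ for some $t\in M$; this holds because every element of $M$ now acts invertibly. The action is a ring homomorphism $S_0\to\operatorname{End}(D_0)$ by the universal property of localization. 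Every element of $S_0$ preserves $D_0$, so the action is uniformly bounded and $D_0$ is an $S_0$-stable lattice. Inverting $u$ gives the action on $D=D_0[1/u]$.

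\textbf{Step 3: extend to the completion.} Because $D_0$ is $S_0$-stable, the action induces compatible maps
\[
S_0/u^N\to \operatorname{End}(D_0/u^ND_0).
\]
By Proposition~\ref{localfff}, $S_0/u^N\cong P_a/u^N$ and $A_0=\varprojlim_N S_0/u^N$. Passing to the limit, and using that $D_0=\varprojlim_N D_0/u^N D_0$ by completeness, gives an action of $A_0$ on $D_0$ preserving $D_0$. Concretely, $x=\sum_i u^if_i$ acts by the convergent series $\sum_i u^i f_i$. Continuity holds because $u^NA_0$ maps $D_0$ into $u^ND_0$. Inverting $u$ yields the continuous action of $\Lambda_{R,[\mu(a),\infty]}(\Gamma_K)$ extending that of $S_{R,\mu(a)}$.

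\textbf{Main obstacle.} The delicate point is Step 1: ensuring the lattice is genuinely stable, not just stable up to a power of $u$. Otherwise the Neumann series in Step 2 would not be guaranteed to converge for all of $M$; for example, $uy$ could act with norm at least $1$. The saturation $\sum_x xD_0$ resolves this, and it needs exactly the uniform bound. The remaining checks (well-definedness of the localization, independence of the representation $\sum u^if_i$) are formal, since the inverse-limit description makes the action canonical.
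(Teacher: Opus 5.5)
Your argument is correct and is essentially the paper's proof. It uses the same three ingredients: a Neumann series for $s=1+uh$, an open bounded stable lattice obtained from the uniform bound, and extension to $\Lambda_{R_0,[\mu(a),\infty]}(\Gamma_K)$ through the $u$-adic description in Proposition~\ref{localfff}. The only difference is the order. You first saturate to a $P_a$-stable lattice and invert $u$-adically on it. The paper inverts directly in operator norm on $D$, using $\|u^nh^n\|\leq C\|u^n\|\to 0$ because every $h^n\in P_a$ is bounded by $C$, and only afterwards takes $D_0=S_0P_0$. So the convergence issue you flag as the main obstacle is already settled by the uniform bound alone.
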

\begin{proof}
Choose a constant $C$ bounding the operators defined by all elements of $P_a$. If $s=1+uh$ with $h\in P_a$, then $h^n$ belongs to the same ring for every $n$, and
$$s^{-1}=\sum_{n\geq0}(-u)^nh^n$$
converges in the ring of bounded operators on $D$. For $a\in P_a$, the operators $ah^n$ are also bounded by $C$. Thus the operators $a/s=\sum_{n\geq0}(-u)^nah^n$ are uniformly bounded, independently of $a$ and $s$.

Choose an open bounded $R_0$-lattice $P_0\subset D$. Uniform boundedness gives $h\geq0$ such that $P_0\subset S_0P_0\subset u^{-h}P_0$, where the span consists of finite sums. Hence $D_0=S_0P_0$ is an open bounded $S_0$-stable lattice. Being an open additive subgroup of a complete module, it is closed and complete, and its topology is the $u$-adic topology. For $\lambda\in\Lambda_{R_0,[\mu(a),\infty]}(\Gamma_K)$, choose compatible representatives $\lambda_n\in S_0$. Then $(\lambda_n-\lambda_m)D_0\subset u^rD_0$ for $n,m\geq r$. Thus $\lambda x=\lim_n\lambda_nx$ defines an action preserving $D_0$, independent of the representatives. The algebra identities pass to these limits.
\end{proof}

The hypothesis of Lemma~\ref{integralcoefficientaction} is satisfied for $D = D_{[a, b]}$ as when $T = \gamma-1$, $\Lambda_{R, [\mu(a), \infty]}(\Gamma_K)$ acts continuously on $D$ and $D$ is Banach.

\begin{lemma}
\label{integralresphi}
Let $D$ be a finite projective $(\varphi,\Gamma_K)$-module and let $a,b>0$ be sufficiently small, with $a\leq b/p^2$. Put $A_{\mu(a)}=\Lambda_{R,[\mu(a),\infty]}(\Gamma_K)$ and $A_{\mu(a)}^0=\Lambda_{R_0,[\mu(a),\infty]}(\Gamma_K)$. Then
$$\operatorname{res}\circ\varphi:D_{[a,b]}\longrightarrow D_{[a,b/p^2]}$$
is integrally completely continuous over $A_{\mu(a)}$.
\end{lemma}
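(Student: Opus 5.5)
We reduce to the classical mechanism behind complete continuity of $\operatorname{res}\circ\varphi$: Frobenius carries the annulus $[a,b]$ onto $[a/p,b/p]$, and restricting to $[a,b/p^2]$ lands in a region where the image is ``overconvergent'' with respect to the outer radius. The new feature is that we must produce finitely generated modules over the \emph{distribution} ring of definition $A^0_{\mu(a)}$, not over $\Lambda_{R_0,[a,b],K}$. Our plan is to route the estimate through the $\psi$-decomposition and Lemma~\ref{lem:closed-psi-zero-projective}, which already identifies the $\psi=0$ part as a finite projective module over a group-Robba ring.

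First, we use the decomposition
\[
D_{[a,b/p^2]}=\varphi\bigl(D_{[pa,b/p]}\bigr)\oplus D_{[a,b/p^2]}^{\psi=0},
\]
which is $\Gamma_K$-equivariant and hence $A_{\mu(a)}$-equivariant by Lemma~\ref{moduleaction} and density. Both projections are bounded. By Lemma~\ref{integralcompcontoperations} it suffices to treat the two components of $\operatorname{res}\circ\varphi$ separately.

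The $\varphi$-component equals $\varphi\circ\operatorname{res}_{[a,b]\to[pa,b/p]}$. Since $\varphi$ is a bounded $A_{\mu(a)}$-linear isomorphism onto its image (it commutes with $\Gamma_K$), it is enough, after bounded postcomposition, to show that restriction $D_{[a,b]}\to D_{[pa,b/p]}$ is integrally completely continuous over $A_{\mu(a)}$. This restriction shrinks the annulus strictly on \emph{both} sides. We iterate the decomposition once more, writing $D_{[pa,b/p]}=\varphi(D_{[p^2a,b]})\oplus D^{\psi=0}_{[pa,b/p]}$; the hypothesis $a\le b/p^2$ keeps $[p^2a,b]$ a genuine interval. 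This reduces the problem to the $\psi=0$ pieces together with a piece factoring through $\varphi$ on a fixed annulus.

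The heart of the argument is the $\psi=0$ component, i.e.\ the composite
\[
D_{[a,b]}\to D_{[a,b/p^2]}^{\psi=0}\cong Q\otimes_{\Lambda_{R,[\mu(a),\mu(b/p^2)]}(\Gamma_K)}\ldots
\]
By Lemma~\ref{lem:closed-psi-zero-projective}, $D^{\psi=0}_{[a,b/p^2]}$ is finite projective over $B=\Lambda_{R,[\mu(a),\mu(b)/p^2]}(\Gamma_K)$. The key point is that the image of a lattice of $D_{[a,b]}$ under $\operatorname{res}\circ\varphi$, followed by projection to $\psi=0$, is actually the restriction of elements defined over the larger distribution annulus $[\mu(a),\mu(b)/p]$, namely the $\psi=0$ part of $D_{[a,b/p]}$. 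So the map factors through restriction
\[
\Lambda_{R,[\mu(a),\mu(b)/p]}(\Gamma_K)^{\oplus r}\to\Lambda_{R,[\mu(a),\mu(b)/p^2]}(\Gamma_K)^{\oplus r}
\]
on a free complement, up to bounded maps. We then verify integral complete continuity of this scalar restriction over $A^0_{\mu(a)}$ explicitly. Using the integral presentation with $T=\gamma-1$, the negative powers $T^{-j}$ on the outer boundary gain a factor $\omega^{(\mu(b)/p-\mu(b)/p^2)j}$. Hence, modulo $u^k$, only finitely many negative monomials $T^{-j}$ survive, while the nonnegative part is already generated by $1$ over $A^0_{\mu(a)}$. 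So $L_k$ is the $A^0_{\mu(a)}$-span of $1,T^{-1},\dots,T^{-N_k}$ times the basis vectors. Proposition~\ref{localfff} and Lemma~\ref{integralcoefficientaction} let us pass between the $u$-adic expansions and the lattice actions, and Lemma~\ref{integralcompcontstricttarget} handles passage to direct summands.

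I expect the main obstacle to be precisely this factorization: showing that the $\psi=0$ projection of $\varphi(x)$ for $x\in D_{[a,b]}$ extends to the larger annulus compatibly with the $A_{\mu(a)}$-action and with uniform lattice bounds. This is also where the constants $\mu$ and $e_K$ from Remark~\ref{compdifflamb} must be tracked carefully.
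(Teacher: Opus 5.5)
There is a genuine gap, and it sits where you anticipated no difficulty. First, the $\psi=0$ component that you call the heart of the argument is identically zero. Since $\operatorname{res}\circ\varphi=\varphi\circ\operatorname{res}_{[a,b]\to[pa,b/p]}$, the map takes values in $\varphi(D_{[pa,b/p]})$, and indeed $(1-\varphi\psi)\varphi=0$. So the factorization through $D^{\psi=0}$ and the distribution-annulus restriction $[\mu(a),\mu(b)/p]\to[\mu(a),\mu(b)/p^2]$ prove nothing here; that is the paper's argument for $1-\varphi\psi$ in Lemma~\ref{modpcompletecontinuity}, not for $\varphi$. The obstacle you expect does not exist.

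All of the content is in the claim you reduce to: that the restriction $D_{[a,b]}\to D_{[pa,b/p]}$, which shrinks both boundaries, is integrally completely continuous over $A_{\mu(a)}$. Your treatment of this claim is circular. After decomposing the target again, the $\varphi$-component is $\varphi\psi\circ\operatorname{res}_{[a,b]\to[pa,b/p]}$, which still contains the very restriction in question. Equivalently it is $\varphi\circ\operatorname{res}_{[pa,pb]\to[p^2a,b]}\circ\psi$, the same problem rescaled by $p$, and it needs $D$ defined on $[pa,pb]$. Your ``piece factoring through $\varphi$ on a fixed annulus'' therefore never bottoms out, and no base estimate is ever proved.

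The missing idea is that this is a direct estimate over $R_0$. It needs neither the $\psi$-decomposition nor any distribution-algebra structure. Take a restriction $\Lambda_{R,J,K}\to\Lambda_{R,I,K}$ that strictly improves both boundaries. The paper uses $J=[a/p,b/p]\supset I=[a,b/p^2]$ after applying $\varphi$; your $[a,b]\supset[pa,b/p]$ works equally well.
\begin{itemize}
\item Truncate by $H_n(\sum_j c_j\pi_K^j)=\sum_{|j|\le n}c_j\pi_K^j$. Then $\|\operatorname{res}-H_n\|\le\sup_{|j|>n}\|\pi_K^j\|_I/\|\pi_K^j\|_J\to0$.
\item Because the coefficient maps are bounded, $H_n$ carries a lattice into the finite $R_0$-module $\sum_{|j|\le n}u^{-h_n}(R_0\otimes\mathcal O_F)\pi_K^j$.
\item A bounded splitting $D_J\to\Lambda_{R,J,K}^{\oplus d}\to D_J$, together with Lemma~\ref{integralcompcontoperations}, transfers this to $D$.
\item Now choose $A^0_{\mu(a)}$-stable lattices via Lemma~\ref{integralcoefficientaction}; these are also $R_0$-lattices. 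The finite $R_0$-modules $F_k$ then generate finite $A^0_{\mu(a)}$-modules $A^0_{\mu(a)}F_k$.
\end{itemize}
So integral complete continuity over $R_0$ immediately yields it over $A_{\mu(a)}$. The map is $A_{\mu(a)}$-linear because $\varphi$ and restriction commute with $\Gamma_K$.
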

\begin{proof}
We first prove the integral complete continuity over $R_0$. Put $J=[a/p,b/p]$ and $I=[a,b/p^2]$. The map factors as
$$D_{[a,b]}\xrightarrow{\varphi}D_J\xrightarrow{\operatorname{res}}D_I,$$
and $a/p<a\leq b/p^2<b/p$, so restriction strictly improves both boundaries. In the scalar case, use the Laurent norms and choose open bounded $R_0$-lattices $P_J\subset\Lambda_{R,J,K}$ and $P_I\subset\Lambda_{R,I,K}$. For $n\geq0$, put
$$H_n\left(\sum_{j\in\mathbb Z}c_j\pi_K^j\right)=\sum_{|j|\leq n}c_j\pi_K^j.$$
The strict inequalities at both boundaries give
$$\|\operatorname{res}-H_n\|\leq\sup_{|j|>n}\frac{\|\pi_K^j\|_I}{\|\pi_K^j\|_J}\longrightarrow0.$$
For every $k\geq1$, choose $n=n(k)$ such that $(\operatorname{res}-H_n)(P_J)\subset u^kP_I$. Each coefficient map $c_j:\Lambda_{R,J,K}\to R\otimes_{\mathbb Z_p}\mathcal O_F$ is bounded. Since only finitely many coefficients are retained, there is $h_n\geq0$ such that
$$H_n(P_J)\subset L_k:=\sum_{|j|\leq n}u^{-h_n}(R_0\otimes_{\mathbb Z_p}\mathcal O_F)\pi_K^j.$$
The module $L_k$ is finite over $R_0$, since $\mathcal O_F$ is finite over $\mathbb Z_p$. Consequently, $\operatorname{res}(P_J)\subset L_k+u^kP_I$.

For general $D$, choose a bounded splitting $D_J\xrightarrow{\iota}\Lambda_{R,J,K}^{\oplus d}\xrightarrow{\operatorname{pr}}D_J$. Restricting the projection to $I$ gives a bounded map $\operatorname{pr}_I:\Lambda_{R,I,K}^{\oplus d}\to D_I$, and
$$\operatorname{res}\circ\varphi=\operatorname{pr}_I\circ\operatorname{res}^{\oplus d}\circ\iota\circ\varphi.$$
The scalar approximation, applied componentwise, and Lemma~\ref{integralcompcontoperations} over $R$ show that this map is integrally completely continuous over $R_0$.

Now choose open bounded $A_{\mu(a)}^0$-stable lattices $M_0\subset D_{[a,b]}$ and $N_0\subset D_I$. These are also $R_0$-lattices. The preceding argument gives, for every $k\geq1$, a finite $R_0$-submodule $F_k\subset D_I$ such that $(\operatorname{res}\circ\varphi)(M_0)\subset F_k+u^kN_0$. Put $V_k=A_{\mu(a)}^0F_k$. Any finite set of $R_0$-generators of $F_k$ generates $V_k$ over $A_{\mu(a)}^0$, and
$$ (\operatorname{res}\circ\varphi)(M_0)\subset V_k+u^kN_0.$$
The map is $A_{\mu(a)}$-linear because $\varphi$ and restriction are $R$-linear, commute with $\Gamma_K$, and are continuous for the extended coefficient actions. This proves the assertion.
\end{proof}

\begin{lemma}
\label{modpcompletecontinuity}
Let $D$ be a finite projective $(\varphi,\Gamma_K)$-module. Suppose that the Banach coefficient actions and the finiteness and base-change statements for $D^{\psi=0}$ hold at the radii below. For $0<a\leq b/p^3$, the maps
$$\varphi,\quad 1-\varphi\psi+\varphi:D_{[a,b]}\longrightarrow D_{[a,b/p^2]}$$
are integrally completely continuous maps of $\Lambda_{R,[\mu(a),\infty]}(\Gamma_K)$-modules, and hence are completely continuous in the sense of Definition~\ref{compcontdef}. Restrictions to the target interval are implicit in this notation.
\end{lemma}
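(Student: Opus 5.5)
For $\varphi$ I will deduce the statement directly from Lemma~\ref{integralresphi}. The hypothesis $a\leq b/p^3$ implies $a\leq b/p^2$, so that lemma applies on the interval $[a,b]$. It shows that $\operatorname{res}\circ\varphi:D_{[a,b]}\to D_{[a,b/p^2]}$ is integrally completely continuous over $A_{\mu(a)}=\Lambda_{R,[\mu(a),\infty]}(\Gamma_K)$. The implicit restriction in the statement is exactly this composite. The map is $A_{\mu(a)}$-linear, because $\varphi$ commutes with $\Gamma_K$ and is continuous for the extended actions of Lemma~\ref{moduleaction}, which are compatible with restriction.

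For the second map I will not treat $1-\varphi\psi$ by itself, since it is a projector and not compact. Instead I plan to rewrite it so that only terms already known to be integrally completely continuous remain. On $D_{[a,b]}$, with $a\le b/p^3$ small, the Frobenius decomposition $D_{[a,b]}=\varphi(D_{[pa,pb]})\oplus D_{[a,b]}^{\psi=0}$ identifies $1-\varphi\psi$ with the projection onto the $\psi=0$ summand. The strategy is as follows.

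First, $\varphi\psi$ restricted to the target factors as $\operatorname{res}\circ\varphi\circ\psi$, where $\psi:D_{[a,b]}\to D_{[pa,pb]}$ is bounded and $A_{\mu(a)}$-linear. Moreover, $\varphi$ from the larger annulus $[pa,pb]$ (with $pa\le b/p^2$) into $D_{[a,b/p^2]}$ is integrally completely continuous, by the same argument as Lemma~\ref{integralresphi}: the inner boundary strictly improves because $a<pa/p\cdot p$. Lemma~\ref{integralcompcontoperations} then makes $\varphi\psi$ integrally completely continuous. Second, $1$ itself, meaning the restriction $D_{[a,b]}\to D_{[a,b/p^2]}$, fails to be compact at the inner boundary $a$. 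This is where the $\psi=0$ part enters. By Lemma~\ref{lem:closed-psi-zero-projective}, $D^{\psi=0}_{[a,b]}$ is finite projective over $\Lambda_{R,[\mu(a),\mu(b)]}(\Gamma_K)$, compatibly with base change to $[a,b/p^2]$. The restriction on the $\psi=0$ summand is therefore the base change along $\Lambda_{R,[\mu(a),\mu(b)]}(\Gamma_K)\to\Lambda_{R,[\mu(a),\mu(b/p^2)]}(\Gamma_K)$. That map is compact over $A_{\mu(a)}=\Lambda_{R,[\mu(a),\infty]}(\Gamma_K)$, because truncating the Laurent expansion in $T=\gamma-1$ to finitely many negative powers approximates it. The positive powers are already $A_{\mu(a)}$-multiples, so the finitely generated modules are $A^0_{\mu(a)}$-spans of finitely many $T^{-j}e_i$. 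Applying Lemma~\ref{integralcompcontoperations} with the bounded projections and inclusions gives integral complete continuity of $1-\varphi\psi$, and adding the $\varphi$ term finishes the argument by additivity.

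The main obstacle I expect is this last step. It requires checking that the scalar truncation estimate in the variable $T$ on $\Lambda_{R,[\mu(a),\mu(b)]}(\Gamma_K)$ gives integral approximations: finite $A^0_{\mu(a)}$-modules $L_k$ with image contained in $L_k+u^kN_0$, just as in the proof of Lemma~\ref{integralresphi} but with coefficients in $A^0_{\mu(a)}$ rather than $R_0$. It also requires checking that the bounded splitting of the projective $\psi=0$ module can be chosen with lattices stable under $A^0_{\mu(a)}$, using Lemma~\ref{integralcoefficientaction}. Complete continuity in the sense of Definition~\ref{compcontdef} then follows from the remark after Definition~\ref{integralcompcontdef}.
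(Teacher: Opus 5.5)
Your argument for $\varphi$ is the paper's: apply Lemma~\ref{integralresphi}, using $a\leq b/p^3\leq b/p^2$. The part of your argument that actually proves the claim for $1-\varphi\psi$ is also essentially the paper's. You compose three maps:
\begin{itemize}
\item the bounded $A_{\mu(a)}$-linear projection $1-\varphi\psi$ onto the $\psi=0$ part;
\item the restriction of that finite projective module, which is integrally completely continuous;
\item the bounded inclusion into $D_{[a,b/p^2]}$.
\end{itemize}
The middle map is integrally completely continuous because, along $\Lambda_{R,[\mu(a),\mu(b)]}(\Gamma_K)\to\Lambda_{R,[\mu(a),\mu(b/p^2)]}(\Gamma_K)$, only negative powers of $T=\gamma-1$ need truncating. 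The nonnegative powers are absorbed into $A^0_{\mu(a)}$, because the endpoint $\mu(a)$ is unchanged.

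Your ``First'' step, however, is false and should be deleted. The map $\varphi$ carries $D_{[pa,pb]}$ into $D_{[a,b]}$, so after restricting to $[a,b/p^2]$ the inner endpoint is still $a$. The estimate in Lemma~\ref{integralresphi} needs $\|\pi_K^j\|_I/\|\pi_K^j\|_J\to0$ as $j\to+\infty$ as well as $j\to-\infty$. Here that ratio is $1$ for all positive $j$, since both norms are governed by the radius $a$. So $\operatorname{res}\circ\varphi\colon D_{[pa,pb]}\to D_{[a,b/p^2]}$ is not integrally completely continuous over $R_0$, and nothing you say gives it over $A_{\mu(a)}$. The phrase ``$a<pa/p\cdot p$'' does not describe an improvement of the boundary.

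Your two claims are also inconsistent with each other. If both $\varphi\psi$ and $1-\varphi\psi$ (followed by restriction) were integrally completely continuous, additivity would make the restriction $\operatorname{res}=\varphi\psi+(1-\varphi\psi)$ integrally completely continuous over $A_{\mu(a)}$. That is exactly what you say fails. It is also what the paper's closing remark says it could not establish, and the reason for the homotopy in Proposition~\ref{firstcoh}. Since your final deduction never uses $\varphi\psi$, simply drop that step.

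Two harmless bookkeeping differences remain.
\begin{itemize}
\item The paper factors through $D^{\psi=0}_{[a,b/p]}$ over $\Lambda_{R,[\mu(a),\mu(b/p)]}(\Gamma_K)$. With a model on $(0,b]$, $\psi(x)$ for $x\in D_{[a,b]}$ is only available after restricting to $[a,b/p]$. Your decomposition $D_{[a,b]}=\varphi(D_{[pa,pb]})\oplus D^{\psi=0}_{[a,b]}$ instead needs the model to extend to radius $pb$, which is fine after shrinking $b$.
\item The paper passes from scalars to the projective module via a bounded surjection $B^{\oplus r}\to M$ and the surjective-descent clause of Lemma~\ref{integralcompcontoperations}. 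You use a bounded $B$-linear splitting, which is automatically $A_{\mu(a)}$-linear. Both work.
\end{itemize}
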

\begin{proof}
Put $A=\Lambda_{R,[\mu(a),\infty]}(\Gamma_K)$ and $A_0=\Lambda_{R_0,[\mu(a),\infty]}(\Gamma_K)$. The assertion for $\varphi$ follows from Lemma~\ref{integralresphi}. We now consider $1-\varphi\psi$. Since $\psi\varphi=1$, this map takes values in the kernel of $\psi$ on the common interval where the terms are defined, and factors as
$$D_{[a,b]}\xrightarrow{1-\varphi\psi}D_{[a,b/p]}^{\psi=0}\xrightarrow{\operatorname{res}}D_{[a,b/p^2]}^{\psi=0}\hookrightarrow D_{[a,b/p^2]}.$$
The first and last maps are bounded. It therefore suffices to prove integral complete continuity of the middle restriction.

Write $B=\Lambda_{R,[\mu(a),\mu(b/p)]}(\Gamma_K)$ and $C=\Lambda_{R,[\mu(a),\mu(b/p^2)]}(\Gamma_K)$. We first prove that $B\to C$ is integrally completely continuous over $A_0$. Put $T=\gamma-1$ and let
$$H_n\left(\sum_{j\in\mathbb Z}a_jT^j\right)=\sum_{j\geq-n}a_jT^j.$$
The image is contained in $T^{-n}A$. The source and target have the same boundary corresponding to $\mu(a)$, so the Laurent norms show that $T^nH_n:B\to A$ is bounded. Choose open bounded integral lattices $B_0\subset B$ and $C_0\subset C$. For each fixed $n$, there is therefore $h_n\geq0$ such that
$$H_n(B_0)\subset u^{-h_n}T^{-n}A_0.$$
On the negative powers, the change from $\mu(b/p)$ to $\mu(b/p^2)$ gives a constant $0<c<1$ such that
$$\frac{\|T^{-j}\|_C}{\|T^{-j}\|_B}\leq c^j\qquad(j\geq0).$$
Consequently $\|\operatorname{res}-H_n\|\leq c^{n+1}$. For every $k\geq1$, choose $n=n(k)$ such that $(\operatorname{res}-H_n)(B_0)\subset u^kC_0$. Then
$$\operatorname{res}(B_0)\subset L_k+u^kC_0,\qquad L_k=u^{-h_{n(k)}}T^{-n(k)}A_0.$$
Each $L_k$ is generated by one element over $A_0$. This proves integral complete continuity of $B\to C$. The coefficient bound on $H_n(B_0)$ is explained in the previous lemma as well.

By hypothesis, $M=D_{[a,b/p]}^{\psi=0}$ is a finite Banach $B$-module and restriction identifies
$$M\widehat{\otimes}_BC\cong D_{[a,b/p^2]}^{\psi=0}.$$
Choose a bounded $B$-linear surjection $B^{\oplus r}\to M$. It is strict by the open mapping theorem. The composite $B^{\oplus r}\to C^{\oplus r}\to M\widehat{\otimes}_BC$ is integrally completely continuous, by the scalar result and bounded postcomposition. Since it factors through $M$, the surjective descent assertion of Lemma~\ref{integralcompcontoperations} shows that $M\to M\widehat{\otimes}_BC$ is integrally completely continuous. This is the integral version of the argument in Remark~1.7 of \cite{kedlaya2016finiteness}.

Applying bounded precomposition and postcomposition to the above factorization proves integral complete continuity of $1-\varphi\psi$. Adding the result for $\varphi$ proves the assertion for $1-\varphi\psi+\varphi$.
\end{proof}

\begin{lemma}
\label{algebraicphibasechange}
For all sufficiently small $a$, the natural action map
$$
D_{(0,b]}^{\varphi=1}
\otimes_{\Lambda_{R,(0,\infty]}(\Gamma_K)}
\Lambda_{R,[\mu(a),\infty]}(\Gamma_K)
\longrightarrow
D_{[a,b]}^{\varphi=1}
$$
is an isomorphism.
\end{lemma}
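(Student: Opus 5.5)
The plan is to reduce the statement to the $R$-finiteness of $\varphi$-invariants (Lemma~\ref{phiinv}), together with the fact that an $R$-finite module carrying a continuous distribution action only "sees" a finite quotient of $\Lambda:=\Lambda_{R,(0,\infty]}(\Gamma_K)$, and that this quotient is the same at every sufficiently small Banach stage.

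\emph{Step 1: the target is independent of $a$.} I will show that for $a\leq b/p$ restriction gives an isomorphism $D_{(0,b]}^{\varphi=1}\to D_{[a,b]}^{\varphi=1}$.
\begin{itemize}
\item Injectivity is clear, since restriction maps of $D$ are injective.
\item For surjectivity, take $x\in D_{[a,b]}$ with $\varphi(x)=x$ on the common interval $[a,b/p]$. Then $\varphi(x)\in D_{[a/p,b/p]}$ and $x$ agree on the overlap, so they glue to a section on $[a/p,b]$; this glued section is again $\varphi$-fixed.
\item Iterating the gluing and using the sheaf property on the quasi-Stein space (Remark~\ref{sheafmodule}) produces an element of $D_{(0,b]}^{\varphi=1}$ restricting to $x$.
\end{itemize}
Together with the radius bookkeeping of Lemma~\ref{basic}, this identifies both sides with $M:=\operatorname{colim}_{b'}D_{(0,b']}^{\varphi=1}$, which is finite over $R$ by Lemma~\ref{phiinv}. (Shrinking $b$ if necessary, it suffices to treat this stabilised module.) It remains to show $M\otimes_\Lambda A_a\cong M$, where $A_a=\Lambda_{R,[\mu(a),\infty]}(\Gamma_K)$.

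\emph{Step 2: the action of $\Lambda$ factors through a finite quotient.} Write $\Gamma_K=\Delta_K\times\Gamma_K^{\mathrm{tf}}$ and $T=\gamma-1$. Since $M$ is $R$-finite and $R$ is Noetherian, Cayley--Hamilton gives a monic $P\in R[\Delta_K][T]$ with $P(T)M=0$; concretely, take the characteristic polynomial of $T$ acting on a finite presentation of $M$.
\begin{itemize}
\item By Lemma~\ref{moduleaction} and Corollary~\ref{annihilatorbehavior}, $T$ acts on $M\subset D_{[a,b]}$ through $A_a$.
\item I then want to show that the roots of $P$ are small, i.e.\ that $P$ is a Weierstrass-type polynomial whose non-leading coefficients are topologically nilpotent at the $\mu(a)$-scale. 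The intended argument: at each maximal point (Lemma~\ref{max}), an eigenvalue $\alpha$ of $T$ satisfies $|\alpha|<1$ by the continuity argument in the proof of Lemma~\ref{max}.
\end{itemize}

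\emph{Step 3: compare the quotients (main obstacle).} The key claim is that, for all sufficiently small $a$, the natural map
\[
\Lambda/P\Lambda\longrightarrow A_a/PA_a
\]
is an isomorphism, with both sides finite over $R$.
\begin{itemize}
\item Granting this, $M$ is a finite $\Lambda/P\Lambda$-module and
\[
M\otimes_\Lambda A_a = M\otimes_{\Lambda/P}(A_a/PA_a)=M,
\]
with the map being the action map.
\item To prove the claim, I would perform Weierstrass division by $P$ at each Banach stage $A_{a'}$ with $a'\leq a$, using the integral presentation $R_0[\Delta_K][[T]]\langle T^{e(s)}/u\rangle$ and the fact that $P$ is monic with small lower coefficients. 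This shows $A_{a'}/PA_{a'}$ is free over $R[\Delta_K]$ with basis $1,\dots,T^{\deg P-1}$, compatibly in $a'$; passing to the inverse limit handles $\Lambda$.
\item The delicate point is the uniformity of the smallness of the coefficients of $P$ relative to the radius $\mu(a)$ in the pseudorigid setting. If this fails directly, I will instead use coadmissibility: the constant system $(M)_{a'}$ is a coadmissible $\Lambda$-module whose stages are $M\otimes_\Lambda A_{a'}$. The statement then follows from the exactness of base change from coadmissible modules to stages.
\end{itemize}
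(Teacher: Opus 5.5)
Your architecture matches the paper's: you get $R$-finiteness of $M=D_{(0,b]}^{\varphi=1}$ from Lemma~\ref{phiinv}, find a monic annihilator, divide by it at sufficiently small Banach stages to get $\Lambda/P\Lambda\cong A_a/PA_a\cong R[T]/(P)$, and finally identify the action map with restriction. Your Step~1 gluing argument is a valid self-contained replacement for the paper's citation of Bellovin's Corollary~3.6.

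The gap is in Step~2. Everything hinges on $P$ being \emph{distinguished}: monic, with topologically nilpotent lower coefficients, so that its leading term dominates on the disks of small $\mu(a)$. Your route to this does not work. The characteristic polynomial of a lift of $T$ to a presentation $R^n\twoheadrightarrow M$ has roots that the eigenvalues of $T$ on the fibres $M\otimes\kappa(x)$ do not control. For instance, take $M=R/\mathfrak m$ with trivial $\Gamma_K$-action. The determinant trick on one generator then allows $P=T-c$ for any $c\in\mathfrak m$, including $c=u^{-k}f$ with $0\neq f\in\mathfrak m$ and $k\gg0$, which is not power-bounded. Every fibre eigenvalue is $0$ or absent, yet $T-c$ is not distinguished. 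Moreover, $\Lambda/(T-c)\Lambda$ need not agree with $A_a/(T-c)A_a$ or with $R[T]/(T-c)$.

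Your coadmissibility fallback is circular. Declaring the constant system $(M)_{a'}$ coadmissible with stages $M\otimes_\Lambda A_{a'}$ already presupposes the transition isomorphisms $M\otimes_{A_{a'}}A_{a''}\cong M$. Those are essentially the statement being proved, and they need the same control of where $M$ is supported in the weight disk.

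The missing idea is to build the annihilator integrally, as the paper does by invoking the argument of Lemma~\ref{resolutionfiniteRmod}. Work componentwise in $\Delta_K$. Since $M$ is closed in $D_{[a,b]}$ and finite over $R$, its subspace topology is the canonical one. Choose a finite $R_0$-lattice $M_0\subset M$, and use continuity of the action (topological nilpotence of $\gamma-1$, as in Lemma~\ref{2214}) to find $N$ with $(\gamma-1)^NM_0\subseteq uM_0$. The determinant trick applied to the endomorphism $T^N$ of $M_0$, relative to the ideal $uR_0$, then produces $T^{Nn}+c_1T^{N(n-1)}+\cdots+c_n$ with $c_i\in u^iR_0$ annihilating $M$. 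This polynomial is distinguished. On the disk underlying $A_a$ its leading term dominates as soon as the radius exceeds $|u|^{1/N}$, which holds once $\mu(a)$ is small. With this $P$, your Step~3 goes through as written.
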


\begin{proof}
By Lemma \ref{phiinv},
$D_{(0,b]}^{\varphi=1}$ is finite over $R$.

As in Lemma \ref{resolutionfiniteRmod}, after working componentwise with
respect to the finite torsion subgroup of $\Gamma_K$, continuity of the
$\Gamma_K$-action gives a monic polynomial
$
F(\gamma-1)\in R[\gamma-1]$
which annihilates $D_{(0,b]}^{\varphi=1}$ and whose coefficients other
than the leading coefficient are topologically nilpotent.

For all sufficiently small $a$, monic division by $F$ is valid in
$\Lambda_{R,[\mu(a),\infty]}(\Gamma_K)$ and gives
\[
\frac{\Lambda_{R,(0,\infty]}(\Gamma_K)}
     {F\Lambda_{R,(0,\infty]}(\Gamma_K)}
\cong
\frac{\Lambda_{R,[\mu(a),\infty]}(\Gamma_K)}
     {F\Lambda_{R,[\mu(a),\infty]}(\Gamma_K)}
\cong
R[\gamma-1]/(F).
\]
Indeed, this is the same distinguished-polynomial division used in
Lemma \ref{resolutionfiniteRmod}; after $\mu(a)$ is sufficiently small,
the leading term of $F$ has bigger norm than the lower terms on $\Lambda_{R, [\mu(a), \infty]}(\Gamma_K)$. Since $F$ annihilates $D_{(0,b]}^{\varphi=1}$, we therefore have
\[
\begin{aligned}
&D_{(0,b]}^{\varphi=1}
\otimes_{\Lambda_{R,(0,\infty]}(\Gamma_K)}
\Lambda_{R,[\mu(a),\infty]}(\Gamma_K)
\\
&\qquad\cong
D_{(0,b]}^{\varphi=1}
\otimes_{R[\gamma-1]/(F)}
\frac{\Lambda_{R,[\mu(a),\infty]}(\Gamma_K)}
     {F\Lambda_{R,[\mu(a),\infty]}(\Gamma_K)}
\\
&\qquad\cong
D_{(0,b]}^{\varphi=1}.
\end{aligned}
\]
Under this identification, the natural map in the statement is simply
the restriction map
$
D_{(0,b]}^{\varphi=1}
\longrightarrow
D_{[a,b]}^{\varphi=1}.
$
This map is injective since
$
D_{(0,b]}\longrightarrow D_{[a,b]}
$
is injective. It is an isomorphism as $D_{(0, b]}^{\varphi = 1} = D_{[a, b]}^{\varphi = 1}$ from Corollary 3.6, \cite{bellovin2024cohomology}.
\end{proof}

\begin{proposition}
\label{firstcoh}
  Let $D$ be a finite projective $(\varphi,\Gamma_K)$-module.
Choose $b>0$ sufficiently small for the Banach coefficient
actions and the annular $\psi=0$ base-change statements.
For sufficiently small admissible $a$ with
$0<a\leq b/p^3$, the module $D_{[a,b]}^{\psi=1}$ is finite
over $\Lambda_{R,[\mu(a),\infty]}(\Gamma_K)$.

\end{proposition}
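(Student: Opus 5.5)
The plan is to apply Corollary~\ref{integralcompcontcohomology} to a morphism between two-term complexes of Banach modules over $A=\Lambda_{R,[\mu(a),\infty]}(\Gamma_K)$. Take
\[
C_1^\bullet=\bigl[D_{[a,b]}\xrightarrow{\psi-1}D_{[a,b/p]}\bigr],\qquad
C_2^\bullet=\bigl[D_{[a,b/p^2]}\xrightarrow{\psi-1}D_{[a,b/p^3]}\bigr],
\]
in degrees $0,1$. Here $\psi$ lowers the outer radius by a factor $p$, and restrictions are implicit. The differentials are bounded and $A$-linear, because $\psi$ commutes with $\Gamma_K$ and is continuous for the extended action of Lemma~\ref{moduleaction}.

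In degree $0$ the comparison map $F$ is restriction. By the radius bookkeeping of Lemma~\ref{basic} (this is where $a\le b/p^3$ is needed), the map $H^0(C_1)=D_{[a,b]}^{\psi=1}\to D_{[a,b/p^2]}^{\psi=1}=H^0(C_2)$ is an isomorphism, so its cokernel is zero. Restriction itself is not integrally completely continuous, since it does not shrink the inner radius $a$. I would therefore replace it, up to homotopy, by the map of Lemma~\ref{modpcompletecontinuity}. On $\ker(\psi-1)$ the identity gives
\[
1-\varphi\psi+\varphi=1+\varphi(1-\psi),
\]
so in degree $0$ we set $F'^0=1-\varphi\psi+\varphi$. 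This differs from restriction by $-\varphi\circ(\psi-1)$, i.e.\ by $h\circ d_1$ with homotopy $h=-\varphi:D_{[a,b/p]}\to D_{[a,b/p^2]}$. In degree $1$ we set $F'^1=F^1+d_2\circ h$, which is restriction minus $(\psi-1)\varphi$. Since $\psi\varphi=1$ on the overlap, this equals restriction minus $1$ plus $\varphi$, i.e.\ $F'^1=\varphi$ after restriction.

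The degree-$0$ component is then integrally completely continuous by Lemma~\ref{modpcompletecontinuity}. The degree-$1$ component is integrally completely continuous by Lemma~\ref{integralresphi}, after adjusting interval endpoints, possibly replacing $b$ by $b/p$ in the bookkeeping. Homotopic maps induce the same map on cohomology, so the map $F'$ still induces an isomorphism on $H^0$. The Corollary and the remark following it then give that $H^0(C_2)=D_{[a,b/p^2]}^{\psi=1}$ is finite over $A$. Since it is isomorphic to $D_{[a,b]}^{\psi=1}$, this gives the claim.

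I expect the main obstacle to be the precise radius bookkeeping. I must check three things: that the homotopy really lands in the correct annuli; that $F'$ is genuinely a chain map with the $\psi$-shifted outer radii; and that the inner radius $a$ stays fixed throughout, so that every module is a Banach module over the same $A$. The $A$-linearity of $\varphi$ on the Banach stage also depends on this. If the degree-$1$ intervals fail to line up, my fallback is to truncate to degree $0$ and apply the perturbation Lemma~\ref{integralcompcontperturbation} directly on kernels. There I would use Lemma~\ref{integralcompcontstricttarget} with the strict inclusion $D^{\psi=1}\hookrightarrow D$.
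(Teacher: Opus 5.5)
Your strategy is the paper's own. The paper uses the same two-term $\psi-1$ complexes, in degrees $1,2$ rather than $0,1$. It takes $\varphi$ as the homotopy to replace restriction by the map with components $1-\varphi\psi+\varphi$ and $\varphi$. Integral complete continuity comes from Lemmas~\ref{modpcompletecontinuity} and~\ref{integralresphi}, the isomorphism on $\psi=1$ parts from Lemma~\ref{basic}, and the conclusion from Corollary~\ref{integralcompcontcohomology}.

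The radius bookkeeping you flagged does need correcting, because your complexes are not well defined as written.

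\textbf{Direction of $\psi$.} In this paper $\psi$ sends $D_{(0,b/p]}$ to $D_{(0,b]}$. On closed annuli this means $\psi:D_{[a,b]}\to D_{[pa,pb]}$, so $\psi$ raises radii. It is $\varphi:D_{[a,b]}\to D_{[a/p,b/p]}$ that lowers them, and since $\psi\varphi=1$ the two cannot move radii the same way. Hence $\psi-1:D_{[a,b]}\to D_{[a,b/p]}$ does not exist: for $x\in D_{[a,b]}$, the element $\psi(x)$ only lives on $[pa,pb]$, which does not contain $[a,b/p]$.

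\textbf{Corrected setup.} The complexes should be $[D_{[a,b]}\xrightarrow{\psi-1}D_{[pa,b]}]$ and $[D_{[a,b/p^2]}\xrightarrow{\psi-1}D_{[pa,b/p^2]}]$. The homotopy is $\varphi:D_{[pa,b]}\to D_{[a,b/p]}\to D_{[a,b/p^2]}$. The second component of the modified map is $\operatorname{res}\circ\varphi:D_{[pa,b]}\to D_{[pa,b/p^2]}$, which is Lemma~\ref{integralresphi} with $pa$ in place of $a$. This is where the condition $pa\leq b/p^2$, i.e.\ $a\leq b/p^3$, is used.

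\textbf{Your fallback.} It also works and avoids the issue entirely. Since $(1-\varphi\psi+\varphi)x=x$ whenever $\psi x=x$, the modified map already equals restriction on $D_{[a,b]}^{\psi=1}$. In the lowest degree, the proof of Corollary~\ref{integralcompcontcohomology} uses only that one component, because there is no $C_2^{i-1}$ term; it then passes to the kernel via Lemma~\ref{integralcompcontstricttarget}. So neither the second component nor the homotopy is actually needed.
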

\begin{proof}
Since $(1-\varphi \psi ): D_{[a, b]} \to D_{[a, b/p^2]}$ and $\varphi: D_{[a, b]} \to D_{[a, b/p^2]}$ are integrally completely continuous from Lemma \ref{integralresphi} and Lemma \ref{modpcompletecontinuity}, so is their sum.

Consider the complexes
$
C_{a,b}^{\bullet}
:=
\left[
D_{[a,b]}
\xrightarrow{\psi-1}
D_{[ap,b]}
\right]
$
and
$
C_{a,b/p^2}^{\bullet}
:=
\left[
D_{[a,b/p^2]}
\xrightarrow{\psi-1}
D_{[ap,b/p^2]}
\right],
$
where the complexes are placed in degrees $1$ and $2$, and in the map
$\psi-1$ the identity denotes the natural restriction map. We consider
the same homotopy between the restriction map and the dotted red map:
\[\begin{tikzcd}
	{D_{[a,b]}} &&& {D_{[ap,b]}} \\
	{D_{[a,b/p^2]}} &&& {D_{[ap,b/p^2]}}
	\arrow["{\psi-1}", from=1-1, to=1-4]
	\arrow["{\text{res}}"', from=1-1, to=2-1]
	\arrow["{1-\varphi\psi+\varphi}", shift left=3, color={rgb,255:red,254;green,52;blue,69}, dashed, from=1-1, to=2-1]
	\arrow["\varphi"{description}, color={rgb,255:red,92;green,92;blue,214}, dashed, from=1-4, to=2-1]
	\arrow["{\text{res}}"', from=1-4, to=2-4]
	\arrow["\varphi", shift left=3, color={rgb,255:red,254;green,52;blue,69}, dashed, from=1-4, to=2-4]
	\arrow["{\psi-1}"', dashed, from=2-1, to=2-4]
\end{tikzcd}\]
The diagonal map $\varphi$ gives a homotopy between the restriction
map of complexes and the dotted red map. From
Lemma~\ref{modpcompletecontinuity}, the dotted red map is degreewise integrally
completely continuous as a morphism of Banach
$\Lambda_{R,[\mu(a),\infty]}(\Gamma_K)$-modules.

The first cohomology of the two complexes is given by
$$
H^1(C_{a,b}^{\bullet})=D_{[a,b]}^{\psi=1}
\qquad\text{and}\qquad
H^1(C_{a,b/p^2}^{\bullet})=D_{[a,b/p^2]}^{\psi=1}.
$$
By Lemma~\ref{basic}, the restriction map
$
D_{[a,b]}^{\psi=1}
\longrightarrow
D_{[a,b/p^2]}^{\psi=1}
$
is an isomorphism. Hence, the map induced by the restriction map on
first cohomology has trivial cokernel. Since the dotted red map is
homotopic to the restriction map, it induces the same map on
cohomology. Therefore
$$
\operatorname{coker}\left(
H^1(C_{a,b}^{\bullet})
\longrightarrow
H^1(C_{a,b/p^2}^{\bullet})
\right)=0
$$
for the map induced by the dotted red map as well.

It now follows from Corollary \ref{integralcompcontcohomology} that
$H^1(C_{a,b}^{\bullet})$ is a finitely generated
$\Lambda_{R,[\mu(a),\infty]}(\Gamma_K)$-module. Thus
$
D_{[a,b]}^{\psi=1}
$
is finitely generated over
$\Lambda_{R,[\mu(a),\infty]}(\Gamma_K)$.

\end{proof}
\begin{remark}
We initially attempted to prove directly that the restriction map
$\operatorname{res}:\Lambda_{R,[a,b],K}\rightarrow\Lambda_{R,[a,b/p],K}$
is completely continuous as a morphism of $\Lambda_{R,[\mu(a),\infty]}(\Gamma_K)$-modules. Although these restriction maps are not completely continuous as morphisms of $R$-modules, the additional $\Gamma_K$-action makes it possible to control the distance of most powers of $T = \pi_{\mathbb{Q}_p}$ from suitable finitely generated $\Lambda_{R,[\mu(a),\infty]}(\Gamma_K)$-submodules. The direct argument, however, was not working for the monomials $T^{p^S-1}$ as $S\rightarrow\infty$.

Modulo $p$, these monomials satisfy
$\psi(T^{p^S-1})=T^{p^{S-1}-1}$.
This suggests that the obstruction produced by them should disappear after passing to the cohomology of the $\psi-1$ complex. Moreover, the idempotent operator
$1-\varphi\psi: D_{[a,b]} \to D_{[a,b]}^{\psi=0}$,
together with the series $G_{\gamma_n}$ constructed in
Lemma \ref{lem:closed-psi-zero-projective}, plays a crucial
role in establishing the projectivity of
$D_{[a,b]}^{\psi=0}$ over the annular distribution algebra. No such idempotent exists that projects to the $\psi = 1$ component. Therefore using this operator felt reasonable to try to produce a chain homotopy. Using this observation, we replace the restriction morphism of Iwasawa complexes by a chain-homotopic morphism involving the operator $1-\varphi\psi+\varphi$. We thereby obtain a degree wise completely continuous morphism of complexes which is homotopic to the restriction morphism. In this sense, the restriction morphism is completely continuous up to homotopy. However, it would be interesting to know the complete continuity of the restriction map as well.
\end{remark}

\begin{remark}
    \label{reducednesshypo}
    From now on, we first assume that $R$ is reduced. Under this assumption
we prove the coadmissibility of $H^1_\psi(D)$ and then the perfectness of
$C^\bullet_\psi(D)$. We subsequently remove the reducedness hypothesis by
filtering by powers of the nilradical and applying the
coefficient-module devissage of
\cite[Lemmas~5.1.3--5.1.5]{kedlaya2014cohomology}. Finally, since a perfect complex over
$\Lambda_{R,(0,\infty]}(\Gamma_K)$ is represented by a bounded complex
of finite projective, hence coadmissible, modules, and the category of
coadmissible modules is abelian, its cohomology modules are
coadmissible. Thus the coadmissibility statement will also hold for
arbitrary $R$.
\end{remark}

\begin{theorem}
\label{redinj}
Let $R$ be reduced and $D$ be a $(\varphi, \Gamma_K)$ module over $\Lambda_{R, \text{rig}, K}$. Then the first Iwasawa cohomology group of $D$ is coadmissible.
\end{theorem}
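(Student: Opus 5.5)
Since the first Iwasawa cohomology is $H^1_\psi(D)=D_{(0,b]}^{\psi=1}$ for $b$ small (Proposition \ref{iwasawann} and Lemma \ref{iwasawadefnequiv}), the plan is to write it explicitly as an inverse limit of finitely generated modules over the Banach stages $A_a=\Lambda_{R,[\mu(a),\infty]}(\Gamma_K)$ and check the coadmissibility compatibility. Fix $b$ as in Proposition \ref{firstcoh} and a decreasing sequence of admissible radii $a_n\to 0$ with $a_n\leq b/p^3$. Put $M_n=D_{[a_n,b]}^{\psi=1}$; by Proposition \ref{firstcoh} each $M_n$ is finite over $A_{a_n}$. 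Since $D_{(0,b]}=\varprojlim_n D_{[a_n,b]}$ and taking $\psi$-invariants is a kernel, $D_{(0,b]}^{\psi=1}=\varprojlim_n M_n$, with transition maps given by restriction. These are $\Lambda_R$-linear by the compatibility in Lemma \ref{moduleaction}.

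What remains is the coadmissibility condition: the natural map $M_{n+1}\otimes_{A_{a_{n+1}}}A_{a_n}\to M_n$ should be an isomorphism. Equivalently, it suffices to show $D_{(0,b]}^{\psi=1}\otimes_{\Lambda_R}A_{a_n}\cong M_n$ for all $n$. The approach is to compare with the $\varphi=1$ part. The map $1-\varphi$ sends $D^{\psi=1}$ into $D^{\psi=0}$ with kernel $D^{\varphi=1}$. This gives an exact sequence
$$0\to D_{[a,b]}^{\varphi=1}\to D_{[a,b]}^{\psi=1}\xrightarrow{1-\varphi} D_{[a,b/p]}^{\psi=0},$$
compatible in $a$. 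By Lemma \ref{algebraicphibasechange}, the left term is constant and satisfies base change. By Lemma \ref{lem:closed-psi-zero-projective} and Lemma \ref{311}, the right-hand terms form a coadmissible (even finite projective) system over the annular distribution algebras. The image of $1-\varphi$ should be the closed submodule $\Lambda_R$-generated compatibly. Here I would try to show that the images $N_n\subset D^{\psi=0}_{[a_n,b/p]}$ form a coadmissible system, using Noetherianity of the Banach stages (Lemma \ref{Noetheriantate}) together with density of the global image. Coadmissibility of $D^{\psi=1}$ would then follow because coadmissible modules are closed under extensions.

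Reducedness of $R$ enters at this point. It should be used to show that no element of $M_n$ is killed by passage to a smaller radius beyond what the global module sees. Concretely, injectivity of $D^{\psi=1}_{(0,b]}\otimes A_{a_n}\to M_n$ would be checked on fibres at maximal points via Theorem \ref{pointbasechange} and Lemma \ref{max}, where the residue fields are honest fields. Reducedness lets a module map that is an isomorphism fibrewise at a dense set of points, and between modules finite over a reduced Noetherian ring, be controlled. Corollary \ref{annihilatorbehavior} would rule out torsion supported away from the annulus.

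The main obstacle is surjectivity of $M_{n+1}\otimes A_{a_n}\to M_n$: an element $\psi$-invariant on $[a_n,b]$ must be shown to extend, up to $A_{a_n}$-combinations, to a smaller radius. For this I would use Lemma \ref{basic}, which says $\psi$-invariants on $[a,b/p^{k}]$ and $[a,b/p^{k-1}]$ agree. Combined with the $\psi$-contraction and the decomposition $D=\varphi D\oplus D^{\psi=0}$, this should let one push the inner radius down by iterating $\psi$. Lemma \ref{uniformfinite} is available if a uniform generator bound is needed to conclude finite generation of the limit.
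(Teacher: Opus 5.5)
\textbf{There is a genuine gap.} You never establish the Banach-stage compatibility, and the tools you name cannot supply it.

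Your plan needs the annular invariants $M_n=D^{\psi=1}_{[a_n,b]}$ themselves to satisfy $M_{n+1}\otimes_{A_{a_{n+1}}}A_{a_n}\cong M_n$. The problems are as follows.
\begin{enumerate}
\item Surjectivity, which you correctly flag as the main obstacle, is not touched by Lemma~\ref{basic}. That lemma only shrinks the \emph{outer} radius $b$. Moreover $\psi$ carries $D_{[a,b]}$ to $D_{[pa,pb]}$, i.e.\ away from the boundary, so iterating it cannot extend a section from $[a_n,b]$ to $[a_{n+1},b]$.
\item Extending towards the boundary would require $x=\varphi(x)+w$, with $w\in D^{\psi=0}$ on the larger annulus restricting to the prescribed element $(1-\varphi\psi)x$ on the smaller one. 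Restriction on $D^{\psi=0}$ has dense image but is not surjective, and you give no argument that $A_{a_n}$-linear combinations of restricted sections fill $M_n$.
\item Passing to $N_n=(1-\varphi)M_n$ gains nothing. Since $D^{\varphi=1}$ is constant and satisfies base change, the five lemma makes compatibility of the $N_n$ equivalent to compatibility of the $M_n$. Noetherianity gives finiteness of each $N_n$, and density is a topological statement; neither yields the algebraic identity $N_{n+1}\otimes A_{a_n}\cong N_n$.
\item The fibrewise check in your third paragraph also fails. Theorem~\ref{pointbasechange} computes the derived fibre of the global complex $C_\psi^\bullet(D)$, not $M_n\otimes L_z$. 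A map of finite modules over a reduced Noetherian ring that is an isomorphism on every fibre need not be injective: over $\mathbb{Q}_p\langle T\rangle$, the projection $A/(T^2)\to A/(T)$ is a counterexample. You also do not know a priori that $D^{\psi=1}_{(0,b]}\otimes_{\Lambda_R}A_{a_n}$ is finite.
\end{enumerate}

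\textbf{What the paper does instead.} It never asserts base change for $D^{\psi=1}_{[a_n,b]}$.
\begin{enumerate}
\item It replaces $M_n$ by the image $V_n$ of $D^{\psi=1}_{(0,b]}\otimes_{\Lambda_R}A_n$ in $D^{\psi=1}_{[a_n,b]}$. This is finite by Proposition~\ref{firstcoh} and Noetherianity.
\item For fixed $n$ it studies $V_{m,n}=V_m\otimes_{A_m}A_n$ with $m\geq n$. Surjectivity of $V_{m+1,n}\to V_{m,n}$ is then automatic, since both are quotients of $D^{\psi=1}_{(0,b]}\otimes A_n$.
\item Eventual injectivity comes from two ingredients absent from your proposal.
\begin{itemize}
\item The kernel embeds in $(\varphi-1)V_{m+1}\otimes_{A_{m+1}}A_n$. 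By reducedness, this lies inside a finite free module, so the kernel is torsion-free. This is where reducedness is actually used.
\item The fibre dimensions $\dim V_{m,n}\otimes\kappa(\mathfrak m)$ are bounded independently of $m$. The bound comes from Theorem~\ref{pointbasechange}, finiteness of $\operatorname{Tor}_2(H^2_\psi(D),\kappa)$ via Lemma~\ref{resolutionfiniteRmod}, and the field Euler characteristic formula.
\end{itemize}
\item The generic ranks at the finitely many minimal primes are nondecreasing in $m$ and bounded, so they stabilize. The torsion-free kernel therefore vanishes.
\item Hence the system stabilizes to finite modules $M_n$ with $M_{n+1}\otimes A_n\cong M_n$, and $\varprojlim M_n=\varprojlim V_n=D^{\psi=1}_{(0,b]}$.
\end{enumerate}
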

\begin{proof}
Assume first that $R$ is reduced. Choose a decreasing sequence $a_n\to0$, and put $A_n=\Lambda_{R,[\mu(a_n),\infty]}(\Gamma_K)$. After discarding finitely many terms, Proposition \ref{firstcoh} shows that $D_{[a_n,b]}^{\psi=1}$ is finite over $A_n$ for every $n$. Let
$$
V_n=\operatorname{im}\left(D_{(0,b]}^{\psi=1}\otimes_{\Lambda_{R,(0,\infty]}(\Gamma_K)}A_n\longrightarrow D_{[a_n,b]}^{\psi=1}\right).
$$
Since $A_n$ is Noetherian, $V_n$ is a finite $A_n$-module. We know that $D_{[a_n,b]}^{\varphi=1}$ are precisely the restrictions of $D_{(0,b]}^{\varphi=1}$. Since these restrictions lie in $V_n$, we obtain
$$
0\longrightarrow D_{(0,b]}^{\varphi=1}\longrightarrow V_n\xrightarrow{\varphi-1}(\varphi-1)V_n\longrightarrow0.
$$
Moreover, $(\varphi-1)V_n\subseteq D_{[a_n,b/p]}^{\psi=0}$. By the annular $\psi=0$ structure theorem and flatness of the corresponding annular localization, $D_{[a_n,b/p]}^{\psi=0}$ is torsion-free over $A_n$. Hence $(\varphi-1)V_n$ is a finite torsion-free $A_n$-module. We also have, compatibly with restriction,
$$
D_{(0,b]}^{\varphi=1}\otimes_{A_m}A_n\cong D_{(0,b]}^{\varphi=1}.
$$
Fix $n$. For $m\geq n$, put $V_{m,n}=V_m\otimes_{A_m}A_n$. We first show that restriction induces a surjection $V_{m+1,n}\twoheadrightarrow V_{m,n}$. By definition there is a surjection
$$
D_{(0,b]}^{\psi=1}\otimes_{\Lambda_{R,(0,\infty]}(\Gamma_K)}A_m\twoheadrightarrow V_m.
$$
Tensoring with $A_n$ over $A_m$ gives a surjection
$$
D_{(0,b]}^{\psi=1}\otimes_{\Lambda_{R,(0,\infty]}(\Gamma_K)}A_n\twoheadrightarrow V_{m,n}.
$$
The maps for $m+1$ and $m$ are compatible with annular restriction, so we have a commutative diagram
$$
\begin{array}{ccc}
D_{(0,b]}^{\psi=1}\otimes_{\Lambda_{R,(0,\infty]}(\Gamma_K)}A_n&\twoheadrightarrow&V_{m+1,n}\\
\Vert&&\downarrow\\
D_{(0,b]}^{\psi=1}\otimes_{\Lambda_{R,(0,\infty]}(\Gamma_K)}A_n&\twoheadrightarrow&V_{m,n}.
\end{array}
$$
Since the lower horizontal map is surjective, the right vertical map is surjective.
We claim that these surjections are eventually isomorphisms. Since $A_m\to A_n$ is flat, tensoring
$$
0\longrightarrow D_{(0,b]}^{\varphi=1}\longrightarrow V_m\longrightarrow(\varphi-1)V_m\longrightarrow0
$$
with $A_n$ gives
$$
0\longrightarrow D_{(0,b]}^{\varphi=1}\longrightarrow V_{m,n}\longrightarrow(\varphi-1)V_m\otimes_{A_m}A_n\longrightarrow0.
$$
The transition maps give a commutative diagram with exact rows
$$
\begin{array}{ccccccccc}
0&\longrightarrow&D_{(0,b]}^{\varphi=1}&\longrightarrow&V_{m+1,n}&\longrightarrow&(\varphi-1)V_{m+1}\otimes_{A_{m+1}}A_n&\longrightarrow&0\\
&&\Vert&&\downarrow&&\downarrow\\
0&\longrightarrow&D_{(0,b]}^{\varphi=1}&\longrightarrow&V_{m,n}&\longrightarrow&(\varphi-1)V_m\otimes_{A_m}A_n&\longrightarrow&0.
\end{array}
$$
Consequently,
$$
\ker(V_{m+1,n}\to V_{m,n})\hookrightarrow(\varphi-1)V_{m+1}\otimes_{A_{m+1}}A_n.
$$
The module on the right is torsion-free over $A_n$. Indeed, since $R$ is reduced, $A_{m+1}$ is reduced Noetherian, and every finite torsion-free $A_{m+1}$-module embeds into a finite free $A_{m+1}$-module. Thus $(\varphi-1)V_{m+1}\hookrightarrow A_{m+1}^d$ for some $d$. Since $A_{m+1}\to A_n$ is flat, tensoring gives
$$
(\varphi-1)V_{m+1}\otimes_{A_{m+1}}A_n\hookrightarrow A_n^d.
$$
Hence $\ker(V_{m+1,n}\to V_{m,n})$ is torsion-free over $A_n$.
We now show that this kernel eventually vanishes. Let $\mathfrak p$ be a minimal prime of $A_n$, and choose a maximal ideal $\mathfrak m\supseteq\mathfrak p$. For every $m\geq n$, the preceding surjection gives
$
D_{(0,b]}^{\psi=1}\otimes_{\Lambda_{R,(0,\infty]}(\Gamma_K)}\kappa(\mathfrak m)\twoheadrightarrow V_{m,n}\otimes_{A_n}\kappa(\mathfrak m).
$
The left-hand side is finite-dimensional over $\kappa(\mathfrak m)$ by the base change theorem and the finiteness of the relevant $\operatorname{Tor}_2$ term. Thus there is an integer $d_{\mathbf{\mathfrak m}}$, independent of $m$\footnote{Put $\Lambda=\Lambda_{R,(0,\infty]}(\Gamma_K)$ and $\kappa=\kappa(\mathfrak m)$. The hyper-Tor spectral sequence gives an exact sequence
$
\begin{aligned}
\operatorname{Tor}_2^\Lambda(H^2_\psi(D),\kappa)&\longrightarrow H^1_\psi(D)\otimes_\Lambda\kappa\longrightarrow H^1(C_\psi^\bullet(D)\otimes_\Lambda^{\mathbf L}\kappa)\\
&\longrightarrow\operatorname{Tor}_1^\Lambda(H^2_\psi(D),\kappa)\longrightarrow0.
\end{aligned}
$
By Proposition 5.2 of \cite{bellovin2024cohomology}, $H^2_\psi(D)$ is finite over $R$. By Lemma \ref{resolutionfiniteRmod}, it therefore admits a resolution $P_\bullet\to H^2_\psi(D)$ by finite projective $\Lambda$-modules. In particular,
$$
\dim_\kappa\operatorname{Tor}_2^\Lambda(H^2_\psi(D),\kappa)\leq\dim_\kappa(P_2\otimes_\Lambda\kappa)<\infty.
$$
Apply Theorem~\ref{pointbasechange} to the maximal point
$\mathfrak m$ of $A_n$. With $\kappa=\kappa(\mathfrak m)$ and
$\eta_{\mathfrak m}$ the character induced by evaluation, put
$$
N_{\mathfrak m}
=(D\widehat\otimes_R\kappa)(\eta_{\mathfrak m}^{-1}).
$$
Then
$$
H^1(C_\psi^\bullet(D)\otimes_\Lambda^{\mathbf L}\kappa)
\cong H^1_{\psi,\gamma_K}(N_{\mathfrak m}).
$$
The field-case Euler characteristic formula and Tate duality give
$$
\dim_\kappa H^1_{\psi,\gamma_K}(N_{\mathfrak m})
\leq([K:\mathbb Q_p]+2)d,
$$
where $d$ is an upper bound for the ranks of $D$.
Thus we may take $c_D=([K:\mathbf Q_p]+2)d$.

Consequently,
$$
\dim_\kappa(H^1_\psi(D)\otimes_\Lambda\kappa)\leq c_D+\dim_\kappa\operatorname{Tor}_2^\Lambda(H^2_\psi(D),\kappa)<\infty.
$$
For fixed $\mathfrak m$, the right-hand side is independent of the index $m$. This is the bound needed for the fibres of $V_{m,n}$.}, such that
$$
\dim_{\kappa(\mathfrak m)}(V_{m,n}\otimes_{A_n}\kappa(\mathfrak m))\leq d_{\mathbf{\mathfrak m}}.
$$
By Nakayama's lemma, $(V_{m,n})_{\mathfrak m}$ can therefore be generated by at most $d_{\mathbf{\mathfrak m}}$ elements. Since $A_n$ is reduced, $(A_n)_{\mathfrak p}$ is a field. Localizing further at $\mathfrak p$ gives
$$
\dim_{(A_n)_{\mathfrak p}}(V_{m,n})_{\mathfrak p}\leq d_{\mathbf{\mathfrak m}}.
$$
Since $V_{m+1,n}\twoheadrightarrow V_{m,n}$, the dimensions $\dim_{(A_n)_{\mathfrak p}}(V_{m,n})_{\mathfrak p}$ are nondecreasing in $m$. They are bounded integers, and hence eventually constant.
Since $A_n$ has only finitely many minimal primes, there is an $m_0$ such that, for every $m\geq m_0$ and every minimal prime $\mathfrak p$ of $A_n$,
$$
\dim_{(A_n)_{\mathfrak p}}(V_{m+1,n})_{\mathfrak p}=\dim_{(A_n)_{\mathfrak p}}(V_{m,n})_{\mathfrak p}.
$$
Localizing the exact sequence
$$
0\longrightarrow\ker(V_{m+1,n}\to V_{m,n})\longrightarrow V_{m+1,n}\longrightarrow V_{m,n}\longrightarrow0
$$
at $\mathfrak p$ therefore gives $\ker(V_{m+1,n}\to V_{m,n})_{\mathfrak p}=0$. Thus the kernel vanishes after localization at every minimal prime of $A_n$. Since the kernel is torsion-free over the reduced ring $A_n$, it follows that $\ker(V_{m+1,n}\to V_{m,n})=0$. Hence, for every fixed $n$, the inverse system $V_{m,n}$, $m\geq n$, eventually stabilizes.
Define $M_n=\varprojlim_{m\geq n}V_{m,n}$. Since the system is eventually constant, $M_n$ is naturally isomorphic to its stable value and is therefore finite over $A_n$.
We next verify the transition property. Choose $m$ sufficiently large that both the systems defining $M_n$ and $M_{n+1}$ have stabilized. Then $M_{n+1}\cong V_m\otimes_{A_m}A_{n+1}$ and $M_n\cong V_m\otimes_{A_m}A_n$. Therefore
$$
\begin{aligned}
M_{n+1}\otimes_{A_{n+1}}A_n&\cong(V_m\otimes_{A_m}A_{n+1})\otimes_{A_{n+1}}A_n\\
&\cong V_m\otimes_{A_m}A_n\\
&\cong M_n.
\end{aligned}
$$
Finally, we identify the inverse limit. First,
$$
D_{(0,b]}^{\psi=1}\cong\varprojlim_nV_n.
$$
Indeed, every element of $D_{(0,b]}^{\psi=1}$ restricts to a compatible family in the $V_n$. Conversely, a compatible family in the $V_n$ is in particular a compatible family of elements of $D_{[a_n,b]}^{\psi=1}$. Since $D_{(0,b]}\cong\varprojlim_nD_{[a_n,b]}$, this family glues to an element of $D_{(0,b]}$, and the equations $\psi-\operatorname{res}=0$ on the family of annuli indexed by $n$ imply that the resulting element belongs to $D_{(0,b]}^{\psi=1}$.
It remains to compare $\varprojlim_nM_n$ with $\varprojlim_nV_n$. By definition,
$$
\varprojlim_nM_n=\varprojlim_n\varprojlim_{m\geq n}V_{m,n}.
$$
The compatible restriction maps identify this with the inverse limit over pairs $(m,n)$ with $m\geq n$, ordered componentwise. The diagonal pairs $(k,k)$ form a cofinal subsystem, and $V_{k,k}=V_k$. Therefore
$$
\varprojlim_nM_n\cong\varprojlim_kV_k.
$$
Combining the preceding identifications gives $D_{(0,b]}^{\psi=1}\cong\varprojlim_nM_n$. Each $M_n$ is finite over $A_n$, and $M_{n+1}\otimes_{A_{n+1}}A_n\cong M_n$. Hence $D_{(0,b]}^{\psi=1}$ is coadmissible over $\Lambda_{R,(0,\infty]}(\Gamma_K)$.
\end{proof}
\begin{lemma}
\label{resolutionfiniteRmod}
    Every continuous $\Lambda_{R, (0, \infty]}(\Gamma_K)$-module $M$ that is $R$-finite lies in $D_{\textrm{perf}}^{-}(\Lambda_{R, (0, \infty]}(\Gamma_K))$, i.e. it has a projective resolution in terms of finite projective $\Lambda_{R, (0, \infty]}(\Gamma_K)$-modules.
\end{lemma}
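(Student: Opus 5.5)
The plan is to build the resolution one step at a time. The key input is that $\Lambda_{R,(0,\infty]}(\Gamma_K)$ acts on an $R$-finite module through a monic polynomial quotient, and quotients by monic polynomials with topologically nilpotent lower coefficients are $R$-finite and have a two-term free resolution. Write $\Lambda=\Lambda_{R,(0,\infty]}(\Gamma_K)$ and $\Gamma_K=\Delta_K\times\Gamma_K^{\mathrm{tf}}$, with $T=\gamma-1$ for a generator $\gamma$ of the torsion-free part. Since $|\Delta_K|$ is prime to $p$, the idempotents $e_\chi$ of $\mathbb{Z}_p[\Delta_K]$ (after a finite étale extension of $R$ if needed, or working with $\mathbb{Z}_p[\Delta_K]$-isotypic components over $R$) split $M$ and $\Lambda$ into components. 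It therefore suffices to treat $e\Lambda\cong\Lambda_{R,(0,\infty]}(\Gamma_K^{\mathrm{tf}})$ acting on $eM$, which is again $R$-finite.

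\textbf{Step 1.} Find a monic $F(T)\in R[T]$ annihilating $M$ whose non-leading coefficients are topologically nilpotent. Since $M$ is $R$-finite and $T$ acts $R$-linearly, Cayley–Hamilton applied to a surjection $R^r\to M$ gives a monic polynomial $G(T)$ killing $M$. To make the lower coefficients topologically nilpotent, use continuity: $\gamma^{p^n}\to1$ on $M$, so $T^{p^n}=(\gamma-1)^{p^n}\equiv\gamma^{p^n}-1$ modulo $p$. Hence $T$ acts topologically nilpotently on a bounded $R_0$-lattice of $M$. Apply Cayley–Hamilton to that integral action on a finitely generated $R_0$-lattice stable under $T$; it is stable for large $n$ after replacing by $\sum_i T^iM_0$. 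Reducing modulo the ideal of topologically nilpotent elements, the characteristic polynomial becomes $T^r$ because $T$ is nilpotent there. This yields $F$ with the required property.

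\textbf{Step 2.} Show that $\Lambda/F\Lambda\cong R[T]/(F)$ and that $F$ is a nonzerodivisor on $\Lambda$. At each Banach stage $\Lambda_{R,[s,\infty]}$ with $s$ small, the leading term dominates the lower ones, so Weierstrass-style division by $F$ holds with bounded quotients. This gives $\Lambda_s=F\Lambda_s\oplus\bigoplus_{i<r}RT^i$, and multiplication by $F$ is injective since its leading term dominates in norm. Passing to the inverse limit over $s$ (a Mittag-Leffler argument, using uniqueness of division) gives the same statements for $\Lambda$. Thus $0\to\Lambda\xrightarrow{F}\Lambda\to R[T]/(F)\to0$ is a finite free resolution, and $R[T]/(F)$ is finite free over $R$.

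\textbf{Step 3.} Build the resolution. $M$ is a finite $\Lambda/F$-module, i.e.\ a finite module over $B=R[T]/(F)$, which is Noetherian. Choose a resolution $\cdots\to B^{n_1}\to B^{n_0}\to M\to0$ by finite free $B$-modules, which exists since $B$ is Noetherian. Each $B^{n}$ is perfect over $\Lambda$ by Step 2. A standard horseshoe/Cartan–Eilenberg argument, or an iterated cone construction along the truncations, then shows $M\in D^{-}_{\mathrm{perf}}(\Lambda)$. Concretely, one replaces the $B$-resolution by the total complex of a double complex whose columns are the two-term $\Lambda$-resolutions $[\Lambda^{n_i}\xrightarrow{F}\Lambda^{n_i}]$, with the differentials lifted to $\Lambda$; the lift may need the correction terms of a twisted complex since $d^2=0$ holds only modulo $F$. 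Alternatively, one may build the resolution directly: surject $\Lambda^{n_0}\to M$, observe that the kernel $K$ contains $F\Lambda^{n_0}$ and that $K/F\Lambda^{n_0}$ is $R$-finite, and iterate.

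\textbf{Main obstacle.} The main obstacle is the division in Step 2 in the pseudorigid setting, where $p$ may be zero or non-invertible. I would first try to handle it by choosing $s$ so that $|c_i|\,\omega^{s(r-i)\cdot(\text{scaling})}<1$ for the lower coefficients $c_i$, which is possible because they are topologically nilpotent, i.e.\ have norm less than $1$ after powers. I would then run the standard Weierstrass division argument integrally on $R_0[[T]]\langle T^{e}/u\rangle$.
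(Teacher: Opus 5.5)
Your route is the paper's: a distinguished polynomial kills $M$, Weierstrass division at the Banach stages gives $0\to\Lambda\xrightarrow{F}\Lambda\to B\to 0$ with $B$ finite over $R$, and then you resolve $M$ over $B$ and totalize. The step that fails as written is the end of Step 1.

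\textbf{The gap in Step 1.} The determinant trick uses some matrix $A$ with $Tm_i=\sum_j a_{ij}m_j$ for chosen generators of the lattice. Since $M$ is only $R$-finite, the lattice $M_0$ need not be free. Nilpotence of $T$ on $M_0/JM_0$ then says nothing about nilpotence of $A$ modulo $J$, so the characteristic polynomial need not reduce to $X^r$. For instance, take $T=0$ on $M_0=R_0x$ with generators $m_1=m_2=x$, $Tm_1=m_1-m_2$, $Tm_2=0$. The trick outputs $X(X-1)$, which kills $M$ but is not distinguished.

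\textbf{The fix.} This is exactly what the paper does via Stacks, Lemma 10.16.2: apply the determinant trick to $\phi=T^N$ and the ideal $uR_0$. Since $T^NM_0\subseteq uM_0=\sum_j uR_0m_j$, the matrix of $T^N$ can be chosen with entries in $uR_0$. This gives $Q(X)=X^n+a_1X^{n-1}+\cdots+a_n$ with $a_j\in u^jR_0$ and $Q(T^N)M=0$. Hence $F(T)=Q(T^N)$ is monic with all non-leading coefficients in $uR_0$. Equivalently, with your lattice $\sum_{k<N}T^kM_0$ and generators $T^kx_i$, take the matrix to be the shift plus the wrap-around block coming from $T^Nx_i\in uM_0$. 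It is then strictly triangular modulo $u$.

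\textbf{Why Step 2 needs this form.} At the stage $[s,\infty]$ one has $|T|_s=\omega^s$ (up to the fixed rescaling of radii). The leading term dominates when $|c_i|\omega^{si}<\omega^{sr}$, i.e.\ $|c_i|<\omega^{s(r-i)}$; the exponent in the inequality you display has the wrong sign. For small $s$ this requires $|c_i|<1$ in the norm defining the stages. Merely topologically nilpotent coefficients can have norm $1$ there, for example an element of $R_0\setminus uR_0$, and would force an extra renormalization of $R$. By contrast, $c_i\in uR_0$ gives $|c_i|\le|u|<1$ directly. Uniqueness of the remainder then makes the divisions compatible in $s$.

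\textbf{Step 3.} This step is fine, and more careful than the paper's ``taking the total complex''. If $\tilde\delta$ lifts the $B$-differentials and $\tilde\delta\tilde\delta=Fh$, the single correction term $h$ suffices. This is because $F$ is a nonzerodivisor on free $\Lambda$-modules, which forces $\tilde\delta h=h\tilde\delta$. Your horseshoe alternative also works.
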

\begin{proof}
Since $M$ is $R$-finite and the action of $\gamma-1$ is continuous, we can choose a finite $R_0$-lattice $M_0$ in $M$ such that, for $N$ sufficiently large,
$
    (\gamma-1)^N M_0 \subseteq uM_0,
$
where $R_0$ is a ring of definition of $R$ and $u$ is a topologically nilpotent unit. By the determinant trick of Lemma 10.16.2 in \cite{stacks-project}, we can find a monic polynomial $P(\gamma-1)$ annihilating $M$ whose non-leading coefficients lie in $uR_0$. Thus $P$ is distinguished.
Now consider the ring
$B = \dfrac{\Lambda_{R, (0, \infty]}(\Gamma_K)}{P(\gamma-1)}.
$
Since $P$ is distinguished, $B$ is finite over $R$ by Weierstrass division. Moreover, multiplication by $P(\gamma-1)$ is injective, so $B$ has a finite free resolution over $\Lambda_{R, (0, \infty]}(\Gamma_K)$ given by
$
\begin{tikzcd}
    0 & {\Lambda_{R, (0, \infty]}(\Gamma_K)} & {\Lambda_{R, (0, \infty]}(\Gamma_K)} & B & 0
    \arrow[from=1-1, to=1-2]
    \arrow["{.P(\gamma-1)}", from=1-2, to=1-3]
    \arrow[from=1-3, to=1-4]
    \arrow[from=1-4, to=1-5]
\end{tikzcd}
$
Now, both $B$ and $M$ are $R$-finite, and hence $M$ is a finite $B$-module. We can therefore find a resolution of $M$ by finite free $B$-modules
$
    \cdots \longrightarrow B^{\oplus n_1}
    \longrightarrow B^{\oplus n_0}
    \longrightarrow M \longrightarrow 0.
$
Since every finite free $B$-module has a finite free resolution over $\Lambda_{R, (0, \infty]}(\Gamma_K)$ of length one, this resolution can be resolved further over $\Lambda_{R, (0, \infty]}(\Gamma_K)$. Taking the total complex gives a bounded above resolution of $M$ by finite free $\Lambda_{R, (0, \infty]}(\Gamma_K)$-modules. This proves our result.
\end{proof}
\begin{lemma}
\label{rankind}
   The first Iwasawa cohomology group $H^1(C_{\psi}^{\bullet})$ is finitely generated.
\end{lemma}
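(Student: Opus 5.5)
The plan is to combine the coadmissibility of Theorem~\ref{redinj} with the uniform-generation criterion of Lemma~\ref{uniformfinite}. I first treat reduced $R$. By Theorem~\ref{redinj}, $H^1_\psi(D)=D_{(0,b]}^{\psi=1}$ is coadmissible over $\Lambda=\Lambda_{R,(0,\infty]}(\Gamma_K)$. Its stages are $M_n=H^1_\psi(D)\otimes_\Lambda A_n$ with $A_n=\Lambda_{R,[\mu(a_n),\infty]}(\Gamma_K)$. The disks $[\mu(a_n),\infty]$ are nested and exhaust $(0,\infty]$, so after passing to the differences of consecutive radii they give an admissible covering. By Lemma~\ref{uniformfinite}, it therefore suffices to find an integer $N$, independent of $n$, such that every $M_n$ is generated by at most $N$ elements over $A_n$.

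To obtain $N$, I would bound the fibre dimensions uniformly and then apply Forster--Swan. The Krull dimensions of the Banach stages $A_n$ are uniformly bounded, by $\dim R+1$. Fix a maximal ideal $\mathfrak m\subset A_n$ and write $\kappa=\kappa(\mathfrak m)$. The hyper-Tor exact sequence from the footnote in the proof of Theorem~\ref{redinj} gives
\[
\dim_\kappa\bigl(H^1_\psi(D)\otimes_\Lambda\kappa\bigr)\le \dim_\kappa H^1_{\psi,\gamma_K}(N_{\mathfrak m})+\dim_\kappa\operatorname{Tor}_2^\Lambda(H^2_\psi(D),\kappa).
\]
The first term is at most $([K:\mathbb Q_p]+2)d$ by Theorem~\ref{pointbasechange} together with the field Euler characteristic formula and Tate duality. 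For the second term, $H^2_\psi(D)$ is $R$-finite, so Lemma~\ref{resolutionfiniteRmod} provides a fixed resolution $P_\bullet$ by finite free $\Lambda$-modules. This resolution is built from $0\to\Lambda\xrightarrow{P}\Lambda\to B\to0$ and a free $B$-resolution of $H^2_\psi(D)$, and it does not depend on $n$ or on $\mathfrak m$. Hence $\dim_\kappa\operatorname{Tor}_2\le\operatorname{rank}P_2$. Together these give a bound $c$ that is uniform in $n$ and $\mathfrak m$.

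Here I use that $H^1_\psi(D)\otimes_\Lambda\kappa=M_n\otimes_{A_n}\kappa$. Forster--Swan then shows that each $M_n$ is generated by at most $N=c+\dim R+1$ elements. Lemma~\ref{uniformfinite} now gives finite generation over $\Lambda$.

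For non-reduced $R$, I would follow Remark~\ref{reducednesshypo} and filter $R$ by powers of its nilradical $\mathfrak n$. The graded pieces are finite modules over $R/\mathfrak n$, and by the dévissage of \cite[Lemmas~5.1.3--5.1.5]{kedlaya2014cohomology} the result for these pieces propagates through the long exact sequences. Finite generation passes through extensions, provided the flanking $H^2$ terms are controlled; this holds because they are $R$-finite by \cite[Proposition~5.2]{bellovin2024cohomology}.

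I expect the main obstacle to be the uniformity of the $\operatorname{Tor}_2$ bound. One has to check that the resolution from Lemma~\ref{resolutionfiniteRmod} can be chosen once over $\Lambda$ and that its ranks dominate the fibre Tor at every point of every stage. This uses flatness of $A_n$ over $\Lambda$, and I would derive it by tensoring $P_\bullet$ with $A_n$. I would check this step first.
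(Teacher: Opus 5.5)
Your proposal is correct and follows essentially the same route as the paper. You use coadmissibility from Theorem~\ref{redinj}, then a fibre bound of the form $\operatorname{rank}P_2+([K:\mathbb{Q}_p]+2)d$, uniform in the stage and the maximal point, coming from the hyper-Tor sequence, a single resolution of $H^2_\psi(D)$ from Lemma~\ref{resolutionfiniteRmod}, and the field Euler characteristic formula with Tate duality, and then finish with Forster--Swan and Lemma~\ref{uniformfinite}. The one difference is that you sketch the non-reduced case here by a d\'evissage on $H^1$, whereas the paper proves this lemma only for reduced $R$ (Remark~\ref{reducednesshypo}) and removes reducedness later at the level of perfect complexes.
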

\begin{proof}
Put $\Lambda=\Lambda_{R,(0,\infty]}(\Gamma_K)$ and
$A_a=\Lambda_{R,[\mu(a),\infty]}(\Gamma_K)$.
By the preceding coadmissibility result,
$$
M_a=H^1(C_\psi^\bullet(D))\otimes_\Lambda A_a
$$
is a finite $A_a$-module. By Lemma~\ref{uniformfinite}, it
suffices to bound its number of generators independently of $a$.

Let $z\in\operatorname{Max}(A_a)$, put
$L_z=A_a/\mathfrak n_{z,a}$, and set
$N_z=(D\widehat\otimes_R L_z)(\eta_z^{-1})$.
By Theorem~\ref{pointbasechange}, the hyper-Tor spectral sequence is
$$
E_2^{-i,j}
=\operatorname{Tor}^{A_a}_i
\left(H^j(C_\psi^\bullet(D))\otimes_\Lambda A_a,L_z\right)
\Longrightarrow H^{j-i}_{\psi,\gamma_K}(N_z).
$$
Because $C_\psi^\bullet(D)$ has cohomology only in degrees $1,2$,
it gives the exact sequence
$$
\operatorname{Tor}^{A_a}_2(H^2_\psi(D)\otimes_\Lambda A_a,L_z)
\longrightarrow M_a\otimes_{A_a}L_z
\longrightarrow H^1_{\psi,\gamma_K}(N_z).
$$

By Proposition~5.2 of \cite{bellovin2024cohomology},
$H^2_\psi(D)$ is finite over $R$.
Lemma~\ref{resolutionfiniteRmod} therefore gives a resolution
$P_\bullet\to H^2_\psi(D)$ by finite projective $\Lambda$-modules.
Choose this resolution once and for all, and choose $t_2$ such that
$P_2$ is a direct summand of $\Lambda^{t_2}$.
Flatness of $A_a$ over $\Lambda$ gives
$$
\dim_{L_z}\operatorname{Tor}^{A_a}_2
(H^2_\psi(D)\otimes_\Lambda A_a,L_z)\leq t_2.
$$

The field case of the Euler characteristic formula
\cite[Corollary~3.14]{bellovin2024cohomology} gives
$$
\dim_{L_z}H^1_{\psi,\gamma_K}(N_z)
=[K:\mathbf Q_p]\operatorname{rank}(N_z)
+\dim_{L_z}H^0_{\psi,\gamma_K}(N_z)
+\dim_{L_z}H^2_{\psi,\gamma_K}(N_z).
$$
If $d$ bounds the ranks of $D$, then $\dim_{L_z}H^0\leq d$,
and Tate duality gives $\dim_{L_z}H^2\leq d$. Hence
\begin{equation}
\label{tatedualitybound}
\dim_{L_z}H^1_{\psi,\gamma_K}(N_z)
\leq([K:\mathbf Q_p]+2)d.
\end{equation}
It follows that
$$
\dim_{L_z}(M_a\otimes_{A_a}L_z)
\leq t_2+([K:\mathbf Q_p]+2)d
$$
for every $a$ and every maximal point $z$.
The Krull dimensions of the $A_a$ are uniformly bounded, so
Forster--Swan gives a uniform bound for the number of generators
of $M_a$. Lemma~\ref{uniformfinite} now implies that
$H^1(C_\psi^\bullet(D))$ is finitely generated over $\Lambda$.
Only field-coefficient cohomology results are used in the fibre
calculation; the perfectness theorem for families is not used.
\end{proof}

\subsubsection{Iwasawa cohomology over a field}

\begin{theorem}
    \label{field}
    Let $R$ be a finite extension of either $\mathbb{Q}_p$ or
    $\mathbb{F}_p((u))$, and let $D$ be a $(\varphi,\Gamma_K)$-module over
    $\Lambda_{R,\mathrm{rig},K}$. Then
    $\operatorname{rank}_{\Lambda_{R,(0,\infty]}(\Gamma_K)}
    H^1(C_\psi^\bullet(D))
    =
    [K:\mathbb{Q}_p]\operatorname{rank}(D),$
    and
    $ \operatorname{rank}_{\Lambda_{R,(0,\infty]}(\Gamma_K)}
    H^2(C_\psi^\bullet(D))
    =
    0.$
\end{theorem}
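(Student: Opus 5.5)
We reduce to the generic fibre by computing ranks through base change to characters. Put $\Lambda=\Lambda_{R,(0,\infty]}(\Gamma_K)$, which for $R$ a field is a product of $|\Delta_K|$ Bézout domains, one for each idempotent $e$ of $\Delta_K$. Ranks are taken componentwise. The first step is to settle $H^2$. By Proposition~5.2 of \cite{bellovin2024cohomology}, $H^2_\psi(D)=D/(\psi-1)$ is finite-dimensional over $R$. An $R$-finite $\Lambda$-module is killed by a distinguished polynomial, as in the proof of Lemma~\ref{resolutionfiniteRmod}, and is therefore torsion. This gives rank $0$ for $H^2$.

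For $H^1$, Lemma~\ref{rankind} shows $H^1_\psi(D)$ is finitely generated, and Theorem~\ref{redinj} shows it is coadmissible. Hence on each idempotent component it is a finitely generated module over a Bézout-type distribution algebra and decomposes, up to torsion, as free of some rank $r_e$ plus a torsion part. The torsion part is coadmissible, so its support is a discrete set of characters. Pick a character $\eta$ in component $e$ that avoids this support and also the zeros of the distinguished polynomial killing $H^2_\psi(D)$. Apply Theorem~\ref{pointbasechange} (or Theorem~\ref{basechange}) at such an $\eta$, with $L$ a finite extension of $R$. Using the hyper-Tor spectral sequence and the fact that $\Lambda$ has global dimension $1$ locally at $\eta$, we get
$$\dim_L H^1_{\psi,\gamma_K}(N_\eta)=r_e+\dim \operatorname{Tor}_1(H^2_\psi(D),L),$$
together with $\dim H^0_{\psi,\gamma_K}(N_\eta)=\dim\operatorname{Tor}_1(H^1_\psi(D),L)$ and $\dim H^2_{\psi,\gamma_K}(N_\eta)=\dim H^2_\psi(D)\otimes L$. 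By the choice of $\eta$, the $\operatorname{Tor}$ terms and $H^2\otimes L$ vanish. It follows that $H^0_{\psi,\gamma_K}(N_\eta)=H^2_{\psi,\gamma_K}(N_\eta)=0$ and $\dim_L H^1_{\psi,\gamma_K}(N_\eta)=r_e$.

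The field Euler characteristic formula \cite[Corollary~3.14]{bellovin2024cohomology} then gives $r_e=[K:\mathbb{Q}_p]\operatorname{rank}(D)$ for every $e$, which proves the first equality.

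The main obstacle is justifying the structure theory. Specifically, we need that a finitely generated coadmissible module over each component of $\Lambda$ splits as free plus torsion, and that its torsion is supported on a discrete, hence avoidable, set of characters. This requires care especially in characteristic $p$. To handle it, I would use that each Banach stage $\Lambda_{R,[\mu(a),\infty]}(\Gamma_K)e$ is a PID (a one-variable disc algebra over a complete field), so the rank is the generic rank at every stage. Torsion at a stage has finite support, and the support is discrete in the limit. A generic character then exists because there are infinitely many characters $\eta$ with $|\eta(\gamma)-1|<1$, for example those given by values $1+x$ with $x$ in a small disc of $L$.
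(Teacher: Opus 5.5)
Your proposal is correct and follows essentially the same route as the paper. Both arguments choose a maximal point (character) avoiding the torsion supports, specialize via Theorem~\ref{pointbasechange} and the two-column hyper-Tor spectral sequence (the residue field has projective dimension one), and conclude with the field Euler characteristic formula. The only difference is how $H^0_{\psi,\gamma_K}(N_\eta)$ is shown to vanish. You read it off directly as $\operatorname{Tor}_1(H^1_\psi(D),L)=0$, using local freeness of $H^1_\psi(D)$ at $\eta$. The paper's choice of $z$ already guarantees this too, but it instead detours through $D^*$ at $z^{-1}$ and Tate duality.
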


\begin{proof}
Put $\Lambda=\Lambda_{R,(0,\infty]}(\Gamma_K)$ and
$M=H^1_\psi(D)$. By Lemma~\ref{rankind}, $M$ is finitely
generated and coadmissible. The modules $H^2_\psi(D)$ and
$H^2_\psi(D^*)$ are finite over $R$, and hence torsion over
$\Lambda$ on every character component. In particular,
$H^2_\psi(D)$ has rank zero.

Fix a Banach stage $A_a$ and one domain component of it.
This component is a regular one-dimensional affinoid algebra.
The finite torsion modules
$$
H^2_\psi(D)\otimes_\Lambda A_a,
\qquad H^2_\psi(D^*)\otimes_\Lambda A_a,
\qquad (M\otimes_\Lambda A_a)_{\mathrm{tors}}
$$
have finite support on their respective components.
Inversion of characters preserves the Banach stage radius condition.
We may therefore choose a maximal point $z$ on the chosen
component such that $z$ avoids the supports of the first and
third modules, and $z^{-1}$ avoids the support of the second.
Here $z^{-1}$ has the same coefficient map and the inverse character.

Let $L_z=A_a/\mathfrak n_{z,a}$ and
$N_z=(D\widehat\otimes_R L_z)(\eta_z^{-1})$.
Theorem~\ref{pointbasechange} and the hyper-Tor spectral sequence
give
$$
H^2_{\psi,\gamma_K}(N_z)=0,
\qquad
H^1_{\psi,\gamma_K}(N_z)
\cong(M\otimes_\Lambda A_a)\otimes_{A_a}L_z.
$$
For the second assertion, the Tor terms involving $H^2_\psi(D)$
vanish at $z$, and a residue field on this regular
one-dimensional component has projective dimension one.
Applying the same argument at $z^{-1}$ to $D^*$ gives
$H^2_{\psi,\gamma_K}(N_z^*)=0$.
Tate duality therefore gives $H^0_{\psi,\gamma_K}(N_z)=0$.

The choice of $z$ also ensures that the localization of
$M\otimes_\Lambda A_a$ at $z$ is torsion-free over a DVR,
and hence free. Consequently,
$$
\operatorname{rank}_{A_a}(M\otimes_\Lambda A_a)
=\dim_{L_z}H^1_{\psi,\gamma_K}(N_z)
=[K:\mathbb Q_p]\operatorname{rank}(D),
$$
where the last equality is the field Euler characteristic formula.
Compatibility under flat restriction identifies these Banach stage ranks
with the rank over $\Lambda$ (we use the fact that at every Banach stage a component of $A_a$ is a PID and torsion of $M$ over $\Lambda$ remains torsion over $A_a$, after this we can use structure theorem of modules over PID). The assertion holds on every
character component.
\end{proof}
\subsubsection{Returning to Iwasawa complex over general pseudoaffinoid algebras}

\begin{theorem}
    \label{iwasawacomplexisbounded}
    The Iwasawa complex $C_{\psi}^{\bullet}$ lies in $\text{D}^{-}_{\text{perf}}(\Lambda_{R, (0, \infty]}(\Gamma_K))$. 
\end{theorem}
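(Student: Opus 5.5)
The plan is to exploit the fact that $C_\psi^\bullet(D)$ is a two-term complex in degrees $1,2$, so it has cohomology only in degrees $1$ and $2$. This gives a distinguished triangle
$$
H^1_\psi(D)[-1]\longrightarrow C_\psi^\bullet(D)\longrightarrow H^2_\psi(D)[-2]\longrightarrow H^1_\psi(D)[0],
$$
obtained from the canonical truncations $\tau_{\leq 1}$ and $\tau_{\geq 2}$. Since $\mathbf D^-_{\mathrm{perf}}(\Lambda)$, with $\Lambda=\Lambda_{R,(0,\infty]}(\Gamma_K)$, is closed under cones and shifts (a cone of a map between bounded-above complexes of finite projectives is again such), it suffices to show that both $H^1_\psi(D)$ and $H^2_\psi(D)$ admit resolutions by finite projective $\Lambda$-modules. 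By Lemma \ref{iwasawadefnequiv} and Proposition \ref{iwasawann} we may work with $D_{(0,b']}\xrightarrow{\psi-1}D_{(0,b']}$ for small $b'$. Here the $\Lambda$-module structure comes from Lemma \ref{moduleaction} and the inverse limit over $a$, and it commutes with $\psi$.

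For $H^2_\psi(D)$, I would invoke Proposition 5.2 of \cite{bellovin2024cohomology}: the module $D/(\psi-1)D$ is finite over $R$, and its $\Lambda$-action is continuous, being the quotient of a continuous action. Lemma \ref{resolutionfiniteRmod} then gives a bounded-above resolution by finite free $\Lambda$-modules, so $H^2_\psi(D)[-2]\in\mathbf D^-_{\mathrm{perf}}$.

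For $H^1_\psi(D)=D^{\psi=1}_{(0,b']}$, Lemma \ref{rankind} shows it is finitely generated over $\Lambda$, and Theorem \ref{redinj} shows it is coadmissible, under the reducedness hypothesis of Remark \ref{reducednesshypo}. I would then build a resolution inductively. Choose a surjection $\Lambda^{n_0}\to H^1_\psi(D)$; its kernel is coadmissible, because coadmissible modules form an abelian category containing finite free modules. One then needs the kernel to be finitely generated again, and this is the main obstacle, since $\Lambda$ is not Noetherian. My first attempt would be a uniform generator bound. At each Banach stage $A_a$ the kernel restricts, by exactness of restriction on coadmissible modules, to the kernel of $A_a^{n_0}\to H^1_\psi(D)\otimes_\Lambda A_a$. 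Its fibres at maximal points are controlled by $n_0$ and by $\mathrm{Tor}_1^{A_a}(H^1\otimes A_a,\kappa)$, and the latter I would bound uniformly through Theorem \ref{pointbasechange}, using the field-case bound \eqref{tatedualitybound} together with the Tor terms of the fixed resolution of $H^2_\psi(D)$. Forster--Swan and Lemma \ref{uniformfinite} then give finite generation. Iterating this, with the same fibre-dimension bookkeeping carried into each degree, produces the resolution.

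A cleaner alternative, which I would try if the iteration becomes awkward, is to apply the inductive cone construction in the proof of Lemma \ref{uniformperfectcomplex} directly to $C_\psi^\bullet(D)$, using the stages $C_n=C_\psi^\bullet(D)\otimes^{\mathbf L}_\Lambda A_n$. That construction only needs, for each degree, a bound independent of $n$ and of the maximal point on the derived fibre dimensions. Theorem \ref{pointbasechange} identifies these derived fibres with Herr complexes of $N_z$, whose cohomology is bounded by $([K:\mathbb Q_p]+2)d$ uniformly. The inductive step there uses only top-degree cohomology and Lemma \ref{uniformfinite}, and so yields a bounded-above finite free representative without yet knowing bounded Tor-amplitude; this is exactly the statement we need.

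Finally, I would remove reducedness as in Remark \ref{reducednesshypo}. Filter $R$ by powers of its nilradical $\mathfrak N$. Each graded piece $\mathfrak N^i/\mathfrak N^{i+1}$ is a finite module over $R/\mathfrak N$, so the corresponding piece of $C_\psi^\bullet(D)$ lies in $\mathbf D^-_{\mathrm{perf}}$ by the dévissage of \cite[Lemmas 5.1.3--5.1.5]{kedlaya2014cohomology}, working over the reduced ring. Then $\mathbf D^-_{\mathrm{perf}}$ is closed under extensions via the resulting triangles, which completes the argument.
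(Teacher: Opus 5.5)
Your proposal is correct and follows essentially the same route as the paper: the truncation triangle, a resolution of $H^2_\psi(D)$ from its $R$-finiteness via Lemma~\ref{resolutionfiniteRmod}, and an inductive finite free resolution of $H^1_\psi(D)$ whose kernels are shown to be finitely generated by uniform fibre bounds (Theorem~\ref{pointbasechange}, the fixed resolution of $H^2_\psi(D)$, Forster--Swan and Lemma~\ref{uniformfinite}), followed by the nilradical d\'evissage of Remark~\ref{reducednesshypo}. One small correction to the bookkeeping: in the exact segment $\operatorname{Tor}_{i+2}(H^2_\psi(D)\otimes_\Lambda A_a,L_z)\to\operatorname{Tor}_i(H^1_\psi(D)\otimes_\Lambda A_a,L_z)\to H^{1-i}_{\psi,\gamma_K}(N_z)$, the term controlling $\operatorname{Tor}_1$ is $H^0_{\psi,\gamma_K}(N_z)$, of dimension at most $d$, rather than the $H^1$ bound \eqref{tatedualitybound}; for $i\geq 2$, only the $t_{i+2}$ contribution remains.
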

\begin{proof}
Put $\Lambda=\Lambda_{R,(0,\infty]}(\Gamma_K)$,
$A_a=\Lambda_{R,[\mu(a),\infty]}(\Gamma_K)$, and
$M=H^1_\psi(D)$. By Lemma~\ref{rankind}, $M$ is finitely
generated and coadmissible.
The module $H^2_\psi(D)$ is finite over $R$, so
Lemma~\ref{resolutionfiniteRmod} gives a resolution
$P_\bullet\to H^2_\psi(D)$ by finite projective
$\Lambda$-modules. For each $j$, choose $t_j$ such that $P_j$
is a direct summand of $\Lambda^{t_j}$.

For a maximal point $z$ of $A_a$, use its actual residue field
$L_z=A_a/\mathfrak n_{z,a}$ and put
$N_z=(D\widehat\otimes_R L_z)(\eta_z^{-1})$.
By Theorem~\ref{pointbasechange},
$$
\operatorname{Tor}^{A_a}_i
\left(H^j_\psi(D)\otimes_\Lambda A_a,L_z\right)
\Longrightarrow H^{j-i}_{\psi,\gamma_K}(N_z).
$$
The two nonzero cohomology rows give, for $i\geq1$, the exact segment
$$
\operatorname{Tor}^{A_a}_{i+2}
(H^2_\psi(D)\otimes_\Lambda A_a,L_z)
\longrightarrow\operatorname{Tor}^{A_a}_i(M\otimes_\Lambda A_a,L_z)
\longrightarrow H^{1-i}_{\psi,\gamma_K}(N_z).
$$
The last term vanishes for $i\geq2$, and has dimension at most
$d$ for $i=1$, where $d$ bounds the ranks of $D$.
Flatness of $A_a$ over $\Lambda$ and the chosen resolution give
$$
\dim_{L_z}\operatorname{Tor}^{A_a}_i(M\otimes_\Lambda A_a,L_z)
\leq t_{i+2}+\begin{cases}d,&i=1,\\0,&i\geq2.\end{cases}
$$
For each fixed $i$, this bound is independent of $a$ and $z$.

We construct finite free modules $F_n$ and coadmissible modules
$K_n$ inductively, where $F_n$ would give projective resolution of $M$. Put $K_0=M$. Given a finitely generated
coadmissible $K_n$, choose a surjection $F_n\twoheadrightarrow K_n$
with $F_n$ finite free, and let $K_{n+1}$ be its kernel.
The category of coadmissible modules is abelian, so $K_{n+1}$ is
coadmissible. At a Banach stage and a maximal point we have
$$
\operatorname{Tor}^{A_a}_1(K_n\otimes_\Lambda A_a,L_z)
\longrightarrow K_{n+1}\otimes_\Lambda L_z
\longrightarrow F_n\otimes_\Lambda L_z.
$$
Dimension shifting identifies the first term with
$\operatorname{Tor}^{A_a}_{n+1}(M\otimes_\Lambda A_a,L_z)$.
Thus the maximal-ideal fibres of $K_{n+1}\otimes_\Lambda A_a$
have uniformly bounded dimension. Forster--Swan, the uniform
bound on $\dim A_a$, and Lemma~\ref{uniformfinite} imply that
$K_{n+1}$ is finitely generated.

This constructs a resolution of $M$ by finite free
$\Lambda$-modules. Thus both $H^1_\psi(D)$ and $H^2_\psi(D)$
lie in $\mathbf D_{\mathrm{perf}}^{-}(\Lambda)$.
The truncation triangle
$$
H^1_\psi(D)[-1]\longrightarrow C_\psi^\bullet(D)
\longrightarrow H^2_\psi(D)[-2]\xrightarrow{+1}
$$
gives $C_\psi^\bullet(D)\in\mathbf D_{\mathrm{perf}}^{-}(\Lambda)$.
\end{proof}

Note that the above theorem is first proved for the case when $R$ is reduced. After that we use Remark \ref{reducednesshypo} to get the above theorem for the case when $R$ is not necessarily reduced.

\begin{lemma}(Local criteria of flatness)
\label{flatnesslocalcriteria}
Let $R$ be a Noetherian ring and $M$ a finitely generated $R$-module.
Then the following are equivalent:
\begin{enumerate}
\item $M$ is flat over $R$;
\item for every maximal ideal $\mathfrak p \subset R$, one has $\operatorname{Tor}_1^R(M, \kappa(\mathfrak p)) = 0$ where $\kappa(\mathfrak p) = R_{\mathfrak p}/\mathfrak p R_{\mathfrak p}$.
\end{enumerate}
\end{lemma}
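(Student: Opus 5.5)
The statement is the standard local criterion for flatness over a Noetherian ring, and I would prove it by reducing flatness to the vanishing of $\operatorname{Tor}_1$ against all finitely generated modules, then against cyclic modules, and finally localizing at maximal ideals.

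The implication (1)$\Rightarrow$(2) is immediate, since flatness of $M$ gives $\operatorname{Tor}_i^R(M,-)=0$ for all $i\geq1$. Note also that for a maximal ideal $\mathfrak p$ we have $\kappa(\mathfrak p)=R/\mathfrak p$, because $R/\mathfrak p$ is already a field.

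For (2)$\Rightarrow$(1) the argument has three steps.

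\textbf{Step 1: reduce to finitely generated test modules.} First I recall that $M$ is flat if and only if $\operatorname{Tor}_1^R(M,R/I)=0$ for every ideal $I$. By the long exact sequence, it therefore suffices to show $\operatorname{Tor}_1^R(M,N)=0$ for every finitely generated $R$-module $N$.

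\textbf{Step 2: localize.} Since $M$ is finitely generated and $R$ is Noetherian, $M$ has a resolution by finite free modules. Hence $\operatorname{Tor}_1^R(M,N)$ is finitely generated and commutes with localization:
$$\operatorname{Tor}_1^R(M,N)_{\mathfrak m}\cong\operatorname{Tor}_1^{R_{\mathfrak m}}(M_{\mathfrak m},N_{\mathfrak m}).$$
A module vanishes if all of its localizations at maximal ideals vanish. So we may replace $R$ by the Noetherian local ring $(R_{\mathfrak m},\mathfrak m)$. There the hypothesis becomes $\operatorname{Tor}_1^{R_{\mathfrak m}}(M_{\mathfrak m},\kappa(\mathfrak m))=0$, again by compatibility of Tor with localization.

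\textbf{Step 3: the local case.} The cleanest route avoids Krull intersection. Choose a minimal presentation
$$0\to K\to R_{\mathfrak m}^n\to M_{\mathfrak m}\to0,$$
with $n=\dim_{\kappa}M_{\mathfrak m}\otimes\kappa$, which is possible by Nakayama. Tensoring with $\kappa=\kappa(\mathfrak m)$ and using $\operatorname{Tor}_1(M_{\mathfrak m},\kappa)=0$ gives the exact sequence
$$0\to K\otimes\kappa\to\kappa^n\to M_{\mathfrak m}\otimes\kappa\to0.$$
The last map is an isomorphism by minimality, so $K\otimes\kappa=0$. Since $R_{\mathfrak m}$ is Noetherian, $K$ is finitely generated, and Nakayama gives $K=0$. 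Thus $M_{\mathfrak m}$ is free, hence flat. Flatness of all localizations $M_{\mathfrak m}$ then implies flatness of $M$, which closes the argument.

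The only step needing care is the Nakayama argument, which requires $K$ to be finitely generated. This is exactly where the Noetherian hypothesis and the finite generation of $M$ are used. Alternatively, one could cite \cite[Tag 00MK]{stacks-project} directly.
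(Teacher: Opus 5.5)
Your argument is correct and is the standard one: Nakayama applied to a minimal presentation of $M_{\mathfrak m}$ shows that every localization at a maximal ideal is free, and flatness is detected on these localizations (Steps 1--2 are redundant once Step 3 is in place). The paper gives no proof of this lemma and simply uses it as the standard local flatness criterion, so there is no different route to compare against.
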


\begin{theorem}
\label{perfectiwa}
        Let $D = D_{(0, b]}$ be a $(\varphi, \Gamma_K)$ module over $\Lambda_{R, (0, b], K}$. 
        \begin{enumerate}
           
            \item The Iwasawa complex $C_{\psi}^{\bullet}(D) \in \textbf{D}_{\text{perf}}^{[0, 2]}(\Lambda_{R, (0, \infty]}(\Gamma_K))$.
            \item  The first cohomology group of the Iwasawa complex is finitely generated over $\Lambda_{R, (0, \infty]}(\Gamma_K)$ of rank $[K: \mathbb{Q}_p] \text{rk}(D)$.
            \item (Euler Characteristic formula) We have $\chi_{\Lambda_{R, (0, \infty]}(\Gamma_K)}(C_{\psi}^{\bullet}(D)) = -[K: \mathbb{Q}_p] \text{rk}(D)$. $H^1(C_{\psi}^{\bullet})$ has generic rank $[K:\mathbb{Q}_p] \text{rank}(D)$ and $H^2(C_{\psi}^{\bullet})$ is torsion. 
        \end{enumerate}
        
\end{theorem}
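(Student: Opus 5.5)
The plan is to assemble the three parts from Theorem~\ref{iwasawacomplexisbounded}, Lemma~\ref{uniformperfectcomplex}, Lemma~\ref{rankind} and Theorem~\ref{field}. We work first with $R$ reduced and remove this hypothesis at the end via Remark~\ref{reducednesshypo}. Throughout, put $\Lambda=\Lambda_{R,(0,\infty]}(\Gamma_K)$ and $A_n=\Lambda_{R,[\mu(a_n),\infty]}(\Gamma_K)$ for a decreasing admissible sequence $a_n\to0$. By Proposition~\ref{iwasawann} and Lemma~\ref{iwasawadefnequiv}, $C_\psi^\bullet(D)$ is computed at a single small radius.

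For (1), set $C_n=C_\psi^\bullet(D)\otimes^{\mathbf L}_\Lambda A_n$. Theorem~\ref{iwasawacomplexisbounded} places $C_\psi^\bullet(D)$ in $\mathbf D^-_{\mathrm{perf}}(\Lambda)$, so each $C_n$ lies in $\mathbf D^-_{\mathrm{perf}}(A_n)$, and the $C_n$ are compatible under the flat transition maps $A_{n+1}\to A_n$. To upgrade to a bounded perfect complex we bound Tor-amplitude using fibres at maximal points. For $\mathfrak m\subset A_n$ with residue field $L_z$, Theorem~\ref{pointbasechange} gives
\[
C_n\otimes^{\mathbf L}_{A_n}L_z\simeq C^\bullet_{\psi,\gamma_K}(N_z).
\]
The right-hand side lives in degrees $[0,2]$, with fibre dimensions bounded by $d$, $([K:\mathbb Q_p]+2)d$ and $d$ respectively, using the bound \eqref{tatedualitybound}, the field Euler characteristic and Tate duality. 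Since $A_n$ is Noetherian and $C_n$ is pseudo-coherent, vanishing of all derived fibres outside $[0,2]$ shows that $C_n$ is perfect of Tor-amplitude $[0,2]$. We then apply Lemma~\ref{uniformperfectcomplex} with $e=2$ and $b_j=([K:\mathbb Q_p]+2)d$. Since $C_\psi^\bullet(D)\simeq\mathbf R\varprojlim_nC_n$ by coadmissibility of its cohomology (Theorem~\ref{redinj}), we conclude that $C_\psi^\bullet(D)$ is represented by finite projective $\Lambda$-modules in degrees $[0,2]$.

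\textbf{Main obstacle: removing reducedness.} Filter $R$ by powers of its nilradical $\mathfrak n$. Each graded piece $\mathfrak n^k/\mathfrak n^{k+1}$ is a finite module over the reduced ring $R/\mathfrak n$. Following \cite[Lemmas~5.1.3--5.1.5]{kedlaya2014cohomology}, one must check that the Iwasawa complex of $D\otimes_R(\mathfrak n^k/\mathfrak n^{k+1})$ is perfect over $\Lambda$. This uses finite presentation over the reduced quotient together with $R$-flatness of $D$ (Lemma~\ref{robbaflat}). Perfectness in $[0,2]$ is then stable under extensions in the resulting exact triangles. The fibre computation above does not use reducedness, so the amplitude bound survives; the only point requiring reducedness was the coadmissibility input.

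For (2), finite generation is Lemma~\ref{rankind}. For the rank, note that $\Lambda\to L_z$ factors through $R\to\kappa(x)$ for a maximal point $x$ of $R$, and perfect complexes commute with this base change. On each irreducible component we may therefore compute generic ranks over the field $\kappa(x)$, where Theorem~\ref{field} gives $\operatorname{rank}H^1=[K:\mathbb Q_p]\operatorname{rank}(D)$ and $\operatorname{rank}H^2=0$. Because $C_\psi^\bullet(D)$ is perfect, its Euler characteristic is locally constant. Since $H^0=0$ (the complex sits in degrees $1,2$), this gives
\[
\chi=\operatorname{rank}H^0-\operatorname{rank}H^1+\operatorname{rank}H^2=-[K:\mathbb Q_p]\operatorname{rk}(D).
\]
This proves (3), and generic rank of $H^1$ over $\Lambda$ equals the field value. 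Finally, $H^2$ is $R$-finite by \cite[Proposition~5.2]{bellovin2024cohomology}, hence annihilated by a distinguished polynomial as in Lemma~\ref{resolutionfiniteRmod}, and is therefore torsion.
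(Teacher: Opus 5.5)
Your argument for (1) when $R$ is reduced is the paper's argument in different packaging. The only real input is that the point fibres $C^\bullet_{\psi,\gamma_K}(N_z)$ of Theorem~\ref{pointbasechange} vanish outside $[0,2]$. Lemma~\ref{uniformperfectcomplex} contains exactly the truncation to $[Q\to N^1\to N^2]$, the local flatness criterion, and the ideal-criterion passage to $\Lambda$ that the paper carries out by hand.

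The genuine gap is in how you remove reducedness. You propose to show that $C^\bullet_\psi(D\otimes_R\mathfrak n^k/\mathfrak n^{k+1})$ is perfect over $\Lambda=\Lambda_{R,(0,\infty]}(\Gamma_K)$, and then use that perfectness is stable under extensions. These complexes are killed by $\mathfrak n$, and such complexes are essentially never perfect over $\Lambda$. For example, let $R=R'[\epsilon]/(\epsilon^2)$ and $K=C^\bullet_\psi(D\otimes_RR')$. Since $\Lambda$ is $R$-flat and $R'\otimes^{\mathbf L}_RR'\simeq\bigoplus_{i\geq0}R'[i]$, one finds $K\otimes^{\mathbf L}_{\Lambda}\Lambda_{R',(0,\infty]}(\Gamma_K)\simeq\bigoplus_{i\geq0}K[i]$. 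This is unbounded below unless $K\simeq0$, whereas a perfect complex has bounded derived fibres. Moreover, $\mathfrak n^k/\mathfrak n^{k+1}$ need not be projective over $R/\mathfrak n$. So $D\otimes_R\mathfrak n^k/\mathfrak n^{k+1}$ is not a projective $(\varphi,\Gamma_K)$-module over a reduced ring, and the reduced case does not even apply to it. The triangle argument therefore has nothing to start from.

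What must be carried through the d\'evissage is coadmissibility of the cohomology, not perfectness. Since $D$ is $R$-flat, $M\mapsto C^\bullet_\psi(D\otimes_RM)$ is exact on finite $R$-modules. Refine the filtration so that its subquotients are $R/\mathfrak p$ with $\mathfrak p$ prime. Then $D\otimes_RR/\mathfrak p$ is finite projective over the Robba ring of the domain $R/\mathfrak p$. Theorem~\ref{redinj} gives a coadmissible $H^1$, and \cite[Proposition~5.2]{bellovin2024cohomology} gives an $R$-finite $H^2$, which is coadmissible by Lemma~\ref{resolutionfiniteRmod}. These complexes have no $H^0$, and coadmissible $\Lambda$-modules are closed under kernels, cokernels and extensions. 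The long exact sequences therefore give coadmissibility of $H^1_\psi(D)$ for arbitrary $R$. After that, Lemma~\ref{rankind}, Theorem~\ref{iwasawacomplexisbounded} and your fibre argument go through unchanged, since, as you rightly note, none of them uses reducedness.

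A smaller point concerns (2)--(3). Derived base change of a perfect complex does not commute with taking individual cohomology modules, so ``computing generic ranks over $\kappa(x)$'' is not justified as stated. It is cleaner to follow the paper: compute $\chi$, which is compatible with derived base change, at a closed point via the Herr Euler characteristic formula. Then combine $H^0=0$ with the torsionness of $H^2$, which you do establish, to obtain $\operatorname{rank}H^1=[K:\mathbb Q_p]\operatorname{rk}(D)$.
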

\begin{proof}
    \begin{enumerate}
    \item Put $\Lambda=\Lambda_{R,(0,\infty]}(\Gamma_K)$. By Theorem~\ref{iwasawacomplexisbounded}, choose a
bounded-above complex $N^\bullet$ of finite projective
$\Lambda$-modules representing $C_\psi^\bullet(D)$, with
$N^i=0$ for $i>2$. Put
$$
Q=\operatorname{coker}(N^{-1}\longrightarrow N^0).
$$
Since there is no negative cohomology, the complex is
quasi-isomorphic to $[Q\longrightarrow N^1\longrightarrow N^2]$.
The module $Q$ is coadmissible and finitely presented.

For $A_a=\Lambda_{R,[\mu(a),\infty]}(\Gamma_K)$, put
$Q_a=Q\otimes_\Lambda A_a$.
Exactness of passage to Banach stages gives a representative
$[Q_a\longrightarrow N_a^1\longrightarrow N_a^2]$.
For each maximal point $z$ of $A_a$, let
$L_z=A_a/\mathfrak n_{z,a}$. The triangle with the projective
subcomplex $[N_a^1\longrightarrow N_a^2]$ gives
$$
\operatorname{Tor}^{A_a}_1(Q_a,L_z)
\cong H^{-1}\left(C_\psi^\bullet(D)
\otimes_\Lambda^{\mathbf L}L_z\right).
$$
By Theorem~\ref{pointbasechange}, the right-hand side is
$H^{-1}_{\psi,\gamma_K}(N_z)=0$.
The local criterion for flatness therefore implies that $Q_a$
is finite projective over $A_a$.
These projective modules are compatible under restriction.
At a fixed maximal point belonging to two stages their fibres
are both canonically $Q\otimes_\Lambda L_z$, so their ranks
agree at that point.

To deduce projectivity over $\Lambda$, let $I\subset\Lambda$
be a finitely generated ideal. It is coadmissible.
A finite presentation $\Lambda^s\to\Lambda^r\to Q\to0$ gives
$$
I\otimes_\Lambda Q=\operatorname{coker}(I^s\to I^r),
$$
so $I\otimes_\Lambda Q$ is coadmissible as well.
At each Banach stage, the map $I\otimes_\Lambda Q\to Q$
becomes
$$
(IA_a)\otimes_{A_a}Q_a\longrightarrow Q_a,
$$
which is injective since $Q_a$ is flat.
Its coadmissible kernel has zero restriction to every stage,
and hence is zero. Thus $I\otimes_\Lambda Q\to Q$ is injective
for every finitely generated ideal $I$.
The ideal criterion for flatness implies that $Q$ is flat.
Since $Q$ is finitely presented, it is finite projective.
This proves the asserted perfect amplitude $[0,2]$.

 \item Finite generation is explained in the Lemma \ref{rankind}. It remains to show the rank. This is explained in the next part.
    \item Since the Euler characteristic can be tested at a closed point on each connected component, the first claim follows from Euler characteristic formula for Galois cohomology as proved in \cite[Corollary~3.14]{bellovin2024cohomology}. For the second claim it is enough to prove it by base changing to every point in $\textrm{Max}(R)$ which is dense in $\textrm{Spa}(R, R_0)$. A point in $\textrm{Max}(R)$ can either lie in generic fiber or special fiber, which would then correspond to proving the result when $R$ is a field extension of $\mathbb{Q}_p$ or $\mathbb{F}_p((u))$, which is done in the preceding section, Theorem \ref{field}.

\end{enumerate}
\end{proof}

Keeping in mind Theorem \ref{basechange}, the following Corollary is an immediate consequence.
\begin{corollary}
\label{perf-galois}
    Let $D$ be a $(\varphi,\Gamma_K)$-module over $\Lambda_{R,(0,b],K}$. 
Then the complex $C_{\varphi,\Gamma}^\bullet(D)$ lies in $\textbf{D}_{\text{perf}}^{[0, 2]}(R)$.
\end{corollary}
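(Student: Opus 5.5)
The plan is to deduce the statement from Theorem~\ref{perfectiwa} and the base-change theorem, following \cite[Proposition~4.3.x]{kedlaya2014cohomology}. Put $\Lambda=\Lambda_{R,(0,\infty]}(\Gamma_K)$ and take the trivial character $\eta=1$. In this case $I_\eta$ is the augmentation ideal, $\kappa_\eta=D_\lambda$ and $S_\eta=R$. Theorem~\ref{basechange} then gives a natural quasi-isomorphism
\[
C_\psi^\bullet(D)\otimes^{\mathbf L}_{\Lambda}R\simeq C^\bullet_{\psi,\gamma_K}(D),
\]
and the latter is quasi-isomorphic to $C^\bullet_{\varphi,\Gamma}(D)$ by the standard comparison recalled at the start of the section. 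If $\Delta_K\neq 1$, the complex $C^\bullet_{\psi,\gamma_K}$ computes the $\Delta_K$-invariant (idempotent $e_1$) part. This is harmless because $|\Delta_K|$ is invertible, so taking $\Delta_K$-invariants is exact and agrees with the Herr complex for the full group $\Gamma_K$.

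Next, Theorem~\ref{perfectiwa}(1) represents $C_\psi^\bullet(D)$ by a complex $N^\bullet=[N^0\to N^1\to N^2]$ of finite projective $\Lambda$-modules. Derived tensor product with $R$ is then computed termwise by $N^i\otimes_\Lambda R$. Each of these is a finite projective $R$-module, since it is a direct summand of a finite free $R^{t}$. Hence $C^\bullet_{\varphi,\Gamma}(D)$ is quasi-isomorphic to a complex of finite projective $R$-modules in degrees $[0,2]$, which is exactly the claim.

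The main thing to check is that Theorem~\ref{perfectiwa} applies for the $b$ in hand and that the derived tensor product makes sense. For the first point, Proposition~\ref{iwasawann} and Lemma~\ref{iwasawadefnequiv} show that shrinking $b$ does not change either complex, and the Herr complex is likewise a colimit over $b$. For the second point, the quasi-isomorphism of Theorem~\ref{basechange} is stated at the level of $\mathbf L\otimes$, so a projective representative suffices and no flatness of $R$ over $\Lambda$ is needed. The non-reduced case is already covered, because Theorem~\ref{perfectiwa} holds for arbitrary $R$ by Remark~\ref{reducednesshypo}.
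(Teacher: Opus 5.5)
Your proposal is correct and matches the paper's proof: specialize Theorem~\ref{basechange} at the trivial character, so that $\kappa_\eta=D_\lambda$ and $S_\eta=R$, and then tensor down the finite projective $[0,2]$ representative supplied by Theorem~\ref{perfectiwa}. Your remarks on the $\Delta_K$-invariants, on independence of $b$, and on computing the derived tensor product termwise spell out what the paper's one-line argument leaves implicit.
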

\begin{proof}

Take the trivial character
$\eta:\Gamma_K\longrightarrow\mathbb Q_p^\times$ in
Theorem~\ref{basechange}. Then
$\kappa_\eta=D_\lambda$ and $S_\eta=R$, so the assertion
follows from Theorem \ref{perfectiwa}.
\end{proof}

We note that the theorem above is already proved in \cite{bellovin2024cohomology}, Corollary 3.11, but our proof is different, though inspired by an idea developed by Kedlaya-Liu \cite{kedlaya2016finiteness} and further used by Bellovin \cite{bellovin2024cohomology}. We use the results on Iwasawa cohomology to prove this assertion.
\begin{theorem}

    Let $D$ be a $(\varphi, \Gamma_K)$ module over $\Lambda_{R, (0, b], K}$. Suppose $R \to R'$ is a map of  pseudoaffinoid algebras.
    
The canonical morphism:
        $$C_{\psi}^{\bullet}(D) \otimes^{\textbf{L}}_{\Lambda_{R, (0, \infty]}(\Gamma_K)}\Lambda_{R', (0, \infty]}(\Gamma_K) \to C_{\psi}^{\bullet}(D \hat{\otimes} R')$$ is an isomorphism.
   
    \end{theorem}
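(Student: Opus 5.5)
The plan is to reduce the base-change statement to the coefficient-flatness arguments already used in the proofs of Theorem~\ref{basechange} and Theorem~\ref{pointbasechange}, with the map $R\to R'$ now arbitrary rather than finite. Put $\Lambda=\Lambda_{R,(0,\infty]}(\Gamma_K)$ and $\Lambda'=\Lambda_{R',(0,\infty]}(\Gamma_K)$. The first step is to identify $\Lambda'\cong\Lambda\widehat\otimes_R R'$. This is checked at the Banach stages using the integral presentation $R_0[\Delta_K][[T]]\langle T^{e(s)}/u\rangle$: completed tensoring with a ring of definition $R'_0$ replaces the coefficients and leaves the growth condition unchanged, exactly as in the proof of Theorem~\ref{basechange}. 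The same holds for $\Lambda_{R,[a,b],K}\widehat\otimes_R R'\cong\Lambda_{R',[a,b],K}$. Consequently $D\widehat\otimes R'\cong D\otimes_{\Lambda_{R,(0,b],K}}\Lambda_{R',(0,b],K}$, and this is a $(\varphi,\Gamma_K)$-module over $R'$. By construction, the canonical morphism is the map of two-term complexes induced by $D\to D\widehat\otimes R'$.

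The second step uses perfectness. By Theorem~\ref{perfectiwa}, $C_\psi^\bullet(D)$ is represented by a complex $P^\bullet$ of finite projective $\Lambda$-modules in degrees $[0,2]$, and the same holds for $D\widehat\otimes R'$ over $\Lambda'$. The left side is therefore $P^\bullet\otimes_\Lambda\Lambda'$, which is again perfect. A morphism of perfect complexes over the quasi-Stein algebra $\Lambda'$ is a quasi-isomorphism once it becomes one after $\otimes^{\mathbf L}_{\Lambda'}A'_a$ for every Banach stage, because cohomology is coadmissible and restriction is exact. After that base change, it suffices to check the claim after $\otimes^{\mathbf L}L_z$ at every maximal point $z$ of $A'_a=\Lambda_{R',[\mu(a),\infty]}(\Gamma_K)$. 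Both sides are perfect over the Noetherian ring $A'_a$, and a morphism of perfect complexes whose cone has acyclic derived fibres at all maximal ideals is a quasi-isomorphism by Nakayama applied to the top cohomology of the cone. The radii here need care: one uses that $\mu(a)$ depends only on $a$ and $K$, and that $b$ can be chosen uniformly for $D$ and $D\widehat\otimes R'$.

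The third step, and the heart of the argument, is the fibre comparison. Fix $z$ with residue field $L_z$ and character $\eta_z$. The point $z$ composed with $\Lambda\to\Lambda'$ gives a point of $\Lambda$ with the same residue field, because $L_z$ is finite over a residue field of $R'$, which receives $R$. By associativity of derived tensor products,
\[
\bigl(C_\psi^\bullet(D)\otimes^{\mathbf L}_\Lambda\Lambda'\bigr)\otimes^{\mathbf L}_{\Lambda'}L_z\simeq C_\psi^\bullet(D)\otimes^{\mathbf L}_\Lambda L_z .
\]
Theorem~\ref{pointbasechange}, or rather its proof, which only uses that $L_z$ is a finite extension of a residue field of $R$ receiving a character, identifies this with $C^\bullet_{\psi,\gamma_K}\bigl((D\widehat\otimes_R L_z)(\eta_z^{-1})\bigr)$. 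Applied to $D\widehat\otimes R'$ over $R'$, the same theorem gives $C^\bullet_{\psi,\gamma_K}\bigl(((D\widehat\otimes R')\widehat\otimes_{R'}L_z)(\eta_z^{-1})\bigr)$, and the two coefficient modules agree canonically. It remains to check that these identifications are compatible with the canonical morphism, which is naturality of the resolution $0\to P_z\xrightarrow{\gamma_K-c_z}P_z\to L_z\to0$. I expect this to be the main obstacle: one must verify that Theorem~\ref{pointbasechange} applies to a point pulled back from $R'$ even though it is not a maximal point of a Banach stage of $\Lambda$, and that the coefficient identifications are natural. If necessary, I would handle this by re-running the proof of Theorem~\ref{pointbasechange} directly with $L_z$ viewed as a finite $R$-algebra through $R\to R'\to L_z$, using only $R$-flatness of $D$ and of $\Lambda$ (Lemma~\ref{robbaflat}). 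The non-reduced case needs no separate treatment, since Theorem~\ref{perfectiwa} already holds for arbitrary $R$.
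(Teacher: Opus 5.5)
Your proposal is correct and is essentially the argument the paper invokes when it defers to the proof of Theorem~4.4.3(1) of \cite{kedlaya2014cohomology}: perfectness of both sides, passage to Banach stages via coadmissibility, Nakayama on derived fibres at maximal points, and finite coefficient base change along $R\to L_z$ using $R$-flatness. One small correction: the point of $A_a$ obtained by pulling back $z$ can have a residue field strictly smaller than $L_z$, since it is generated by the image of $R$ and the character values. So your fallback of treating $L_z$ directly as a finite $R$-algebra is the right way to run the fibre comparison, and then both fibres are literally $C_\psi^\bullet(D\widehat\otimes_R L_z)\otimes^{\mathbf L}_{\Lambda_{L_z,(0,\infty]}(\Gamma_K)}L_z$, which makes the naturality check routine.
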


\begin{proof}
    The proof of Theorem 4.4.3(1), \cite{kedlaya2014cohomology} proves such an assertion using perfectness of Iwasawa complex and pure commutative algebra along with the theory of coadmissible modules over quasi-Stein algebras. There is no mixed characteristic fact, or invertibility of $p$ used there. The proof is purely algebraic. Hence our assertion follows similarly.
\end{proof}

\subsection{Cyclotomic deformation}
\begin{definition}
\label{cyclotomicdeformation}
For $n\geq 1$, let
$$
\Lambda_n=\Lambda_{D_\lambda,[1/p^n,\infty]}(\Gamma_K),
\qquad
\Lambda_n^0=\Lambda_{D_\lambda^0,[1/p^n,\infty]}(\Gamma_K).
$$
Let $X_n$ be the associated pseudoaffinoid space, and put
$X=\bigcup_{n\geq 1}X_n$.
Consider the rank one $(\varphi,\Gamma_K)$-module
$$
\mathbf{Dfm}_n^b
=\Lambda_{\Lambda_n,(0,b],K}\mathbf{e},
$$
where $\varphi(\mathbf{e})=\mathbf{e}$ and
$\gamma(\mathbf{e})=[\gamma^{-1}]\mathbf{e}$ for
$\gamma\in\Gamma_K$.
Let $\mathbf{Dfm}_n$ be its base change to
$\Lambda_{\Lambda_n,\mathrm{rig},K}$.
These modules are compatible under restriction in $n$, and define a
rank one $(\varphi,\Gamma_K)$-module $\mathbf{Dfm}$ over the relative
Robba ring of $X$.

If $x:\Lambda_n\to L$ is a continuous point, put
$\eta_x(\gamma)=x([\gamma])$.
Then
$$
\mathbf{Dfm}_n\widehat{\otimes}_{\Lambda_n,x}L
\cong
\Lambda_{L,\mathrm{rig},K}(\eta_x^{-1}),
$$
where the tensor product is formed on closed annuli before passing to
the relative Robba ring.

For $M$ a $(\varphi,\Gamma_K)$-module over
$\Lambda_{R,\mathrm{rig},K}$, let
$$
A_n=\Lambda_{R,[1/p^n,\infty]}(\Gamma_K),
\qquad
A=\Lambda_{R,(0,\infty]}(\Gamma_K).
$$
Consider the compatible family
$$
M_n=
M\widehat{\otimes}_{\Lambda_{D_\lambda,\mathrm{rig},K}}
\mathbf{Dfm}_n
$$
over
$$
X_R=
X\times_{\mathrm{Spa}(D_\lambda,D_\lambda^0)}
\mathrm{Spa}(R,R_0).
$$
We write
$$
\mathbf{Dfm}(M)=\varprojlim_n M_n
$$
for its cyclotomic deformation.
More precisely, on a closed annulus $[a,b]$ its underlying module is
$$
M_{n,[a,b]}=A_n\widehat{\otimes}_R M_{[a,b]},
$$
with diagonal action
$$
\gamma(\lambda\otimes m)
=[\gamma^{-1}]\lambda\otimes\gamma(m).
$$
We put
$$
M_n^b=\varprojlim_{a\to 0}M_{n,[a,b]},
\qquad
M_n=\varinjlim_{b\to 0}M_n^b.
$$
Thus the inverse limit in the annular parameter $a$ is retained in
the definition.
\end{definition}

\subsection{Duality of Iwasawa cohomology}
Let $\Delta_K$ be the torsion subgroup of $\Gamma_K$, and choose a
topological generator $\gamma_K$ of its torsion-free factor.
We assume that $|\Delta_K|$ is invertible in $R$, as is automatic when
$p\neq 2$.
Consider the involution $\iota$ given by
$\iota([\gamma])=[\gamma^{-1}]$.
For a distribution module $N$, we write $N^\iota$ for the same
underlying abelian group with scalar multiplication
$a\cdot_{N^\iota}m=\iota(a)m$.
We use $M^*$ for the Cartier dual $M^\vee(1)$.

\begin{lemma}
\label{cyclotomicgammaexact}
Let $M$ be a $(\varphi,\Gamma_K)$-module over
$\Lambda_{R,\mathrm{rig},K}$, and choose a model over
$\Lambda_{R,(0,b_0],K}$.
For $0<b\leq b_0/p$, put
$$
N^b=\varprojlim_n M_n^b
=\varprojlim_n\varprojlim_{a\to 0}
\left(A_n\widehat{\otimes}_R M_{[a,b]}\right).
$$
Then the operator $\gamma_K-1$ for the diagonal action is injective
on $N^b$. Moreover, there is a natural strict exact sequence
$$
0\longrightarrow (N^b)^{\Delta_K}
\xrightarrow{\gamma_K-1}(N^b)^{\Delta_K}
\xrightarrow{q_b}M_{(0,b]}
\longrightarrow 0,
$$
where $A$ acts on the first two terms through the coefficient factor (i.e. on the factor on tensor product corresponding to $A_n$),
on the last term through the given $\Gamma_K$-action, and
$$
q_b([\delta]\otimes m)=\delta(m).
$$
The sequence is compatible with restriction in $b$ and with $\psi$.
\end{lemma}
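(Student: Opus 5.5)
The plan is to work first at a fixed Banach stage $n$ and a fixed closed annulus $[a,b]$, and only then pass to the limits in $a$ and $n$. All the operators are compatible with restriction, and the limits are of coadmissible or Fréchet type, so this should suffice. Since $|\Delta_K|$ is invertible, taking $\Delta_K$-invariants is exact. Explicitly, $(A_n\widehat\otimes_R M_{[a,b]})^{\Delta_K}$ is the image of the idempotent $e=|\Delta_K|^{-1}\sum_\delta\delta$ for the diagonal action. So we may reduce to the torsion-free part generated by $\gamma_K$, keeping the convention that $A$ acts on the coefficient factor.

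The key step is an untwisting automorphism. On $A_n\widehat\otimes_R M_{[a,b]}$, define $\Theta(\lambda\otimes m)$ by letting the group-like element act on $m$. Concretely, $\Theta([\gamma]\otimes m)=[\gamma]\otimes\gamma(m)$, extended by continuity using the action of $A_n$ on $M_{[a,b]}$ from Lemma \ref{moduleaction}. This requires $1/p^n$ to be compatible with $\mu(a)$; we handle this by choosing $a$ small relative to $n$, and the inverse limit over $a$ is cofinal anyway. With $\Theta$ in hand, one checks the following.
\begin{itemize}
\item Conjugating the diagonal action by $\Theta$ gives the action $[\gamma^{-1}]\otimes 1$ of the coefficient factor alone.
\item $\Theta$ commutes with $\psi$, which acts on the $M$ factor only.
\item $\Theta$ is $A$-linear for the coefficient action.
\end{itemize}
After applying $\Theta$, the diagonal operator $\gamma_K-1$ becomes multiplication by $[\gamma_K^{-1}]-1$ on the coefficient factor. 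This element is $-\gamma_K^{-1}T$ with $T=[\gamma_K]-1$, and $T$ is a non-zero-divisor in $A_n$. Flatness of $M_{[a,b]}$ over $R$ (it is projective over the flat ring of Lemma \ref{robbaflat}) then gives injectivity. The same argument gives the cokernel $A_n/T\,\widehat\otimes_R M_{[a,b]}\cong M_{[a,b]}$, which is evaluation at the trivial character.

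Next we trace $q_b$ back through $\Theta$. We have $q_b([\delta]\otimes m)=\delta(m)$, which is the augmentation applied after $\Theta$. So the exact sequence at each stage is the untwisted sequence $0\to A_n\xrightarrow{T}A_n\to R\to 0$ tensored with $M_{[a,b]}$. Lemma \ref{2214} ensures the action series converge, and the open mapping theorem makes the maps strict. For the $A$-linearity of $q_b$, with $A$ acting through $\Gamma_K$ on the target, we argue as follows. The identity $q_b([\gamma]\lambda\otimes m)=\gamma\, q_b(\lambda\otimes m)$ holds on group-like elements, and it extends by continuity and density of $R[\Gamma_K]$.

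Finally, we pass to $\varprojlim_a$ and then $\varprojlim_n$. At fixed $n$, the stagewise sequences form compatible systems of Banach modules with surjective, dense transition maps. Mittag-Leffler and the vanishing of $\varprojlim^1$ for such quasi-Stein-type systems (as in Schneider–Teitelbaum) preserve exactness. The $n$-limit is handled the same way, since the third term $M_{[a,b]}$ does not depend on $n$. Compatibility with restriction in $b$ and with $\psi$ is inherited from the stagewise statements, because $\Theta$, the differential and $q_b$ all commute with $\psi$. The hypothesis $b\le b_0/p$ is exactly what is needed for $\psi$ to land in the model. I expect the main obstacle to be rigorously defining $\Theta$ as a bounded, invertible map on the completed tensor product, with uniform control over $a$. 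In particular, we must make sure the range of radii on which $A_n$ acts on $M_{[a,b]}$ can be chosen cofinally, so that the limits commute.
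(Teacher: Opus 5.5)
Your mechanism is the paper's. You untwist the diagonal action by $\Theta$ (your $\Theta$ is the paper's $\Theta^{-1}$), so that $\gamma_K-1$ becomes multiplication by $[\gamma_K^{-1}]-1=-[\gamma_K^{-1}]T$ on the coefficient factor, and $q_b$ becomes the augmentation.

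The gap is in the radius bookkeeping. You flag it as the main obstacle, but you resolve it in the wrong direction. The ring $\Lambda_{R,[s,\infty]}(\Gamma_K)$ shrinks as $s\to0$. So $A_n=\Lambda_{R,[1/p^n,\infty]}(\Gamma_K)$ acts on $M_{[a,b]}$ only for $n\geq n_0(a)$ (roughly, when $1/p^n\leq\mu(a)$), and $n_0(a)\to\infty$ as $a\to0$. In general no single $A_n$ acts on $M_{(0,b]}$; only $A$ does. Thus $n$ must be large relative to $a$, not $a$ small relative to $n$.

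Consequently, at a fixed $n$ the following are not defined on $A_n\widehat\otimes_R M_{[a,b]}$ once $a$ is small, hence not on $M_n^b$:
\begin{itemize}
\item your $\Theta$;
\item the map $q_b(\lambda\otimes m)=\lambda m$;
\item the stagewise sequence ``$0\to A_n\xrightarrow{T}A_n\to R\to0$ tensored with $M_{[a,b]}$''.
\end{itemize}
Your plan is to prove exactness at fixed $n$, pass to $\varprojlim_a$, and then handle the $n$-limit ``the same way''. That cannot be carried out, because the admissible $a$ at fixed $n$ are bounded below and so are not cofinal. This is exactly why the lemma is stated for $N^b$ rather than for $M_n^b$.

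The missing step is to define everything only on $N^b$, through the cofinal pairs $(n,a)$ with $n\geq n_0(a)$. The paper defines the $(n,[a,b])$-component of $\Theta(x)$ by applying $\Theta_{m,[a,b]}$ with $m\geq\max\{n,n_0(a)\}$ and restricting to level $n$; this is independent of $m$. It then argues directly on $N^b$:
\begin{itemize}
\item unique $T$-expansions give injectivity of $T$, and continuity of division by $T$ on elements with zero constant term;
\item the identity $q_b=(\mathrm{aug}\otimes1)\circ\Theta^{-1}$ identifies $\ker q_b$ with $(\gamma_K-1)(N^b)^{\Delta_K}$;
\item the continuous section $s_b(m)=\Theta(e_{\Delta_K}\otimes m)$ and an explicit continuous inverse of $\gamma_K-1$ onto $\ker q_b$ give strictness, with no open mapping theorem or $\varprojlim^1$ argument.
\end{itemize}
Alternatively, your stagewise sequences do exist along a diagonal sequence $(n_k,a_k)$ with $n_k\geq n_0(a_k)$. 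You could pass to the limit along that system, but you would then need to justify exactness and strictness of the limit.

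Two smaller points. First, $\Theta$ is not $A$-linear for the coefficient action on both sides: it intertwines $[\delta]\otimes1$ with $[\delta]\otimes\delta$. This is harmless, since you check linearity of $q_b$ directly. Second, injectivity of $T$ on a completed tensor product does not follow from $R$-flatness of $M_{[a,b]}$, which only controls algebraic tensor products. It does follow from the unique $T$-expansion, as in the paper.
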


\begin{proof}
Fix a closed annulus $[a,b]$.
The continuous distribution action on $M_{[a,b]}$ extends to an action
of $A_n$ for all sufficiently large $n$, where the bound may depend
on $[a,b]$.
Indeed, continuity of the action and the Banach norm on $M_{[a,b]}$
give some $n_0$ and $C>0$ such that
$$
\|\lambda m\|_{[a,b]}
\leq C\|\lambda\|_{n_0}\|m\|_{[a,b]}
\qquad
(\lambda\in A,\ m\in M_{[a,b]}).
$$
Density of $A$ in $A_{n_0}$ gives the extension to $A_{n_0}$, and
restriction $A_n\to A_{n_0}$ gives the extension for $n\geq n_0$.

For such $n$, define on the dense submodule generated by group-like
elements
$$
\Theta_{n,[a,b]}([\delta]\otimes m)
=[\delta]\otimes\delta^{-1}(m).
$$
This extends to a continuous automorphism of $M_{n,[a,b]}$.
To see this, the coproduct and inversion on the coefficient algebra
are continuous:
$$
[\delta]\longmapsto[\delta]\otimes[\delta],
\qquad
[\delta]\longmapsto[\delta^{-1}].
$$
On the torsion-free factor, with $T=[\gamma_K]-1$, these are given by
$$
T\longmapsto T\otimes 1+1\otimes T+T\otimes T,
\qquad
T\longmapsto-\frac{T}{1+T}.
$$
Both preserve each closed weight disk. Applying the continuous
$A_n$-action on the second tensor factor therefore gives the required
extension. Its inverse is given by
$$
\Theta_{n,[a,b]}^{-1}([\delta]\otimes m)
=[\delta]\otimes\delta(m).
$$

These maps commute with restriction wherever they are defined.
For a prescribed level $n$, choose
$m\geq\max\{n,n_0\}$, apply $\Theta_{m,[a,b]}$, and then restrict
to level $n$. Compatibility makes this independent of $m$.
Doing this for every $[a,b]$ defines continuous inverse automorphisms
$\Theta$ and $\Theta^{-1}$ of $N^b$.
In particular, no bound on $n_0$ independent of $a$ is needed.

Let $U_\gamma$ denote the action only through the inverse universal
character. On group-like elements we have
$$
\begin{aligned}
\gamma\Theta([\delta]\otimes m)
&=[\gamma^{-1}\delta]\otimes\gamma\delta^{-1}(m),\\
\Theta U_\gamma([\delta]\otimes m)
&=[\gamma^{-1}\delta]\otimes\delta^{-1}\gamma(m).
\end{aligned}
$$
Since $\Gamma_K$ is abelian, these are equal. Thus
$$
\gamma\Theta=\Theta U_\gamma,
\qquad
\gamma_K-1=\Theta(U_{\gamma_K}-1)\Theta^{-1}.
$$
This also identifies invariants for the two actions of $\Delta_K$.

For the universal action,
$$
U_{\gamma_K}-1
=[\gamma_K^{-1}]-1
=-[\gamma_K^{-1}]T.
$$
Elements at a Banach weight level have unique convergent power-series
expansions in $T$, with coefficients in $R[\Delta_K]$ tensored with
$M_{[a,b]}$. Consequently, multiplication by $T$ is injective, and
division by $T$ on the submodule with zero constant term is continuous.
These operations commute with both inverse limits, so the same
statements hold on $N^b$.

We now identify the kernel of $q_b$ and verify strictness.
Let $\mathrm{aug}:A\to R$ be the augmentation
$[\delta]\mapsto 1$. In particular,
$\mathrm{aug}(T)=0$ for $T=[\gamma_K]-1$.
The formula for $q_b$ gives
$
q_b=(\mathrm{aug}\otimes 1)\circ\Theta^{-1}.
$

Put
$$
e_{\Delta_K}=\frac{1}{|\Delta_K|}
\sum_{\delta\in\Delta_K}[\delta].
$$
The invariants for the universal $\Delta_K$-action are
$e_{\Delta_K}N^b$, and
$
\Theta:e_{\Delta_K}N^b
\xrightarrow{\sim}(N^b)^{\Delta_K}
$
is a continuous isomorphism with continuous inverse.
On $e_{\Delta_K}N^b$, the map $\mathrm{aug}\otimes 1$
takes the constant coefficient in $T$, under the identification
$e_{\Delta_K}\otimes M_{(0,b]}\cong M_{(0,b]}$.
Consequently,
$
\ker\left(
(\mathrm{aug}\otimes 1)|_{e_{\Delta_K}N^b}
\right)
=Te_{\Delta_K}N^b.
$
Since
$
U_{\gamma_K}-1=-[\gamma_K^{-1}]T
$
and $[\gamma_K^{-1}]$ is a unit, we obtain
$$
\begin{aligned}
\ker(q_b)
&=\Theta(Te_{\Delta_K}N^b)\\
&=\Theta\left((U_{\gamma_K}-1)e_{\Delta_K}N^b\right)\\
&=(\gamma_K-1)(N^b)^{\Delta_K}.
\end{aligned}
$$

To prove surjectivity and strictness of $q_b$, define
$$
s_b:M_{(0,b]}\longrightarrow (N^b)^{\Delta_K},
\qquad
s_b(m)=\Theta(e_{\Delta_K}\otimes m)
=\frac{1}{|\Delta_K|}
\sum_{\delta\in\Delta_K}
[\delta]\otimes\delta^{-1}(m).
$$
This map is continuous and satisfies $q_b\circ s_b=1$.
Thus $q_b$ is surjective, and the induced map
$$
\overline{q}_b:
\frac{(N^b)^{\Delta_K}}
     {(\gamma_K-1)(N^b)^{\Delta_K}}
\longrightarrow M_{(0,b]},
\qquad
[x]\longmapsto q_b(x),
$$
is a continuous bijection. Its inverse is the continuous map
$$
m\longmapsto [s_b(m)],
$$
where the source of $\overline{q}_b$ has the quotient topology.
Hence $\overline{q}_b$ is a homeomorphism, proving that $q_b$
is strict.

Finally, $\gamma_K-1$ is injective by the injectivity of
multiplication by $T$ and the identity
$
\gamma_K-1=\Theta(U_{\gamma_K}-1)\Theta^{-1}.
$
The resulting bijection
$$
\gamma_K-1:(N^b)^{\Delta_K}\longrightarrow\ker(q_b)
$$
has continuous inverse
$$
y\longmapsto
-\Theta\left(
\frac{[\gamma_K]\Theta^{-1}(y)}{T}
\right).
$$
Here $\Theta^{-1}(y)\in Te_{\Delta_K}N^b$, and division by $T$
on this submodule is continuous, as shown above.
Thus $\gamma_K-1$ is also strict. Moreover, its image
$\ker(q_b)$ is closed because $q_b$ is continuous and
$M_{(0,b]}$ is Hausdorff.

We therefore obtain the strict exact sequence
$$
0\longrightarrow (N^b)^{\Delta_K}
\xrightarrow{\gamma_K-1}(N^b)^{\Delta_K}
\xrightarrow{q_b}M_{(0,b]}
\longrightarrow 0.
$$

The formula for $q_b$ shows that it is $A$-linear for the stated
actions. Finally, $\psi$ commutes with $\Gamma_K$, so this formula
also proves compatibility with $\psi$. Compatibility with restriction
in $b$ follows from the construction.
\end{proof}

\begin{theorem}
\label{cyclotomicdeform}
Let $M$ be a $(\varphi,\Gamma_K)$-module over
$\Lambda_{R,\mathrm{rig},K}$.
Then there is a natural quasi-isomorphism of complexes of $A$-modules
$$
C^\bullet_{\psi,\gamma_K}(\mathbf{Dfm}(M))
\;\tilde{\longrightarrow}\;
C^\bullet_\psi(M),
$$
where the action on the left comes from the coefficient algebras
$A_n$, and the action on the right comes from the
$(\varphi,\Gamma_K)$-action on $M$.
Consequently, there is a natural Iwasawa duality quasi-isomorphism
$$
C^\bullet_\psi(M^*)
\;\tilde{\longrightarrow}\;
\mathbf{R}\!\operatorname{Hom}_A
\left(C^\bullet_\psi(M),A\right)^\iota[-2].
$$
\end{theorem}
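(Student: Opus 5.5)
The first quasi-isomorphism comes from Lemma~\ref{cyclotomicgammaexact}, applied for each $b$ and passed to the colimit $b\to0$. The duality then follows from the first quasi-isomorphism, Herr-complex duality over the coefficient algebras $A_n$, and perfectness (Theorem~\ref{perfectiwa}).

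\textbf{Step 1: the first quasi-isomorphism.} Restrict first to $\Delta_K$-invariants. Since $|\Delta_K|$ is invertible, this is an exact functor. After it, $C^\bullet_{\psi,\gamma_K}(\mathbf{Dfm}(M))$ is the total complex of the double complex
$$(N^b)^{\Delta_K}\xrightarrow{\gamma_K-1}(N^b)^{\Delta_K}$$
in the $\gamma$-direction, with $\psi-1$ in the other direction, taken in the colimit over $b$. Lemma~\ref{cyclotomicgammaexact} shows each $\gamma$-row is a resolution of $M_{(0,b]}$ through $q_b$, and that $q_b$ commutes with $\psi$ and with restriction in $b$. Hence $q_b$ defines a morphism of double complexes from this total complex to $[M_{(0,b]}\xrightarrow{\psi-1}M_{(0,b]}]$. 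Since the rows are exact, a standard spectral-sequence argument (or the explicit two-column computation) shows it is a quasi-isomorphism. Filtered colimits are exact, and Proposition~\ref{iwasawann} says the colimit stabilises, so we get the quasi-isomorphism to $C^\bullet_\psi(M)$. The $A$-linearity of $q_b$, with $A$ acting through the coefficients on the left and through $\Gamma_K$ on the right, is built into the lemma.

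\textbf{Step 2: duality at each level.} Fix $n$. Then $M_n$ is a $(\varphi,\Gamma_K)$-module over the pseudoaffinoid $A_n$, with dual $M_n^*\cong M^*\widehat\otimes\mathbf{Dfm}_n^{\iota}$: dualising $\mathbf{e}$ inverts the universal character, which gives the twist by $\iota$. Apply the Herr-complex Tate duality in pseudorigid families from \cite{bellovin2024cohomology}:
$$C^\bullet_{\psi,\gamma}(M_n^*)\simeq \mathbf R\operatorname{Hom}_{A_n}\bigl(C^\bullet_{\psi,\gamma}(M_n),A_n\bigr)[-2].$$
Here we use Corollary~\ref{perf-galois} to know both sides are perfect over $A_n$. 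The pairing is the residue pairing $\{\,,\}$ of Definition~\ref{defpairM}, and Lemma~\ref{gammaphipsiprop} gives the $\varphi/\psi$ and $\Gamma$ adjunctions. The latter make the cup-product pairing of $\psi$-Herr complexes a chain map.

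\textbf{Step 3: passage to the limit.} Step 1 applies equally to $M^*$. Step 1 applied at each level $n$, together with Theorem~\ref{perfectiwa}, gives
$$C^\bullet_{\psi,\gamma}(M_n)\simeq C^\bullet_\psi(M)\otimes^{\mathbf L}_AA_n.$$
Then
$$\mathbf R\operatorname{Hom}_{A_n}\bigl(C^\bullet_\psi(M)\otimes^{\mathbf L}_AA_n,A_n\bigr)\simeq \mathbf R\operatorname{Hom}_A\bigl(C^\bullet_\psi(M),A\bigr)\otimes_AA_n,$$
because $C^\bullet_\psi(M)$ is perfect. Take $\mathbf R\varprojlim_n$. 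The terms are finite projective, hence coadmissible, so $\varprojlim^1$ vanishes, and we get the stated duality with the $\iota$-twist.

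\textbf{Main obstacle.} I expect the hardest part to be naturality and compatibility in $n$ in Steps 2 and 3. One has to check that the level-$n$ duality isomorphisms commute with the restriction maps $A_{n+1}\to A_n$, and that the $\iota$-twist agrees with the $\iota$ coming from dualising the universal character. The plan is to write the duality pairing explicitly as the residue cup product and verify compatibility on group-like elements $[\delta]\otimes m$, then extend by density and continuity.
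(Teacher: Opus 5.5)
Your proposal has two genuine gaps, both concerning how inverse limits in $n$ interact with the rest of the construction.

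\textbf{Step 1.} You identify $C^\bullet_{\psi,\gamma_K}(\mathbf{Dfm}(M))$ with the colimit over $b$ of the total complexes built on $(N^b)^{\Delta_K}$. But $\mathbf{Dfm}(M)=\varprojlim_n\varinjlim_b M_n^b$, while Lemma~\ref{cyclotomicgammaexact} only concerns $N^b=\varprojlim_n M_n^b$. The map $\varinjlim_b N^b\to\mathbf{Dfm}(M)$ is not surjective. For example, take $M=\Lambda_{R,\mathrm{rig},K}$. The series $\sum_{k\geq1}T^k\otimes\pi_K^{-k}$ is a compatible family in $\varprojlim_n M_n$. At level $n$ it converges only on annuli $(0,b_n]$ with $b_n\to0$, because $|T|_{A_n}\to1$, so it lies in no $N^b$.

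The missing idea, which is the core of the paper's argument, is to fix one $b\leq b_0/p$ and show that $N^b\hookrightarrow\mathbf{Dfm}(M)$ induces a quasi-isomorphism on Herr complexes:
\begin{enumerate}
\item Since $\psi(M_n^c)\subset M_n^{pc}$, the operator $\psi$ is locally nilpotent on $M_n/M_n^b$. Hence $\psi-1$ is invertible there, with inverse $-(1+\psi+\psi^2+\cdots)$, and these inverses are compatible in $n$.
\item Topological Mittag--Leffler (using dense transition maps) gives $\mathbf{R}^1\varprojlim_n M_n^b=0$. This yields the exact sequence $0\to N^b\to\mathbf{Dfm}(M)\to\varprojlim_n(M_n/M_n^b)\to0$.
\item On $\varprojlim_n(M_n/M_n^b)$, $\psi-1$ is still invertible, even though $\psi$ is no longer locally nilpotent there.
\end{enumerate}
The same Mittag--Leffler input identifies $C^\bullet_{\psi,\gamma_K}(\mathbf{Dfm}(M^*))$ with $\mathbf{R}\varprojlim_nC^\bullet_{\psi,\gamma_K}(M_n^*)$. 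You use this tacitly when you take $\mathbf{R}\varprojlim_n$ of the levelwise dualities in Step~3.

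\textbf{Step 3.} The finite-level comparison $C^\bullet_{\psi,\gamma_K}(M_n)\simeq C^\bullet_\psi(M)\otimes^{\mathbf L}_AA_n$ cannot come from ``Step~1 applied at each level $n$''. The untwisting automorphism $\Theta$ in Lemma~\ref{cyclotomicgammaexact} needs the $A_n$-action on $M_{[a,b]}$. That action exists only for $n\geq n_0(a)$, with $n_0(a)\to\infty$ as $a\to0$. At a fixed level $n$, the module $M_n^b$ involves every $a$, so no level-$n$ version of the lemma exists; it holds only after passing to $\varprojlim_n$.

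The paper argues as follows instead:
\begin{enumerate}
\item $K_n^\bullet=C^\bullet_{\psi,\gamma_K}(M_n)$ is perfect over the pseudoaffinoid algebra $A_n$, with $K_{n+1}^\bullet\otimes^{\mathbf L}_{A_{n+1}}A_n\simeq K_n^\bullet$. Hence the modules $H^i(K_n^\bullet)$ form a coadmissible system.
\item The first part identifies a finite projective representative $P^\bullet$ of $C^\bullet_\psi(M)$ with $\mathbf{R}\varprojlim_nK_n^\bullet$.
\item Projection gives a map $A_n\otimes_AP^\bullet\to K_n^\bullet$. It is a quasi-isomorphism because $A_n\otimes_A-$ is exact on coadmissible modules and $A_n\otimes_A\varprojlim_mH^i(K_m^\bullet)\cong H^i(K_n^\bullet)$.
\end{enumerate}

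Once these two limit arguments are supplied, your Steps~2 and~3 go through as in the paper. The compatibility in $n$ of the levelwise dualities, which you flag as the main obstacle, is comparatively routine.
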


\begin{proof}
Choose a model of $M$ over $\Lambda_{R,(0,b_0],K}$ and fix
$0<b\leq b_0/p$.
We first explain why this one radius works for every $n$.
For $0<c\leq b$, the operator $\psi$ sends $M_n^c$ into $M_n^{pc}$.
Therefore its induced action on $M_n/M_n^b$ is locally nilpotent:
every element is represented in some $M_n^c$, and sufficiently many
applications of $\psi$ bring it into $M_n^b$.
It follows that $\psi-1$ is invertible on this quotient, with inverse
$$
-(1+\psi+\psi^2+\cdots),
$$
where the sum is finite on each element of the quotient.
Thus
$$
C^\bullet_\psi(M_n^b)
\;\tilde{\longrightarrow}\;
C^\bullet_\psi(M_n),
$$
and, since these maps commute with $\Gamma_K$,
$$
C^\bullet_{\psi,\gamma_K}(M_n^b)
\;\tilde{\longrightarrow}\;
C^\bullet_{\psi,\gamma_K}(M_n).
$$
The same argument gives
$C^\bullet_\psi(M_{(0,b]})\simeq C^\bullet_\psi(M)$.

At this fixed $b$, all the modules are inverse limits of Banach
modules on closed annuli. Restriction in the weight variable and in
the annular variable has dense image: this follows from density for
the coefficient rings and a finite projective presentation of the
annular model of $M$.
After choosing a countable sequence $a_j\to 0$, the topological
Mittag--Leffler lemma applied first in $a_j$ and then to the cofinal
diagonal system in $(n,j)$ shows that these inverse limits have no
higher derived inverse limits.
Taking $\Delta_K$-invariants preserves this property, since it is
the image of the continuous averaging idempotent.
Consequently, with $N^b$ as in Lemma \ref{cyclotomicgammaexact},
$$
\mathbf{R}\!\varprojlim_n
C^\bullet_{\psi,\gamma_K}(M_n^b)
\simeq C^\bullet_{\psi,\gamma_K}(N^b).
$$
Moreover, the vanishing of $\mathbf{R}^1\!\varprojlim_n M_n^b$
shows that taking inverse limits in
$$
0\longrightarrow M_n^b\longrightarrow M_n
\longrightarrow M_n/M_n^b\longrightarrow 0
$$
gives an exact sequence
$$
0\longrightarrow N^b\longrightarrow\mathbf{Dfm}(M)
\longrightarrow\varprojlim_n(M_n/M_n^b)\longrightarrow 0.
$$
The inverses of $\psi-1$ on $M_n/M_n^b$ commute with restriction
in $n$, so $\psi-1$ is also invertible on their inverse limit.
It follows that $N^b\to\mathbf{Dfm}(M)$ induces a quasi-isomorphism
on Herr complexes. Together with the finite-level radius comparison,
this gives
$$
\begin{aligned}
C^\bullet_{\psi,\gamma_K}(\mathbf{Dfm}(M))
&\simeq
\mathbf{R}\!\varprojlim_n
C^\bullet_{\psi,\gamma_K}(M_n)\\
&\simeq
C^\bullet_{\psi,\gamma_K}(N^b).
\end{aligned}
$$
In particular, no interchange of an inverse limit in $n$ with an
inductive limit in $b$ is being used.

By Lemma \ref{cyclotomicgammaexact},
$$
\left[(N^b)^{\Delta_K}
\xrightarrow{\gamma_K-1}(N^b)^{\Delta_K}\right]
\;\tilde{\longrightarrow}\;
M_{(0,b]}[-1].
$$
This quasi-isomorphism commutes with $\psi$.
Taking the total complexes after applying $\psi-1$ gives
$$
C^\bullet_{\psi,\gamma_K}(N^b)
\;\tilde{\longrightarrow}\;
C^\bullet_\psi(M_{(0,b]}).
$$
For the convention in which the Herr differentials are
$(\psi-1,\gamma_K-1)$ and $(\gamma_K-1,1-\psi)$, the map in degree
$1$ is $(x,y)\mapsto q_b(y)$ and the map in degree $2$ is
$z\mapsto-q_b(z)$.
The exact sequence of the lemma shows that the kernel total complex
is acyclic. Combining the preceding quasi-isomorphisms proves
$
C^\bullet_{\psi,\gamma_K}(\mathbf{Dfm}(M))
\;\tilde{\longrightarrow}\;
C^\bullet_\psi(M).
$
The $A$-linearity follows from the $A$-linearity of $q_b$.

We next obtain the finite-level comparison needed for duality.
Put
$
K_n^\bullet=C^\bullet_{\psi,\gamma_K}(M_n).
$
Finiteness and coefficient base change for Herr cohomology over
pseudoaffinoid algebras give
$$
K_n^\bullet\in D_{\mathrm{perf}}^{[0,2]}(A_n),
\qquad
A_n\otimes_{A_{n+1}}^{\mathbf L}K_{n+1}^\bullet
\simeq K_n^\bullet.
$$
Here the coefficient base change is $A_{n+1}\to A_n$ for the
$(\varphi,\Gamma_K)$-module $M_{n+1}$.
Since this restriction is flat, the finite modules
$H^i(K_n^\bullet)$ form a coadmissible system.
For the Fr\'echet--Stein presentation $A=\varprojlim_n A_n$,
the recovery and exactness properties for coadmissible modules
therefore give
$$
\begin{aligned}
H^i\!\left(\mathbf{R}\!\varprojlim_m K_m^\bullet\right)
&\cong\varprojlim_m H^i(K_m^\bullet),\\
A_n\otimes_A\varprojlim_m H^i(K_m^\bullet)
&\cong H^i(K_n^\bullet).
\end{aligned}
$$

By the perfectness of Iwasawa cohomology proved above, choose a
bounded complex $P^\bullet$ of finite projective $A$-modules
representing $C^\bullet_\psi(M)$.
The first part of the proof identifies $P^\bullet$ with
$\mathbf{R}\!\varprojlim_m K_m^\bullet$.
Projection to $K_n^\bullet$ and derived tensor--Hom adjunction give
a natural morphism
$
A_n\otimes_A P^\bullet\longrightarrow K_n^\bullet.
$
Every term of $P^\bullet$ is coadmissible, and kernels, images and
cokernels of morphisms of coadmissible modules are coadmissible.
Exactness of localization on this category shows that
$
H^i(A_n\otimes_A P^\bullet)
\cong A_n\otimes_A H^i(P^\bullet)
\cong H^i(K_n^\bullet).
$
Thus the preceding morphism is a quasi-isomorphism, and we have
proved
$
A_n\otimes_A^{\mathbf L}C^\bullet_\psi(M)
\simeq C^\bullet_{\psi,\gamma_K}(M_n).
$
This argument uses coadmissibility of $H^i(P^{\bullet})$.

We now prove duality. Tate duality for the Herr complex over $A_n$
and the identity
$$
\left(
M\widehat{\otimes}_{\Lambda_{D_\lambda,\mathrm{rig},K}}
\mathbf{Dfm}_n
\right)^*
\cong
\left(
M^*\widehat{\otimes}_{\Lambda_{D_\lambda,\mathrm{rig},K}}
\mathbf{Dfm}_n
\right)^\iota
$$
give compatible quasi-isomorphisms
$$
C^\bullet_{\psi,\gamma_K}
\left(
M^*\widehat{\otimes}_{\Lambda_{D_\lambda,\mathrm{rig},K}}
\mathbf{Dfm}_n
\right)
\simeq
\mathbf{R}\!\operatorname{Hom}_{A_n}
(K_n^\bullet,A_n)^\iota[-2].
$$
Here the involution changes the inverse universal character into
the universal character. The $\varphi$-Herr and $\psi$-Herr complexes
are identified by their natural quasi-isomorphism when applying
Tate duality.

By the finite-level comparison just proved,
$K_n^\bullet\simeq A_n\otimes_A P^\bullet$.
Since $P^\bullet$ is bounded and finite projective, duality commutes
with this base change:
$
\mathbf{R}\!\operatorname{Hom}_{A_n}
(A_n\otimes_A P^\bullet,A_n)
\simeq
A_n\otimes_A\operatorname{Hom}_A(P^\bullet,A).
$
Every term of $\operatorname{Hom}_A(P^\bullet,A)$ is again finite
projective and is recovered from its Banach realizations, with
vanishing higher derived inverse limits.
Taking derived inverse limits in the duality quasi-isomorphisms
therefore gives
$$
C^\bullet_{\psi,\gamma_K}(\mathbf{Dfm}(M^*))
\simeq
\mathbf{R}\!\operatorname{Hom}_A
\left(C^\bullet_\psi(M),A\right)^\iota[-2].
$$
Applying the first part of the theorem to $M^*$ proves the desired
quasi-isomorphism
$$
C^\bullet_\psi(M^*)
\;\tilde{\longrightarrow}\;
\mathbf{R}\!\operatorname{Hom}_{\Lambda_{R,(0,\infty]}(\Gamma_K)}
\left(
C^\bullet_\psi(M),
\Lambda_{R,(0,\infty]}(\Gamma_K)
\right)^\iota[-2].
$$
\end{proof}

\section{An application to Selmer complexes in pseudorigid families}
In \cite{nekovar2006selmer}, Nekovar explains the importance of Selmer complexes for base change and duality. For example, \cite{buyukboduk2025functional} uses determinants of Selmer complexes to establish functional equations for algebraic $p$-adic $L$-functions. Perfectness is an important input in such arguments. We explain how our local results give perfect Selmer complexes over pseudoaffinoid algebras, under the integral hypothesis below, and then study the loci where their first cohomology jumps. We hope to understand applications to eigenvarieties in subsequent work.

Let $S$ be a finite set of finite primes of $\mathbf Q$
containing $p$. We write $G_{\mathbf Q,S}$ for the
Galois group of the maximal extension unramified outside
$S$ and the archimedean place. The local-condition sums
below run over the finite primes in $S$.
Throughout this section, let $R$ be a pseudoaffinoid algebra, and put $X=\mathrm{Spa}(R,R^+)$. Let $V$ be a finite projective $R$-module admitting an action of $G_{\mathbb{Q}, S}$.
Choose a finite rational covering
$$
X=\bigcup_{i=1}^r X_i,\qquad X_i=\mathrm{Spa}(R_i,R_i^+),
$$
on which $V_i=V\otimes_RR_i$ is free. By Lemma~4.13 of
\cite{bellovin2023galois}, after refining this covering by the
affine charts of admissible formal blow-ups, we may choose
Noetherian $u$-adically complete rings of definition $R_{i,0}$
and $G_{\mathbb Q,S}$-stable finite projective lattices
$$
V_{i,0}\subset V_i,\qquad V_{i,0}[1/u]=V_i.
$$
Here $u$ denotes its image in each coefficient algebra.
The coverings and lattices are chosen on the coefficient space
and will be kept fixed as the weight radius varies.

\begin{definition}
\label{nekcomplex}
Let $A$ be a coefficient algebra and $M$ a continuous $A$-linear representation of $G_{\mathbb{Q},S}$. A bounded local condition at $v\in S$ is a complex $U_v^\bullet$ with bounded cohomology and a morphism $i_v:U_v^\bullet\to C^\bullet_{\mathrm{cont}}(G_{\mathbb{Q}_v},M)$. For $\Delta=(U_v^\bullet,i_v)_{v\in S}$, define
$$C_f^\bullet(G_{\mathbb{Q},S},M;\Delta)=\mathrm{Cone}\left(C^\bullet_{\mathrm{cont}}(G_{\mathbb{Q},S},M)\oplus\bigoplus_{v\in S}U_v^\bullet\xrightarrow{\oplus_v(\mathrm{res}_v-i_v)}\bigoplus_{v\in S}C^\bullet_{\mathrm{cont}}(G_{\mathbb{Q}_v},M)\right)[-1].$$
The Iwasawa Selmer complex is obtained by applying this construction to the cyclotomic deformation and its corresponding local conditions.
\end{definition}

We first give the integral argument needed to apply this construction over $R$ and over its distribution algebra. Write $\Gamma=\mathrm{Gal}(\mathbb{Q}(\mu_{p^\infty})/\mathbb{Q})=\Delta\times\Gamma_1$, choose a generator $\gamma$ of $\Gamma_1\simeq\mathbb{Z}_p$, and put $T=[\gamma]-1$ and $\Lambda_R=\Lambda_{R,(0,\infty]}(\Gamma)$. The action on $V_{\mathrm{Iw}}=V\widehat\otimes_R\Lambda_R$ is
$$g(v\otimes\lambda)=g(v)\otimes[\overline g]^{-1}\lambda,$$
where $\overline g$ is the image of $g$ in $\Gamma$. We use the same inverse universal character in $\mathbf{Dfm}(D)$.

\begin{lemma}
\label{selmergaloisperfect}
The global and local Galois cohomology complexes of $V$ are
perfect over $R$ with amplitude $[0,2]$. Their cyclotomic
deformations are perfect over $\Lambda_R$ with amplitude $[0,2]$.
These statements are compatible with derived coefficient base change.
Moreover, $R\Gamma(I_l,V)$ is perfect over $R$ with amplitude
$[0,1]$ for $l\neq p$ and satisfies the same base-change property.
\end{lemma}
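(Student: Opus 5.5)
The plan is to work integrally on the fixed rational covering $X=\bigcup X_i$ with the $G_{\mathbb Q,S}$-stable lattices $V_{i,0}\subset V_i$. Perfectness over $R$ is local for the analytic topology, because $R\to\prod R_i$ is faithfully flat and finite projectivity descends. So it suffices to work over one chart $R_i$ with ring of definition $R_{i,0}$, and we drop the index.

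\emph{Step 1: the integral statement.} For $A_0=R_0$, the group $G=G_{\mathbb Q,S}$ or $G_{\mathbb Q_v}$, and $T_0=V_0$, I will use the standard argument for profinite groups with the finiteness property $(F_p)$, following Nekov\'a\v{r} and Pottharst. Each $V_0/u^k$ is a finite $R_0/u^k$-module with a discrete action. Cohomology of $G$ with coefficients in finite $p$-primary modules is finite, so $C^\bullet_{\mathrm{cont}}(G,V_0/u^k)$ is pseudo-coherent over $R_0/u^k$. Its Tor-amplitude is $[0,2]$ since the cohomological dimension is $\le2$; this uses $p\neq2$ for the global group. The transition maps are compatible with $\otimes^{\mathbf L}_{R_0/u^{k+1}}R_0/u^k$. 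Since $R_0$ is Noetherian and $u$-adically complete, the derived limit $C^\bullet_{\mathrm{cont}}(G,V_0)=\mathbf R\varprojlim_k C^\bullet(G,V_0/u^k)$ is then perfect over $R_0$ with amplitude $[0,2]$. I will check this with the Nakayama-type criterion for complete Noetherian rings. Inverting $u$ gives perfectness over $R$, and base change is inherited from the mod $u^k$ level. For $I_l$ with $l\ne p$, tame inertia is procyclic on the pro-$p$ part, so $R\Gamma(I_l,V_0)$ is computed by a two-term complex $[V_0^{P}\xrightarrow{\tau-1}V_0^{P}]$. Here $P$ is wild inertia, which acts through a finite quotient of order prime to $p$, so taking $P$-invariants is exact and projective summands are preserved. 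This gives amplitude $[0,1]$ and base change directly.

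\emph{Step 2: the cyclotomic deformation.} I will write $V_{\mathrm{Iw}}=\varprojlim_n V\widehat\otimes_R A_n$ with $A_n=\Lambda_{R,[1/p^n,\infty]}(\Gamma)$. Each $V\widehat\otimes_R A_n$ is a finite projective module over the pseudoaffinoid $A_n$, with integral lattice $V_0\widehat\otimes_{R_0}\Lambda_{R_0,[1/p^n,\infty]}(\Gamma)$ stable under the twisted action. So Step 1 applies with $R$ replaced by $A_n$ and gives perfect complexes $K_n$ of amplitude $[0,2]$ with $A_n\otimes^{\mathbf L}_{A_{n+1}}K_{n+1}\simeq K_n$. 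Their fibre dimensions at maximal points are bounded uniformly by $2$ or $3$ times $\mathrm{rank}\,V$, using Tate's Euler characteristic and duality over residue fields. Lemma~\ref{uniformperfectcomplex} then shows that $\mathbf R\varprojlim_nK_n$ is perfect over $\Lambda_R$ with amplitude $[0,2]$ and restricts to $K_n$. It remains to identify $\mathbf R\varprojlim_n K_n$ with $C^\bullet_{\mathrm{cont}}(G,V_{\mathrm{Iw}})$. Continuous cochains commute with inverse limits, and the $\mathbf R^1\varprojlim$ terms vanish by topological Mittag-Leffler, since the restriction maps have dense image. This is the same argument as in the proof of Theorem~\ref{cyclotomicdeform}. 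Base change for $R\to R'$ then follows from the levelwise statement together with the coadmissible recovery argument used there.

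\emph{Main obstacle.} I expect the hardest point to be the gluing of the charts and the uniformity of the lattices as the weight radius varies. Step 1 needs integral lattices, and they exist only chartwise after an admissible blow-up. We must check that the chartwise perfect complexes glue to a perfect complex over $R$; this needs Čech descent of perfect complexes on the pseudoaffinoid $X$. We must also check that the same fixed lattices serve every stage $A_n$; this holds because the coverings are chosen on the coefficient space only. I will handle the descent by using the \v{C}ech complex of the finite covering, together with the fact that perfect complexes satisfy descent for rational coverings of Noetherian Tate rings.
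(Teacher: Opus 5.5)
Your overall architecture matches the paper's. You prove integral finiteness chartwise on the fixed lattices $V_{i,0}$; where the paper cites Pottharst's Theorem~1.4, you reprove it via the $(F_p)$/derived-limit argument. You then glue by analytic descent of perfect complexes together with the \v{C}ech identification, build stagewise complexes over $A_n$, apply Lemma~\ref{uniformperfectcomplex}, and identify the limit by topological Mittag--Leffler.

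Your direct two-term treatment of $I_l$ is a legitimate and more elementary alternative to the paper's descent argument. Wild inertia acts through a finite $l$-group, so its invariants are a direct summand, and tame inertia is procyclic. Your opening claim that perfectness is local because $R\to\prod R_i$ is faithfully flat would need $R\Gamma(G,V)\otimes_RR_i\simeq R\Gamma(G,V_i)$. That is not available a priori, so the \v{C}ech route you adopt later is the right one.

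The genuine gap is the uniform fibre bound you feed into Lemma~\ref{uniformperfectcomplex}. Euler characteristic plus duality does give $h^1\leq 3\operatorname{rank}V$ for $G_{\mathbb Q_p}$ and $h^1\leq 2\operatorname{rank}V$ for $G_{\mathbb Q_l}$. It does not work for $G_{\mathbb Q,S}$:
\begin{itemize}
\item The global Euler characteristic formula only fixes the alternating sum $h^0-h^1+h^2$.
\item Poitou--Tate expresses $h^2(G_{\mathbb Q,S},W)$ through the locally trivial classes in $H^1(G_{\mathbb Q,S},W^*(1))$, and duality gives no bound on these.
\end{itemize}
Your stated bound is in fact false. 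Take $V=\mathbb Q_p(1)$ at the trivial character, which is a maximal point of every stage. Then $h^1(G_{\mathbb Q,S},\mathbb Q_p(1))=\dim_{\mathbb Q_p}(\mathbb Z[1/S]^\times\otimes\mathbb Q_p)=|S|$, which exceeds $3\operatorname{rank}V$ once $|S|\geq4$. Moreover, you build $K_n$ by applying Step~1 afresh over each $A_n$, so nothing in your construction controls term ranks uniformly in $n$ either.

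The paper closes this gap with a single integral model over the integral Iwasawa algebra.
\begin{enumerate}
\item Apply the integral finiteness theorem once over $B_{i,0}=R_{i,0}[\Delta][[T]]$, which is Noetherian and $(u,T)$-adically complete. Use the module $V_{i,0}\otimes_{R_{i,0}}B_{i,0}$ with the inverse universal character.
\item This yields one complex $Q^\bullet_{G,i,0}$ of finite projective $B_{i,0}$-modules in degrees $[0,e]$.
\item Every chart $A_{n,i,0}$ of every Banach stage is a $B_{i,0}$-algebra, because $T^{m_n}\in uA_{n,i,0}$. Integral base change therefore identifies the stage complex chartwise with $Q^\bullet_{G,i,0}\otimes_{B_{i,0}}A_{n,i}$.
\item Each derived fibre at a maximal point $z$ of $A_n$ is a specialization of some $Q^\bullet_{G,i,0}$. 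Choosing $t_{i,j}$ with $Q^j_{G,i,0}$ a summand of $B_{i,0}^{t_{i,j}}$, you get $\dim H^j\leq\max_it_{i,j}$.
\end{enumerate}
This bound is uniform in $n$ and $z$ because the covering is finite. With this replacement for your fibre bound, the rest of your Step~2 goes through.
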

\begin{proof}
Let $G$ be $G_{\mathbb Q,S}$, $G_{\mathbb Q_v}$, or $I_l$
with $l\neq p$. Put $e=2$ in the first two cases and $e=1$
in the inertia case. These groups have $p$-cohomological
dimension at most $e$ and finite cohomology on finite discrete
$p$-primary modules. We use the fixed coefficient covering
and projective lattices $V_{i,0}$ chosen above.

Choose a finite rational covering
$$
X=\operatorname{Spa}(R,R^+)=\bigcup_{i=1}^r X_i,
\qquad
X_i=\operatorname{Spa}(R_i,R_i^+),
$$
on which $V_i=V\otimes_R R_i$ admits a $G$-stable finite
projective lattice $V_{i,0}$ over a suitable ring of definition
$R_{i,0}$. For each $i$, the pair $(R_{i,0},V_{i,0})$ satisfies
Hypotheses A of \cite{pottharst2013analytic}: the ring is
Noetherian and complete for $(u)$, a power of $p$ belongs
to $(u)$, the action is continuous, and $V_{i,0}$ is finite
projective, hence flat. Corollary~1.2 and Theorem~1.4 of that
paper provide 
$
K_i^\bullet\simeq R\Gamma(G,V_i),
$
where $K_i^\bullet$ consists of finite projective $R_i$-modules
in degrees $[0,e]$.
For each nonempty subset $J\subseteq\{1,\ldots,r\}$, put
$$
X_J=\bigcap_{i\in J}X_i,
\qquad
R_J=\mathcal O_X(X_J),
\qquad
V_J=V\otimes_R R_J,
$$
Derived coefficient base change
gives, for $i\in J$,
$
K_i^\bullet\otimes_{R_i}^{\mathbf L}R_J
\simeq R\Gamma(G,V_J).
$
In particular, on a pairwise intersection we have canonical
equivalences
$
K_i^\bullet\otimes_{R_i}^{\mathbf L}R_{ij}
\simeq R\Gamma(G,V_{ij})
\simeq K_j^\bullet\otimes_{R_j}^{\mathbf L}R_{ij}.
$
These equivalences, together with their higher compatibilities
are induced by restriction of continuous cochains. Thus the
$K_i^\bullet$ form descent data for perfect complexes of
Tor-amplitude $[0,e]$.

By analytic descent for perfect complexes of prescribed
Tor-amplitude
\cite[Theorem~1.4]{andreychev2021pseudocoherent}, there exists
$
K^\bullet\in\mathbf D_{\mathrm{perf}}^{[0,e]}(R)
$
whose restrictions, with their descent identifications, are
the complexes $K_i^\bullet$. It remains to identify
$K^\bullet$ with $R\Gamma(G,V)$.

Tate acyclicity gives a strictly exact augmented Cech complex
$
0\longrightarrow V
\longrightarrow\prod_i V_i
\longrightarrow\prod_{i<j}V_{ij}
\longrightarrow\cdots.
$
For a profinite set $S$, the functor
$\operatorname{Map}_{\mathrm{cont}}(S,-)$ preserves strictly
exact sequences of complete Banach modules.\footnote{ Strictness
provides uniformly bounded lifts under a surjection.
Let $q:M\to N$ be a strict surjection of complete Banach
modules, and let $f:S\to N$ be continuous. Choose locally
constant functions $f_n:S\to N$ converging uniformly to $f$,
and put $f_0=0$. Strictness gives a constant $C>0$ such that
every $y\in N$ admits a lift $x\in M$ satisfying
$\|x\|\leq C\|y\|$.

For each $n\geq1$, the locally constant function
$f_n-f_{n-1}$ has finite image. Choosing a lift of each of
its values gives a locally constant function $g_n:S\to M$
such that
$$
q\circ g_n=f_n-f_{n-1},
\qquad
\|g_n\|_{\sup}\leq C\|f_n-f_{n-1}\|_{\sup}.
$$
Since $\|f_n-f_{n-1}\|_{\sup}\to0$, completeness and the
nonarchimedean triangle inequality imply that
$
g=\sum_{n=1}^{\infty}g_n
$
converges uniformly. Hence $g$ is continuous, and
$
q\circ g
=\lim_{N\to\infty}\sum_{n=1}^N(f_n-f_{n-1})
=\lim_{N\to\infty}f_N
=f.
$
Thus every continuous function $S\to N$ has a continuous
lift to $M$.}

Applying this with $S=G^q$, for every $q\geq0$, we have Cech complex
$$
0\longrightarrow C^q_{\mathrm{cont}}(G,V)
\longrightarrow\prod_i C^q_{\mathrm{cont}}(G,V_i)
\longrightarrow\prod_{i<j}C^q_{\mathrm{cont}}(G,V_{ij})
\longrightarrow\cdots.
$$
The restriction maps commute with the Galois differentials.
Since there are finitely many non zero terms in Cech complex, totalization therefore gives
$$
R\Gamma(G,V)\simeq
\operatorname{Tot}\left(
\prod_i R\Gamma(G,V_i)
\longrightarrow
\prod_{i<j}R\Gamma(G,V_{ij})
\longrightarrow\cdots
\right).
$$
On the other hand, $K^\bullet$ admits a bounded representative
by finite projective $R$-modules. Applying Tate acyclicity
termwise to this representative gives
$
K^\bullet\simeq
\operatorname{Tot}\left(
\prod_i(K^\bullet\otimes_R^{\mathbf L}R_i)
\longrightarrow
\prod_{i<j}(K^\bullet\otimes_R^{\mathbf L}R_{ij})
\longrightarrow\cdots
\right).
$
The descent identifications identify this total complex with
the preceding one. Hence
$
K^\bullet\simeq R\Gamma(G,V),
$
and consequently
$
R\Gamma(G,V)\in\mathbf D_{\mathrm{perf}}^{[0,e]}(R).
$
The same argument proves derived coefficient base change:
after pulling back the rational covering to a pseudoaffinoid
coefficient algebra $R'$, it is the local base-change
isomorphism given by Pottharst, and descent identifies
these local isomorphisms with
$$
R\Gamma(G,V)\otimes_R^{\mathbf L}R'
\simeq R\Gamma(G,V\otimes_RR').
$$
To apply the integral statement, choose compatible rings of
definition on each pulled-back chart; pullback preserves
the projectivity of $V_{i,0}$.

We next treat the full distribution algebra.
Write $\Gamma=\Delta\times\Gamma_1$, put $T=[\gamma]-1$, and set
$$
B_{i,0}=R_{i,0}[\Delta][[T]],\qquad
\mathbb V_{i,0}=V_{i,0}\otimes_{R_{i,0}}B_{i,0}.
$$
Give $\mathbb V_{i,0}$ the inverse universal character action
$
g(v\otimes b)=g(v)\otimes[\overline g]^{-1}b.
$
The ring $B_{i,0}$ is Noetherian and complete for $(u,T)$.
The module $\mathbb V_{i,0}$ is finite projective, and its
$G$-action is continuous for that topology.
Thus Pottharst's results give one complex
$
Q_{G,i,0}^\bullet\simeq R\Gamma(G,\mathbb V_{i,0})
$
of finite projective $B_{i,0}$-modules in degrees $[0,e]$.

Choose integers $m_n\to\infty$ for each $n$ and Banach stage integral models
$$
A_{n,0}
=R_0[\Delta]\langle T,Y\rangle/(T^{m_n}-uY),
\qquad A_n=A_{n,0}[1/u].
$$
We have
$\Lambda_R=\varprojlim_nA_n$.
On $X_i$, use the corresponding rings $A_{n,i,0}$ and
$A_{n,i}=A_{n,i,0}[1/u]$ with $R_{i,0}$ in place of $R_0$.
These are the rational pullbacks of the coefficient covering
to the $n$th weight stage. Since $T^{m_n}\in uA_{n,i,0}$,
the $(u,T)$-adic and $u$-adic topologies on $A_{n,i,0}$
are equivalent. Integral base change and inversion of $u$ give
$
Q_{G,i,0}^\bullet\otimes_{B_{i,0}}A_{n,i}
\simeq R\Gamma(G,V_i\otimes_{R_i}A_{n,i}),
$
where the representation on the right has the same inverse
universal character action. The module $R\Gamma(G,V_i\otimes_{R_i}A_{n,i})$ is also perfect as $V_{i, 0}$ is a projective module over $R_{i, 0}$.

Using analytic descent at every stage $n$ from \cite{andreychev2021pseudocoherent}, we get compatible complexes
$$
C_n=R\Gamma(G,V\otimes_RA_n)
\in\mathbf D_{\mathrm{perf}}^{[0,e]}(A_n),
\qquad
C_{n+1}\otimes_{A_{n+1}}^{\mathbf L}A_n\simeq C_n,
$$
whose restrictions to $A_{n,i}$ are given by $R\Gamma(G, V_i \otimes_{R_i} A_{n, i})$ (which is quasi-isomorphic to a perfect complex as above).

For each $i$ and $j$, choose $t_{i,j}$ such that
$Q_{G,i,0}^j$ is a direct summand of $B_{i,0}^{t_{i,j}}$.
Let $z$ be a maximal point of $A_n$ and put $L_z=\kappa(z)$.
Its coefficient point belongs to some $X_i$.
Evaluation at $z$ then factors through $A_{n,i}$, and the
local model gives
$
C_n\otimes_{A_n}^{\mathbf L}L_z
\simeq Q_{G,i,0}^\bullet\otimes_{B_{i,0}}L_z.
$
Consequently,
$
\dim_{L_z}H^j(C_n\otimes_{A_n}^{\mathbf L}L_z)
\leq\max_i t_{i,j}.
$
This bound is independent of $n$ and of $z$ because the
coefficient covering is finite and the complexes $Q_{G,i,0}^\bullet$
are fixed. Lemma~\ref{uniformperfectcomplex} therefore gives
a single finite projective complex in degrees $[0,e]$ representing
$\mathbf R\varprojlim_nC_n$ over $\Lambda_R$.

It remains to identify this limit with the stated cohomology.
The modules $V\otimes_RA_n$ have dense transition maps.
Locally constant functions on the compact profinite spaces
$G^j$ are dense in continuous functions, so the transition
maps on continuous cochains are dense as well.
The topological Mittag--Leffler lemma makes the higher
inverse-limit terms vanish. Continuous maps into the inverse
limit are the compatible systems of continuous maps into
its stages. Hence
$
R\Gamma(G,V_{\mathrm{Iw}})
\simeq
\mathbf R\varprojlim_n R\Gamma(G,V\otimes_RA_n)
\in\mathbf D_{\mathrm{perf}}^{[0,e]}(\Lambda_R).
$

Finally, let $R\to R'$ be a continuous coefficient map.
As above, pull back the coefficient covering and choose compatible
Banach stages. The local integral base-change theorem and descent
give a quasi-isomorphism at each stage. Since the full distribution
complexes have finite projective representatives, the derived
tensor product is computed termwise and restricts to these
stagewise comparisons. The cone of the full comparison map
has coadmissible cohomology, which vanishes at every target stage
and hence vanishes globally. Thus
$$
R\Gamma(G,V_{\mathrm{Iw}})
\otimes_{\Lambda_R}^{\mathbf L}\Lambda_{R'}
\simeq
R\Gamma(G,(V\otimes_RR')_{\mathrm{Iw}}).
$$
This proves all assertions, with $e=1$ for inertia and $e=2$
for the other groups.
\end{proof}

\begin{proposition}
\label{selmerconeperfect}
Let $A$ be a ring. Suppose that the global and local Galois cohomology complexes in Definition \ref{nekcomplex}, and all the local conditions $U_v^\bullet$, belong to $\mathbf{D}_{\mathrm{perf}}^{[0,2]}(A)$. The resulting Selmer complex is then perfect over $A$ with amplitude $[0,3]$.
\end{proposition}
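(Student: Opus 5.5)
The argument is formal, using only that perfect complexes with prescribed Tor-amplitude are stable under finite direct sums, shifts, and cones. Write
\[
P^\bullet=C^\bullet_{\mathrm{cont}}(G_{\mathbb Q,S},M)\oplus\bigoplus_{v\in S}U_v^\bullet,
\qquad
Q^\bullet=\bigoplus_{v\in S}C^\bullet_{\mathrm{cont}}(G_{\mathbb Q_v},M),
\]
and let $\alpha=\oplus_v(\mathrm{res}_v-i_v):P^\bullet\to Q^\bullet$. By hypothesis every summand lies in $\mathbf D^{[0,2]}_{\mathrm{perf}}(A)$. Since $S$ is finite, both $P^\bullet$ and $Q^\bullet$ lie in $\mathbf D^{[0,2]}_{\mathrm{perf}}(A)$.

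Next, I view $\mathrm{Cone}(\alpha)[-1]$ as sitting in the distinguished triangle
\[
\mathrm{Cone}(\alpha)[-1]\to P^\bullet\xrightarrow{\alpha}Q^\bullet\xrightarrow{+1}.
\]
The key step is to replace $\alpha$ by a map between bounded complexes of finite projective modules. I choose representatives $\tilde P^\bullet$ and $\tilde Q^\bullet$ of finite projective $A$-modules in degrees $[0,2]$. Since $\tilde P^\bullet$ is a bounded complex of projectives, $\mathrm{Hom}_{\mathbf D(A)}(\tilde P^\bullet,\tilde Q^\bullet)$ is computed by homotopy classes of chain maps. Hence $\alpha$ is represented by an honest chain map $\tilde\alpha:\tilde P^\bullet\to\tilde Q^\bullet$, and $\mathrm{Cone}(\alpha)[-1]\simeq\mathrm{Cone}(\tilde\alpha)[-1]$ because the cone is well defined up to isomorphism in the derived category. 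One subtlety is that the cone in Definition~\ref{nekcomplex} is formed on the actual cochain complexes. Its quasi-isomorphism class nevertheless depends only on the morphism of complexes up to the chosen quasi-isomorphisms, since cones of quasi-isomorphic arrows are quasi-isomorphic by the five lemma on long exact sequences.

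It remains to compute degrees. The complex $\mathrm{Cone}(\tilde\alpha)[-1]$ has degree-$n$ term $\tilde P^n\oplus\tilde Q^{n-1}$. These terms are finite projective and vanish unless $0\le n\le 3$: $\tilde P$ contributes in degrees $0,1,2$ and the shifted $\tilde Q$ in degrees $1,2,3$. This gives the representative in degrees $[0,3]$, so the Selmer complex lies in $\mathbf D^{[0,3]}_{\mathrm{perf}}(A)$.

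The only step needing care is the lifting of $\alpha$ to a chain map between the projective representatives. This rests on the standard fact that a bounded-above complex of projectives is K-projective, so no further obstacle is expected.
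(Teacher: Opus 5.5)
Your argument is correct and is the same as the paper's: choose finite projective representatives in degrees $[0,2]$ for the source and target, realize the defining morphism as an honest chain map between them, and read off that the shifted cone has finite projective terms $\tilde P^n\oplus\tilde Q^{n-1}$ in degrees $[0,3]$. Your remarks on K-projectivity and on comparing the cones of quasi-isomorphic arrows spell out what the paper's one-line proof leaves implicit.
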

\begin{proof}
Choose finite projective representatives in degrees $[0,2]$ for the source and target of the morphism defining the Selmer complex, and represent this morphism by a chain map. Its cone shifted by $[-1]$ has finite projective terms in degrees $[0,3]$.
\end{proof}

\begin{remark}
This is the cone argument underlying Theorem 1.11 of \cite{pottharst2013analytic}. Its application here uses Lemma \ref{selmergaloisperfect}. In particular, the passage from $R_0$ to $R_0[1/u]$ and the finite projective model over $\Lambda_R$ are part of the argument; neither follows simply by calling a pseudoaffinoid algebra an affinoid algebra.
\end{remark}

\subsection{Hodge-Tate weights for trianguline $(\varphi,\Gamma)$-modules}
Let $L$ be a finite extension of $\mathbb{Q}_p$. We use the convention that the cyclotomic character has Hodge-Tate weight $-1$. A continuous character $\delta:\mathbb{Q}_p^\times\to L^\times$ is Hodge-Tate of weight $s\in\mathbb{Z}$ if its restriction to an open subgroup of $\mathbb{Z}_p^\times$ is $z\mapsto z^{-s}$. Equivalently, $\delta|_{\mathbb{Z}_p^\times}=\eta(z)z^{-s}$ for a finite-order character $\eta$. There is no restriction on $\delta(p)$ in this definition for a rank-one $(\varphi,\Gamma)$-module.

Fix a saturated triangulation $0=\mathrm{Fil}^0D\subset\mathrm{Fil}^1D\subset\cdots\subset\mathrm{Fil}^dD=D$ with $\mathrm{gr}^iD\simeq\mathcal{R}_L(\delta_i)$, and put $s_i=-\delta_i'(1)$. For $D=D_{\mathrm{rig}}(V)$, the $s_i$ are the Hodge-Tate-Sen weights of $V$, with multiplicity. Integrality of these numbers alone does not imply that $V$ is Hodge-Tate when weights repeat. We call the fixed triangulation \textit{non-critical} if $s_i\in\mathbb{Z}$ for all $i$ and $s_1<\cdots<s_d$. In this situation $D$ is de Rham. The order here is the order of the graded pieces of the given triangulation; we do not reorder these pieces. We call $\delta$ \textit{exceptional} if $\delta(z)=z^n$ or $\delta(z)=z^n|z|$ for some $n\in\mathbb{Z}$, and call the triangulation non-exceptional if every $\delta_i$ is non-exceptional.

\begin{remark}
For an equal-characteristic coefficient field $L=\mathbb{F}_q((u))$, we say it has a \textit{tame weight} $k\bmod p-1$ for a character satisfying $\delta|_{\mathbb{Z}_p^\times}=\overline\chi_{\mathrm{cyc}}^{-k}$, using $\mathbb{Z}_p^\times\simeq\Gamma_{\mathbb{Q}_p}$. This requires triviality on $1+p\mathbb{Z}_p$ and imposes no condition on $\delta(p)$, so it does not imply that the whole character has finite image. The characteristic-$p$ universal character with $\delta(1+p)=1+u$ does not satisfy this condition. We explain why a character as above should not be considered Hodge-Tate below. Consider a character $\delta: \Gamma_K \to \mathbb{Z}_p[[u]][\dfrac{1}{u}]$ given by $\delta(x) = \kappa_{\chi} \circ \chi_{cyc}(x)$, where $\kappa_{\chi}$ is the universal character as defined in \cite{andreatta2018halo}. This means that if $\chi_{cyc}(x) = 1+p$, then $\kappa_{\chi} \circ  \chi_{cyc}(x) = 1+ u$. Since such characters naturally show up in the description of trianguline $(\varphi, \Gamma)$-modules attached to modular forms in the boundary of the eigencurve, it is natural to wonder why we don't allow such characters to be Hodge-Tate.
    \begin{enumerate}
        \item We first notice that if such a character is Hodge-Tate in the generic fiber and its finite-order factor is trivial on $\chi_{cyc}^{-1}(1+p\mathbb{Z}_p)$, then it must lie inside the disk $|u| \leq |p|$ in $(\textrm{Spa } \mathbb{Z}_p[[u]])^{an}$. This locus is disjoint from the boundary. At the boundary, the character given above deforms to $\bar{\delta}: \Gamma_K \to \mathbb{F}_p((u))^{\times}$ taking $\bar{\delta}(x) = (1+ u)$. It is unusual to have a sudden jump at the boundary where such a character is allowed to be Hodge-Tate.

        \item In the setting of ordinary families of overconvergent eigenforms, whose associated Galois representations are studied in \cite{kisin2003overconvergent}, consider specializations approaching the boundary of the eigencurve in weight space. If the finite-order part of the weight character on $1+p\mathbb{Z}_p$ is fixed, then the Hodge--Tate specializations remain in a closed subdisk strictly contained in the open unit disk. Consequently, sufficiently close to the boundary, the associated Galois representations are no longer Hodge--Tate, and hence are neither de Rham nor crystalline.
\item The universal character $\kappa_{\chi}$ as discussed above is not locally analytic at the boundary as explained in \cite{andreatta2018halo}.

  \end{enumerate}

A residue class modulo $p-1$ has no well-defined sign, so this convention does not define a number of negative Hodge-Tate weights at a characteristic-$p$ point. The specialization and rank statements below apply to such points without assigning them Hodge-Tate weights. The Hodge-Tate and Bloch-Kato comparisons are made only at characteristic-zero points.
\end{remark}

\subsection{Selmer groups}
\begin{definition}
Let $K$ be a finite extension of $\mathbb{Q}_p$ and $V$ a finite-dimensional $L$-linear representation of $G_K$, where $L/\mathbb{Q}_p$ is finite. The local Bloch-Kato condition is
$$H^1_f(K,V)=\ker\left(H^1(G_K,V)\longrightarrow H^1(G_K,V\otimes_{\mathbb{Q}_p}B_{\mathrm{cris}})\right).$$
Thus it consists of extension classes that split after tensoring with $B_{\mathrm{cris}}$. If $V$ is crystalline, these are exactly the extensions $0\to V\to E\to L\to0$ with crystalline middle term. For $l\neq p$, put $H^1_f(\mathbb{Q}_l,V)=H^1_{\mathrm{ur}}(\mathbb{Q}_l,V)=\ker(H^1(G_{\mathbb{Q}_l},V)\to H^1(I_l,V))$.
\end{definition}

\begin{definition}[Bloch-Kato Selmer group]
For a finite-dimensional $L$-linear representation $V$ of $G_{\mathbb{Q},S}$, define
$$H^1_f(\mathbb{Q},V)=\ker\left(H^1(G_{\mathbb{Q},S},V)\longrightarrow\bigoplus_{v\in S}H^1(\mathbb{Q}_v,V)/H^1_f(\mathbb{Q}_v,V)\right).$$
The maps are localization followed by the quotient maps. Since $V$ is unramified outside $S$, this is the usual global Bloch-Kato Selmer group.
\end{definition}

\begin{definition}
Let $V$ be a finite projective $R$-linear representation of $G_{\mathbb{Q},S}$ and let $D\subset D_{\mathrm{rig}}(V|_{G_{\mathbb{Q}_p}})$ be a saturated $(\varphi,\Gamma)$-submodule with finite projective quotient. At $p$, put $U_p^+=C^\bullet_{\varphi,\Gamma}(D)$, with the morphism induced by inclusion and the comparison with Galois cohomology. For $l\neq p$, put $U_l^+=[V^{I_l}\xrightarrow{\mathrm{Frob}_l-1}V^{I_l}]$ in degrees $0$ and $1$, with its inflation morphism to $C^\bullet_{\mathrm{cont}}(G_{\mathbb{Q}_l},V)$. Define
$$C_S^\bullet(V,D)=\mathrm{Cone}\left(C^\bullet_{\mathrm{cont}}(G_{\mathbb{Q},S},V)\oplus\bigoplus_{v\in S}U_v^+\xrightarrow{\oplus_v(\mathrm{res}_v-i_v)}\bigoplus_{v\in S}C^\bullet_{\mathrm{cont}}(G_{\mathbb{Q}_v},V)\right)[-1].$$
For the Iwasawa complex, use $V_{\mathrm{Iw}}$ with the action specified above and set $U_{p,\mathrm{Iw}}^+=C^\bullet_{\varphi,\Gamma}(\mathbf{Dfm}(D))$ and $U_{l,\mathrm{Iw}}^+=[V_{\mathrm{Iw}}^{I_l}\xrightarrow{\mathrm{Frob}_l-1}V_{\mathrm{Iw}}^{I_l}]$ for $l\neq p$. The same cone defines $C_{S,\mathrm{Iw}}^\bullet(V,D)$. Here Frobenius acts on both factors of $V_{\mathrm{Iw}}$, including the inverse universal character. We suppress $S$ from now on.
\end{definition}

We first record a crystalline case of the comparison with the Bloch-Kato condition.
\begin{proposition}
Let $V$ be a representation over a finite extension $L/\mathbb{Q}_p$ which is crystalline at $p$, and let $D\subset D_{\mathrm{rig}}(V)$ be a saturated crystalline $(\varphi,\Gamma)$-submodule. Put $E=\mathbf{D}_{\mathrm{cris}}(D)\subset\mathbf{D}_{\mathrm{cris}}(V)$ and $F=\mathbf{D}_{\mathrm{cris}}(V)/E$, with the induced and quotient filtrations. Suppose that $\mathrm{Fil}^0E=0$, $\mathrm{Fil}^0F=F$, $F^{\varphi=1}=0$, and $E/(p\varphi-1)E=0$. Then the inclusion induces an isomorphism $H^1_{\varphi,\Gamma}(D)\xrightarrow{\sim}H^1_f(\mathbb{Q}_p,V)$, and $H^1(C^\bullet(V,D))\simeq H^1_f(\mathbb{Q},V)$.
\end{proposition}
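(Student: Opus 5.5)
The plan is to compare the local condition at $p$ with the Bloch--Kato condition, and then pass to the global statement by a long exact sequence argument.

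\emph{Step 1: reduce the local statement to dimension counts.} For the local statement, I would first show that $H^1_{\varphi,\Gamma}(D)\to H^1(G_{\mathbb{Q}_p},V)$ is injective. The kernel is a quotient of $H^0_{\varphi,\Gamma}(D_{\mathrm{rig}}(V)/D)$. Since $D_{\mathrm{rig}}(V)/D$ is crystalline with $\mathbf{D}_{\mathrm{cris}}=F$, its $H^0$ equals $(\mathrm{Fil}^0F)^{\varphi=1}=F^{\varphi=1}=0$. Next I would show that the image lands in $H^1_f$. Every extension of $\Lambda_{L,\mathrm{rig}}$ by $D$ with $D$ crystalline has its crystallinity governed by Berger's equivalence together with $\mathbf{D}_{\mathrm{cris}}$. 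I would instead compare dimensions. By the field Euler characteristic formula (Corollary 3.14 of \cite{bellovin2024cohomology}),
\[
\dim H^1_{\varphi,\Gamma}(D)=\mathrm{rk}(D)+\dim H^0(D)+\dim H^2(D).
\]
Here $H^0(D)=(\mathrm{Fil}^0E)^{\varphi=1}=0$, and by duality $H^2(D)$ is dual to $H^0(D^*)$, which equals $\bigl(\mathrm{Fil}^0\mathbf{D}_{\mathrm{cris}}(D^*)\bigr)^{\varphi=1}$. This is controlled by $E/(p\varphi-1)E=0$ after dualizing, since $\mathbf{D}_{\mathrm{cris}}(D^*)=E^\vee(1)$ has $\varphi=1$ eigenvectors corresponding to cokernel classes of $p\varphi-1$ on $E$. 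So $\dim H^1(D)=\mathrm{rk}(D)=\dim E$.

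\emph{Step 2: compute $\dim H^1_f(\mathbb{Q}_p,V)$ and check the image lies in it.} On the other side, the Bloch--Kato formula gives
\[
\dim H^1_f(\mathbb{Q}_p,V)=\dim \mathbf{D}_{\mathrm{dR}}/\mathrm{Fil}^0+\dim H^0(V).
\]
The hypotheses give $\mathbf{D}_{\mathrm{dR}}(V)/\mathrm{Fil}^0=E/\mathrm{Fil}^0E\oplus F/\mathrm{Fil}^0F=E$ in dimension. Also $H^0(V)$ sits between $H^0(D)=0$ and $(\mathrm{Fil}^0F)^{\varphi=1}=0$, so it vanishes. Hence the two dimensions agree. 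The remaining point is containment. I would apply the same formula to $D$ itself: $H^1_f(D)$ in the sense of Benois/Nakamura (crystalline extensions) has dimension $\dim E/\mathrm{Fil}^0E+\dim H^0(D)=\dim E=\dim H^1(D)$, so every class of $H^1(D)$ is crystalline. Its pushforward is then crystalline in $V$. Combined with injectivity and the equal dimensions, this gives the isomorphism. I expect this step, identifying all of $H^1(D)$ with $H^1_f(D)$ and the role of the $p\varphi-1$ hypothesis, to be the main point requiring care. I would cite Benois' computation of $H^1_f$ for crystalline $(\varphi,\Gamma)$-modules rather than reprove it.

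\emph{Step 3: the global statement.} The long exact sequence of the cone gives
\[
H^0(\text{local})\to H^1(C^\bullet(V,D))\to H^1(G_{\mathbb{Q},S},V)\oplus\bigoplus_v H^1(U_v^+)\to\bigoplus_v H^1(\mathbb{Q}_v,V).
\]
I would first check that the leftmost map has vanishing contribution. The maps $H^0(U_v^+)\to H^0(\mathbb{Q}_v,V)$ are isomorphisms for $l\neq p$, since they are $V^{G_{\mathbb{Q}_l}}$ on both sides. At $p$ both terms vanish by the computation above. So $H^1(C^\bullet)$ is the fibre product of $H^1(G_{\mathbb{Q},S},V)$ with the $H^1(U_v^+)$ over the local $H^1$'s. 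The maps $H^1(U_v^+)\to H^1(\mathbb{Q}_v,V)$ are injective with image $H^1_f$: at $l\neq p$ by inflation--restriction, and at $p$ by Step 2. Hence the fibre product is exactly $H^1_f(\mathbb{Q},V)$.
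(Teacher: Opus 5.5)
Your proposal is correct. The global step is the paper's argument: $H^0(\mathbb{Q}_p,V)=0$, the isomorphism $H^0(U_l^+)\cong H^0(\mathbb{Q}_l,V)$ together with inflation at $l\neq p$, and the long exact sequence of the cone.

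Where you genuinely differ is the local comparison at $p$. The paper does not prove $H^1_{\varphi,\Gamma}(D)\xrightarrow{\sim}H^1_f(\mathbb{Q}_p,V)$. It cites Proposition~3.7(3) of Pottharst and Proposition~5.19 of K\"u\c{c}\"uk, and only adds the $H^0$-vanishing needed for injectivity. You instead derive it by a dimension count:
\begin{itemize}
\item injectivity comes from $(\mathrm{Fil}^0F)^{\varphi=1}=0$;
\item $\dim H^1_{\varphi,\Gamma}(D)=\dim E$ comes from the Euler characteristic formula, with $H^0(D)=(\mathrm{Fil}^0E)^{\varphi=1}=0$ and $H^2(D)\cong H^0(D^*)^\vee=0$, the latter because $(E^\vee(1))^{\varphi=1}\cong(E/(p\varphi-1)E)^\vee$;
\item $\dim H^1_f(\mathbb{Q}_p,V)=\dim E$ comes from Bloch--Kato;
\item the image lies in $H^1_f$ because Benois' formula forces $H^1_f(D)=H^1(D)$, and crystalline extensions push forward to crystalline extensions.
\end{itemize}

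Your route needs a few more standard inputs: Liu's Euler characteristic and duality, the crystalline $H^0$ formula, Benois' dimension formula, and Berger's equivalence. The last is what identifies $E$ and $F$ with $\mathbf{D}_{\mathrm{cris}}(D)$ and $\mathbf{D}_{\mathrm{cris}}(D_{\mathrm{rig}}(V)/D)$ with their induced and quotient filtrations; the paper also uses this implicitly. In return, your argument makes it transparent where each of the four hypotheses enters. In particular, $E/(p\varphi-1)E=0$ is exactly what kills $H^2_{\varphi,\Gamma}(D)$, and so is what makes $\dim H^1_{\varphi,\Gamma}(D)$ match $\dim H^1_f(\mathbb{Q}_p,V)$. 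The paper's citation is shorter, but it leaves the reader to check that Pottharst's hypotheses and filtration conventions match the ones stated here.
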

\begin{proof}
The crystalline comparison of local conditions gives the first assertion under precisely these filtration and Frobenius hypotheses; see Proposition 3.7(3) of \cite{pottharst2013analytic} and Proposition 5.19 of \cite{kuccuk2023factorization}. The notation distinguishes the filtered $\varphi$-module $E$ from the Robba-ring module $D$ whose Herr cohomology occurs in the local condition. Moreover, $H^0_{\varphi,\Gamma}(D)=0$ by $\mathrm{Fil}^0E=0$, and $H^0_{\varphi,\Gamma}(D_{\mathrm{rig}}(V)/D)=0$ by $F^{\varphi=1}=0$. Hence $H^0(\mathbb{Q}_p,V)=0$ and the local map in degree $1$ is injective. At $l\neq p$, inflation identifies $H^1(U_l^+)$ with $H^1_{\mathrm{ur}}(\mathbb{Q}_l,V)$ and is an isomorphism in degree $0$. The long exact sequence of the defining cone therefore identifies $H^1(C^\bullet(V,D))$ with the kernel imposing these local conditions, which is $H^1_f(\mathbb{Q},V)$.
\end{proof}

For the family statements assume from now on that $H^1(I_l,V)$ is projective over $R$ for every $l\in S$, $l\neq p$.
\begin{proposition}
\label{selmerperfect}
The complexes $C^\bullet(V,D)$ and $C^\bullet_{\mathrm{Iw}}(V,D)$ belong to $\mathbf{D}_{\mathrm{perf}}^{[0,3]}(R)$ and $\mathbf{D}_{\mathrm{perf}}^{[0,3]}(\Lambda_R)$, respectively.
\end{proposition}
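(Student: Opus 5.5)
The plan is to apply Proposition~\ref{selmerconeperfect}, once with $A=R$ and once with $A=\Lambda_R$. That proposition needs every constituent of the cone to lie in $\mathbf{D}_{\mathrm{perf}}^{[0,2]}(A)$. So I will check, one at a time, the global complex, the local complexes at each $v\in S$, and the local conditions $U_v^+$. Once these are established, the cone shifted by $[-1]$ has finite projective terms in degrees $[0,3]$, which is the assertion.

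The Galois cohomology pieces come directly from Lemma~\ref{selmergaloisperfect}. It gives that $C^\bullet_{\mathrm{cont}}(G_{\mathbb Q,S},V)$ and $C^\bullet_{\mathrm{cont}}(G_{\mathbb Q_v},V)$ are in $\mathbf D^{[0,2]}_{\mathrm{perf}}(R)$, and that their cyclotomic deformations over $V_{\mathrm{Iw}}$ are in $\mathbf D^{[0,2]}_{\mathrm{perf}}(\Lambda_R)$.

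At $p$, the local condition over $R$ is $U_p^+=C^\bullet_{\varphi,\Gamma}(D)$, and Corollary~\ref{perf-galois} puts it in $\mathbf D^{[0,2]}_{\mathrm{perf}}(R)$. For the Iwasawa version I will use Theorem~\ref{cyclotomicdeform}, which identifies $C^\bullet_{\varphi,\Gamma}(\mathbf{Dfm}(D))\simeq C^\bullet_\psi(D)$ compatibly with the $\Lambda_R$-structure. Theorem~\ref{perfectiwa}(1) then gives $U^+_{p,\mathrm{Iw}}\in\mathbf D^{[0,2]}_{\mathrm{perf}}(\Lambda_R)$. The $\varphi$- and $\psi$-Herr complexes are used interchangeably here, as the paper already does.

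For $l\ne p$, I will argue as follows. Let $G=G_{\mathbb Q_l}/I_l$, which is procyclic and topologically generated by Frobenius. Inflation together with the Hochschild--Serre spectral sequence identifies $U_l^+$ with $R\Gamma(G,\,H^0(I_l,V))$. The key point is that $H^0(I_l,V)=V^{I_l}$ is finite projective. To see this, I use Lemma~\ref{selmergaloisperfect}: $R\Gamma(I_l,V)$ is perfect with amplitude $[0,1]$, and by hypothesis its top cohomology $H^1(I_l,V)$ is projective. Choose a representative $[P^0\to P^1]$ by finite projectives. The image of $P^0\to P^1$ is the kernel of the surjection $P^1\to H^1(I_l,V)$ onto a projective module, so it is projective; hence the surjection $P^0\to\mathrm{im}$ splits, and $\ker=H^0(I_l,V)$ is a direct summand of $P^0$, hence projective. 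The two-term complex $[V^{I_l}\xrightarrow{\mathrm{Frob}_l-1}V^{I_l}]$ therefore lies in $\mathbf D^{[0,1]}_{\mathrm{perf}}(R)$.

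For the Iwasawa version I expect the main obstacle, namely showing that $V_{\mathrm{Iw}}^{I_l}$ is finite projective over $\Lambda_R$. Since $\Gamma$ is unramified at $l$, inertia acts trivially on the $\Lambda_R$-factor, so $V_{\mathrm{Iw}}^{I_l}=V^{I_l}\widehat\otimes_R\Lambda_R$. I would justify this stagewise over the Banach algebras $A_n$ by flatness of $A_n$ over $R$, as in Lemma~\ref{robbaflat}, and then pass to the limit using coadmissibility. This module is finite projective over $\Lambda_R$, so $U^+_{l,\mathrm{Iw}}\in\mathbf D^{[0,1]}_{\mathrm{perf}}(\Lambda_R)$.

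If the identification of invariants with the completed tensor product turns out to be delicate, I would instead use the base-change statement of Lemma~\ref{selmergaloisperfect}. That statement gives $R\Gamma(I_l,V_{\mathrm{Iw}})\simeq R\Gamma(I_l,V)\otimes^{\mathbf L}_R\Lambda_R$, and taking $H^0$ of a projective-amplitude-$[0,1]$ complex with projective $H^1$ yields the same conclusion. With all inputs in hand, Proposition~\ref{selmerconeperfect} finishes both cases.
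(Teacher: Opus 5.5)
Your proposal is correct and follows the paper's proof essentially step for step: both feed Lemma~\ref{selmergaloisperfect}, the Herr and Iwasawa perfectness theorems with the cyclotomic comparison at $p$, and the split-exact-sequence argument for projectivity of $V^{I_l}$ into Proposition~\ref{selmerconeperfect}. Of your two justifications for $V_{\mathrm{Iw}}^{I_l}=V^{I_l}\otimes_R\Lambda_R$, the derived base-change one is the one the paper implicitly relies on when it says that the formation of $V^{I_l}$ commutes with coefficient base change. Flatness alone would not justify commuting invariants with completed tensor products, so that is the version to use.
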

\begin{proof}
The Herr-cohomology perfectness theorem gives $U_p^+\in\mathbf{D}_{\mathrm{perf}}^{[0,2]}(R)$, and Theorem \ref{perf02iw}, together with the cyclotomic comparison, gives $U_{p,\mathrm{Iw}}^+\in\mathbf{D}_{\mathrm{perf}}^{[0,2]}(\Lambda_R)$. Here we use the finite projective representative over $\Lambda_R$ given by that theorem.

For $l\neq p$, Lemma \ref{selmergaloisperfect} represents $R\Gamma(I_l,V)$ by $[P_l^0\xrightarrow{d_l}P_l^1]$, with finite projective terms. Since $\mathrm{coker}(d_l)=H^1(I_l,V)$ is projective, the exact sequence $0\to\mathrm{im}(d_l)\to P_l^1\to H^1(I_l,V)\to0$ splits, so $\mathrm{im}(d_l)$ is projective. The sequence $0\to V^{I_l}\to P_l^0\to\mathrm{im}(d_l)\to0$ then splits as well. Thus $V^{I_l}$ is finite projective and its formation commutes with coefficient base change. Consequently, $U_l^+\in\mathbf{D}_{\mathrm{perf}}^{[0,1]}(R)$.

The cyclotomic extension is unramified at $l\neq p$, so inertia acts trivially on the distribution factor and $V_{\mathrm{Iw}}^{I_l}=V^{I_l}\otimes_R\Lambda_R$. This gives a two-term finite projective representative for $U_{l,\mathrm{Iw}}^+$, with the deformed Frobenius action. Lemma \ref{selmergaloisperfect} provides the global and local Galois complexes, and Proposition \ref{selmerconeperfect} proves both assertions.
\end{proof}

\begin{corollary}
\label{selmerbasechange}
Let $R\to R'$ be a continuous morphism of pseudoaffinoid algebras, put $V'=V\otimes_RR'$ and $D'=D\widehat\otimes_RR'$, and write $\Lambda_{R'}=\Lambda_{R',(0,\infty]}(\Gamma)$. There are natural quasi-isomorphisms
$$C^\bullet(V,D)\otimes_R^{\mathbf L}R'\simeq C^\bullet(V',D'),\qquad C^\bullet_{\mathrm{Iw}}(V,D)\otimes_{\Lambda_R}^{\mathbf L}\Lambda_{R'}\simeq C^\bullet_{\mathrm{Iw}}(V',D').$$
\end{corollary}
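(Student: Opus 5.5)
The plan is to show that every ingredient in the defining cone of the Selmer complex commutes with derived base change, and then conclude because cones commute with derived tensor products. I treat the $R$-level and $\Lambda_R$-level statements in parallel.

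By Proposition~\ref{selmerperfect}, all complexes in the cone are perfect. Fix bounded finite projective representatives of the source
\[
C^\bullet_{\mathrm{cont}}(G_{\mathbb Q,S},V)\oplus\bigoplus_v U_v^+
\]
and of the target $\bigoplus_v C^\bullet_{\mathrm{cont}}(G_{\mathbb Q_v},V)$, and a chain map between them representing $\oplus_v(\mathrm{res}_v-i_v)$. Then $C^\bullet(V,D)\otimes^{\mathbf L}_R R'$ is computed termwise: it is the cone of the base-changed chain map. So it suffices to give natural quasi-isomorphisms, compatible with the maps $\mathrm{res}_v$ and $i_v$, between each base-changed term and the corresponding term for $(V',D')$.

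The ingredients come from three sources.
\begin{itemize}
\item \emph{Global and local Galois cohomology.} Lemma~\ref{selmergaloisperfect} gives base change for global and local Galois cohomology, both over $R$ and over $\Lambda_R$. The identification $(V\otimes_RR')_{\mathrm{Iw}}=V_{\mathrm{Iw}}\widehat\otimes_{\Lambda_R}\Lambda_{R'}$ uses the same inverse universal character.
\item \emph{The local condition at $p$.} Over $R$, base change for $U_p^+=C^\bullet_{\varphi,\Gamma}(D)$ follows from Corollary~\ref{perf-galois} together with the base change theorem for Iwasawa complexes stated just before the cyclotomic deformation section, specialized along the trivial character as in Theorem~\ref{basechange}. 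Over $\Lambda_R$, I use Theorem~\ref{cyclotomicdeform}, which identifies $C^\bullet_{\varphi,\Gamma}(\mathbf{Dfm}(D))$ with $C^\bullet_\psi(D)$. The Iwasawa base change theorem then gives $C^\bullet_\psi(D)\otimes^{\mathbf L}\Lambda_{R'}\simeq C^\bullet_\psi(D\widehat\otimes R')$.
\item \emph{The local conditions at $l\neq p$.} As shown in the proof of Proposition~\ref{selmerperfect}, projectivity of $H^1(I_l,V)$ makes $V^{I_l}$ a direct summand of a finite projective module. Its formation therefore commutes with any base change, since $R\Gamma(I_l,V)$ commutes with base change and the relevant exact sequences are split. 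Hence $V^{I_l}\otimes_RR'=V'^{I_l}$, and the two-term complexes $U_l^+$ base change termwise, with $\mathrm{Frob}_l$ compatible. The Iwasawa version follows from $V_{\mathrm{Iw}}^{I_l}=V^{I_l}\otimes_R\Lambda_R$.
\end{itemize}

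The main obstacle is \emph{naturality}: the chosen quasi-isomorphisms must commute with $\mathrm{res}_v$ and $i_v$ in the derived category. Only then do they induce a map of cones. This requires checking that the base change maps all arise from the canonical maps on continuous cochains and Herr complexes, namely $x\mapsto x\otimes1$. The inflation map $i_l$ and the comparison $C^\bullet_{\varphi,\Gamma}(D)\to C^\bullet_{\mathrm{cont}}(G_{\mathbb Q_p},V)$ are defined functorially in the coefficients, so the relevant squares commute on the nose at the cochain level, before passing to projective representatives. Given this, the induced map on cones is a quasi-isomorphism by the five lemma applied to the long exact sequences.
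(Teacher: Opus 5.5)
Your proposal is correct and follows essentially the same route as the paper: base change for each ingredient (Lemma~\ref{selmergaloisperfect} for the Galois cochains; the Herr/Iwasawa base-change theorems together with the cyclotomic comparison at $p$; the splitting argument giving $V^{I_l}\otimes_RR'\simeq (V')^{I_l}$ at $l\neq p$), compatibility with the localization and local-condition maps, and then the derived tensor product of the defining cone computed termwise on finite projective representatives. The only differences are cosmetic: you obtain Herr base change at $p$ by specializing the Iwasawa base change along the trivial character via Theorem~\ref{basechange}, where the paper simply invokes a Herr base-change theorem, and you spell out the naturality check that the paper states in a single sentence.
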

\begin{proof}
Lemma \ref{selmergaloisperfect} gives derived base change for the Galois cohomology complexes. The Herr and Iwasawa cohomology base-change theorems give it for the local condition at $p$. For $l\neq p$, the splitting in the preceding proof gives $V^{I_l}\otimes_RR'\simeq(V')^{I_l}$, compatibly with Frobenius, and gives the analogous statement for cyclotomic deformation. These identifications respect the localization and local-condition morphisms. Applying derived tensor product to the defining cone proves the result. The displayed tensor products are computed termwise on finite projective representatives.
\end{proof}

\begin{lemma}
\label{complexblochkato}
Let $L/\mathbb{Q}_p$ be finite and $V$ a finite-dimensional continuous representation of $G_{\mathbb{Q},S}$ over $L$. Put $D=D_{\mathrm{rig}}(V|_{G_{\mathbb{Q}_p}})$ and fix a saturated non-exceptional triangulation $\mathrm{Fil}^\bullet D$. Then:
\begin{enumerate}
\item For every $0\leq a\leq d$, there is a natural isomorphism
$$H^1(C^\bullet(V,\mathrm{Fil}^aD))\simeq\ker\left(H^1(G_{\mathbb{Q},S},V)\longrightarrow\bigoplus_{\substack{l\in S\\l\neq p}}\frac{H^1(\mathbb{Q}_l,V)}{H^1_{\mathrm{ur}}(\mathbb{Q}_l,V)}\oplus H^1_{\varphi,\Gamma}(D/\mathrm{Fil}^aD)\right).$$
The component at $p$ is localization followed by the map induced by $D\to D/\mathrm{Fil}^aD$.
\item If the fixed triangulation is non-critical and $a$ is the number of strictly negative Hodge-Tate weights, then $H^1(C^\bullet(V,\mathrm{Fil}^aD))\simeq H^1_f(\mathbb{Q},V)$.
\end{enumerate}
\end{lemma}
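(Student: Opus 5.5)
For part (1) I will compute $H^1$ of the Selmer cone directly from its long exact sequence. Write $G=C^\bullet_{\mathrm{cont}}(G_{\mathbb{Q},S},V)$, $U=\bigoplus_v U_v^+$ and $W=\bigoplus_v C^\bullet_{\mathrm{cont}}(G_{\mathbb{Q}_v},V)$. The cone gives an exact sequence
$$H^0(G)\oplus H^0(U)\longrightarrow H^0(W)\longrightarrow H^1(C^\bullet(V,\mathrm{Fil}^aD))\longrightarrow H^1(G)\oplus H^1(U)\longrightarrow H^1(W).$$

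I will show that the first map is surjective. At $l\neq p$, inflation $H^0(U_l^+)=V^{G_{\mathbb{Q}_l}}\to H^0(\mathbb{Q}_l,V)$ is an isomorphism. At $p$, I will use the exact sequence $0\to\mathrm{Fil}^aD\to D\to D/\mathrm{Fil}^aD\to 0$ together with the comparison $H^i_{\varphi,\Gamma}(D)\simeq H^i(\mathbb{Q}_p,V)$. The map $H^0_{\varphi,\Gamma}(\mathrm{Fil}^aD)\to H^0_{\varphi,\Gamma}(D)$ has cokernel embedding into $H^0_{\varphi,\Gamma}(D/\mathrm{Fil}^aD)$. Non-exceptionality forces $H^0$ of every graded piece to vanish, since $H^0(\mathcal{R}_L(\delta))\neq0$ only when $\delta(z)=z^{-n}$ with $n\geq0$, which is exceptional. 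Dévissage along the induced triangulation of $D/\mathrm{Fil}^aD$ then gives $H^0_{\varphi,\Gamma}(D/\mathrm{Fil}^aD)=0$. So the connecting map vanishes, and $H^1$ of the Selmer complex is the fibre product
$$\{(c,(u_v)):\ \mathrm{res}_v c=i_v(u_v)\}.$$

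Next I will show that each $i_v$ is injective on $H^1$, which lets me rewrite this fibre product as a kernel. At $l\neq p$, $H^1(U_l^+)=V^{I_l}/(\mathrm{Frob}_l-1)=H^1(G_{\mathbb{Q}_l}/I_l,V^{I_l})$ injects by inflation–restriction with image $H^1_{\mathrm{ur}}$. At $p$, the long exact sequence gives $H^0_{\varphi,\Gamma}(D/\mathrm{Fil}^aD)=0\to H^1_{\varphi,\Gamma}(\mathrm{Fil}^aD)\to H^1_{\varphi,\Gamma}(D)\to H^1_{\varphi,\Gamma}(D/\mathrm{Fil}^aD)$. The image of $i_p$ is therefore exactly the kernel of the map to $H^1_{\varphi,\Gamma}(D/\mathrm{Fil}^aD)$. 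Projecting the fibre product to $c$ is then injective, with image the displayed kernel. Naturality is clear from the construction.

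For part (2), I will identify $\ker\bigl(H^1_{\varphi,\Gamma}(D)\to H^1_{\varphi,\Gamma}(D/\mathrm{Fil}^aD)\bigr)$ with $H^1_f(\mathbb{Q}_p,V)$. By the previous step this kernel is $H^1_{\varphi,\Gamma}(\mathrm{Fil}^aD)$. Non-critical means the weights $s_1<\dots<s_a$ are the negative ones, so $\mathrm{Fil}^aD$ is the part with negative weights, hence crystalline/de Rham with $\mathrm{Fil}^0=0$ in the right normalization, and $D/\mathrm{Fil}^aD$ carries the non-negative weights. I will then invoke the comparison of local conditions: Proposition 3.7 of \cite{pottharst2013analytic} and Proposition 5.19 of \cite{kuccuk2023factorization}, as in the crystalline proposition just proved, or Benois' description $H^1_f=H^1(\mathrm{Fil}^aD)$ for non-critical, non-exceptional triangulations.

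The main obstacle is exactly this comparison. The earlier proposition assumed crystalline data with $F^{\varphi=1}=0$ and $E/(p\varphi-1)E=0$, whereas here only de Rham, non-critical and non-exceptional are given. I will first try to reduce to that proposition by noting that non-exceptionality of each $\delta_i$ kills the $\varphi$-invariants and the $(p\varphi-1)$-coinvariants of the relevant $\mathbf{D}_{\mathrm{cris}}$ (or $\mathbf{D}_{\mathrm{pst}}$) pieces. The weight condition supplies the filtration hypotheses. If $V$ is only potentially crystalline, I will pass to a finite extension and take invariants. Combining the result with part (1) and the unramified conditions at $l\neq p$ gives $H^1_f(\mathbb{Q},V)$.
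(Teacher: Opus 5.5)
Your part (1) is correct and follows the same argument as the paper. There is a genuine gap in part (2). You reduce the local identification $\mathrm{im}\bigl(H^1_{\varphi,\Gamma}(\mathrm{Fil}^aD)\to H^1(\mathbb{Q}_p,V)\bigr)=H^1_f(\mathbb{Q}_p,V)$ to the crystalline comparison, if necessary after a finite extension. However, the hypotheses of the lemma do not make $V|_{G_{\mathbb{Q}_p}}$ potentially crystalline.

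Here is a counterexample. Let $E/\mathbb{Q}$ have non-split multiplicative reduction at $p$, put $V=V_pE$, and let $\mu$ be the unramified character with $\mu(p)=-1$. Then $D$ has the saturated triangulation with parameters $\delta_1(z)=z|z|\mu(z)$ and $\delta_2=\mu$. Both parameters are non-exceptional, since $\mu(p)\notin\{1,p,p^{-1}\}$. The weights are $-1<0$, so the triangulation is non-critical with $a=1$. Yet $V$ has nonzero monodromy over every finite extension, so neither the crystalline proposition nor ``pass to a finite extension and take invariants'' applies.

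Even when $V$ is potentially crystalline, the reduction is not automatic. A ramified finite-order $\delta_i$ is non-exceptional over $\mathbb{Q}_p$ but becomes trivial on $G_K$ for a suitable finite $K$. Then a hypothesis such as $F^{\varphi=1}=0$ can fail over $K$, and you would have to track the $\mathrm{Gal}(K/\mathbb{Q}_p)$-action by hand. Your alternative appeal to Benois is only named, not checked against these hypotheses.

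The missing idea is a dimension count, which needs no crystalline input; this is what the paper does, following Bella\"{\i}che's Prop.~5. There are three steps.
\begin{enumerate}
\item Every extension of $\mathcal{R}_L$ by $\mathrm{Fil}^aD$ is trianguline with strictly increasing integral weights $s_1<\cdots<s_a<0$. Hence it is de Rham: inductively, $H^1_g=H^1$ for a de Rham module all of whose weights are negative, by the Bloch--Kato dimension formula. So the image of $H^1_{\varphi,\Gamma}(\mathrm{Fil}^aD)$ lies in $H^1_g(\mathbb{Q}_p,V)$.
\item Non-exceptionality gives $\mathbf{D}_{\mathrm{cris}}(V^*(1))^{\varphi=1}=0$, since a nonzero element would force some $\delta_i$ to be of the form $z^n|z|$. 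Hence $H^1_g(\mathbb{Q}_p,V)=H^1_f(\mathbb{Q}_p,V)$.
\item The two sides have the same dimension. The map $H^1_{\varphi,\Gamma}(\mathrm{Fil}^aD)\to H^1(\mathbb{Q}_p,V)$ is injective by your part (1). The Euler characteristic formula gives $\dim H^1_{\varphi,\Gamma}(\mathrm{Fil}^aD)=a$, using $H^0_{\varphi,\Gamma}(\mathrm{Fil}^aD)=0$ and also $H^2_{\varphi,\Gamma}(\mathrm{Fil}^aD)=0$. The latter follows by d\'evissage from $H^2_{\varphi,\Gamma}(\mathrm{gr}^iD)=0$, which you did not record. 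On the other side, $\dim H^1_f(\mathbb{Q}_p,V)=\dim H^0(\mathbb{Q}_p,V)+\dim\mathbf{D}_{\mathrm{dR}}(V)/\mathrm{Fil}^0=0+a$.
\end{enumerate}
An inclusion between spaces of equal dimension is an equality. This works uniformly for semistable and potentially semistable $V$, and combined with part (1) and the unramified conditions gives $H^1_f(\mathbb{Q},V)$.
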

\begin{proof}
We first prove (a). Non-exceptionality gives $H^0_{\varphi,\Gamma}(\mathrm{gr}^iD)=H^2_{\varphi,\Gamma}(\mathrm{gr}^iD)=0$ for every $i$. Applying the long exact sequences along the triangulation gives $H^0_{\varphi,\Gamma}(D)=H^0_{\varphi,\Gamma}(D/\mathrm{Fil}^aD)=H^2_{\varphi,\Gamma}(\mathrm{Fil}^aD)=0$. Hence
$$0\longrightarrow H^1_{\varphi,\Gamma}(\mathrm{Fil}^aD)\longrightarrow H^1_{\varphi,\Gamma}(D)\longrightarrow H^1_{\varphi,\Gamma}(D/\mathrm{Fil}^aD)\longrightarrow0.$$
In particular, the local-condition map at $p$ is injective in degree $1$ and $H^0(\mathbb{Q}_p,V)=0$. At $l\neq p$, inflation is an isomorphism in degree $0$ and an injection in degree $1$ with image $H^1_{\mathrm{ur}}(\mathbb{Q}_l,V)$.

Write $A^\bullet=C^\bullet_{\mathrm{cont}}(G_{\mathbb{Q},S},V)\oplus\bigoplus_{v\in S}U_v^+$ and $B^\bullet=\bigoplus_{v\in S}C^\bullet_{\mathrm{cont}}(G_{\mathbb{Q}_v},V)$. The map $H^0(A^\bullet)\to H^0(B^\bullet)$ is surjective: the summands at $l\neq p$ are supplied by $H^0(U_l^+)$, and the summand at $p$ is zero. The long exact sequence of $\mathrm{Cone}(A^\bullet\to B^\bullet)[-1]$ therefore gives $H^1(C^\bullet(V,\mathrm{Fil}^aD))\simeq\ker(H^1(A^\bullet)\to H^1(B^\bullet))$. Every local-condition map is injective in degree $1$. Projection to global cohomology consequently identifies this kernel with the classes whose localizations lie in the prescribed local subspaces. The short exact sequence above identifies the quotient at $p$ with $H^1_{\varphi,\Gamma}(D/\mathrm{Fil}^aD)$, proving (a).

For (b), non-criticality means that the weights in the fixed triangulation satisfy $s_1<\cdots<s_a<0\leq s_{a+1}<\cdots<s_d$, with the evident interpretation when $a=0$ or $a=d$. The local comparison in Proposition 5 of \cite{bellaiche2012ranks} identifies $\mathrm{im}(H^1_{\varphi,\Gamma}(\mathrm{Fil}^aD)\to H^1(\mathbb{Q}_p,V))$ with $H^1_f(\mathbb{Q}_p,V)$. We need strict inequality, an extension of the trivial module by $\mathrm{Fil}^aD$ has ordered weights $s_1<\cdots<s_a<0$ followed by $0$, so it is de Rham. Its image is therefore in $H^1_g(\mathbb{Q}_p,V)$. Non-exceptionality gives $H^1_g(\mathbb{Q}_p,V)=H^1_f(\mathbb{Q}_p,V)$, and both this space and $H^1_{\varphi,\Gamma}(\mathrm{Fil}^aD)$ have dimension $a$. Here $a=\dim_L\mathbf{D}_{\mathrm{dR}}(V)/\mathrm{Fil}^0$, so it counts strictly negative weights. Together with the unramified conditions away from $p$, (a) now gives the global Bloch-Kato Selmer group.
\end{proof}

The rank statements below are about the complexes attached to a fixed step of a fixed triangulation. They do not require non-criticality. That hypothesis is used when identifying their characteristic-zero ordinary fibers with Bloch-Kato Selmer groups. This distinction is needed in comparing our statements with Theorem 1 of \cite{bellaiche2012ranks}. We treat ordinary dimensions and Iwasawa ranks separately.

\subsubsection{When $R$ has dimension $1$}
In this subsection, let $R$ be a regular pseudoaffinoid domain of dimension $1$. A one-dimensional Noetherian ring satisfying $(R_1)$ is regular, so the domain assumption can be imposed by working on its connected components. For Iwasawa ranks, fix one character component of the finite group $\Delta$ and continue to write $\Lambda_R$ for that component. Its Banach stages $A_n$ and their fibers $A_n/\mathfrak mA_n$ are domains. Ranks of compatible coadmissible modules are computed at these stages.

\begin{lemma}
\label{dimension1Rpseudonull}
Fix a Banach stage $A=A_n$ and a finite pseudo-null $A$-module $M$. For every maximal ideal $\mathfrak m\subset R$ and every $i\geq0$, one has $\mathrm{rank}_{A/\mathfrak mA}\mathrm{Tor}_i^R(M,R/\mathfrak m)=0$.
\end{lemma}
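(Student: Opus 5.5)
We first fix the setting. $R$ is a regular one-dimensional domain, so $R/\mathfrak m$ has the two-term free resolution $0\to R\xrightarrow{x}R\to R/\mathfrak m\to0$ after localizing. Here we may shrink to a rational neighbourhood, or simply use that $\mathfrak m$ is locally principal. Consequently $\mathrm{Tor}_i^R(M,R/\mathfrak m)=0$ for $i\geq2$, and we only need to treat $i=0,1$.

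We next compute the two remaining groups. Work locally at $\mathfrak m$, where $\mathfrak m=(x)$ with $x$ a uniformizer of the DVR $R_{\mathfrak m}$. Since $A$ is flat over $R$ (by the same topological-basis argument as Lemma~\ref{robbaflat}), base change to $A$ gives
$$\mathrm{Tor}_0^R(M,R/\mathfrak m)=M/xM,\qquad \mathrm{Tor}_1^R(M,R/\mathfrak m)=M[x].$$
Both are finite modules over $\bar A:=A/\mathfrak mA$, and $\bar A$ is a domain by the standing assumptions. So the claim is that $M/xM$ and $M[x]$ are torsion $\bar A$-modules.

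To prove this, we use pseudo-nullity. $A$ is a domain of dimension $2$: it is a Banach stage over the one-dimensional $R$ with one extra disk variable $T$. Pseudo-null means $\mathrm{ht}(\mathrm{Ann}_A M)\geq2$, equivalently $M_{\mathfrak q}=0$ for every height $\leq1$ prime $\mathfrak q$. Now $\mathfrak mA$ is a height-one prime of $A$, since $\bar A$ is a domain of dimension $1$ and $A$ is flat over $R$ with fibre of dimension one. Localizing at this prime, the generic point of $\bar A$, we get $M_{\mathfrak mA}=0$. Both $M/xM$ and $M[x]$ are subquotients of $M$ built from $A$-linear maps, so their localizations at $\mathfrak mA$ vanish as well. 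Localization at $\mathfrak mA$ is exactly passage to the fraction field of $\bar A$, so their $\bar A$-ranks are zero. This settles $i=0,1$. For $i\geq2$ the Tor groups vanish outright, as noted in the first step.

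The main obstacle is the dimension bookkeeping. We must check that $A_n$ is a two-dimensional domain and that $\mathfrak mA_n$ is a height-one prime. The fibre $A_n/\mathfrak mA_n$ is a one-variable Banach disk algebra over the residue field $\kappa(\mathfrak m)$, hence of dimension $1$ and a domain on the fixed character component. Going-down for the flat map $R\to A_n$ then gives $\mathrm{ht}(\mathfrak mA_n)=\mathrm{ht}(\mathfrak m)+\dim(\text{fibre at the generic point of }\bar A)=1+0=1$. If the reader's definition of pseudo-null is instead that $M_{\mathfrak q}=0$ for all height-one primes, then this is exactly what the argument uses.
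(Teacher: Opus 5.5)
Your proof is correct and follows essentially the same route as the paper. You show that $\mathfrak mA$ is a height-one prime, use pseudo-nullity to get $M_{\mathfrak mA}=0$, and then localize the Tor computation at the generic point of $A/\mathfrak mA$. The differences are minor: you get $\mathrm{ht}(\mathfrak mA)=1$ from the dimension formula for the flat local map $R_{\mathfrak m}\to A_{\mathfrak mA}$, whereas the paper notes that the maximal ideal of $A_{\mathfrak mA}$ is generated by the non-zero-divisor $\pi$, so $A_{\mathfrak mA}$ is a DVR; and you treat $i\geq 2$ separately via global dimension one instead of localizing the whole resolution at once.
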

\begin{proof}
Put $\mathfrak p=\mathfrak mA$, which is prime because $A/\mathfrak mA$ is a domain. The local ring $R_{\mathfrak m}$ is a DVR; let $\pi$ be a uniformizer. Since $A$ is flat over $R$, $\pi$ is a non-zero-divisor after localizing over $\mathfrak m$, and the maximal ideal of $A_{\mathfrak p}$ is generated by $\pi$. Thus $A_{\mathfrak p}$ is a DVR and $\mathrm{ht}(\mathfrak p)=1$. Pseudo-nullity gives $M_{\mathfrak p}=0$. Localizing a flat resolution of $R/\mathfrak m$ at the generic point of $A/\mathfrak mA$ therefore makes the complex computing $\mathrm{Tor}_i^R(M,R/\mathfrak m)$ zero. Its rank over $A/\mathfrak mA$ is consequently zero.
\end{proof}

\begin{corollary}
\label{corodim1R}
If $M\to N$ is a pseudo-isomorphism of finite $A_n$-modules, then, for every $i\geq0$,
$$\mathrm{rank}_{A_n/\mathfrak mA_n}\mathrm{Tor}_i^R(M,R/\mathfrak m)=\mathrm{rank}_{A_n/\mathfrak mA_n}\mathrm{Tor}_i^R(N,R/\mathfrak m).$$
The same statement holds stagewise for compatible coadmissible modules.
\end{corollary}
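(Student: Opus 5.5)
The plan is to derive the corollary from Lemma~\ref{dimension1Rpseudonull} using long exact sequences of $\operatorname{Tor}^R_\bullet(-,R/\mathfrak m)$ and additivity of rank.

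First, factor the pseudo-isomorphism $f:M\to N$ as
\[0\to\ker f\to M\to\operatorname{im}f\to0,\qquad 0\to\operatorname{im}f\to N\to\operatorname{coker}f\to0.\]
Here $\ker f$ and $\operatorname{coker}f$ are finite pseudo-null $A_n$-modules; finiteness holds because $A_n$ is Noetherian. Since $A_n$ is flat over $R$, a flat resolution of $R/\mathfrak m$ over $R$ tensored up computes $\operatorname{Tor}^R_i(-,R/\mathfrak m)$ as $A_n/\mathfrak mA_n$-modules. Each short exact sequence therefore gives a long exact sequence of finite $A_n/\mathfrak mA_n$-modules. Note that $R_{\mathfrak m}$ is a DVR, so these sequences are only nonzero for $i\le1$ after localizing at $\mathfrak m$, although I will not need this.

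Next, use that rank over the domain $A_n/\mathfrak mA_n$ is additive along exact sequences, since it is computed by localizing at its generic point, which is exact. In the long exact sequence for the first short exact sequence, every third term $\operatorname{Tor}^R_i(\ker f,R/\mathfrak m)$ has rank zero by Lemma~\ref{dimension1Rpseudonull}. Localizing at the generic point then gives isomorphisms $\operatorname{Tor}_i^R(M,R/\mathfrak m)_{(0)}\cong\operatorname{Tor}_i^R(\operatorname{im}f,R/\mathfrak m)_{(0)}$, because the flanking terms vanish. The same argument applied to the second sequence, with $\operatorname{coker}f$, gives the isomorphism for $\operatorname{im}f$ and $N$. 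Composing the two proves the rank equality.

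For compatible coadmissible modules, apply the above at each Banach stage $A_n$ to $M\otimes A_n\to N\otimes A_n$. The one point to check is that a pseudo-isomorphism of coadmissible modules restricts stagewise to pseudo-isomorphisms. This holds because restriction to $A_n$ is exact on coadmissible modules, so the kernel and cokernel restrict to $\ker$ and $\operatorname{coker}$ at stage $n$. If pseudo-nullity is defined stagewise, the restrictions are pseudo-null by definition. Otherwise it follows from flatness of $A_{n+1}\to A_n$, since height of primes cannot drop under flat base change: primes of $A_n$ contract to primes of no larger height. This last point is the only mildly delicate step. Everything else is formal.
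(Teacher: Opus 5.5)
Your argument is correct and uses the same mechanism as the paper. Localizing at the height-one prime $\mathfrak mA_n$, which is the generic point of the fiber $A_n/\mathfrak mA_n$, kills pseudo-null modules and commutes with $\operatorname{Tor}^R_i(-,R/\mathfrak m)$. The paper applies this directly to $f$: it gets $M_{\mathfrak mA_n}\cong N_{\mathfrak mA_n}$ and hence isomorphic localized Tor complexes, so your detour through Lemma~\ref{dimension1Rpseudonull} and the two long exact sequences is unnecessary but harmless. Your handling of the coadmissible case, via exactness of restriction to Banach stages, is more explicit than the paper's.
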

\begin{proof}
The kernel and cokernel vanish after localizing at the height-one prime $\mathfrak mA_n$. Thus $M_{\mathfrak mA_n}\to N_{\mathfrak mA_n}$ is an isomorphism, and the localized complexes computing Tor are isomorphic.
\end{proof}

\begin{theorem}
\label{selmerrankjump}
Retain the hypotheses of Proposition \ref{selmerperfect}, and suppose that $D_{\mathrm{rig}}(V|_{G_{\mathbb{Q}_p}})$ has a fixed saturated triangulation with finite projective graded pieces. Fix $0\leq a\leq d$, put $D^a=\mathrm{Fil}^aD_{\mathrm{rig}}(V|_{G_{\mathbb{Q}_p}})$, and write $C_\star^\bullet=C_\star^\bullet(V,D^a)$ for $\star\in\{\varnothing,\mathrm{Iw}\}$. For $x\in\mathrm{Max}(R)$, define
$$h_\varnothing(x)=\dim_{\kappa(x)}H^1(C^\bullet(V_x,D_x^a)),\qquad h_{\mathrm{Iw}}(x)=\mathrm{rank}_{\Lambda_{\kappa(x)}}H^1(C^\bullet_{\mathrm{Iw}}(V_x,D_x^a)).$$
There are integers $n_a^\star\geq0$ such that $h_\star(x)\geq n_a^\star$ for every $x$, with equality outside a finite set. In the ordinary case, the jump set is
$$V(I_\varnothing)\cap\mathrm{Max}(R),\qquad I_\varnothing=\mathrm{Char}_R(H^1(C^\bullet)_{\mathrm{tors}})\,\mathrm{Char}_R(H^2(C^\bullet)_{\mathrm{tors}}).$$
In the Iwasawa case, choose a Banach stage $A=A_n$, put $C_n^\bullet=C_{\mathrm{Iw}}^\bullet\otimes_{\Lambda_R}^{\mathbf L}A$ and $T_{i,n}=H^i(C_n^\bullet)_{\mathrm{tors}}$, and set
$$S_{\mathrm{Iw}}=\{\mathfrak m\in\mathrm{Max}(R):(T_{1,n})_{\mathfrak mA}\neq0\ \text{or}\ (T_{2,n})_{\mathfrak mA}\neq0\},\qquad I_{\mathrm{Iw}}=\prod_{\mathfrak m\in S_{\mathrm{Iw}}}\mathfrak m.$$
The set $S_{\mathrm{Iw}}$ is finite and independent of the stage, and is exactly the Iwasawa rank-jump set. The empty product is $R$.

Let $Z_a$ be the set of characteristic-zero points where the specialized fixed triangulation is non-critical and non-exceptional, with exactly $a$ strictly negative Hodge-Tate weights. For $x\in Z_a$, one has $h_\varnothing(x)=\dim_{\kappa(x)}H^1_f(\mathbb{Q},V_x)$. The rank conclusions hold at every point of $\overline{Z}_a$ by restriction, including its characteristic-$p$ points.
\end{theorem}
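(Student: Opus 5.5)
The plan is to derive everything from the perfectness and base change already established: Proposition~\ref{selmerperfect} and Corollary~\ref{selmerbasechange}, together with the structure of finite modules over the Dedekind domain $R$ and over the one-dimensional-fibred Banach stages $A_n$.

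\textbf{Ordinary case.} By Proposition~\ref{selmerperfect}, $C^\bullet=C^\bullet(V,D^a)$ is represented by a complex $P^\bullet$ of finite projective $R$-modules in degrees $[0,3]$. By Corollary~\ref{selmerbasechange}, the fibre at $x=\mathfrak m$ is $P^\bullet\otimes_R\kappa(x)$. Since $R$ is Dedekind, $\mathrm{Tor}_i^R(-,\kappa(x))$ vanishes for $i\geq2$. The universal coefficient sequence then gives
\[
h_\varnothing(x)=\dim_{\kappa(x)}\bigl(H^1(C^\bullet)\otimes_R\kappa(x)\bigr)+\dim_{\kappa(x)}\mathrm{Tor}_1^R\bigl(H^2(C^\bullet),\kappa(x)\bigr).
\]
Decompose $H^i(C^\bullet)\simeq R^{r_i}\oplus H^i(C^\bullet)_{\mathrm{tors}}$, where the torsion part is a finite-length module supported at finitely many maximal ideals. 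The first term is then $r_1+\dim(H^1_{\mathrm{tors}}\otimes\kappa(x))$, and the second is $\dim(H^2_{\mathrm{tors}}[\mathfrak m])$, which is nonzero exactly when $\mathfrak m$ lies in the support of $H^2_{\mathrm{tors}}$. So I set $n_a^\varnothing=r_1$. The jump set is the union of the supports of the two torsion modules, namely $V(I_\varnothing)\cap\mathrm{Max}(R)$, because each characteristic ideal $\mathrm{Char}_R$ is the product of the $\mathfrak m^{\mathrm{length}}$ over its support.

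\textbf{Iwasawa case.} Fix a character component and a Banach stage $A=A_n$. Base change gives $C_n^\bullet\otimes^{\mathbf L}_{A}(A/\mathfrak mA)\simeq C^\bullet_{\mathrm{Iw}}(V_x,D^a_x)\otimes\cdots$ at the corresponding stage over $\kappa(x)$. Ranks over $\Lambda_{\kappa(x)}$ are computed at any stage, by flatness of the restriction maps and coadmissibility. Since $A$ is flat over $R$ and $R/\mathfrak m$ has projective dimension one, $\otimes_A^{\mathbf L}A/\mathfrak mA=\otimes_R^{\mathbf L}R/\mathfrak m$. So again
\[
h_{\mathrm{Iw}}(x)=\mathrm{rank}_{A/\mathfrak mA}\bigl(H^1(C_n)/\mathfrak m\bigr)+\mathrm{rank}_{A/\mathfrak mA}\,H^2(C_n)[\mathfrak m].
\]
Localize at the height-one prime $\mathfrak p=\mathfrak mA$; by the proof of Lemma~\ref{dimension1Rpseudonull}, $A_{\mathfrak p}$ is a DVR with uniformizer $\pi$. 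By Corollary~\ref{corodim1R}, only the localizations at $\mathfrak p$ matter. Write $H^i(C_n)_{\mathfrak p}\simeq A_{\mathfrak p}^{r_i}\oplus (T_{i,n})_{\mathfrak p}$, where the torsion is $\pi$-power torsion since $A_{\mathfrak p}$ is a DVR. The first rank is then $r_1+\mathrm{length}$-type contributions that are nonzero iff $(T_{1,n})_{\mathfrak p}\neq0$, and the second is nonzero iff $(T_{2,n})_{\mathfrak p}\neq0$. Hence $n_a^{\mathrm{Iw}}=r_1$ (the generic rank), and the jump set is $S_{\mathrm{Iw}}$.

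\textbf{Finiteness of $S_{\mathrm{Iw}}$, and the main obstacle.} The main obstacle is showing that $S_{\mathrm{Iw}}$ is finite and independent of the stage. $T_{i,n}$ is a finite $A$-module, so it has finitely many associated primes. The primes of the form $\mathfrak mA$ among them are height-one primes, and distinct $\mathfrak m$ give distinct primes, so only finitely many occur. Independence of the stage comes from flatness of $A_{n+1}\to A_n$: the torsion modules restrict compatibly, $H^i(C_{n+1})\otimes A_n=H^i(C_n)$, and localization at $\mathfrak mA_n$ versus $\mathfrak mA_{n+1}$ agree, because both are the generic point of the fibre over $\mathfrak m$ and the restriction map between the fibres is dominant (dense image). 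I expect this step to need the most care, in particular to confirm that $\mathfrak mA_n$ contracts to $\mathfrak mA_{n+1}$.

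\textbf{Bloch--Kato comparison and closure.} The identification $h_\varnothing(x)=\dim H^1_f(\mathbb Q,V_x)$ for $x\in Z_a$ is Lemma~\ref{complexblochkato}(b), applied to the fibre via Corollary~\ref{selmerbasechange}. The final sentence holds because the rank statements above are valid at every maximal point, in particular on $\overline Z_a$.
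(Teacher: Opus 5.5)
Your proposal is correct and follows essentially the same route as the paper. Both arguments use perfectness plus derived base change, the two-column Tor spectral sequence over the DVRs $R_{\mathfrak m}$ and $A_{\mathfrak mA}$ (you get it from $R$-flatness of $A$ and projective dimension one of $R/\mathfrak m$, which is equivalent), and stage-independence via injectivity of the maps between the fibre domains. The one step to tighten is the finiteness of $S_{\mathrm{Iw}}$: you need that $(T_{i,n})_{\mathfrak mA}\neq 0$ forces $\mathfrak mA\in\operatorname{Ass}(T_{i,n})$. This holds because $T_{i,n}$ is torsion over the domain $A$, so a height-one prime in its support is minimal there; the paper instead observes that a nonzero annihilator of $T_{i,n}$ lies in only finitely many height-one primes.
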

\begin{proof}
First consider the ordinary complex, and put $H^i=H^i(C^\bullet)$. Corollary \ref{selmerbasechange} identifies its derived fiber with $C^\bullet(V_x,D_x^a)$. Since $R_{\mathfrak m_x}$ is a DVR, the base-change spectral sequence has only Tor degrees $0$ and $1$, and gives
$$0\longrightarrow H^1\otimes_R\kappa(x)\longrightarrow H^1(C^\bullet(V_x,D_x^a))\longrightarrow\mathrm{Tor}_1^R(H^2,\kappa(x))\longrightarrow0.$$
The torsion-free quotients of the finite $R$-modules $H^i$ are projective. Therefore, with $n_a^\varnothing=\mathrm{rank}_R H^1$, we have
$$h_\varnothing(x)=n_a^\varnothing+\dim_{\kappa(x)}(H^1_{\mathrm{tors}}\otimes_R\kappa(x))+\dim_{\kappa(x)}\mathrm{Tor}_1^R(H^2_{\mathrm{tors}},\kappa(x)).$$
For a finite torsion module over a DVR, either of the last two terms is nonzero precisely when the corresponding localized torsion module is nonzero. These supports are the zero sets of their characteristic ideals, proving the ordinary assertion.

For the Iwasawa complex, fix $A=A_n$, put $\mathfrak p=\mathfrak m_xA$, and let $F_x=\mathrm{Frac}(A/\mathfrak m_xA)$. As in Lemma \ref{dimension1Rpseudonull}, $A_{\mathfrak p}$ is a DVR with residue field $F_x$. Base change and localization identify the rank of the Iwasawa fiber with $\dim_{F_x}H^1(C_n^\bullet\otimes_A^{\mathbf L}F_x)$. Applying the same two-column spectral sequence over $A_{\mathfrak p}$ gives
$$h_{\mathrm{Iw}}(x)=n_a^{\mathrm{Iw}}+\dim_{F_x}((T_{1,n})_{\mathfrak p}\otimes_{A_{\mathfrak p}}F_x)+\dim_{F_x}\mathrm{Tor}_1^{A_{\mathfrak p}}((T_{2,n})_{\mathfrak p},F_x),$$
where $n_a^{\mathrm{Iw}}=\mathrm{rank}_A H^1(C_n^\bullet)$. Both additional terms are nonnegative, and their sum is positive exactly when $\mathfrak m_x\in S_{\mathrm{Iw}}$. Flat restriction between Banach stages commutes with cohomology and gives injective maps between their fiber domains. It preserves these generic ranks, so $n_a^{\mathrm{Iw}}$ and $S_{\mathrm{Iw}}$ are independent of the stage.

Each $T_{i,n}$ is a finite torsion $A$-module, so a nonzero element of $A$ annihilates it. Only finitely many height-one primes can contain such an element. Since each $\mathfrak mA$ has height one and contracts to $\mathfrak m$, the set $S_{\mathrm{Iw}}$ is finite. This proves the Iwasawa assertion. Only the vertical primes $\mathfrak mA$ enter this calculation; torsion supported at other primes has zero generic rank on the fiber. Finally, Lemma \ref{complexblochkato} gives the Bloch-Kato identification on $Z_a$.
\end{proof}

\subsubsection{A general pseudoaffinoid algebra}
We now give a determinantal description for a general $R$. Keep the perfectness and base-change hypotheses above, and fix a character component of $\Delta$ in the Iwasawa case.

\begin{theorem}
\label{thm:determinantal-rank-locus}
Let $C_\star^\bullet$ denote either of the Selmer complexes above, and choose a finite projective representative
$$P_\star^0\xrightarrow{d_\star^0}P_\star^1\xrightarrow{d_\star^1}P_\star^2\xrightarrow{d_\star^2}P_\star^3.$$
Its coefficient ring is $A_\varnothing=R$ in the ordinary case and $A_{\mathrm{Iw}}=\Lambda_R$ in the Iwasawa case. Work componentwise so that $n_{1,\star}=\mathrm{rank}_{A_\star}P_\star^1$ is constant. Define $h_\star(x)$ from the specialized Selmer complexes as in Theorem \ref{selmerrankjump}, using dimension in the ordinary case and distribution-algebra rank in the Iwasawa case. For every $q\geq0$, there is a finitely generated ideal $\mathfrak J_{\star,q}\subset R$ such that
$$\{x\in\mathrm{Max}(R):h_\star(x)\geq q\}=V(\mathfrak J_{\star,q})\cap\mathrm{Max}(R).$$
More explicitly, choose finite free modules in which the $P_\star^i$ are direct summands and extend each differential by zero on a complement. Let $I_{r+1}(d_\star^i)$ be generated by the $(r+1)\times(r+1)$ minors of the resulting finite matrix. In the ordinary case put $\mathfrak a_{\varnothing,i,r}=I_{r+1}(d_\varnothing^i)$. In the Iwasawa case expand each of these finitely many minors as $f(T)=\sum_{j\geq0}c_jT^j$ and let $\mathfrak a_{\mathrm{Iw},i,r}$ be the ideal of $R$ generated by all their coefficients $c_j$. Then one can take
$$\mathfrak J_{\star,q}=\prod_{\substack{r,s\geq0\\r+s\leq n_{1,\star}-q}}(\mathfrak a_{\star,0,r}+\mathfrak a_{\star,1,s}),$$
with the empty product equal to $R$. On each irreducible component, $h_\star$ has a minimum generic value $q_\star$, and its jump locus is the closed subset defined by $\mathfrak J_{\star,q_\star+1}$ on that component.
\end{theorem}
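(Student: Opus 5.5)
The plan is a reduction to the classical semicontinuity statement for a bounded complex of finite projective modules, combined with derived base change. By Corollary~\ref{selmerbasechange}, for $x\in\mathrm{Max}(R)$ the derived fibre of $C_\star^\bullet$ computes the specialized Selmer complex. Concretely, in the ordinary case $P_\varnothing^\bullet\otimes_R\kappa(x)$ represents $C^\bullet(V_x,D_x^a)$. In the Iwasawa case $P_{\mathrm{Iw}}^\bullet\otimes_{\Lambda_R}\Lambda_{\kappa(x)}$ represents $C^\bullet_{\mathrm{Iw}}(V_x,D_x^a)$, and $\Lambda_{\kappa(x)}$ is, on a fixed character component, a domain whose Banach stages are PIDs. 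Since $P^1$ has constant rank $n_1$, I will use
\[
h_\star(x)=n_1-\mathrm{rk}(d^0_x)-\mathrm{rk}(d^1_x).
\]
Here the ranks are taken over $\kappa(x)$ in the ordinary case. In the Iwasawa case they are generic ranks over $\mathrm{Frac}$ of a component of $\Lambda_{\kappa(x)}$; torsion in $H^1$ does not contribute to the rank, so the formula holds there as well. Extending the differentials by zero on complements in free modules does not change these ranks.

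It follows that $h_\star(x)\geq q$ if and only if there exist $r,s\geq0$ with $r+s\leq n_1-q$ such that $\mathrm{rk}(d^0_x)\leq r$ and $\mathrm{rk}(d^1_x)\leq s$. In the ordinary case, $\mathrm{rk}(d^i_x)\leq r$ is equivalent to all $(r+1)$-minors of $d^i$ vanishing in $\kappa(x)$, that is, to $x\in V(I_{r+1}(d^i))$. The condition "there exist $(r,s)$" is a finite union over pairs of the sets $V(\mathfrak a_{0,r})\cap V(\mathfrak a_{1,s})=V(\mathfrak a_{0,r}+\mathfrak a_{1,s})$. A finite union of zero sets is the zero set of the product of the ideals, which gives $\mathfrak J_{\star,q}$. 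Each ideal is finitely generated because the matrices are finite and $R$ is Noetherian.

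The Iwasawa case is the main point to check. A minor $f=\sum_j c_jT^j\in\Lambda_R$ (componentwise in $\Delta$) specializes to $\sum_j\bar c_jT^j\in\Lambda_{\kappa(x)}$. Its generic rank condition amounts to this element being zero as a distribution, since $\Lambda_{\kappa(x)}$ is a domain on the component. By uniqueness of power series expansions at each Banach stage, the specialized element is zero if and only if every coefficient $c_j$ lies in $\mathfrak m_x$. Hence the vanishing of all $(r+1)$-minors over $\mathrm{Frac}$ is equivalent to $x\in V(\mathfrak a_{\mathrm{Iw},i,r})$. The ideal $\mathfrak a_{\mathrm{Iw},i,r}$ is generated by infinitely many coefficients, but it is finitely generated because $R$ is Noetherian. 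This coefficient step, together with identifying specialization of $\Lambda_R$ with coefficientwise reduction (Theorem~\ref{pointbasechange}-style finite base change), is where I expect the most care to be needed.

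For the generic statement, fix an irreducible component $Z$ and let $q_\star$ be the minimum value of $h_\star$ on $\mathrm{Max}(Z)$. Then $\{h_\star\geq q_\star+1\}\cap Z=V(\mathfrak J_{\star,q_\star+1})\cap Z$ is Zariski closed. It is proper, because some point attains $q_\star$, so its complement is a nonempty open subset of an irreducible space and is therefore dense. Thus $q_\star$ is the generic value, and the jump locus is cut out on $Z$ by the finitely generated ideal $\mathfrak J_{\star,q_\star+1}$.
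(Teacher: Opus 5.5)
Your proposal is correct and follows essentially the same route as the paper. The shared steps are: derived base change to represent the fibre by $P_\star^\bullet\otimes\kappa(x)$ (resp.\ $P_\star^\bullet\otimes\Lambda_{\kappa(x)}$, then the fraction field of the chosen component), the rank formula $h_\star(x)=n_{1,\star}-\mathrm{rank}(d^0_{\star,x})-\mathrm{rank}(d^1_{\star,x})$, the minors criterion (with coefficient ideals and uniqueness of power-series expansions in the Iwasawa case), and $V(I)\cup V(J)=V(IJ)$. The only difference is in the generic-value step: you take the minimum of $h_\star$ on the component and use closedness of $\{h_\star\geq q_\star+1\}$, whereas the paper intersects the two nonempty open loci on which $d^0$ and $d^1$ attain maximal rank; both arguments work.
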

\begin{proof}
Fix $x\in\mathrm{Max}(R)$. By derived base change, specializing $P_\star^\bullet$ represents the Selmer complex of the fiber. In the Iwasawa case, further extend scalars to the fraction field of the chosen component of $\Lambda_{\kappa(x)}$. Since $d_\star^1d_\star^0=0$, elementary linear algebra gives
$$h_\star(x)=n_{1,\star}-\mathrm{rank}(d_{\star,x}^0)-\mathrm{rank}(d_{\star,x}^1).$$
Extending a differential by zero on a projective complement does not change its rank after passing to a field. Thus $\mathrm{rank}(d_{\varnothing,x}^i)\leq r$ exactly when all $(r+1)$-minors vanish modulo $\mathfrak m_x$, or equivalently $\mathfrak a_{\varnothing,i,r}\subseteq\mathfrak m_x$.

In the Iwasawa case, uniqueness of the power-series expansion implies that $f(T)$ specializes to zero in $\Lambda_{\kappa(x)}$ exactly when every coefficient of $f$ lies in $\mathfrak m_x$. The same minors criterion therefore gives $\mathrm{rank}(d_{\mathrm{Iw},x}^i)\leq r$ exactly when $\mathfrak a_{\mathrm{Iw},i,r}\subseteq\mathfrak m_x$. The minors can have infinitely many coefficients, the ideal they generate in $R$ is finitely generated because $R$ is Noetherian.

The rank formula shows that $h_\star(x)\geq q$ exactly when there are $r,s\geq0$ with $r+s\leq n_{1,\star}-q$, $\mathrm{rank}(d_{\star,x}^0)\leq r$, and $\mathrm{rank}(d_{\star,x}^1)\leq s$. Hence,
$$\{x:h_\star(x)\geq q\}=\bigcup_{\substack{r,s\geq0\\r+s\leq n_{1,\star}-q}}V(\mathfrak a_{\star,0,r}+\mathfrak a_{\star,1,s})=V(\mathfrak J_{\star,q}),$$
where the equality is on maximal points of $R$. On an irreducible component, each differential attains its maximum rank on a nonempty open subset, as detected by a nonvanishing minor, or by a nonvanishing coefficient of a minor in the Iwasawa case. These two open subsets intersect. The rank formula therefore gives a minimum generic value $q_\star$, and the locus of larger values is $V(\mathfrak J_{\star,q_\star+1})$ on that component.
\end{proof}
\appendix
\section{Modules in Iwasawa theory}
Fix $\Gamma=\Gamma_K$, and write
$$
\Lambda
=
\Lambda_{R,(0,\infty]}(\Gamma),
\qquad
\Lambda_a
=
\Lambda_{R,[\mu(a),\infty]}(\Gamma).
$$
All assertions below are understood componentwise with respect to the
idempotent decomposition coming from the torsion subgroup of $\Gamma$.

For the ordinary characteristic-ideal statements in this
appendix, assume that $R$ and every Banach stage $\Lambda_a$
are regular. All assertions are understood componentwise
on their domain components.
In particular, for every
height-one prime ideal $\mathfrak p$ of $\Lambda_a$, the local ring
$(\Lambda_a)_{\mathfrak p}$ is a discrete valuation ring.

\begin{definition}
Let $M$ be a coadmissible $\Lambda$-module, and put
$$
M_a=M\otimes_{\Lambda}\Lambda_a.
$$
We say that $M$ is \emph{pseudo-null} if
$$
(M_a)_{\mathfrak p}=0
$$
for every $a$ and every prime ideal $\mathfrak p$ of $\Lambda_a$ of
height at most one.

Equivalently, since $M_a$ is finitely generated over the Noetherian
ring $\Lambda_a$, the support of $M_a$ has codimension at least two for
every $a$. If $M_a\neq 0$, this is equivalent to
$$
\operatorname{ht}\bigl(\operatorname{Ann}_{\Lambda_a}(M_a)\bigr)
\geq 2.
$$
\end{definition}

\begin{definition}
A morphism $f:M\to N$ of coadmissible $\Lambda$-modules is called a
\emph{pseudo-isomorphism} if both $\ker(f)$ and $\operatorname{coker}(f)$
are pseudo-null. If such a morphism exists, we write
$$
M\sim N.
$$
\end{definition}

The following standard structure theorem will be used repeatedly.

\begin{lemma}\label{serretorsion}
Let $A$ be a Noetherian ring satisfying Serre's condition $(R_1)$, and
let $M$ be a finitely generated torsion $A$-module. Then there is a
pseudo-isomorphism
$$
M
\sim
\bigoplus_{\substack{\mathfrak p\in\operatorname{Spec}(A)\\
\operatorname{ht}(\mathfrak p)=1}}
\ \bigoplus_{i\geq 1}
\left(A/\mathfrak p^i\right)^{n(\mathfrak p,i)},
$$
where the integers $n(\mathfrak p,i)$ are nonnegative and only finitely
many of them are nonzero.
\end{lemma}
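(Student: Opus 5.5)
The plan is to reduce to the case of a Dedekind domain, where the classical structure theorem holds, by localizing at the finitely many height-one primes in the support of $M$.

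\emph{Step 1: the relevant primes.} Since $M$ is finitely generated and torsion, $\operatorname{Ann}_A(M)$ contains a non-zero-divisor $f$. The height-one primes $\mathfrak p$ with $M_{\mathfrak p}\neq 0$ are then among the minimal primes of $(f)$. A Noetherian ring has only finitely many of these; call them $\mathfrak p_1,\dots,\mathfrak p_s$.

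\emph{Step 2: the local modules.} By $(R_1)$, each $A_{\mathfrak p_j}$ is a DVR. The localization $M_{\mathfrak p_j}$ is finitely generated and torsion over it, so
$$M_{\mathfrak p_j}\cong\bigoplus_i \bigl(A_{\mathfrak p_j}/\mathfrak p_j^iA_{\mathfrak p_j}\bigr)^{n(\mathfrak p_j,i)},$$
with only finitely many $n(\mathfrak p_j,i)$ nonzero. Set
$$E=\bigoplus_j\bigoplus_i (A/\mathfrak p_j^i)^{n(\mathfrak p_j,i)}.$$
One checks that $E_{\mathfrak q}=0$ for every height-one $\mathfrak q\notin\{\mathfrak p_j\}$. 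At $\mathfrak p_j$ itself, $(A/\mathfrak p_k^i)_{\mathfrak p_j}=0$ for $k\neq j$, while $(A/\mathfrak p_j^i)_{\mathfrak p_j}=A_{\mathfrak p_j}/\mathfrak p_j^iA_{\mathfrak p_j}$. Hence $E_{\mathfrak p_j}\cong M_{\mathfrak p_j}$. Note that here $\mathfrak p^i$ is the ordinary power, not the symbolic power. This still localizes correctly, but $A/\mathfrak p^i$ may carry embedded components of height at least $2$; these are harmless for pseudo-null statements.

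\emph{Step 3: a global map.} This is the main obstacle: we need a map $\phi:M\to E$ that becomes the chosen isomorphism after localizing at every $\mathfrak p_j$ simultaneously. I would work with the semilocal ring $S^{-1}A$, where $S=A\setminus\bigcup_j\mathfrak p_j$. This ring is one-dimensional Noetherian with maximal ideals $\mathfrak p_jS^{-1}A$, and $S^{-1}M$ has finite length. By the Chinese remainder theorem for finite-length modules,
$$S^{-1}M\cong\prod_j M_{\mathfrak p_j},\qquad S^{-1}E\cong\prod_j E_{\mathfrak p_j},$$
so Step 2 gives an isomorphism $\theta:S^{-1}M\cong S^{-1}E$. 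Because $M$ is finitely presented,
$$\operatorname{Hom}_{S^{-1}A}(S^{-1}M,S^{-1}E)=S^{-1}\operatorname{Hom}_A(M,E).$$
Hence $\theta=\phi/s$ for some $\phi\in\operatorname{Hom}_A(M,E)$ and $s\in S$. Since $s$ is a unit in each $A_{\mathfrak p_j}$, the localization $\phi_{\mathfrak p_j}$ is an isomorphism for every $j$.

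\emph{Step 4: conclusion.} $\ker\phi$ and $\operatorname{coker}\phi$ vanish at each $\mathfrak p_j$. At any other height-one prime both $M$ and $E$ vanish, so the kernel and cokernel vanish there too. At height-zero primes $M$ vanishes because it is torsion, and $E$ vanishes because each $A/\mathfrak p_j^i$ is killed by an element outside any minimal prime. So $\ker\phi$ and $\operatorname{coker}\phi$ are supported in codimension at least $2$, i.e. they are pseudo-null, and $\phi$ is the required pseudo-isomorphism.
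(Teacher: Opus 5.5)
Your argument is correct and follows the same outline as the paper's proof: elementary divisors over the DVRs $A_{\mathfrak p}$ at the finitely many height-one primes in the support of $M$, followed by a globalization step. The only difference is in that last step. The paper obtains the global pseudo-isomorphism by citing Nekov\'a\v{r} (Proposition 2.10.18 and Corollary 2.10.19 of \cite{nekovar2006selmer}). You instead prove it directly, via the semilocalization at $S=A\setminus\bigcup_j\mathfrak p_j$ and the identity $S^{-1}\operatorname{Hom}_A(M,E)\cong\operatorname{Hom}_{S^{-1}A}(S^{-1}M,S^{-1}E)$, which makes the lemma self-contained.
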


\begin{proof}
At every height-one prime $\mathfrak p$, the local ring
$A_{\mathfrak p}$ is a DVR, so the elementary-divisor
theorem gives a finite direct sum of
$A_{\mathfrak p}/\mathfrak p^iA_{\mathfrak p}$.
Only finitely many height-one primes occur in the
support of the torsion module $M$.
The local decompositions determine the claimed
pseudo-isomorphism by
\cite[Proposition~2.10.18 and Corollary~2.10.19]{nekovar2006selmer}.
\end{proof}

We record some elementary examples. Let $A$ be a Noetherian domain
satisfying $(R_1)$.

\textbf{Examples:}\\
1. A nonzero finite free $A$-module is not pseudo-null, since its
localization at every prime ideal is nonzero.

2. Let $\mathfrak p_1$ and $\mathfrak p_2$ be distinct height-one prime
ideals of $A$. The natural diagonal map
$$
A/(\mathfrak p_1\mathfrak p_2)
\longrightarrow
A/\mathfrak p_1\oplus A/\mathfrak p_2
$$
is an isomorphism after localization at every prime ideal of height at
most one. Its kernel and cokernel are therefore pseudo-null, and hence
$$
A/(\mathfrak p_1\mathfrak p_2)
\sim
A/\mathfrak p_1\oplus A/\mathfrak p_2.
$$

3.Let $\mathfrak p_1$ and $\mathfrak p_2$ be distinct height-one prime
ideals of $A$. No height-one prime ideal contains both
$\mathfrak p_1$ and $\mathfrak p_2$. Consequently,
$$
\operatorname{ht}(\mathfrak p_1+\mathfrak p_2)\geq 2,
$$
and
$$
A/(\mathfrak p_1+\mathfrak p_2)
$$
is pseudo-null.

These examples apply to each $\Lambda_a$, and hence to coadmissible
$\Lambda$-modules by passage through the Banach stages.

\begin{definition}
Let $M$ be a torsion coadmissible $\Lambda$-module. For every $a$,
Lemma \ref{serretorsion} gives a pseudo-isomorphism
$$
M_a
\sim
\bigoplus_{\substack{\mathfrak p\in\operatorname{Spec}(\Lambda_a)\\
\operatorname{ht}(\mathfrak p)=1}}
\ \bigoplus_{i\geq 1}
\left(\Lambda_a/\mathfrak p^i\right)^{n_a(\mathfrak p,i)}.
$$
For a height-one prime ideal $\mathfrak p$ of $\Lambda_a$, put
$$
e_a(\mathfrak p)
=
\sum_{i\geq 1}i\,n_a(\mathfrak p,i).
$$
Equivalently,
$
e_a(\mathfrak p)
=
\operatorname{length}_{(\Lambda_a)_{\mathfrak p}}
\left((M_a)_{\mathfrak p}\right).
$
The characteristic ideal of $M_a$ is
$
\operatorname{char}_{\Lambda_a}(M_a)
=
\prod_{\substack{\mathfrak p\in\operatorname{Spec}(\Lambda_a)\\
\operatorname{ht}(\mathfrak p)=1}}
\mathfrak p^{e_a(\mathfrak p)}.
$

Regular Noetherian rings are locally factorial.
Consequently each height-one prime ideal is invertible,
and the above characteristic ideal is an invertible
ideal determined by its codimension-one multiplicities.
Under a flat restriction map, its extension remains
invertible. The localized DVR length formula identifies
its multiplicities with those of the restricted module.
Hence
$\operatorname{char}_{\Lambda_a}(M_a)\Lambda_{a'}
=\operatorname{char}_{\Lambda_{a'}}(M_{a'}).$
These finite ideals therefore form a compatible coherent
ideal sheaf on the quasi-Stein space; its global sections
give the corresponding coadmissible ideal of $\Lambda$.
\end{definition}
Since $\Lambda_a$ is flat over $R$ and one variable power series ring over $R$, a prime ideal $\mathfrak{q}$ of $R$ of height 1 gives us a height 1 prime ideal $\mathfrak{q} \Lambda_a$ in $\Lambda_a$.
Let
$e_{\mathfrak q}(M)
= e_a(\mathfrak q\Lambda_a)$
whenever $\mathfrak q\Lambda_a$ is proper. Compatibility of
characteristic ideals under base change shows that this integer is
independent of the sufficiently small choice of $a$. We define
$$
\operatorname{char}_{R}(M)
=
\prod_{\substack{\mathfrak q\in\operatorname{Spec}(R)\\
\operatorname{ht}(\mathfrak q)=1}}
\mathfrak q^{e_{\mathfrak q}(M)}.
$$
Only finitely many factors in this product have nonzero exponent.
We now specialize to coefficients in a field.

\begin{lemma}\label{soobvious}
Suppose that $A$ is either a finite extension of $\mathbb Q_p$ or a
finite extension of $\mathbb F_p((u))$. Then every idempotent component
of
$\Lambda_{A,[a,\infty]}(\Gamma_K)
$
has Krull dimension one. Consequently, every nonzero prime ideal in
such a component is maximal. The maximal ideals of
$\Lambda_{A,[a,\infty]}(\Gamma_K)$
are precisely the point-evaluation ideals corresponding to continuous
characters satisfying the relevant radius condition.
\end{lemma}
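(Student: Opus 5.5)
Reduce first to $\Gamma_K$ torsion free. Write $\Gamma_K=\Delta_K\times\Gamma_K^{\mathrm{tf}}$ and let $e$ be a primitive idempotent of $A[\Delta_K]$; since $|\Delta_K|$ is prime to $p$ it is invertible in $A$. For each such $e$ the component $e\Lambda_{A,[a,\infty]}(\Gamma_K)$ is, after possibly enlarging the coefficient field by the values of the corresponding character of $\Delta_K$ (a finite extension $A'$), isomorphic to $\Lambda_{A',[d_Ka,\infty]}(\Gamma_K^{\mathrm{tf}})$. With $T=\gamma-1$, the integral presentation used in Lemma \ref{max} identifies this ring with
$$
B=\mathcal O_{A'}[[T]]\left\langle \frac{T^{e}}{u}\right\rangle\left[\frac1u\right],\qquad e=e(a),
$$
where $u$ is a uniformizer (or $p$) of $A'$. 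So it suffices to treat $B$.

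\textbf{Step 1: $B$ is a PID of dimension one.} The map $T\mapsto T$ identifies $B$ with the ring of power series $\sum c_jT^j$, $c_j\in A'$, converging on the closed disc $|T|\le r:=|u|^{1/e}$. After the substitution $T=\varpi'X$ in a finite extension containing an $e$-th root $\varpi'$ of $u$, this becomes the Tate algebra $A''\langle X\rangle$. Alternatively, stay over $A'$ and use the Weierstrass preparation theorem for power series on a closed disc of rational radius over a complete discretely valued field. Either way, every nonzero $f\in B$ is a unit times a polynomial whose roots lie in $|T|\le r$. Hence $B$ is a PID, and every nonzero prime is generated by an irreducible polynomial $P(T)$ all of whose roots satisfy $|\alpha|\le r$. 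Such a prime is maximal, and $\dim B=1$ since $B$ is not a field (for instance $T$ is not a unit). The descent from $A''\langle X\rangle$ to $B$, if that route is used, follows from finite faithful flatness together with going up and going down for an integral extension. This gives the first two assertions; in characteristic $p$ the same Weierstrass argument applies verbatim over $\mathbb F_q((u))$.

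\textbf{Step 2: maximal ideals are evaluation ideals.} Given $\mathfrak m=(P)$, the residue field $L=B/\mathfrak m$ is finite over $A'$, generated by the image $\beta$ of $T$, and $|\beta|_L\le r$, i.e.\ $|\beta|^{e}\le|u|$. The assignment $\gamma\mapsto1+\beta$ extends to a continuous character of $\Gamma_K^{\mathrm{tf}}\cong\mathbb Z_p$, since $|\beta|<1$ gives $(1+\beta)^{p^n}\to1$. Combining it with the character of $\Delta_K$ fixed by $e$ yields $\eta:\Gamma_K\to L^\times$ satisfying the radius condition of Lemma \ref{max}, and $\mathfrak m$ is the kernel of evaluation at $\eta$. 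Conversely, Lemma \ref{max} with $R=A$ (so the coefficient map is the identity) shows that every such $\eta$ defines a maximal ideal. Two characters give the same ideal exactly when they are Galois conjugate over $A$, which accounts for the phrase ``up to isomorphism of residue field.''

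\textbf{Main obstacle.} The delicate point is Step 1 when $e$ is not divisible in the value group, i.e.\ the disc radius $r$ is not in $|A^\times|$. I would handle this by the finite base change to $A''$ and then descend via integrality, rather than proving Weierstrass preparation on an irrational disc directly. One must also check that the full-index convention really gives the exponent $e(a)$ on each component.
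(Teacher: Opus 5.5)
Your proposal is correct and takes essentially the paper's route. You split off an idempotent component, identify it with a one-variable closed-disc affinoid algebra over a finite extension of $A$ (hence a one-dimensional domain whose nonzero primes are maximal), and then read off the maximal ideals from Lemma~\ref{max}. Your Weierstrass-preparation/PID argument and the explicit construction of $\eta$ from a root $\beta$ fill in details the paper leaves implicit.

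One small slip: $u$ is the image of the $D_\lambda$-variable, not necessarily a uniformizer or $p$. This is harmless, because the radius is $|u|^{1/e(a)}$ for that element. Since this radius lies in $\sqrt{|A'^{\times}|}$, the presentation $A'\langle T,Y\rangle/(T^{e(a)}-uY)$ is already an affinoid algebra, so your ``main obstacle'' needs no special treatment.
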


\begin{proof}
After fixing an idempotent component, the coefficient
ring is a finite field extension of $A$, and the
torsion-free factor contributes one analytic variable.
Thus $\Lambda_{A,[a,\infty]}(\Gamma_K)$ on that component
is a one-variable closed-disk affinoid algebra.
It is a domain of Krull dimension one.
Consequently every nonzero prime ideal is maximal.
Its maximal ideals are the evaluation ideals described
in Lemma~\ref{max}, with coefficient
maps extending the given map from $A$ and characters
satisfying the stage radius condition.
\end{proof}

\begin{lemma}\label{sosoobvious}
Suppose that $A$ is either a finite extension of $\mathbb Q_p$ or a
finite extension of $\mathbb F_p((u))$. Let $\mathfrak m_\eta$ be the
maximal ideal of $\Lambda_{A,(0,\infty]}(\Gamma_K)$
corresponding to a continuous character $\eta$. Then, for every
$i\geq 1$, the quotient
$\Lambda_{A,(0,\infty]}(\Gamma_K)/\mathfrak m_\eta^i$
is finite-dimensional over $A$. The same conclusion holds for a
Banach stage whenever $\eta$ satisfies the corresponding radius
condition.
\end{lemma}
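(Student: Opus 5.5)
The plan is to reduce everything to the Banach stage, where the ring is a one-variable affinoid over a field on each idempotent component, and then use that $\mathfrak m_\eta$ is an honest maximal ideal with finite residue field.

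First, the idempotent $e=\frac{1}{|\Delta_K|}\sum_{\delta}\eta(\delta)^{-1}\delta$ splits off the component of $\Lambda_{A,(0,\infty]}(\Gamma_K)$ on which $\mathfrak m_\eta$ lives. The quotient by $\mathfrak m_\eta^i$ only sees this component, because the complementary idempotent lies in $\mathfrak m_\eta$, hence in $\mathfrak m_\eta^i$. On this component the ring is $A'\otimes\Lambda_{A,(0,\infty]}(\Gamma_K^{\mathrm{tf}})$ with $A'$ finite over $A$, and we write $T=\gamma-1$ and $\alpha=\eta(\gamma)-1$.

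Second, we identify $\mathfrak m_\eta$. By Lemma~\ref{max} the residue field $L=A(\eta)$ is finite over $A$, and $|\alpha|_L<1$. Choose a Banach stage $[a,\infty]$ whose radius condition $\alpha$ satisfies. As in the proofs of Theorems~\ref{basechange} and~\ref{pointbasechange}, take $P(T)$ to be the minimal polynomial of $\alpha$ over $A'$. Its roots are Galois conjugates of $\alpha$, so they have the same absolute value, and $P$ is distinguished for the stage. Weierstrass division by $P$ then gives two identifications, one at the stage and one at the full level:
$$\Lambda_{A,[a,\infty]}/P\cong A'[T]/(P)\cong\Lambda_{A,(0,\infty]}/P.$$
The second holds because division by $P$ is valid compatibly at every smaller radius and passes to the inverse limit. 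Hence $\mathfrak m_\eta=(P)$ on this component, since both are kernels of evaluation at $\alpha$.

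Third, we handle the power. We have $\mathfrak m_\eta^i=(P^i)$, and $P^i$ is again a distinguished polynomial of degree $i\deg P$. Weierstrass division by $P^i$, at the Banach stage and then in the inverse limit, gives
$$\Lambda/\mathfrak m_\eta^i\cong A'[T]/(P^i),$$
which is free over $A'$ of rank $i\deg P$. It is therefore finite-dimensional over $A$. The Banach-stage statement is exactly the stage division, and the same argument applies to any stage whose radius condition $\eta$ satisfies, since it contains all roots of $P^i$.

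The main obstacle is the Weierstrass division step in the characteristic-$p$ case $A\supset\mathbb F_p((u))$. There one must check that the norm on the disk $[\mu(a),\infty]$ makes $P$ dominated by its leading term; this follows from $|\alpha|<1$ after shrinking the radius. One must also check that the division is compatible across stages, so that the full-level quotient agrees with the stage quotient, as in Lemma~\ref{algebraicphibasechange}. I would try this first by mimicking that lemma's argument verbatim.
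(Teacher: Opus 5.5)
Your argument is correct, but it follows a different route from the paper's.

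The paper argues abstractly, by filtration:
\begin{itemize}
\item The case $i=1$ is the finiteness of the residue field $L$ over $A$.
\item It then asserts that $\mathfrak m_\eta$ is finitely generated (indeed principal in the one-variable description). Hence each graded piece $\mathfrak m_\eta^{j}/\mathfrak m_\eta^{j+1}$ is a finitely generated module over $\Lambda/\mathfrak m_\eta=L$.
\item Induction along $0\to\mathfrak m_\eta^{i-1}/\mathfrak m_\eta^{i}\to\Lambda/\mathfrak m_\eta^{i}\to\Lambda/\mathfrak m_\eta^{i-1}\to0$ finishes the proof.
\end{itemize}

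You instead compute the quotient outright. You apply Weierstrass division by the minimal polynomial $P$ of $\eta(\gamma)-1$ over $A'$, and by $P^i$, at each stage whose disk contains the point. Uniqueness of the division glues these across stages, giving $\Lambda/\mathfrak m_\eta^i\cong A'[T]/(P^i)$.

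Your route buys two things. First, it actually proves the principality $\mathfrak m_\eta=(P)$ on the component. The paper only asserts this, even though $\Lambda_{A,(0,\infty]}(\Gamma_K)$ is not Noetherian. Second, it gives the exact dimension $i\,[L:A]$. The paper's route is shorter and purely formal once principality is granted.

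Two small corrections.
\begin{itemize}
\item The idempotent you display lies in $L[\Delta_K]$, not in $A[\Delta_K]$, unless $\eta(\Delta_K)\subset A$. In general you should use the primitive idempotent of $A[\Delta_K]$ attached to the Galois orbit of $\eta|_{\Delta_K}$, i.e.\ the sum of the distinct conjugates of your $e$. This is what makes the component's coefficient ring the field $A'$, and it still satisfies $\eta(e)=1$, so $1-e\in\mathfrak m_\eta^i$.
\item Your ``main obstacle'' is not really one. Weierstrass division by a monic polynomial whose roots lie in the closed disk holds over any complete nonarchimedean field, including finite extensions of $\mathbb F_p((u))$. The only input needed is the radius condition $|\eta(\gamma)-1|\le\rho$ of the stage. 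The roots of $P$ and $P^i$ all have absolute value $|\eta(\gamma)-1|$, by uniqueness of the extension of the absolute value over the complete field $A'$; this holds even when $P$ is inseparable. No further ``shrinking'' is required.
\end{itemize}
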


\begin{proof}
For $i=1$, the quotient is the residue field at the point defined by
$\eta$, and this is a finite extension of $A$.
On the idempotent component containing $\mathfrak m_\eta$, the maximal
ideal $\mathfrak m_\eta$ is finitely generated; in fact, in the
one-variable description it is principal. Therefore
$
\mathfrak m_\eta^j/\mathfrak m_\eta^{j+1}
$
is a finitely generated module over
$
\Lambda_{A,(0,\infty]}(\Gamma_K)/\mathfrak m_\eta,
$
and is consequently finite-dimensional over $A$. The exact sequences
$$
0
\longrightarrow
\mathfrak m_\eta^{i-1}/\mathfrak m_\eta^i
\longrightarrow
\Lambda_{A,(0,\infty]}(\Gamma_K)/\mathfrak m_\eta^i
\longrightarrow
\Lambda_{A,(0,\infty]}(\Gamma_K)/\mathfrak m_\eta^{i-1}
\longrightarrow
0
$$
now prove the assertion by induction on $i$. The argument for a
Banach stage is identical.
\end{proof}

\printbibliography
\end{document}

%% file: packages.tex
\usepackage{amsfonts, amsmath, amssymb, stmaryrd, color, enumerate, relsize, mathrsfs,array}
\usepackage[utf8]{inputenc}
\DeclareUnicodeCharacter{0308}{}
\DeclareUnicodeCharacter{FFFD}{}
\usepackage[all]{xy}
\usepackage{float}

\usepackage[
    backend=biber,
    style=alphabetic,    
    sorting=nyt,         
    maxbibnames=10
]{biblatex}
\usepackage{xcolor}
\DeclareNameAlias{author}{family-given}

\usepackage[dvipsnames]{xcolor}

\usepackage{hyperref}
\hypersetup{colorlinks,linkcolor={blue},citecolor={blue},urlcolor={red}} 
\usepackage[capitalise]{cleveref}
\usepackage{microtype}

\usepackage{tikz-cd}

\tikzcdset{
  cells={font=\everymath\expandafter{\the\everymath\displaystyle}},
}
